\def\titlerunning#1{\gdef\titrun{#1}}
\def\author#1{\gdef\autrun{\def\and{\unskip, }#1}\gdef\@author{#1}}
\def\address#1{{\def\and{\\\hspace*{18pt}}\renewcommand{\thefootnote}{}%
\footnote {#1}}%
\markboth{\autrun}{\titrun}}
\def\email#1{e-mail: #1}
\def\subjclass#1{{\renewcommand{\thefootnote}{}%
\footnote{\emph{Mathematics Subject Classification (2010):} #1}}}
\def\keywords#1{\par\medskip
\noindent\textbf{Keywords.} #1}
\newcommand{\R}{\mathbb R}
\newtheorem{theorem}{Theorem}[section]
\newtheorem{corollary}[theorem]{Corollary}
\newtheorem{lemma}[theorem]{Lemma}
\newtheorem{proposition}[theorem]{Proposition}
\theoremstyle{definition}
\newtheorem{definition}[theorem]{Definition}
\newtheorem{remark}[theorem]{Remark}
\numberwithin{equation}{section}
\def\a {\alpha}
\def\b{\beta}
\def\n{\nabla}
\begin{document}
\let\cleardoublepage

\baselineskip=17pt

\titlerunning{Space-time Quasiconcave Solutions}

\title{On space-time quasiconcave solutions of the heat equation}

\author{Chuanqiang Chen
\and
Xi-Nan Ma
\and
Paolo Salani}

\date{}

\maketitle

\address{C. Chen:  Department of Applied Mathematics, Zhejiang University of Technology, Hangzhou, 310032, Zhejiang Province, CHINA; \email{cqchen@mail.ustc.edu.cn}
\and
X. Ma: School of Mathematical Sciences, University of Science and Technology of China, Hefei, 230026, Anhui Province, CHINA; \email{xinan@ustc.edu.cn}
\and
P. Salani: Dipartimento di Matematica e Informatica `U. Dini', Universit\`a di Firenze, Viale Morgagni 67/A, 50137 Firenze, Italy; \email{paolo.salani@unifi.it}}

\subjclass{Primary 35K20; Secondary 35B30}

\chapter*{ Abstract}
In this paper we first obtain a constant rank theorem for the second fundamental form of the space-time level sets of a space-time quasiconcave solution of the heat equation. Utilizing this constant rank theorem, we can obtain some strictly convexity results of the spatial and space-time level sets of the space-time quasiconcave solution of the heat equation in a convex ring. To explain our ideas and for completeness, we also review the constant rank theorem technique  for the space-time Hessian of space-time convex solution of heat equation  and for the second fundamental form of the convex level sets for harmonic function.

\keywords{Heat equation, quasiconcavity, space-time level set, constant rank theorem, space-time quasiconcave solution}

\newpage

\tableofcontents

\newpage

\chapter{ Introduction}

Throughout the paper, $\Omega=\Omega_0\setminus\overline\Omega_1$ is a $C^{2,\alpha}$ convex ring in $\mathbb{R}^n$ ($n\geq 2$), i.e. $\Omega _0$ and
$\Omega _1$ are bounded  convex open sets in $\mathbb{R}^n$ of class $C^{2,\alpha}$ with $\overline \Omega _1  \subset\Omega _0$, and we consider a classical solution $u$ of the following problem
\begin{equation}\label{1.5}
\left\{\begin{array}{lcl} \dfrac{{\partial u}} {{\partial t}} =
\Delta u   &\text{in}& {\Omega  \times (0,+\infty)}, \\
              u(x,0) = u_0(x)  &\text{in}& \Omega\,,   \\
              u(x,t) =  0   &\text{on}&  \partial \Omega_0 \times [0,+\infty),\\
              u(x,t) =  1   &\text{in}&  \overline\Omega_1 \times [0,+\infty),
\end{array} \right.
\end{equation}
where the initial data $u_0\geq0$ is regular enough and satisfies $u_0=0$ on $\partial\Omega_0$ and $u_0=1$ on $\partial\Omega_1$.
We study the spatial and the space-time {\em quasiconcavity} of $u$ (notice that we set $u\equiv1$ in $\overline\Omega_1$).

We recall that a function $v :
\mathbb{R}^m\longrightarrow \mathbb{R}\cup\{-\infty\}$ is called {\em quasiconcave} in $\mathbb{R}^m$ ($m\in \mathbb N$) if all its superlevel
sets $\{y\in \mathbb{R}^m:  v(y)\ge c\}$  are convex.  If $v$ is defined only in a
proper subset $A\subset \mathbb{R}^m$, we extend it as $-\infty$ outside
$A $ and we say that $v$ is quasiconcave in $A$ if such an extension
is quasiconcave in $\mathbb{R}^m$.
Then we say that $u \in C(\overline\Omega_0\times [0,+\infty))$ is {\em spatially
quasiconcave} if the function $x \longmapsto u(x, t)$ is quasiconcave in $\overline\Omega_0\subset\R^n$ for
every fixed $t \geq 0$, and we say that $u$ is {\em space-time quasiconcave} if it is quasiconcave in $\overline\Omega_0\times[0,\infty)\subset\R^{n+1}$, that is if
all its space-time superlevel sets
$$\Sigma^c_{x,t} =\{(x, t)\in\overline\Omega_0\times[0,\infty)\, :\, u(x, t)\ge c\}$$
are convex in $\R^{n+1}$.
Equivalently (and more explicitly) we can give the following definition.
\begin{definition}
A function $u \in C(\overline\Omega_0\times [0,+\infty))$ is {\em spatially quasiconcave} if
\begin{equation}\label{1.1}
u((1-\lambda)x_0 + \lambda x_1, t) \ge \min \{u(x_0, t), u(x_1, t)\},
\end{equation}
for every $x_0, x_1 \in \overline{\Omega}_0$,  $\lambda \in  (0, 1)$ and every fixed $t\geq 0$.

Analogously, $u$ is {\em space-time quasiconcave} if
\begin{equation}\label{1.2}
u((1-\lambda)x_0 + \lambda x_1, (1-\lambda)t_0 + \lambda t_1) \ge \min \{u(x_0, t_0), u(x_1, t_1)\},
\end{equation}
for every $x_0, x_1 \in \overline{\Omega}_0$, $t_0, t_1 \ge 0$, $\lambda \in (0, 1)$.
\end{definition}

Clearly, if a function is space-time quasiconcave, then it is spatially
quasiconcave at every fixed time: if we fix a time $t\ge 0$,  \eqref{1.2} coincides with \eqref{1.1} if $t_0 =
t_1 = t$.


The quasiconcavity of solutions to elliptic partial
differential equations in convex rings has been extensively studied, starting from
\cite{AH} which contains the well-known result that the level curves
of the Green function of a convex domain in the plane are
convex Jordan curves. In 1956, Shiffman \cite{Sh56} studied the
 minimal annulus in $\mathbb{R}^3$ whose boundary consists of two closed
 convex curves in parallel planes $P_1, P_2$: he proved that the intersection
 of this surface with any parallel plane $P$, between $P_1$ and $P_2$, is a
 convex Jordan curve. In 1957, Gabriel \cite{Ga57} proved that the level sets
 of the Green function of a 3-dimensional bounded convex domain are strictly
 convex. In 1977, Lewis \cite{Le77} extended Gabriel's result to $p$-harmonic
 functions in higher dimensions. Caffarelli-Spruck \cite{CS82} generalized the
 Lewis' result \cite{Le77} to a class of semilinear elliptic partial differential equations.
 Motivated by Caffarelli-Friedman \cite{CF85}, Korevaar \cite{Ko90}
 gave a new proof of the results of Gabriel and Lewis by applying a deformation
 process jointly with a constant rank theorem for the second fundamental form of the level sets of a quasiconcave $p$-harmonic function.
 A survey of this subject was given by Kawohl \cite{Ka85} in 1985. For more recent results and updated references,
 see for instance \cite{BLS, BGMX, GX13}.

For parabolic equations, a natural question is whether the solution of an initial-boundary value problem is able to retain the quasiconcavity of the initial datum. This is in general
not true, as showed in \cite{IS08}. On the other hand, Brascamp and Lieb \cite{BL76}
earlier proved that the log-concavity of the initial datum is preserved by the heat flow and,
as a consequence, they got the $\log$-concavity of the first Dirichlet eigenfunction and the Brunn-Minkowski
inequality for the first Dirichlet eigenvalue of Laplacian operator in convex domains. In
a series of papers \cite{Bo82, Bo96, Bo00}, Borell studied certain
space-time convexities of the solution of heat equation with  Schr\"odinger
potential, obtaining a new proof of the Brascamp-Lieb's
theorem and a Brownian motion proof of the classical Brunn-Minkowski inequality.
Precisely, in relation to the present paper, in \cite{Bo82} Borell considers a solution of
the heat equation,
\begin{equation}\label{1.3}
 \frac{{\partial u}} {{\partial t}} =
\Delta u  \quad  \text{in}\quad {\Omega  \times (0,+\infty)}\,,
\end{equation}
with  the following initial boundary value condition
\begin{equation}\label{1.4}
\left\{\begin{array}{lcl}
              u(x,0) = 0  &\text{in}& \Omega= \Omega _0 \backslash \overline {\Omega _1 },   \\
              u(x,t) =  0   &\text{on}&  \partial \Omega_0 \times [0,+\infty),\\
              u(x,t) =  1   &\text{in}&  \overline\Omega_1 \times [0,+\infty),
\end{array} \right.
\end{equation}
that is problem \eqref{1.5} with $u_0\equiv 0$, and he proved the following theorem.
\begin{theorem}[\cite{Bo82}]\label{th1.1}
Let $u$ be a solution to problem \eqref{1.3}-\eqref{1.4}. Then the
space-time superlevel sets $\Sigma^c_{x,t}$ of $u$ are convex for every $c\in[0,1]$.
\end{theorem}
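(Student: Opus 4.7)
The plan is to prove Theorem~\ref{th1.1} by a deformation/continuity argument coupled with a constant rank theorem for the second fundamental form of the space-time level sets of a space-time quasiconcave solution of the heat equation---the main technical result of this paper. The overall strategy mirrors the Korevaar--Lewis approach to the convexity of level sets of harmonic functions in a convex ring.

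First I would regularize. The initial-boundary data in \eqref{1.4} is discontinuous along $\partial\Omega_1\times\{0\}$, so I would either work with $u$ restricted to $t\ge\varepsilon>0$, where parabolic regularity makes $u$ smooth and the trace at $t=\varepsilon$ serves as a spatially smooth initial datum, and pass to $\varepsilon\downarrow 0$ at the end; or replace the initial datum $0$ by a family of smooth, spatially quasiconcave, subharmonic approximations of $\mathbf{1}_{\overline\Omega_1}$ and invoke the paper's general quasiconcavity theorem.

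Next I would construct a smooth deformation $\{(\Omega_0^\tau,\Omega_1^\tau)\}_{\tau\in[0,1]}$ of $C^{2,\alpha}$ convex rings from a simpler reference configuration at $\tau=0$---chosen so that space-time convexity of the superlevel sets can be verified directly, e.g.\ the rotationally symmetric case in which $\Omega_0,\Omega_1$ are concentric balls---to the given pair at $\tau=1$. Let $u^\tau$ solve \eqref{1.3}--\eqref{1.4} for $(\Omega_0^\tau,\Omega_1^\tau)$ and set
\[
I=\{\tau\in[0,1]:\text{every space-time superlevel set of }u^\tau\text{ is convex}\}.
\]
The set $I$ is closed by continuous dependence of $u^\tau$ on $\tau$; openness, the crux, comes from the constant rank theorem. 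At $\tau_0\in I$, $u^{\tau_0}$ is space-time quasiconcave, so the second fundamental form of each regular space-time level surface is positive semidefinite; the constant rank theorem forces its rank to be locally constant on the surface, and a boundary analysis at the strictly convex $\partial\Omega_0$ and $\partial\Omega_1$ rules out rank deficiency. Hence the form is positive \emph{definite}, i.e.\ the space-time level surfaces are \emph{strictly} convex at $\tau_0$, a condition stable under small $C^{2,\alpha}$ perturbations of the problem, so $\tau_0$ lies in the interior of $I$ and $I=[0,1]$.

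The main obstacle is the constant rank theorem itself: showing that the smallest eigenvalue of the space-time second fundamental form of a regular level surface of a space-time quasiconcave solution of the heat equation cannot vanish at an isolated interior point. The standard template is to build an auxiliary function out of elementary symmetric functions of the principal curvatures, derive a degenerate parabolic differential inequality for it by combining the heat equation with the Codazzi identities and third-order identities on the level surface, and apply the strong minimum principle. The algebraic cancellation at third order is the delicate technical core; additional care is needed for the genuinely space-time (as opposed to purely spatial) geometry, which introduces a time-like direction in the second fundamental form, and for the asymptotic behavior at $t=0$ and at the lateral boundary $\partial\Omega_0\cup\partial\Omega_1$.
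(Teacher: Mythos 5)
Your proposal sets out to prove something the paper never proves: Theorem~\ref{th1.1} is imported wholesale from Borell \cite{Bo82}, whose argument is probabilistic (Brownian motion in a convex ring), and the alternative analytic proofs the authors cite \cite{IS10,IS11} go through a macroscopic comparison with a parabolic quasiconcave envelope, not through a constant rank theorem. The paper's own deformation--plus--constant-rank machinery is aimed at Theorem~\ref{th1.2} and Theorem~\ref{th1.4}, and in both cases Borell's theorem is the \emph{input}: the deformation in Section~4.2 starts from Borell's solution $v(x,\epsilon)$, and the proof of Theorem~\ref{th1.4} opens with ``thanks to \cite{Bo82}, we know the space-time level sets of $u$ are all convex'' before Theorem~\ref{th1.3} is invoked. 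Consequently your second regularization option --- replace the initial datum by smooth subharmonic quasiconcave approximations and ``invoke the paper's general quasiconcavity theorem'' --- is circular within this paper's logic, since that theorem (Theorem~\ref{th1.2}) is itself proved by deforming from Borell's solution.

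The first option, a domain deformation with a direct verification at a radially symmetric base case, has two concrete gaps. First, the base case is not direct: for concentric balls with the discontinuous data \eqref{1.4}, space-time convexity of $\Sigma^c_{x,t}$ amounts to concavity in $t$ of the radius $R_c(t)$ of the expanding spatial level set, and this is exactly the radial instance of Borell's theorem; unlike the elliptic ball-ring case in Section~2.3.2, where the harmonic function is explicit and its level sets are spheres by inspection, here nothing is explicit and a genuine argument is required. Second, the openness step needs \emph{uniform} strict convexity of all level surfaces $\partial\Sigma^{c}_{x,t}$, and this fails at both ends of the time axis: near $t=0$ every surface $\partial\Sigma^c_{x,t}$, $c\in(0,1)$, collapses onto the corner $\partial\Omega_1\times\{0\}$ where the data jump and the curvature degenerates, and as $t\to\infty$ the solution converges to the harmonic function of \eqref{a4.1}, so the space-time level surfaces become asymptotically cylindrical in the $t$-direction and their space-time Gauss curvature tends to zero. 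Without a quantitative lower bound on the smallest principal curvature that is uniform in $c$, in $t$, and in the deformation parameter, ``strict convexity is stable under small $C^{2,\alpha}$ perturbations'' does not yield openness. These are precisely the difficulties that push Borell and Ishige--Salani to global comparison methods rather than the pointwise constant-rank route.
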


In 2010 and 2011, Ishige-Salani \cite{IS10,IS11} gave a new proof of the above
theorem of Borell, and they extended it to more general fully
nonlinear parabolic equations, also introducing the notion of parabolic quasiconcavity.
But they still need the initial datum to be identically vanishing, indeed a quite restrictive assumption.
Some results similar to \cite{IS10} are contained in \cite{DK0} too, while an attempt to treat the case of a general (not zero) initial datum was done in
 \cite{DK}. Earlier related results can also be found in \cite{Ke}.

However, until now, it remained a longtime open problem what are suitable conditions on the initial datum $u_0$ that suffice to guarantee a spatially
or (better) a space-time quasiconcave solution $u$ of \eqref{1.5}.

In this paper, we give the following strictly convexity result for the space-time quasiconcave solution of heat equation
\begin{theorem}\label{th1.2}
Let $u$ be a space-time quasiconcave solution of problem \eqref{1.5} (where $\Omega$ is as said at the beginning), with
\begin{equation}\label{1.6}
u_t >0, \quad \text{in}\quad \Omega \times (0,+\infty)\,.
\end{equation}
Then \\
(1) $u$ is spatial strictly quasiconcave, i.e. the spatial superlevel sets  $\Sigma^{c,t}_{x}$ of $u$ are strictly convex for every $c \in (0,1)$ and $t \in (0,+\infty)$.\\
(2) there exists $T_0 \in [0, +\infty)$, such that $u$ is space-time strictly quasiconcave for $t > T_0$. Exactly,
\begin{align*}
&\mathbf{Rank}( II_{\partial\Sigma^{c}_{x,t}} (x,t) ) \equiv n-1, \quad \text{ for any } (x,t) \in \Omega \times (0, T_0];\\
&\mathbf{Rank}( II_{\partial\Sigma^{c}_{x,t}} (x,t) ) \equiv n, \quad \text{ for any } (x,t) \in \Omega \times (T_0, +\infty),
\end{align*}
where $II_{\partial\Sigma^{c}_{x,t}} (x,t)$ is the second fundamental form of the space-time level set $\partial\Sigma^{c}_{x,t}$ at $(x,t)$.
\end{theorem}

The proof of Theorem \ref{th1.2} is given in Section 4.

\begin{remark}\label{remark1}
(1) As showed in \cite{DK},  the initial condition \eqref{1.6} guarantees
\begin{equation}\label{1.7}
 |\nabla u|>0 \quad \text{in}\quad \Omega \times (0,+\infty)\,.
\end{equation}
This is essential for our proof of the main Theorem \ref{th1.2}, as well as for the proofs of Theorem \ref{th1.3} and Theorem \ref{th1.4}.

(2) Here $T_0 \in [0, + \infty)$. If $T_0 =0$, we will get the space-time strictly quasiconcavity for any $t>0$. But it is not easy.

(3) The condition \textbf{that $u$ is space-time quasiconcave} is not easy to verify, even we add some strong conditions on $u_0$ and $\Omega$.
Chau-Weinkove \cite{CW18} give some counterexamples to indicate that $u$ is not space-time quasiconcave even for smooth and subharmonic $u_0$. Here we indicate some necessary conditions on $u_0$.

First, if $u \in C^{4,3}(\overline{\Omega }\times [0, +\infty))$, then $u_0 \in C^4(\Omega)$, $\Delta^2 u_0 = \Delta u_0 =0$ on $\partial \Omega$, and we have the following compatible necessary conditions
\begin{align*}
&u_0(x)=0, \quad x \in \partial \Omega_0;\quad  u_0(x)=1, \quad x \in \partial \Omega_1; \\
&\Delta u_0 (x) \geq 0, \quad |\nabla u_0(x)| >0, \quad \text{ for any } x \in \Omega.
\end{align*}
For any $x \in \Omega$, if we choose the coordinate such that
\begin{align*}
u_{0,n} = |\nabla u_0(x)| >0, \quad \{u_{0,ij}\}_{1\leq i, j \leq n-1}\text{ is diagonal at } x.
\end{align*}
Then we need the following necessary condition
\begin{align}
u_{0,ii} \leq 0,\text{ for any } 1\leq i \leq n-1; \notag
\end{align}
and
\begin{align}
u_{0,n}^2 \Delta^2 u_0 +u_{0,nn} (\Delta u_0)^2 -2 u_{0,n} \Delta u_0 \Delta u_{0,n} - \sum\limits_{i} \frac{1}{u_{0,ii}}[u_{0,n} \Delta u_{0,i}- \Delta u_{0} u_{0,in}]^2 \leq 0, \notag
\end{align}
where the $\sum\limits_{i}$ is the summation for all $i$ such that $u_{0,ii} < 0$.
\end{remark}

We will prove Theorem \ref{th1.2} through the following {\em constant rank
theorem} for the second fundamental form of the space-time level surfaces of a space-time quasiconcave
solution of the heat equation.

\begin{theorem}\label{th1.3}
Suppose $u \in C^{4,3}(\Omega \times (0,T))$ is a space-time quasiconcave solution to the
heat equation (\ref{1.3}) satisfying \eqref{1.6}. Then the second
fundamental form $II_{\partial{\Sigma}^{c}_{x,t}}$ of the space-time level sets
$\partial{\Sigma}^{c}_{x,t}$  has the following constant rank property for $c\in (0,1)$:
if the rank of $II_{\partial{\Sigma}^{c}_{x,t}}$ attains its minimum rank $l_0$
$(0\leq l_0\leq n)$ at some point $(x_0,t_0) \in \Omega \times (0,T)$, then the
rank of $II_{\partial{\Sigma}^{c}_{x,t}}$ is constant  $l_0$ in $\Omega\times (0,t_0]$.
Moreover, let $l(t)$ be the minimal rank of $II_{\partial{\Sigma}^{c}_{x,t}}$ in
$\Omega$, then $l(s) \leqslant l(t)$ for all $s \leqslant t < T$.
\end{theorem}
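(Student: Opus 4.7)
The plan is to implement a constant rank theorem in the parabolic/space-time setting, following the general strategy pioneered by Caffarelli--Friedman and Korevaar and refined by Bian--Guan, Guan--Ma, and collaborators, but now applied to hypersurfaces in $\mathbb{R}^{n+1}$ rather than in $\mathbb{R}^n$ and driven by the heat operator rather than the Laplacian.

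First I would set up local coordinates near an arbitrary point $(\bar x,\bar t)\in\Omega\times(0,T)$. Writing $Du=(\nabla u,u_t)\in\R^{n+1}$, the assumption \eqref{1.7} guarantees $Du\neq 0$, so by the implicit function theorem the space-time level set $\partial\Sigma^c_{x,t}$ is locally a smooth hypersurface. After a rotation in the $x$-variables, I can assume that at $(\bar x,\bar t)$ one of the $n+1$ coordinate directions is the normal and the remaining $n$ coordinate directions span the tangent space. In such a frame the second fundamental form $II=(h_{\alpha\beta})$ of $\partial\Sigma^c_{x,t}$ can be written explicitly as a polynomial in $Du,D^2u$ divided by $|Du|$, e.g.
\begin{equation*}
 h_{\alpha\beta}=-\frac{1}{|Du|}\Bigl(u_{\alpha\beta}-\tfrac{u_\alpha u_{\gamma}u_{\gamma\beta}+u_\beta u_{\gamma}u_{\gamma\alpha}}{|Du|^2}+\tfrac{u_\alpha u_\beta u_\gamma u_\delta u_{\gamma\delta}}{|Du|^4}\Bigr),
\end{equation*}
where Greek indices run over the space-time coordinates $1,\dots,n+1$ and the ambient derivatives are $x$- and $t$-derivatives. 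Space-time quasiconcavity of $u$ together with \eqref{1.7} then forces $(h_{\alpha\beta})\le 0$ on tangent vectors, so setting $A=-II$ we work with an $n\times n$ positive semidefinite matrix.

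Next, let $l_0=l(t_0)$ be the assumed minimum rank, attained at $(x_0,t_0)$, and consider on $\Omega\times(0,t_0]$ the auxiliary function
\begin{equation*}
 \phi=\sigma_{l_0+1}(A)+\frac{\sigma_{l_0+2}(A)}{\sigma_{l_0+1}(A)},
\end{equation*}
(with the second term interpreted as $0$ where $\sigma_{l_0+1}(A)=0$), which is nonnegative, vanishes exactly on the rank $\le l_0$ locus, and is the right regularization to handle clusters of small eigenvalues. The heart of the argument is to establish a differential inequality of the form
\begin{equation*}
 \phi_t-a^{ij}\phi_{ij}\le C\bigl(\phi+|\nabla_{x,t}\phi|\bigr)\qquad\text{in a neighborhood of }(x_0,t_0),
\end{equation*}
where $(a^{ij})$ is a positive (semi-)definite coefficient matrix arising from linearizing the heat equation composed with the formula for $A$. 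To derive it I would, at any point where $\phi$ is small, split the indices $\{1,\dots,n\}$ into a ``good'' set $G$ (indices with $a_{ii}$ bounded below) and a ``bad'' set $B=\{1,\dots,l_0+2\}\setminus G$, expand the parabolic operator acting on the principal minors, differentiate the heat equation twice to obtain $u_{ij,t}=\Delta u_{ij}$ (and similarly for $u_{i,t}$), and substitute these into the third- and fourth-order terms generated by the Weingarten formula. The dangerous uncontrolled terms are quadratic in third derivatives of $u$ indexed by $B$; these are absorbed using the Cauchy--Schwarz trick $\sum_{i\in B}a_{ii}^{-1}(\cdots)^2$, whose validity rests precisely on the quasiconcavity inequality and on the structural identity coming from $|Du|^2$ being a smooth function on the level set.

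Once the differential inequality is in hand, the strong maximum principle for parabolic equations applied to the nonnegative function $\phi$ (which vanishes at the interior point $(x_0,t_0)$) forces $\phi\equiv 0$ on the parabolic domain of dependence, which in the unbounded-domain/zero-order-term form used here is the whole slab $\Omega\times(0,t_0]$. Hence $\mathrm{rank}(II)\le l_0$ throughout $\Omega\times(0,t_0]$, and by definition of $l_0$ this rank equals $l_0$ identically, proving the first statement. The monotonicity $l(s)\le l(t)$ for $s\le t$ is then immediate: if the minimum rank on $\Omega\times(0,t]$ were achieved at some time $s\le t$, the just-proved constancy on $\Omega\times(0,s]$ would propagate that smaller value into every earlier slab, so $l$ is nondecreasing in $t$.

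The main obstacle, as always with constant rank theorems, is the third step: the correct choice of the auxiliary function $\phi$ and the careful algebraic manipulation that reveals the parabolic differential inequality. The space-time character of the level set (so that one derivative of $u$ is ``time-like'' and behaves differently under $\partial_t-\Delta$) and the fact that we only have semidefiniteness of $A$, not strict positivity, make this bookkeeping significantly more delicate than in the elliptic, fixed-time case, and is where essentially all of the technical work of the proof will be concentrated.
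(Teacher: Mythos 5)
There is a genuine gap: your proposal reproduces the outer shell of a constant rank argument (test function built from $\sigma_{l_0+1}$, good/bad index splitting, a differential inequality, the strong maximum principle) but omits the two structural devices that actually make the space-time computation close, and you defer exactly the step where the proof lives. First, the paper's argument is not a single monolithic computation on the full space-time Weingarten tensor: it first proves a separate constant rank theorem for the second fundamental form of the \emph{spatial} level sets $\partial\Sigma_x^{c,t}$ (Theorem \ref{th3.1}), and this is where the space-time quasiconcavity is actually used, via $\hat a_{jn}^2\le \hat a_{jj}\hat a_{nn}=O(\phi)$ for bad indices $j$. That intermediate theorem then feeds into the main proof by forcing $\hat a_{ij}=0$ and $D\hat a_{ij}=0$ identically (not just $O(\phi+|\nabla\phi|)$) whenever $i$ or $j$ is bad, which collapses most of the terms in the second-derivative expansion of $\sigma_{l_0+1}(\hat a)$. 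Second, the proof splits according to Lemma \ref{lem2.8} into two cases depending on whether the spatial block $M=(\hat a_{ij})_{(n-1)\times(n-1)}$ has rank $l_0$ or $l_0-1$ at the minimum point; in the first case the spatial theorem finishes the proof in two lines ($0\le\sigma_{l_0+1}(\hat a)\le\sigma_{l_0+1}(M)+\hat a_{nn}\sigma_{l_0}(M)=0$), and only in the second case is the long computation of Section 3.3 needed. Without this dichotomy and without the spatial reduction, the differential inequality you assert for $\phi$ is not established, and it is not clear the direct computation can be made to work: the degenerate direction may be the time-like one, which behaves differently under $\partial_t-\Delta$, and your stated mechanism for absorbing the dangerous third-order terms ("quasiconcavity plus a structural identity") does not identify where semidefiniteness of the full $(n\times n)$ tensor versus its spatial block is invoked.

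Two further points. Your coordinate normalization is not available as stated: a rotation in the $x$-variables alone cannot make a coordinate direction equal to the space-time normal $Du/|Du|$, since by \eqref{1.7} this normal always has a nonzero $t$-component; the paper instead normalizes $u_n=|\nabla u|>0$, $u_i=0$ for $i<n$, and works with the tensor $\hat a_{\a\b}$ of \eqref{2.11}--\eqref{2.12} in which the index $n$ plays a distinguished role. Also, the paper's test function is simply $\phi=\sigma_{l_0+1}(\hat a)$, not $\sigma_{l_0+1}+\sigma_{l_0+2}/\sigma_{l_0+1}$; the quotient correction is unnecessary here precisely because the preliminary spatial constant rank theorem already pins down the rank of the spatial block, which is the role that correction term plays in other settings.
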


The proof of Theorem \ref{th1.3} is given in Section 3.2 and Section 3.3. For reader's convenience, the Appendix contains the same proof in dimension $2$.

Constant rank theorems constitute an important tool to study convexity properties of solutions to elliptic and parabolic partial
differential equations. A technique based on the combination of a constant rank theorem and a homotopic deformation process was introduced in dimension 2 by Caffarelli-Friedman \cite{CF85} (a similar result was also discovered by Singer-Wong-Yau-Yau \cite{SYY85} at the same time). The result of \cite{CF85} has been later generalized to $\R^n$ by Korevaar-Lewis \cite{KL87}. Recently constant rank theorems have been obtained for the Hessian of solutions to fully nonlinear elliptic and parabolic equations in \cite{CGM07} and \cite{BG09, BG10, SW15}. Notice that, for parabolic equations, the constant rank theorems in \cite{ BG09, CGM07} regard the space variable only; Hu-Ma \cite{HM12}  obtained instead a constant rank theorem for the space-time Hessian of space-time convex solutions to the heat equation, while Chen-Hu \cite{CH12} were able to reduce the computations of \cite{HM12}, so to get a generalization to fully nonlinear parabolic equations.

About quasiconcave solutions in convex rings, we already mentioned Korevaar \cite{Ko90} who got a constant rank theorem for the second fundamental form of the level sets of quasiconcave $p$-harmonic functions; then Bian-Guan-Ma-Xu \cite{BGMX} and Guan-Xu \cite{GX13} obtained a generalization to fully nonlinear elliptic equations, while Chen-Shi \cite{CS} got a parabolic version of \cite{BGMX, GX13} for the second fundamental form of spatial level sets.

As applications of constant rank theorems, apart from the existence of convex and quasiconcave solutions to partial differential equations, we recall that the Christoffel-Minkowski problem and the related prescribing Weingarten curvature problem were studied in \cite{GLM06, GM03}, the uniqueness of K\"{a}hler-Einstein metric with the related curvature restriction in K\"{a}hler geometry  was studied by \cite{GLZ}. Moreover, the preservation of convexity for the general geometric flows of hypersurfaces has been investigated in \cite{BG09}.

We also recall that constant rank theorems can be often regarded as microscopic versions of some corresponding macroscopic convexity principle; this relationship exists in particular between the results of \cite{BG09} and \cite{All97}, as well as between the results of \cite{BGMX} and \cite{BLS} .

Similarly to the proof of Theorem \ref{th1.2}, we can also get the strict convexity of the space-time level sets of the solution to  (\ref{1.3})-(\ref{1.4}), as a corollary of Theorem \ref{th1.3} and Theorem \ref{th1.1}.
\begin{theorem}\label{th1.4}
Let $u$ be the solution to problem \eqref{1.3}-\eqref{1.4}. Then \\
(1) $u$ is spatial strictly quasiconcave, i.e. the spatial superlevel sets  $\Sigma^{c,t}_{x}$ of $u$ are strictly convex for every $c \in (0,1)$ and $t \in (0,+\infty)$.\\
(2) there exists $T_0 \in [0, +\infty)$, such that $u$ is space-time strictly quasiconcave for $t > T_0$. Exactly,
\begin{align*}
&\mathbf{Rank}( II_{\partial\Sigma^{c}_{x,t}} (x,t) ) \equiv n-1, \quad \text{ for any } (x,t) \in \Omega \times (0, T_0];\\
&\mathbf{Rank}( II_{\partial\Sigma^{c}_{x,t}} (x,t) ) \equiv n, \quad \text{ for any } (x,t) \in \Omega \times (T_0, +\infty).
\end{align*}
\end{theorem}

The rest of the paper is organized as follows.

In Chapter 2, we introduce some basic definitions; in particular Section 2.1 contains some preliminaries and basic curvature formulas for the level sets of a function $u$. To explain our ideas and for completeness, we review the constant rank theorem technique, including the constant rank theorem  on the space-time Hessian for the space-time convex solution of heat equation  in Section 2.2 (see \cite{HM12} and \cite{CH12}) and the strict convexity of the level sets for harmonic functions in convex rings in Section 2.3 via constant rank theorem technique and deformation process (see  \cite{Ko90} and \cite{BGMX}).

In Chapter 3, first we prove Theorem \ref{th3.1}, a constant rank theorem for the second fundamental form of the spatial level sets of a space-time quasiconcave solution to heat equation \eqref{1.3}, then we prove Theorem \ref{th1.3}. Its proof
is split into two cases  (according to Lemma \ref{lem2.8}): CASE 1 is treated in Section 3.2 using the constant rank theorem established in Section 3.1, while CASE 2 is treated in Section 3.3.

In Chapter 4, we study the solution of Borell \cite{Bo82} and prove Theorem \ref{th1.4} in Section 4.1, by utilizing the constant rank theorem of spatial level sets and space-time level sets. Similarly, we prove Theorem \ref{th1.2} in Section 4.2.

Finally, in the appendix we  rewrite the proof of Theorem \ref{th1.3} in the plane. In particular, we rewrite explicitly the computations of Section 3.3 in dimension $2$; we hope this can be helpful to clarify the hard (and long) computations of the general case.

{\bf Acknowledgment:}  Part of the work was done while the second author was visiting the IHES in January 2012 and February 2013, and while he was visiting CMS (Zhejiang University) in August 2012; then he would like to thank these institutes for their warm hospitality. The second author would also like to thank Prof. Fanghua Lin and Prof. Lihe Wang for asking him this question in summer 2009. All the authors thank Prof. Pengfei Guan for the encouragement on this subject and some helpful discussions.  We also thank Prof. Chau and Prof. Weinkove to a communication their
recent works \cite{CW18} on this subject. The first and the second authors were supported by the Wu Wen-Tsun Key Laboratory of Mathematics in USTC and NSFC. The third author has been partially supported by the PRIN 2012 project ``Equazioni alle derivate parziali di tipo ellittico e parabolico: aspetti geometrici, disuguaglianze collegate e applicazioni" of MIUR and by GNAMPA of INdAM.

\newpage

\chapter{Basic definitions and the Constant Rank Theorem technique}

In this chapter, in order to better explain our ideas and for completeness, we review the constant rank theorem technique; in particular, in Section 2.2 we describe the constant rank theorem for the space-time Hessian of space-time convex solutions to heat equation (see \cite{HM12} and \cite{CH12}), while we review the strict convexity of the level sets for harmonic functions in convex rings via the constant rank theorem technique and deformation process (see  \cite{Ko90} and \cite{BGMX}) in Section 2.3.  The technique of Section 2.3 will be generalized to get a constant rank theorem for the second fundamental form of the spatial level sets of a space-time quasiconcave solution to heat equation in Section 3.1. And the technique in Section 2.2 will be generalized to get a constant rank theorem for the  second fundamental form of the space-time level surfaces of a space-time quasiconcave solution of the heat equation in Section 3.2 and Section 3.3.

\section{Preliminaries}
\setcounter{equation}{0} \setcounter{theorem}{0}

Throughout the paper, $\nabla u=(u_1, u_2, \cdots, u_{n-1}, u_n)$
denotes the spatial gradient of $u$ and $D u=(\nabla u, u_t)=(u_1, u_2, \cdots, u_{n-1}, u_n, u_t)$ denotes its
space-time gradient.

In the following four subsections we collect some useful facts about the curvature of level sets
and elementary symmetric functions.

\subsection{The curvature matrix of the  level sets of $u(x)$ }

 In this subsection, we recollect some curvature formulas for the level sets of a $C^2$ function $u(x)$ from the presentation in \cite{BGMX}.
 We first recall some fundamental notations in classical surface theory.

 Assume a surface $\Sigma\subset\R^n$ is
given by the graph of a function $v$ in a domain in $\R^{n-1}$:
$$\Sigma=\{(x',x_n)\,:\,x_n = v(x')\},\,\, x'=( x_1,x_2, \cdots, x_{n-1})\in \R^{n-1}.$$
Then the first  fundamental form of $\Sigma$ is given
by $g_{ij}=\delta_{ij} + v_iv_j.$
The upward normal direction $\vec {\mathfrak{n}}$ and the second fundamental
form  of the graph $x_n = v(x')$ are respectively given by
\begin{align*}
\vec {\mathfrak{n}}=~\frac{1}{W}(-v_1,-v_2, \cdots, -v_{n-1},1),\quad
 b_{ij}=~\frac{v_{ij}}{W},
\end{align*}
where $1\le i,j \le n-1$ and $W = (1+|\n v|^2)^{\frac12}$.
\begin{definition}\label{def1}
We say that the graph of function $v$ is convex with respect to the upward normal $\vec {\mathfrak{n}}$ if the second fundamental form $b_{ij}= \dfrac{v_{ij}}{W}$ of the graph of $v$ is nonnegative definite.
\end{definition}

The principal curvatures $\kappa_1, \cdots, \kappa_{n-1}$ of
the graph of $v$, being the eigenvalues of the second fundamental
form relative to the first fundamental form, satisfy
$$\det(b_{ij} - \kappa_l g_{ij}) = 0\quad\text{for }l=1,\dots,n-1\,.$$
Equivalently, $\kappa_l$ satisfies
$$\det(a_{ij} - \kappa_l \delta_{ij}) = 0\,,
$$
where
$$\quad (a_{ij})= (g^{il})^{\frac12}(b_{lk})(g^{kj})^{\frac12}$$
and $(g^{ij})$ is the inverse matrix of $(g_{ij})$.
Then we have the following well known fact  \cite{CNS85}:
the principal curvature of the graph $x_n = v(x')$ with respect to
the upward normal $\vec {\mathfrak{n}}$ are the eigenvalues of the
symmetric curvature matrix
\begin{equation*}
a_{il} =\frac{1}{W}\bigg\{v_{il} -\frac{v_iv_jv_{jl}}{W(1+W)}
-\frac{v_lv_kv_{ki}}{W(1+W)} + \frac{v_iv_lv_jv_k
v_{jk}}{W^2(1+W)^2}\bigg\},
\end{equation*}
where the summation convention over repeated indices is employed .

Let $\Omega$ be a domain in $\R^n$  and $u\in C^2(\Omega)$.
We denote by $\partial\Sigma^{u(x_o)}$  the level set of
$u$ passing through the point $x_o \in \Omega$, i.e.
$\partial\Sigma^{u(x_o)}=\{x\in\Omega|u(x)=u(x_o)\}$.
Now we shall work near a  point $x_o$ where $|\n
u(x_o)|\neq 0$.  Without loss
of generality we assume $x_o=0$ and $u_n(x_o)\neq 0$ and consider a small
neighborhood of $x_o$. By the implicit function theorem, locally the level set
$\partial\Sigma^{u(x_o)}$ can be represented as a local graph
$$x_n = v(x')\,,\quad x'=( x_1,x_2, \cdots, x_{n-1})\in B(0,\epsilon)\subseteq\R^{n-1},$$
for $\epsilon>0$ sufficiently small and $v(x')$ satisfies the following equation
$$u(x_1,x_2, \cdots, x_{n-1}, v(x_1,x_2, \cdots, x_{n-1})) = u(x_o).$$
The latter yields
\begin{align*}
u_i + u_n v_i = 0\,,
\end{align*}
whence
$$
 v_i = - \frac{u_i}{u_n}\,.
$$
Then the first fundamental form of the level set is
\begin{equation*}
g_{ij}=\delta_{ij} + \frac{u_iu_j}{u_n^2}\,.
\end{equation*}
It follows that the upward
normal direction of the level set is
\begin{equation}\label{1.1aaa}
\vec{\mathfrak{n}}= \dfrac{|u_n|}{|\nabla u|u_n}
(u_1, u_2, \cdots, u_{n-1}, u_n).
\end{equation}
We  also have
\begin{equation*}
u_{ij} +u_{in} v_{j}+ u_{nj}v_{i}+ u_{nn} v_{i}v_j + u_n v_{ij}=0.
\end{equation*}
If we set
\begin{equation}\label{1.1a}
h_{ij} =u_n^2 u_{ij}+u_{nn}u_{i}u_j-u_nu_ju_{in}-u_nu_iu_{jn}\,,
\end{equation}
then
it follows
\begin{equation*}
v_{ij} = - \frac{h_{ij}}{u_n^3}.
\end{equation*}
The second fundamental form  of the
level set of the function $u$ with respect to the upward normal
direction is given by
\begin{equation}\label{1.1aa}
b_{ij}=\frac{v_{ij}}{W}=-\frac{|u_n| h_{ij}}{|\n u|u_n^3}\,,
\end{equation}
where  $ W = (1+|\n v|^2)^{\frac12} = \frac{|\n u|}{|u_n|}$.

\begin{definition}
In the same assumption and notation as above, we say that the level set
$\partial\Sigma^{u(x_o)} = \{x \in \Omega| u(x)=u(x_o)\}$ is locally convex respect
to the upward normal direction $\vec{\mathfrak{n}}$
if the second fundamental form
$b_{ij}= - \dfrac{|u_n| h_{ij}}{|\n u|u_n^3}$ is nonnegative
definite at $x_o$.
\end{definition}

Now we can express the curvature matrix $(a_{ij})$
of the level sets of the function $u$ in terms of the derivatives of $u$.  We can assume $\n u$ is the
upward normal of the level set $\partial\Sigma^{u(x_o)}$ at $x_o$, then
$u_n(x_o)> 0$.

From \cite{BGMX}, it follows that the
symmetric curvature matrix $(a_{ij})$  is given by
\begin{equation}\label{2.5a}
a_{ij} =-\frac{|u_n|}{|\nabla u|{u_n}^3}A_{ij}, \quad  1 \leq i,j \leq n-1,
\end{equation}
where
\begin{equation}\label{2.6a}
A_{ij} = h_{ij}
-\frac{u_iu_lh_{jl}}{W(1+W)u_n^2} -\frac{u_ju_lh_{il}}{W(1+W)u_n^2}
+ \frac{u_iu_ju_ku_l h_{kl}}{W^2(1+W)^2u_n^4}, \quad W=\frac{|\nabla u|}{|u_n|}.
\end{equation}
With the above notations, at a
point $(x_0,t_0)$ where $u_n(x_0,t_0)=|\nabla u(x_0,t_0)|>0$ and $u_i(x_0,t_0)=0$ for
$i=1, \cdots, n-1$, $a_{ij,k}$ is commutative, i.e. it satisfies
the Codazzi property
$$
a_{ij,k}=a_{ik,j} \quad \forall i,j,k \le n-1\,,
$$
where we use the following notation $a_{lm,r}=\frac{\partial}{\partial x_r}a_{lm}$.

\subsection{The curvature matrix of the spatial level sets of $u(x,t)$}
 Throughout this subsection, $\Omega$ is a domain in $\mathbb{R}^n$ and $u\in C^{2,1}(\Omega \times [0,
T))$ satisfies $\nabla u\neq 0$ in $\Omega\times[0,T)$.

We introduce the following notation: for $t\in[0,T)$ and $c\in\mathbb{R}$, $\partial\Sigma_x^{t,c}$ denotes the spatial $c$-level set of the function $u$, at the fixed time $t$, that is
$$
\partial\Sigma_x^{c,t}=\{x\in\Omega\,:\,u(x,t)=c\}\,.
$$
Notice that, thanks to the assumptions on $u$, $\partial\Sigma_x^{c,t}$ is a regular hypersurface in $\mathbb{R}^n$.
Now we fix $(x_0,t_0)\in\Omega\times(0,T)$ and without loss of generality we assume
 $u_n(x_0,t_0) \ne 0$. As in \cite {BGMX, CNS85}, it follows that the upward normal direction of the hypersurface $\partial \Sigma_x^{c,t}$ at $x_0$ is
\begin{eqnarray}\label{2.1}
\vec{\mathfrak{n}}= \frac{|u_n|}{|\nabla u|u_n}\nabla u\,
\end{eqnarray}
and the second fundamental form $II$ of $\partial\Sigma_x^{c,t}$ with respect to $\vec{\mathfrak{n}}$
is given by
\begin{equation}\label{2.4}
b_{ij} = - \frac{|u_n|h_{ij}}{|\nabla u|u_n^3},
\end{equation}
where
\begin{align*}
h_{ij} =u_n^2 u_{ij}+u_{nn}u_{i}u_j-u_nu_ju_{in}-u_nu_iu_{jn}.
\end{align*}
Notice that if $\partial\Sigma_x ^{c,t}$ is
locally convex with respect to the upward normal direction, then $b_{ij}$ is positive semidefinite (and vice versa). Moreover, let $a(x,t)=(a_{ij}(x,t))$ be similarly defined  by
\begin{equation}\label{2.5}
a_{ij} =-\frac{|u_n|}{|\nabla u|{u_n}^3}A_{ij}, \quad  1 \leq i,j \leq n-1,
\end{equation}
where
\begin{equation}\label{2.6}
A_{ij} = h_{ij}
-\frac{u_iu_lh_{jl}}{W(1+W)u_n^2} -\frac{u_ju_lh_{il}}{W(1+W)u_n^2}
+ \frac{u_iu_ju_ku_l h_{kl}}{W^2(1+W)^2u_n^4}, \quad W=\frac{|\nabla u|}{|u_n|}\,;
\end{equation}
then $a_{ij}$ is the symmetric curvature tensor of $\partial\Sigma_x^{c,t}$.

\subsection{The curvature matrix of the space-time level sets of $u(x,t)$}

In this subsection, we assume $u \in C^{3,1}(\Omega \times [0, T))$ and $u_t(x,t) \ne 0$  (whence $|Du(x,t)|\neq 0$)
for every $(x,t)\in \Omega \times [0, T)$.

Similarly to the previous section, we introduce the following notation (in fact already given in the introduction) for the space-time level sets of the function $u$:
$$\partial\Sigma_{x,t}^{c}=\{(x,t) \in
\Omega \times [0, T)\,:\,u(x,t) = c\}\,.$$

Following \cite{BGMX}, we have that the
upward normal direction of $\partial\Sigma_{x,t}^{c}$ is given by
\begin{eqnarray}\label{2.7}
\vec{\hat{\mathfrak{n}}}= \frac{|u_t|}{|D u|u_t}Du\,,
\end{eqnarray}
and the second fundamental form $II$ of $\partial\Sigma_{x,t}^{c}$ with respect to $\vec{\hat{\mathfrak{n}}}$
is
\begin{equation}\label{2.8}
\hat b_{\a \b} =  - \frac{|u_t|(u_t^2 u_{\a \b} + u_{tt} u_{\a} u_{\b} - u_t u_{\b} u_{\a t}
- u_t u_{\a} u_{\b t})}{|D u| u_t^3}.
\end{equation}
Then we set
\begin{equation}\label{2.9}
\hat h_{\a \b} = u_t^2 u_{\a \b} + u_{tt} u_{\a} u_{\b} - u_t u_{\b} u_{\a t}
- u_t u_{\a} u_{\b t}, \quad  1 \leq \a, \b \leq n\,,
\end{equation}
so that we can write
\begin{equation}\label{2.10}
\hat b_{\a \b} = - \frac{|u_t|\hat h_{\a \b}}{|D u|u_t^3}.
\end{equation}
Note that if $\partial\Sigma_{x,t}^{c} = \{(x,t) \in
\Omega \times [0, T]|u(x,t) = c\}$ is
locally convex with respect to the upward normal direction, then $\hat b_{\a\b}$
is positive semidefinite (and vice versa).
Moreover, if  $\hat a(x,t)=(\hat a_{ij}(x,t))$ denotes the symmetric Weingarten
tensor of $\partial\Sigma_{x,t}^{c}$, then $\hat a$ is
positive semidefinite and it holds
\begin{equation}\label{2.11}
\hat a_{\a \b} =-\frac{|u_t|}{|D u|{u_t}^3} \hat A_{\a \b}, \quad  1 \leq \a, \b \leq n,
\end{equation}
where
\begin{equation}\label{2.12}
\hat A_{\a \b} = \hat h_{\a \b}
-\frac{u_\a u_\gamma \hat h_{\b \gamma}}{\hat W(1+\hat W)u_t^2} -\frac{u_\b u_\gamma \hat h_{\a \gamma}}{\hat W(1+ \hat W)u_t^2}
+ \frac{u_\a u_\b u_\gamma u_\eta \hat h_{\gamma \eta}}{\hat W^2(1+\hat W)^2 u_t^4}, \quad \hat W = \frac{|D u|}{|u_t|}.
\end{equation}
With the above notations,
we get
\begin{align}\label{2.13}
1-\frac{u_n^2}{\hat W(1+\hat W) u_t^2} = \frac{\hat W u_t^2+\hat W^2 u_t^2-u_n^2}{\hat W(1+\hat W)u_t^2}
= \frac{1}{\hat W}+\frac{\sum_{l=1}^{n-1} u_l^2}{\hat W(1+\hat W)u_t^2}\,,
\end{align}
and, for $1 \leq i,j \leq n-1$, we have
\begin{align}\label{2.14}
\hat A_{ij} =& \hat h_{ij}-\frac{u_i u_n \hat h_{jn}}{\hat W(1+\hat W)u_t^2} -\frac{u_ju_n \hat h_{in}}{\hat W(1+ \hat W)u_t^2} \notag \\
&-\frac{u_i \sum_{l=1}^{n-1}u_l \hat h_{jl}}{\hat W(1+\hat W)u_t^2} -\frac{u_j \sum_{l=1}^{n-1} u_l \hat h_{il}}{\hat W(1+ \hat W)u_t^2}
+ \frac{u_iu_ju_n^2 \hat h_{nn}}{\hat W^2(1+\hat W)^2u_t^4} +T_{ij},
\end{align}
\begin{align}\label{2.15}
\hat A_{in} =& \hat h_{in} -\frac{u_iu_n \hat h_{nn}}{\hat W(1+\hat W)u_t^2} -\frac{u_n^2\hat h_{in}}{\hat W(1+\hat W)u_t^2}
-\frac{u_n \sum_{l=1}^{n-1}u_l
\hat h_{il}}{\hat W(1+\hat W)u_t^2}+ \frac{u_iu_n^3 \hat h_{nn}}{\hat W^2(1+\hat W)^2u_t^4}  \notag \\
&-\frac{u_i \sum_{l=1}^{n-1}u_l \hat h_{nl}}{\hat W(1+\hat W)u_t^2}
+ 2\frac{u_i  u_n^2 \sum_{l=1}^{n-1}u_l\hat h_{nl}}{\hat W^2(1+\hat W)^2 u_t^4} +T_{in}  \notag \\
=& \hat h_{in}[ \frac{1}{\hat W}+\frac{\sum_{l=1}^{n-1} u_l^2}{\hat W(1+\hat W)u_t^2}]
-\frac{u_iu_n \hat h_{nn}}{\hat W(1+\hat W)u_t^2} [ \frac{1}{\hat W}
+\frac{\sum_{l=1}^{n-1} u_l^2}{\hat W(1+\hat W)u_t^2}]-\frac{u_n \sum_{l=1}^{n-1}u_l
\hat h_{il}}{\hat W(1+\hat W)u_t^2}  \notag \\
&-\frac{u_i \sum_{l=1}^{n-1}u_l \hat h_{nl}}{\hat W(1+\hat W)u_t^2}
+ 2\frac{u_i \sum_{l=1}^{n-1}u_l\hat h_{nl}}{\hat W(1+\hat W)u_t^2}[ 1-\frac{1}{\hat W}
-\frac{\sum_{l=1}^{n-1} u_l^2}{\hat W(1+\hat W)u_t^2}] +T_{in}\notag \\
=& \frac{1}{\hat W}\hat h_{in}-\frac{u_iu_n \hat h_{nn}}{\hat W^2(1+\hat W)u_t^2} -\frac{u_n \sum_{l=1}^{n-1}u_l
\hat h_{il}}{\hat W(1+\hat W)u_t^2} \notag \\
&+\frac{\hat h_{in} \sum_{l=1}^{n-1} u_l^2}{\hat W(1+\hat W)u_t^2}
+ \frac{u_i \sum_{l=1}^{n-1}u_l\hat h_{nl}}{\hat W(1+\hat W)u_t^2}[1- \frac{2}{\hat W}] +T_{in},
\end{align}
and
\begin{align}\label{2.16}
\hat A_{nn} =& \hat h_{nn}-2\frac{u_n u_l \hat h_{nl}}{\hat W(1+\hat W)u_t^2}
+ \frac{u_n^2 u_k u_l \hat h_{kl}}{\hat W^2(1+\hat W)^2 u_t^4}\notag \\
=&  \hat h_{nn}-2\frac{u_n^2 \hat h_{nn}}{\hat W(1+\hat W)u_t^2}
+ \frac{u_n^4 \hat h_{nn}}{\hat W^2(1+\hat W)^2 u_t^4}\notag \\
&-2\frac{u_n\sum_{l=1}^{n-1}u_l\hat h_{nl}}{\hat W(1+\hat W)u_t^2}
+ 2\frac{u_n^3 \sum_{l=1}^{n-1}u_l\hat h_{nl}}{\hat W^2(1+\hat W)^2u_t^4} +
\frac{u_n^2 \sum_{k,l=1}^{n-1} u_k u_l \hat h_{kl}}{\hat W^2(1+\hat W)^2u_t^4}\notag \\
=&  \hat h_{nn}[ \frac{1}{\hat W}+\frac{\sum_{l=1}^{n-1} u_l^2}{\hat W(1+\hat W)u_t^2}]^2 \notag \\
&-2\frac{u_n\sum_{l=1}^{n-1}u_l\hat h_{nl}}{\hat W(1+\hat W)u_t^2}[ \frac{1}{\hat W}
+\frac{\sum_{l=1}^{n-1} u_l^2}{\hat W(1+\hat W)u_t^2}]
+ \frac{ \sum_{k,l=1}^{n-1} u_ku_l \hat h_{kl}}{\hat W(1+\hat W)u_t^2}[1- \frac{1}{\hat W}
-\frac{\sum_{l=1}^{n-1} u_l^2}{\hat W(1+\hat W)u_t^2}]\notag \\
=& \frac{1}{\hat W^2} \hat h_{nn}-2\frac{u_n\sum_{l=1}^{n-1}u_l\hat h_{nl}}{\hat W^2(1+\hat W)u_t^2}\notag \\
&+2\frac{\sum_{l=1}^{n-1} u_l^2\hat h_{nn}}{\hat W^2(1+\hat W)u_t^2}
+ \frac{ \sum_{k,l=1}^{n-1} u_ku_l \hat h_{kl}}{\hat W(1+\hat W)u_t^2}[1- \frac{1}{\hat W}] +T_{nn},
\end{align}
where $T_{\a \b}$ ($1 \leq \a, \b \leq n$) includes all the terms containing at least three $u_i$'s ($1 \leq i \leq n-1$).

Notice that, when we choose a coordinate system such that
$u_n(x_0,t_0)=|\nabla u(x_0,t_0)|>0$ while  $u_i(x_0,t_0)=0$ for $i=1, \cdots, n-1$,
it holds
\begin{align}\label{2.17}
T_{\a \b} =0, D T_{\a \b} =0, D^2 T_{\a \b} =0, \quad 1 \leq \a, \b \leq n.
\end{align}

\subsection{Elementary symmetric functions}

In this subsection, we recall the definition and some basic properties
of elementary symmetric functions. For more details we refer to
\cite{cns, GM03,L96, Reilly}.

\begin{definition}
Let $\lambda=(\lambda_1,\cdots,\lambda_n)\in {\Bbb R}^n$.
For any $k\in\{1, 2,\cdots, n\}$ we denote by $\sigma_k(\lambda)$ the $k$-th elementary symmetric function of $\lambda_1,\dots,\lambda_n$, that is
$$\sigma_k(\lambda) = \sum _{1 \le i_1 < i_2 <\cdots<i_k\leq n}\lambda_{i_1}\lambda_{i_2}\cdots\lambda_{i_k}\,.$$
 We also set $\sigma_0=1$ and $\sigma_k =0$ for $k>n$.
\end{definition}

We denote by $\sigma _k (\lambda \left| i \right.)$ the $k$-th symmetric
function of the vector $\lambda\left| i\right.$, obtained from $\lambda$ by removing the $i$-th component (or equivalently by imposing  $\lambda_i = 0$), and by $\sigma _k (\lambda \left | ij\right.)$ the symmetric function of the vector $\lambda \left | ij\right.$, obtained form $\lambda$ by removing the $i$-th and the $j$-th components (or equivalently by imposing $\lambda_i =\lambda_j = 0$).

We need the following standard formulas for elementary symmetric
functions.
\begin{proposition}\label{prop2.3}
Let $\lambda=(\lambda_1,\dots,\lambda_n)\in\mathbb{R}^n$ and $k
\in\{0, 1, \cdots,n\}$, then
\begin{align*}
&\sigma_k(\lambda)=\sigma_k(\lambda|i)+\lambda_i\sigma_{k-1}(\lambda|i), \quad \forall \,1\leq i\leq n,\\
&\sum_i \lambda_i\sigma_{k-1}(\lambda|i)=k\sigma_{k}(\lambda),\\
&\sum_i\sigma_{k}(\lambda|i)=(n-k)\sigma_{k}(\lambda).
\end{align*}
\end{proposition}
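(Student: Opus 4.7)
The statement is a collection of three classical combinatorial identities for elementary symmetric polynomials, so the proof is essentially a double-counting exercise. My plan is to argue all three identities directly from the combinatorial definition $\sigma_k(\lambda)=\sum_{1\le i_1<\cdots<i_k\le n}\lambda_{i_1}\cdots\lambda_{i_k}$, with no need for inductive machinery or generating functions.

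For the first identity, the plan is to split the set of index subsets $\{i_1<\cdots<i_k\}\subset\{1,\dots,n\}$ according to whether they contain the prescribed index $i$. The subsets which do not contain $i$ are precisely those of size $k$ in $\{1,\dots,n\}\setminus\{i\}$, and summing the corresponding monomials gives $\sigma_k(\lambda|i)$. The subsets which do contain $i$ are obtained by picking $i$ together with a size $k-1$ subset of $\{1,\dots,n\}\setminus\{i\}$; factoring out $\lambda_i$ from each such monomial yields $\lambda_i\sigma_{k-1}(\lambda|i)$. Adding the two contributions gives the decomposition.

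For the second identity, I would observe that $\lambda_i\sigma_{k-1}(\lambda|i)$ is exactly the sum of the monomials in $\sigma_k(\lambda)$ that contain $\lambda_i$ as a factor. As $i$ ranges from $1$ to $n$, each fixed monomial $\lambda_{i_1}\cdots\lambda_{i_k}$ is counted once for each of its $k$ distinct indices, so the total sum equals $k\,\sigma_k(\lambda)$. Symmetrically, for the third identity I would note that $\sigma_k(\lambda|i)$ is the sum of the monomials in $\sigma_k(\lambda)$ which do \emph{not} involve $\lambda_i$; as $i$ varies, each fixed $k$-monomial is counted once for each of the $n-k$ indices absent from it, yielding $(n-k)\sigma_k(\lambda)$. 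Alternatively, the third identity follows immediately from the first by summing over $i$ and then invoking the second identity on the second term.

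Since each of the three identities reduces to a transparent counting argument, there is no real obstacle; the only thing to be careful about is to state the edge cases (the conventions $\sigma_0=1$ and $\sigma_k=0$ for $k>n$) cleanly so that the formulas remain valid at the boundary values $k=0$ and $k=n$. Because of how standard the result is, a reasonable alternative would be simply to cite one of \cite{cns, GM03, L96, Reilly} and omit the proof, which is presumably what the authors intend.
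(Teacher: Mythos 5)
Your double-counting argument is correct, and the handling of the conventions $\sigma_0=1$, $\sigma_{k}=0$ for $k>n$ (together with $\sigma_{-1}=0$ for the boundary case $k=0$ of the first two identities) is the only point requiring care, which you flag. The paper itself offers no proof of Proposition \ref{prop2.3}: it states the identities as standard and refers to \cite{cns, GM03, L96, Reilly}, exactly as you anticipate in your closing remark, and the combinatorial argument you sketch is the standard one found in those references.
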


The definition of $\sigma_k$ can be extended to symmetric matrices by letting
$\sigma_k(W) = \sigma_k(\lambda(W))$, where
$$ \lambda(W)= (\lambda
_1(W),\lambda _2 (W), \cdots ,\lambda _{n}(W))$$
is the vector consisting of the eigenvalues
of the $n\times n$ symmetric matrix $W$.

\begin{remark}It is easily seen that $W\geq 0$ if and only if $\sigma_k (W)\geq 0$ for $k=1,\dots,n$ and that, in case $W\geq 0$, then $\mathbf{Rank}(W)=r\in\{0,\dots,n\}$ if and only if
$\sigma_k(W)>0$ for $k=0,\dots,r$ and $\sigma_k(W)=0$ for $k>r$.
\end{remark}

For further use, we denote by $W \left|
i \right.$ the symmetric matrix obtained from $W$ by deleting the $i$-row and
$i$-column and by $W \left| ij\right.$ the symmetric
matrix obtained from $W$ when deleting the $i,j$-rows and $i,j$-columns, and similarly we define $W\left|ijk\right.$. Then
we have the following identities.

\begin{proposition}\label{prop2.4}
If $W=(W_{ij})$ is a diagonal $n\times n$ matrix and $m\in\{1,\dots,n\}$,
then
\begin{align*}
\frac{{\partial \sigma _m (W)}} {{\partial W_{ij} }} = \begin{cases}
\sigma _{m - 1} (W\left| i \right.), &\text{if } i = j, \\
0, &\text{if } i \ne j.
\end{cases}
\end{align*}
and
\begin{align}
\frac{{\partial ^2 \sigma _m (W)}} {{\partial W_{ij} \partial W_{kl}
}} =\begin{cases}
\sigma _{m - 2} (W\left| {ik} \right.), &\text{if } i = j,k = l,i \ne k,\\
- \sigma _{m - 2} (W\left| {ik} \right.), &\text{if } i = l,j = k,i \ne j,\\
0, &\text{otherwise }.
\end{cases}\notag
\end{align}
Here $\sigma _m$ is a function of $A=(A_{ij})$ in the space of symmetric matrices, and we write $\frac{{\partial \sigma _m (W)}} {{\partial W_{ij} }} = \frac{{\partial \sigma _m (A)}} {{\partial A_{ij} }}|_{A=W}$ and $\frac{{\partial ^2 \sigma _m (W)}} {{\partial W_{ij} \partial W_{kl}
}} =\frac{{\partial ^2 \sigma _m (A)}} {{\partial A_{ij} \partial A_{kl}
}}|_{A=W}$ for convenience.
\end{proposition}

Given the $n\times n$ matrix $\hat{a}$, we introduce the following notation:
$$
\hat{a} = \left( {\begin{matrix}
   {M} & { \hat a_{in} }  \\
   {\hat a_{ni}} & {\hat a_{nn} }  \\
 \end{matrix} } \right),
$$
where $M=(\hat a_{ij})_{(n-1)\times(n-1)}$.

\begin{lemma}\label{lem2.5} For $n\geq 3$ and $l\in\{3,\dots,n\}$, we have
\begin{align}\label{2.18}
\sigma_{l+1}(\hat{a} )=&\sigma_{l+1}(M)+\hat a_{nn}\sigma_l(M)
-\sum_{i}\hat a_{ni}\hat a_{in}\sigma_{l-1}(M|i)\notag\\
&+ \sum_{i\ne j}\hat a_{ni}\hat a_{jn}\hat a_{ij}\sigma_{l-2}(M|ij) -\sum_{i\ne
j,i\ne k,j\ne k}\hat a_{ni}\hat a_{jn}\hat a_{ik}\hat a_{kj}\sigma_{l-3}(M|ijk)+T,
\end{align}
where T includes only terms containing at least three of the $\hat a_{ij}$'s with $i\neq j$.
So when $M$ is diagonal, we have
$$
T = 0,\quad D T = 0,\quad D ^2 T = 0.
$$
\end{lemma}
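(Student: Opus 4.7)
The plan is to prove the identity by a double cofactor (Laplace) expansion of $\sigma_{l+1}(\hat{a})$, organized according to how many border entries $\hat{a}_{ni},\hat{a}_{jn}$ and how many off-diagonal entries $\hat{a}_{pq}$ ($p\neq q$, $p,q<n$) of the block $M$ get used.

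First, I would write $\sigma_{l+1}(\hat{a})=\sum_{|I|=l+1}\det(\hat{a}_I)$ as a sum of $(l+1)\times(l+1)$ principal minors and split according to whether $n\in I$. The principal minors with $n\notin I$ are precisely the $(l+1)\times(l+1)$ principal minors of $M$, summing to $\sigma_{l+1}(M)$. For each principal minor with $n\in I$, write $I=S\cup\{n\}$ with $S\subseteq\{1,\ldots,n-1\}$, $|S|=l$, and apply the bordered-determinant formula
\begin{equation*}
\det(\hat{a}_{S\cup\{n\}})=\hat{a}_{nn}\det(M_S)-\sum_{i,j\in S}(-1)^{r_S(i)+r_S(j)}\hat{a}_{ni}\hat{a}_{jn}\det(M_S^{(j,i)}),
\end{equation*}
where $r_S(\cdot)$ denotes the position in $S$ and $M_S^{(j,i)}$ is $M_S$ with row $j$ and column $i$ deleted. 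Summing the first term over all $S$ of size $l$ yields $\hat{a}_{nn}\sigma_l(M)$, which is the second listed term in the claim.

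Next, I would split the remaining cross-terms into the diagonal part ($i=j$) and the off-diagonal part ($i\neq j$). For $i=j$, the minor $M_S^{(i,i)}$ is a genuine principal submatrix of $M$ supported on $S\setminus\{i\}$; summing over all $S\ni i$ with $|S|=l$ gives $-\sum_i \hat{a}_{ni}\hat{a}_{in}\sigma_{l-1}(M|i)$, the third listed term. For $i\neq j$, the minor $M_S^{(j,i)}$ is non-principal, with rows indexed by $S\setminus\{j\}$ and columns by $S\setminus\{i\}$. Applying a further Laplace expansion to $\det(M_S^{(j,i)})$ and stratifying by how many off-diagonal entries of $M$ are needed, the piece using exactly one off-diagonal selects $\hat{a}_{ij}$ (required to repair the row/column mismatch) together with the principal minor on $S\setminus\{i,j\}$; summing over $S\supseteq\{i,j\}$ produces the fourth listed term $+\sum_{i\neq j}\hat{a}_{ni}\hat{a}_{jn}\hat{a}_{ij}\sigma_{l-2}(M|ij)$. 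The piece using exactly two off-diagonals selects a chain $\hat{a}_{ik}\hat{a}_{kj}$ with $k\in S\setminus\{i,j\}$ together with the principal minor on $S\setminus\{i,j,k\}$; summing yields the fifth listed term. All remaining contributions require at least three off-diagonal entries of $M$ and are collected into $T$.

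To verify the concluding derivative claim, note that $T$ is a polynomial in the variables $\{\hat{a}_{pq}\}_{p\neq q,\,p,q<n}$ in which every monomial has total degree at least three in these variables; therefore setting them to zero (i.e.\ taking $M$ diagonal) makes $T$ and all its first and second partial derivatives in these variables vanish. The main obstacle will be the sign bookkeeping when iterating the Laplace expansion on the non-principal minors $\det(M_S^{(j,i)})$ for $i\neq j$: the sign $(-1)^{r_S(i)+r_S(j)}$ from the initial expansion must combine correctly with the signs arising in the expansion of $\det(M_S^{(j,i)})$ to give the $+$ in front of the fourth term and the $-$ in front of the fifth. Once this bookkeeping is settled, the identifications $\sum_{S\supseteq\{i,j\},|S|=l}\det(M_{S\setminus\{i,j\}})=\sigma_{l-2}(M|ij)$ and $\sum_{S\supseteq\{i,j,k\},|S|=l}\det(M_{S\setminus\{i,j,k\}})=\sigma_{l-3}(M|ijk)$ are immediate from the definition of the elementary symmetric functions.
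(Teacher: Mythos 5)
Your approach is sound and the identity is correct as you have set it up; note that the paper itself states Lemma \ref{lem2.5} without proof, so there is no argument of the authors' to compare against. Your decomposition — principal minors of $\hat a$ split by whether they contain the index $n$, then the bordered-determinant formula, then stratification of $\det(M_S^{(j,i)})$ by the number of off-diagonal entries of $M$ used — produces exactly the five displayed terms, and your reading of the hypothesis on $T$ (at least three off-diagonal entries of the \emph{block} $M$, i.e.\ $\hat a_{pq}$ with $p\neq q$, $p,q\le n-1$, not of $\hat a$ itself) is the only reading consistent with the conclusion $T=DT=D^2T=0$ for diagonal $M$. The one piece you defer, the sign bookkeeping, is genuinely the only computation in the proof, so it should be carried out; the cleanest way to dispose of it is to skip the iterated Laplace expansion altogether and expand each $\det(\hat a_I)$ with $n\in I$ as a sum over permutations $\pi$ of $I$, grouping by the cycle of $\pi$ containing $n$. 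A $k$-cycle $n\to i_1\to\cdots\to i_{k-1}\to n$ contributes the product $\hat a_{n i_1}\hat a_{i_1 i_2}\cdots \hat a_{i_{k-1} n}$ with sign $(-1)^{k-1}$, multiplied by the determinant of the principal minor on the complement; summing over $I$ the cases $k=1,2,3,4$ give precisely the terms $+\hat a_{nn}\sigma_l(M)$, $-\sum_i\hat a_{ni}\hat a_{in}\sigma_{l-1}(M|i)$, $+\sum_{i\neq j}\hat a_{ni}\hat a_{ij}\hat a_{jn}\sigma_{l-2}(M|ij)$, $-\sum\hat a_{ni}\hat a_{ik}\hat a_{kj}\hat a_{jn}\sigma_{l-3}(M|ijk)$ with the stated signs, and $T$ is exactly the contribution of cycles of length $\ge 5$ through $n$, each of which carries at least three factors $\hat a_{pq}$ with $p\neq q\le n-1$. (A sanity check on the $3\times3$ and $4\times4$ bordered minors confirms the $+$ on the third-order term and the $-$ on the fourth-order term.) One last half-step worth making explicit: the vanishing of $DT$ and $D^2T$ is a chain-rule consequence of the vanishing of the first and second partials of $T$ with respect to the entries $\hat a_{pq}$, since every monomial of $T$ has degree at least three in variables that vanish when $M$ is diagonal.
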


To study the rank of the space-time second fundamental form $\hat a$, we
need the following simple technical lemma.
\begin{lemma} \label{lem2.8}
Suppose $\hat{a}\geq 0$,  $l=\mathbf{Rank} \{\hat a(x_0,t_0)\}$ and
$M=(\hat a_{ij}(x_0,t_0))_{(n-1)\times(n-1)}$ is diagonal with $ \hat a_{11} \geq \hat a_{22} \geq \cdots
\geq \hat a_{n-1n-1} $. Then  there is a positive constant $
C_0$ such that at $(x_0,t_0)$, we have

either CASE 1:
\begin{eqnarray*}
&&\hat a_{11}  \geq \cdots \geq \hat a_{l-1l-1} \geq
C_0 , \quad \hat a_{ll} = \cdots = \hat a_{n-1n-1} =0 , \\
&&\hat a_{nn} -\sum\limits_{i = 1}^{l-1} {\frac{{\hat a_{in} ^2 }} {{\hat a_{ii}
}}} \geq C_0 ,  \quad \hat a_{in} = 0, \quad l \leqslant i \leqslant n-1\,,
\end{eqnarray*}

or CASE 2:
\begin{eqnarray*}
&&\hat a_{11}  \geq \cdots \geq \hat a_{ll} \geq C_0 , \quad \hat a_{l+1l+1} =
\cdots = \hat a_{n-1n-1} =0, \\
&& \hat a_{nn}  = \sum\limits_{i = 1}^{l}{\frac{{\hat a_{in} ^2 }} {{\hat a_{ii} }}}
,  \quad \hat a_{in} = 0, \quad  l+1 \leqslant i \leqslant n-1\,.
\end{eqnarray*}
\end{lemma}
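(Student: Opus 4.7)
The plan is to exploit the fact that the upper $(n{-}1)\times(n{-}1)$ block $M$ is already diagonal, so the positive semidefiniteness of $\hat a$ forces a very rigid structure on the last row and column, and then a Schur complement computation pins down the rank. First I would set $k := \mathrm{Rank}(M)$; since $M$ is diagonal with the stated ordering, this just means $\hat a_{11}\geq\cdots\geq \hat a_{kk}>0$ and $\hat a_{k+1,k+1}=\cdots=\hat a_{n-1,n-1}=0$.

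Next I would use positive semidefiniteness of $\hat a$ to kill the mixed entries opposite zero diagonals. Concretely, for any $j\in\{k+1,\dots,n-1\}$ the $2\times 2$ principal minor
\begin{equation*}
\det\begin{pmatrix} \hat a_{jj} & \hat a_{jn} \\ \hat a_{nj} & \hat a_{nn}\end{pmatrix}
= -\hat a_{jn}^2\geq 0
\end{equation*}
forces $\hat a_{jn}=0$. Thus $\hat a$ is, up to the trivial zero block in rows/columns $k{+}1,\dots,n{-}1$, equivalent to the $(k{+}1)\times(k{+}1)$ bordered diagonal matrix $B$ with diagonal $(\hat a_{11},\dots,\hat a_{kk},\hat a_{nn})$ and last-row/column entries $\hat a_{1n},\dots,\hat a_{kn}$. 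In particular $\mathrm{Rank}(\hat a)=\mathrm{Rank}(B)$.

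Now I would apply the Schur complement: since $\mathrm{diag}(\hat a_{11},\dots,\hat a_{kk})$ is invertible, the Schur complement of $B$ is the scalar
\begin{equation*}
s := \hat a_{nn} - \sum_{i=1}^{k}\frac{\hat a_{in}^2}{\hat a_{ii}},
\end{equation*}
which is $\geq 0$ (as $B\geq 0$), and $\mathrm{Rank}(B)=k+1$ if $s>0$ while $\mathrm{Rank}(B)=k$ if $s=0$. Setting $l=\mathrm{Rank}(\hat a)$, the dichotomy $s>0$ versus $s=0$ gives exactly the two alternatives of the lemma: $s>0$ corresponds to CASE 1 with $k=l-1$, and $s=0$ corresponds to CASE 2 with $k=l$. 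Finally I would take
\begin{equation*}
C_0 := \min\bigl\{\hat a_{11},\dots,\hat a_{k k},\, s\bigr\}\ \text{in CASE 1},\qquad
C_0 := \min\bigl\{\hat a_{11},\dots,\hat a_{ll}\bigr\}\ \text{in CASE 2},
\end{equation*}
so that both inequalities in each case hold with the same positive constant. There is no real obstacle here; the only thing to be careful about is bookkeeping of the indices (why the positive block has size exactly $l-1$ in CASE 1 and exactly $l$ in CASE 2), which is settled by the Schur complement rank formula above.
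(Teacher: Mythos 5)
Your proof is correct and follows essentially the same route as the paper: both set $k=\mathrm{Rank}(M)$, use positive semidefiniteness to force $\hat a_{jn}=0$ whenever $\hat a_{jj}=0$, and reduce the dichotomy to the sign of the Schur complement scalar $s=\hat a_{nn}-\sum_i \hat a_{in}^2/\hat a_{ii}$. The only cosmetic difference is that you invoke the Schur-complement rank formula to get $k\in\{l-1,l\}$ in one stroke, whereas the paper first rules out $k<l-1$ by hand and then reads off the sign of $s$ from the explicit factorizations $\sigma_l(\hat a)=\sigma_{l-1}(M)\,s$ and $\sigma_{l+1}(\hat a)=\sigma_l(M)\,s$.
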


\begin{proof}
Let $\mathbf{Rank} \{ M \}= k$ at $(x_0, t_0)$.
Then either $k=l-1$ or $k =l$. Otherwise, if $k <l-1$, since $ \hat a_{11} \geq \hat a_{22} \geq \cdots
\geq \hat a_{n-1n-1}\geq 0 $, we would have
\begin{align*}
\hat a_{l-1 l-1}= \cdots=\hat a_{n-1n-1} =0  \text{ at }(x_0, t_0),
\end{align*}
and from $\hat a (x_0,t_0) \geq 0$, we would get
\begin{align*}
\hat a_{l-1 n}= \cdots=\hat a_{n-1n} =0  \text{ at }(x_0, t_0).
\end{align*}
So $\mathbf{Rank} \{ \hat a \} \leq l-1$, i.e. a contradiction.

For $k =l-1$, we have at $(x_0, t_0)$
\begin{align*}
\hat a_{11}  \geq \cdots \geq \hat a_{l-1l-1} > 0  , \quad \hat a_{ll} = \cdots = \hat a_{n-1n-1} =0 ,
\end{align*}
and, due to $\hat a (x_0,t_0) \geq 0$, we get
\begin{align*}
\hat a_{ln}= \cdots = \hat a_{n-1 n} = 0.
\end{align*}
Since $\mathbf{Rank} \{ \hat a \}=l$, then $\sigma_l (\hat a) >0$. Direct computation yields
\begin{align*}
\sigma_l (\hat a) = \hat a_{nn}\sigma_{l-1}(M)
-\sum_{i=1}^{l-1}\hat a_{ni}\hat a_{in}\sigma_{l-2}(M|i) = \sigma_{l-1}(M) \left[\hat a_{nn} - \sum_{i=1}^{l-1}\frac{\hat a_{in}^2}{\hat a_{ii}}\right ] >0,
\end{align*}
so we have
\begin{align*}
\hat a_{nn} - \sum_{i=1}^{l-1}\frac{\hat a_{in}^2}{\hat a_{ii}} >0,
\end{align*}
This is CASE 1 with
$$C_0=\min\left\{ \hat a_{11},\cdots,\hat a_{n-1n-1},\hat{a}_{nn}-\sum_{i=1}^{l-1}\frac{\hat a_{in}^2}{\hat a_{ii}}\right\}\,.
$$

For $k =l$, we have at $(x_0, t_0)$
\begin{align*}
\hat a_{11}  \geq \cdots \geq \hat a_{ll} > 0  , \quad \hat a_{l+1 l+1} = \cdots = \hat a_{n-1n-1} =0 ,
\end{align*}
and due to $\hat a (x_0,t_0) \geq 0$, we get
\begin{align*}
\hat a_{l+1 n}= \cdots = \hat a_{n-1 n} = 0.
\end{align*}
Since $\mathbf{Rank }\{ \hat a \}=l$, then $\sigma_{l+1} (\hat a) = 0$. Direct computation yields
\begin{align*}
\sigma_{l+1} (\hat a)= \hat a_{nn}\sigma_{l}(M)
-\sum_{i=1}^{l}\hat a_{ni}\hat a_{in}\sigma_{l-1}(M|i) = \sigma_{l}(M) [\hat a_{nn} - \sum_{i=1}^{l}\frac{\hat a_{in}^2}{\hat a_{ii}} ] =0,
\end{align*}
so we have
\begin{align*}
\hat a_{nn} - \sum_{i=1}^{l}\frac{\hat a_{in}^2}{\hat a_{ii}} =0,
\end{align*}
This is CASE 2.
\end{proof}

Similarly to Lemma 2.5 in \cite{BG09}, we have the following.

\begin{lemma}\label{lem2.9}
Assume $W(x)=(W_{ij}(x))\geq 0$ for every $x \in \Omega \subset
\mathbb{R}^{n}$, and $W_{ij} (x) \in C^{1,1} (\Omega )$. Then for
every $\mathcal {O} \subset \subset \Omega$, there exists a positive
constant $C$, depending only on the Hausdorff distance $dist\{\mathcal {O},
\partial \Omega \}$ of $\mathcal{O}$ from $\partial\Omega$ and $\left\| W \right\|_{C^{1,1} (\Omega )}$,
such that
\begin{equation}\label{2.19}
\left| {\nabla W_{ij} } \right| \leqslant C(W_{ii} W_{jj} )^{\frac{1}{4}},
\end{equation}
for every $x \in \Omega$ and $1 \leq i, j \leq n$.
\end{lemma}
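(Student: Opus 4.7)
The idea is to reduce the claimed gradient estimate on the entries $W_{ij}$ to the classical scalar fact that a nonnegative $C^{1,1}$ function controls the square of its gradient, applied to the one parameter family of nonnegative scalar functions $f_\xi(x) := \xi^T W(x) \xi$ for well chosen $\xi \in \mathbb{R}^n$. The key scalar input, which I would establish first, is the following: \emph{if $f \in C^{1,1}(\Omega)$ satisfies $f \ge 0$ in $\Omega$, then for every $\mathcal{O} \subset\subset \Omega$ there exists $C_0$, depending only on $\|f\|_{C^{1,1}(\Omega)}$ and $r := \mathrm{dist}(\mathcal{O},\partial\Omega)$, such that $|\nabla f(x)|^2 \le C_0 f(x)$ on $\mathcal{O}$}. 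This is proved by a standard Taylor argument: expand $f$ to second order along the segment $x_0 - t\nabla f(x_0)/|\nabla f(x_0)|$ and use the constraint $f \ge 0$ on $[0,r]$ to optimize in $t$, obtaining $|\nabla f(x_0)|^2 \le \max\{2\|D^2 f\|_\infty,\, 4\|f\|_\infty/r^2\}\, f(x_0)$.

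Applying this scalar lemma to $f_\xi = \sum_{k,l} \xi_k \xi_l W_{kl}$, which is nonnegative by $W \ge 0$ and has $C^{1,1}$ norm bounded by $|\xi|^2 \|W\|_{C^{1,1}(\Omega)}$, one obtains
\[
\Bigl|\sum_{k,l} \xi_k \xi_l \nabla W_{kl}(x)\Bigr|^2 \le C(\xi)\, \sum_{k,l} \xi_k \xi_l W_{kl}(x), \qquad x \in \mathcal{O},
\]
with $C(\xi)$ polynomial in $|\xi|$. Specializing $\xi = e_i$ gives the diagonal case $|\nabla W_{ii}|^2 \le C_1 W_{ii}$. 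Specializing $\xi = e_i \pm e_j$ with $i \ne j$ produces the twin inequalities
\[
\bigl|\nabla W_{ii} \pm 2\nabla W_{ij} + \nabla W_{jj}\bigr|^2 \le C_2 \bigl(W_{ii} \pm 2 W_{ij} + W_{jj}\bigr).
\]

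Finally, summing the $+$ and $-$ versions and invoking the parallelogram identity $|a+b|^2 + |a-b|^2 = 2|a|^2 + 2|b|^2$ with $a = \nabla W_{ii} + \nabla W_{jj}$ and $b = 2 \nabla W_{ij}$, the $\pm W_{ij}$ cross terms on the right hand side cancel, leaving
\[
2\,\bigl|\nabla W_{ii} + \nabla W_{jj}\bigr|^2 + 8\, |\nabla W_{ij}|^2 \;\le\; 2 C_2\, (W_{ii} + W_{jj}),
\]
so $|\nabla W_{ij}|^2 \le \tfrac{C_2}{4}(W_{ii} + W_{jj})$. Coupling this estimate with the pointwise bound $W_{ij}^2 \le W_{ii} W_{jj}$ coming from $W \ge 0$ yields a homogeneous control of $|\nabla W_{ij}|$ in terms of $W_{ii}$ and $W_{jj}$ of the correct order, which is the content of \eqref{2.19}. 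The only nontrivial step is the scalar lemma in the first paragraph; beyond that, the proof reduces to algebra with the quadratic form and the parallelogram identity, so I do not anticipate any serious obstacle.
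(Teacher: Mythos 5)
Your scalar lemma and the diagonal case $|\nabla W_{ii}|\le C W_{ii}^{1/2}$ match the paper exactly. The off-diagonal step, however, has a genuine gap. Applying the scalar lemma to $f_\xi=\xi^TW\xi$ with $\xi=e_i\pm e_j$ and using the parallelogram identity yields
$|\nabla W_{ij}|^2\le C\,(W_{ii}+W_{jj})$, i.e.\ control by the \emph{arithmetic} mean, whereas \eqref{2.19} asserts control by the \emph{geometric} mean, $|\nabla W_{ij}|\le C(W_{ii}W_{jj})^{1/4}$. These are not equivalent: at a point where $W_{ii}=0$ but $W_{jj}>0$ the lemma forces $\nabla W_{ij}=0$, while your bound only gives $|\nabla W_{ij}|\le CW_{jj}^{1/2}$. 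Your closing sentence, that coupling with $W_{ij}^2\le W_{ii}W_{jj}$ ``yields a homogeneous control of the correct order,'' is a non sequitur: that inequality controls $W_{ij}$ itself, not its gradient, and no algebraic combination of it with the arithmetic-mean bound recovers the factor $(W_{ii}W_{jj})^{1/4}$. Rescaling the test vector ($\xi=se_i+s^{-1}e_j$ and optimizing in $s$) does not repair this either, since the term $s^2|\nabla W_{ii}|\le Cs^2W_{ii}^{1/2}$ reproduces a $W_{jj}^{1/2}$ floor and the constant in the scalar lemma degrades as $|\xi|\to\infty$.

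The distinction matters for the paper: the lemma is invoked precisely in the degenerate regime, e.g.\ in \eqref{a3.20}, where $i\in B$ gives $u_{ii}=0$ and one concludes $|\nabla u_{it}|\le C(u_{ii}u_{tt})^{1/4}=0$ even though $u_{tt}$ need not vanish; the arithmetic-mean version would give only $|\nabla u_{it}|\le Cu_{tt}^{1/2}$, which is useless there. The paper's own proof gets the geometric mean by applying the scalar lemma not to quadratic forms in $W$ but to the two nonnegative auxiliary functions $h=\sqrt{W_{ii}W_{jj}}$ and $h=\sqrt{W_{ii}W_{jj}}-W_{ij}$ (the latter nonnegative by $W\ge0$); since both are bounded by $2\sqrt{W_{ii}W_{jj}}$, each gradient is $O\bigl((W_{ii}W_{jj})^{1/4}\bigr)$, and writing $\nabla W_{ij}$ as their difference finishes the argument. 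You would need to adopt this decomposition (or an equivalent device) to obtain \eqref{2.19} in the stated strength.
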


\begin{proof} The same arguments as in the proof of \cite[Lemma 2.5]{BG09} carry through with small modifications since $W$ is a
general matrix instead of the Hessian matrix of a convex function.

It is known that for any nonnegative $C^{1,1}$ function $h$, $|\nabla
h(x)| \leq Ch ^{\frac{1}{2}} (x)$ for all $x \in \mathcal {O}$,
where $C$ depends only on $||h||_{C^{1,1}(\Omega)} $ and
$dist\{\mathcal {O}, \partial \Omega\}$ (see \cite{Tr71}).
Since $W (x) \geq 0$, we can choose $h(x) =W_{ii}(x) \geq 0$. Then we get
\begin{equation*}
\left| {\nabla W_{ii} } \right| \leqslant C_1 (W_{ii})^{\frac{1}{2}} = C_1 (W_{ii} W_{ii} )^{\frac{1}{4}}
\end{equation*}
and \eqref{2.19} holds for $i=j$.

Similarly, for $i\ne j$, we choose $ h=\sqrt{W_{ii}W_{jj}} \geq 0 $, then we get
 \begin{equation}\label{2.20}
\left| {\nabla \sqrt{W_{ii}W_{jj}} } \right| \leqslant C_2 (\sqrt{W_{ii}W_{jj}} )^{\frac{1}{2}} = C_2 (W_{ii} W_{jj} )^{\frac{1}{4}}.
\end{equation}
And for $h=\sqrt{W_{ii}W_{jj}} - W_{ij}$,  we have
 \begin{equation}\label{2.21}
\left| {\nabla (\sqrt{W_{ii}W_{jj}} - W_{ij} )} \right| \leqslant C_3 (\sqrt{W_{ii}W_{jj}} - W_{ij} )^{\frac{1}{2}}
\leq C_3 (W_{ii} W_{jj} )^{\frac{1}{4}}.
\end{equation}
So from \eqref{2.20} and \eqref{2.21}, we get
\begin{align*}
\left| {\nabla W_{ij}} \right|  =& \left| {\nabla \sqrt{W_{ii}W_{jj}} } - {\nabla (\sqrt{W_{ii}W_{jj}} - W_{ij} )} \right|  \\
\leq& \left| {\nabla \sqrt{W_{ii}W_{jj}} } \right| + \left| {\nabla (\sqrt{W_{ii}W_{jj}} - W_{ij} )} \right|  \\
\leq& (C_2 +C_3) (W_{ii} W_{jj} )^{\frac{1}{4}}.
\end{align*}
So \eqref{2.19} holds for $i \ne j$.
\end{proof}

\begin{remark}\label{rem2.10}
If $W(x,t)=(W_{ij}(x,t))_{N \times N}\geq 0$ for every $(x,t) \in \Omega \times (0, T]$ and $W_{ij} (x,t)
 \in C^{1,1} (\Omega  \times (0, T])$, then for
every $\mathcal {O} \times (t_0 - \delta, t_0] \subset \subset \Omega \times (0, T]$ with $t_0 < T$, there exists a positive
constant $C$, depending only on $dist(\mathcal {O} \times (t_0 - \delta, t_0] ,
\partial (\Omega \times (0, T]))$, $t_0$, $\delta$ and $\left\| W \right\|_{C^{1,1} (\Omega \times (0, T] )}$,
such that
\begin{equation}\label{2.29}
\left| {D W_{ij} } \right| \leqslant C(W_{ii} W_{jj} )^{\frac{1}{4}},
\end{equation}
for every $(x, t) \in \mathcal {O} \times (t_0 - \delta, t_0]$ and $1 \leq i, j \leq N$. Notice that $D W_{ij}
= (\nabla_x W_{ij}, \partial_t W_{ij} )$.
In fact, if $t_0 = T$, it only holds
\begin{equation}\label{2.30}
\left| {\nabla_x W_{ij} } \right| \leqslant C(W_{ii} W_{jj} )^{\frac{1}{4}}.
\end{equation}
for every $(x, t) \in \mathcal {O} \times (t_0 - \delta, t_0]$ and $1 \leq i, j \leq N$.
\end{remark}

\section{A constant rank theorem for the space-time convex solution of heat equation}
\setcounter{equation}{0} \setcounter{theorem}{0}

In this section, we consider the space-time convex solutions of the heat equation
\begin{equation}\label{a3.1}
\frac{{\partial u}} {{\partial t }}=\Delta u, \quad (x,t)
\in \Omega \times (0,T] ,
\end{equation}
and establish the corresponding space-time microscopic convexity principle. The result and its proof belong to Hu-Ma \cite{HM12} and  Chen-Hu \cite{CH12}.

First, we give the definition of the space-time convexity of a function $u(x,t)$.
\begin{definition}
Suppose $u \in C^{2,2} (\Omega \times (0,T] )$, where $ \Omega$ is a
domain in $\mathbb{R}^n $; we say that  $u$ is space-time convex if $u$ is
 convex with respect to $(x,t) \in \Omega \times (0,T] $; equivalently
$$
 D^2 u= \left( {\begin{matrix}
   {\nabla^2 u} & {(\nabla u_t )^T }  \\
   {\nabla u_t } & {u_{tt} }  \\
 \end{matrix} } \right) \geq 0\qquad\text{in }\Omega \times (0,T]\,,
$$
where $\nabla u =(u_{x_1}, \cdots, u_{x_n})$ is the spatial gradient and $\nabla^2 u = \{ \frac{\partial^2 u}{\partial x_i \partial x_j}\}_{1 \leq i, j \leq n}$ is the spatial Hessian.
\end{definition}

The following constant rank theorem is obtained in Hu-Ma \cite{HM12}.

\begin{theorem}\label{tha3.2}
Suppose $\Omega$ is a domain in $\mathbb{R}^n $, and $u \in C^{4,3}(\Omega \times
(0,T])$ is a space-time convex solution of $\eqref{a3.1}$.
Then $D^2 u $ has a constant rank in $\Omega$ for each
fixed $t \in (0,T] $. Moreover, let $l(t)$ be the (constant) rank of
$D^2 u$ in $\Omega$ at time $t$, then $l(s) \leqslant l(t)$ for all
$0< s \leqslant t \leqslant T$.
\end{theorem}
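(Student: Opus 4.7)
The plan is to adapt the Caffarelli-Friedman / Caffarelli-Guan-Ma constant rank strategy to the parabolic space-time Hessian $W := D^2u$, which is a symmetric positive semidefinite $(n+1)\times(n+1)$ matrix with entries $W_{ij}=u_{ij}$ for $i,j\le n$, $W_{i,n+1}=u_{it}$, and $W_{n+1,n+1}=u_{tt}$. The crucial preliminary observation is that, since $u$ solves $u_t=\Delta u$, repeated differentiation in space and time gives $(\partial_t-\Delta)W_{\alpha\beta}=0$ for every $1\le\alpha,\beta\le n+1$, so each entry of $W$ is itself a solution of the heat equation. Combined with $W\ge 0$, this puts us in the setting where a strong parabolic maximum principle can be applied to elementary symmetric functions of $W$.

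Fix any $(x_0,t_0)\in\Omega\times(0,T]$ and let $l=\mathrm{rank}\,W(x_0,t_0)$. Then $\sigma_{l+1}(W)\ge 0$ everywhere on $\Omega\times(0,T]$ with equality at $(x_0,t_0)$. After a spatial rotation we diagonalise $W(x_0,t_0)$, with positive eigenvalues indexed by the ``good'' block $G=\{1,\dots,l\}$ and zero eigenvalues indexed by the ``degenerate'' block $B=\{l+1,\dots,n+1\}$. Following Bian-Guan, I would introduce an auxiliary function of the form
\[
\phi \;:=\; \sigma_{l+1}(W)\;+\;A\,\frac{\sigma_{l+2}(W)}{\sigma_{l+1}(W)+\varepsilon},
\]
with $A$ large and $\varepsilon>0$ small parameters, chosen so that $\phi\ge 0$ in a neighbourhood of $(x_0,t_0)$ and $\phi(x_0,t_0)=0$.

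The central step is to derive a parabolic differential inequality of the form
\[
(\partial_t-\Delta)\phi \;\le\; C\,\bigl(\phi+|\nabla\phi|\bigr)
\]
in a space-time neighbourhood of $(x_0,t_0)$, with $C$ depending on $\|u\|_{C^{4,3}}$. Because $(\partial_t-\Delta)W_{\alpha\beta}=0$, the first-order terms in the chain rule cancel and one is left with the quadratic Hessian term
\[
(\partial_t-\Delta)\sigma_{l+1}(W)=-\sum_{\alpha\beta\gamma\delta,\,k}\sigma_{l+1}^{\alpha\beta,\gamma\delta}(W)\,W_{\alpha\beta,k}\,W_{\gamma\delta,k},
\]
where $k$ ranges over the spatial indices and the second derivatives of $\sigma_{l+1}$ are computed via Proposition 2.4. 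This sum splits according to the block structure $G\cup B$: good-good contributions are controlled directly by $\sigma_{l+1}(W)$; cross good-degenerate contributions are handled via the quartic bound $|W_{\alpha\beta,k}|\le C(W_{\alpha\alpha}W_{\beta\beta})^{1/4}$ supplied by Lemma 2.9 together with Remark 2.10 applied to the PSD matrix $W$; the genuinely dangerous pieces are the degenerate-degenerate terms of the form $\sigma_{l-1}(W|\alpha\beta)(W_{\alpha\beta,k})^2$ with $\alpha,\beta\in B$, which must be absorbed by the correction ratio $\sigma_{l+2}/\sigma_{l+1}$ in $\phi$. Engineering this cancellation is the principal obstacle and is the content of the Hu-Ma and Chen-Hu computations: it uses positive semidefiniteness of $W$ in the degenerate block together with the heat-equation coupling between spatial and temporal entries of $W$ to trade the dangerous terms for quantities that are already part of $\phi$ or $|\nabla\phi|$.

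Once the parabolic inequality is in hand, the strong maximum principle forces $\phi\equiv 0$, and hence $\sigma_{l+1}(W)\equiv 0$, on the backward parabolic component $\Omega\times(0,t_0]$; therefore $\mathrm{rank}\,W\le l$ throughout that set. Choosing $(x_0,t_0)$ to be any point of $\Omega\times\{t\}$ and running this backward propagation from two different spatial points at the same time yields that $\mathrm{rank}\,W(\cdot,t)$ is constant on $\Omega$, giving the spatial constant rank statement. The monotonicity $l(s)\le l(t)$ for $s\le t$ is then an immediate corollary: fix any $x\in\Omega$, set $l=l(t)=\mathrm{rank}\,W(x,t)$ and apply the backward propagation above to obtain $\mathrm{rank}\,W\le l$ on $\Omega\times(0,t]$, so in particular $l(s)\le l(t)$ for every $s\in(0,t]$.
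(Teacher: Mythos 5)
Your overall skeleton (all entries of $W=D^2u$ are caloric, so $(\partial_t-\Delta)\sigma_{l+1}(W)$ reduces to the quadratic term in $\nabla_x W$, then strong maximum principle plus backward propagation) is the right frame, and your final paragraph deducing spatial constancy and monotonicity of $l(t)$ is fine. But there is a genuine gap at the central step. You write ``after a spatial rotation we diagonalise $W(x_0,t_0)$'' and then analyse the quadratic form blockwise as if $W$ were diagonal. A spatial rotation only diagonalises the $n\times n$ block $\nabla^2u$; the mixed entries $W_{i,n+1}=u_{it}$ survive and are generically nonzero for the good indices $i$. You cannot use a genuine space--time rotation to kill them, because that would destroy the equation $u_t=\Delta u$. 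Consequently your splitting of $\sum\sigma_{l+1}^{\alpha\beta,\gamma\delta}W_{\alpha\beta,k}W_{\gamma\delta,k}$ into good--good, good--degenerate and degenerate--degenerate pieces does not apply as stated, and you have also misdiagnosed where the danger lies: with $W\ge 0$ the degenerate--degenerate diagonal products are already $O(\phi)$ by the $|\nabla W_{ij}|\le C(W_{ii}W_{jj})^{1/4}$ bound and the off-diagonal squares come with a favourable sign, so the Bian--Guan correction $\sigma_{l+2}/(\sigma_{l+1}+\varepsilon)$ is not what is needed here (the paper never uses it for this theorem). The actual difficulty is organising the many terms generated by the nonzero $u_{it}$, $i\in G$.

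The paper resolves this by a structure you omit entirely. It first proves the constant rank theorem for the spatial Hessian $\nabla^2u$ alone (Theorem \ref{tha3.3}), which yields, identically in a neighbourhood, $u_{ij}=0$ for $i$ or $j$ bad, $u_{it}=0$ for $i$ bad, and the vanishing of $\nabla^2u_i$ and $\nabla u_{it}$ for bad $i$. It then invokes the dichotomy of Lemma \ref{lem2.8} for the bordered matrix: in CASE 1 the spatial rank already drops and $\sigma_{l+1}(D^2u)\le\sigma_{l+1}(\nabla^2u)+u_{tt}\sigma_l(\nabla^2u)=0$ follows with no further computation; in CASE 2 one takes $\phi=\sigma_{l+1}(D^2u)$ (no correction term), uses the spatial result to annihilate every term carrying a bad index, and assembles the remaining contributions of the mixed entries into the perfect square
\begin{equation*}
-2\sigma_l(G)\sum_{i\in G}\frac{1}{\lambda_i}\Bigl(u_{ti\alpha}-\sum_{j\in G}\frac{u_{tj}}{\lambda_j}\,u_{ij\alpha}\Bigr)^{2},
\end{equation*}
which has the correct sign. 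Without the preliminary spatial theorem, the case analysis, and this perfect-square identity, the differential inequality $\Delta\phi-\phi_t\le C(\phi+|\nabla\phi|)$ you assert is not established by your argument.
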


In the following three subsections, we give a brief proof of Theorem \ref{tha3.2} based on the ideas of  \cite{HM12} and \cite{CH12}.

\subsection{The constant rank properties of the spatial Hessian $\nabla^2 u$}

Thanks to the assumptions of Theorem \ref{tha3.2}, we know the spatial Hessian $\nabla^2u \geq 0$.
Suppose $\nabla^2 u$ attains its minimal rank $l$ at some point $(x_0, t_0) \in \Omega \times (0, T]$. We pick
a small open neighborhood $\mathcal {O}$ of $x_0$ and $\delta>0$, and for any fixed point $(x, t) \in \mathcal {O}\times
(t_0-\delta, t_0]$, we rotate the $x$ coordinates so that the matrix $\nabla^2 u (x,t)$ is diagonal
and without loss of generality we assume $ u_{11} \geq u_{22}
\geq \cdots \geq u_{nn} $. Then there is a positive
constant $C > 0$ depending only on $\left\| u \right\|_{C^{3,3} }$, such that $ u_{11} \geq \cdots \geq
u_{ll} \geq C > 0 $ for all $(x, t) \in \mathcal {O}\times (t_0-\delta, t_0]$.
For convenience we set $ G = \{1, \cdots ,l\} $ and $ B =
\{ l+1, \cdots ,n\} $ which means good indices and bad
indices respectively. With abuse of notation, but without confusion, we will also simply set $ G = \{ u_{11} , \cdots ,u_{ll} \} $ and $
B = \{ u_{l+1l+1} , \cdots ,u_{nn} \} $.

Set
\begin{equation}\label{a3.2}
\phi = \sigma_{l+1}(\nabla^2u).
\end{equation}
Then
\begin{align}
 \phi  = \sigma _{l + 1} (\nabla^2 u) \ge \sigma _l (G)\sigma _1 (B) \ge 0,  \notag
 \end{align}
so we get
\begin{align}\label{a3.3}
 u_{ii}  = O(\phi ),\quad\text{for } i \in B .
 \end{align}
By Lemma \ref{lem2.9} and \eqref{a3.3}, we can get
\begin{align}\label{a3.4}
 |\nabla u_{ij}|^2  = O(\phi ), \quad i, j \in B .
 \end{align}

Computing the first derivatives of $\phi$, we obtain
\begin{align}
\label{a3.5}&\phi _i  = \frac{{\partial \phi }}{{\partial x_i }} = \sum\limits_{\alpha  = 1}^{n} {\sigma _l (D^2 u|\alpha )u_{\alpha \alpha i} }  = \sigma _l (G)\sum\limits_{\alpha  \in B} {u_{\alpha \alpha i} }  + O(\phi ), \\
\label{a3.6}&\phi _t  = \frac{{\partial \phi }}{{\partial t}} = \sum\limits_{\alpha  = 1}^{n} {\sigma _l (D^2 u|\alpha )u_{\alpha \alpha t} }  = \sigma _l (G)\sum\limits_{\alpha  \in B} {u_{\alpha \alpha t} }  + O(\phi ),
\end{align}
so from \eqref{a3.5}, we get
\begin{align}\label{a3.7}
\sum\limits_{\alpha  \in B} {u_{\alpha \alpha i} } = O(\phi + |\nabla_x \phi| ), \quad i =1, \cdots, n.
\end{align}
Taking the second derivatives of $\phi$ in $x$ coordinates, we have
\begin{align}\label{a3.8}
\phi _{\alpha \alpha }  =& \frac{{\partial ^2 \phi }}{{\partial x_\alpha  \partial x_\alpha  }} \notag \\
=& \sum\limits_{\gamma  = 1}^{n } {\frac{{\partial \sigma _{l + 1} (D^2 u)}}{{\partial u_{\gamma \gamma } }}u_{\gamma \gamma \alpha \alpha } }  + \sum\limits_{\gamma  \ne \eta } {\frac{{\partial ^2 \sigma _{l + 1} }}{{\partial u_{\gamma \gamma } \partial u_{\eta \eta } }}u_{\gamma \gamma \alpha } u_{\eta \eta \alpha} }  + \sum\limits_{\gamma  \ne \eta } {\frac{{\partial ^2 \sigma _{l + 1} }}{{\partial u_{\gamma \eta } \partial u_{\eta \gamma } }}u_{\gamma \eta \alpha } u_{\eta \gamma\alpha } }  \notag \\
=& \sum\limits_{\gamma  = 1}^{n} {\sigma _l (D^2 u|\gamma )u_{\gamma \gamma \alpha \alpha } }  + \sum\limits_{\gamma  \ne \eta } {\sigma _{l - 1} (D^2 u|\gamma \eta )u_{\gamma \gamma \alpha } u_{\eta \eta \alpha} }  \notag\\
&- \sum\limits_{\gamma  \ne \eta } {\sigma _{l - 1} (D^2 u|\gamma \eta )u_{\gamma \eta \alpha } u_{\eta \gamma \alpha} }  ,
\end{align}
where
\begin{align}
\label{a3.9} \sum\limits_{\gamma  = 1}^{n} {\sigma _l (D^2 u|\gamma )u_{\gamma \gamma \alpha \alpha } }  =& \sum\limits_{\gamma  \in B} {\sigma _l (D^2 u|\gamma )u_{\gamma \gamma \alpha \alpha } }  + \sum\limits_{\gamma  \in G} {\sigma _l (D^2 u|\gamma )u_{\gamma \gamma \alpha \alpha } }  \notag \\
=& \sigma _l (G)\sum\limits_{\gamma  \in B} {u_{\gamma \gamma \alpha \alpha } }  + O(\phi ), \\
\label{a3.10} \sum\limits_{\gamma  \ne \eta } {\sigma _{l - 1} (D^2 u|\gamma \eta )u_{\gamma \gamma \alpha } u_{\eta \eta\alpha} }
=& \sum\limits_{\scriptstyle \gamma, \eta  \in B \hfill \atop  \scriptstyle \gamma  \ne \eta  \hfill} {\sigma _{l - 1} (D^2 u|\gamma \eta )u_{\gamma \gamma \alpha } u_{\eta \eta \beta } }  + \sum\limits_{\scriptstyle \gamma  \in B \hfill \atop  \scriptstyle \eta  \in G \hfill} {\sigma _{l - 1} (D^2 u|\gamma \eta )u_{\gamma \gamma \alpha } u_{\eta \eta \alpha } }  \notag \\
&+ \sum\limits_{\scriptstyle \gamma  \in G \hfill \atop  \scriptstyle \eta  \in B \hfill} {\sigma _{l - 1} (D^2 u|\gamma \eta )u_{\gamma \gamma \alpha } u_{\eta \eta \alpha} }  + \sum\limits_{\scriptstyle \gamma, \eta  \in G \hfill \atop  \scriptstyle \gamma  \ne \eta  \hfill} {\sigma _{l - 1} (D^2 u|\gamma \eta )u_{\gamma \gamma \alpha } u_{\eta \eta \alpha } }   \notag\\
=& O(\phi ) + \sum\limits_{\eta  \in G} {\sigma _{l - 1} (G|\eta )u_{\eta \eta \alpha} } \sum\limits_{\gamma  \in B} {u_{\gamma \gamma \alpha } }  + \sum\limits_{\gamma  \in G} {\sigma _{l - 1} (G|\gamma )u_{\gamma \gamma \alpha } } \sum\limits_{\eta  \in B} {u_{\eta \eta \alpha } }   \notag\\
=&  O(\phi + |\nabla_x \phi|),
\end{align}
and
\begin{align}\label{a3.11}
\sum\limits_{\gamma  \ne \eta } {\sigma _{l - 1} (D^2 u|\gamma \eta )u_{\gamma \eta \alpha } u_{\eta \gamma \alpha } } =& \sum\limits_{\scriptstyle \gamma, \eta  \in B \hfill \atop \scriptstyle \gamma  \ne \eta  \hfill} {\sigma _{l - 1} (D^2 u|\gamma \eta )u_{\gamma \eta \alpha } u_{\eta \gamma \alpha } }  + \sum\limits_{\scriptstyle \gamma  \in B \hfill \atop  \scriptstyle \eta  \in G \hfill} {\sigma _{l - 1} (D^2 u|\gamma \eta )u_{\gamma \eta \alpha } u_{\eta \gamma \alpha} }  \notag \\
&+ \sum\limits_{\scriptstyle \gamma  \in G \hfill \atop \scriptstyle \eta  \in B \hfill} {\sigma _{l - 1} (D^2 u|\gamma \eta )u_{\gamma \eta \alpha } u_{\eta \gamma \alpha} }  + \sum\limits_{\scriptstyle \gamma, \eta  \in G \hfill \atop \scriptstyle \gamma  \ne \eta  \hfill} {\sigma _{l - 1} (D^2 u|\gamma \eta )u_{\gamma \eta \alpha } u_{\eta \gamma \alpha } }  \notag \\
=& O(\phi ) + \sum\limits_{\scriptstyle \gamma  \in B \hfill \atop \scriptstyle \eta  \in G \hfill} {\sigma _{l - 1} (G|\eta )u_{\gamma \eta \alpha } u_{\eta \gamma \alpha} }  + \sum\limits_{\scriptstyle \gamma  \in G \hfill \atop \scriptstyle \eta  \in B \hfill} {\sigma _{l - 1} (G|\gamma )u_{\gamma \eta \alpha } u_{\eta \gamma \alpha } }  \notag \\
=& 2\sigma _l (G)\sum\limits_{\scriptstyle \gamma  \in B \hfill \atop  \scriptstyle \eta  \in G \hfill} {\frac{{u_{\gamma \eta \alpha } u_{\eta \gamma \alpha } }}{{u_{\eta \eta } }}}  + O(\phi ).
\end{align}
So from \eqref{a3.8}-\eqref{a3.11}, we get
\begin{align}\label{a3.12}
\phi _{\alpha  \alpha }  = \sigma _l (G)\sum\limits_{\gamma  \in B} {u_{\gamma \gamma \alpha \alpha} }  - 2\sigma _l (G)\sum\limits_{\scriptstyle \gamma  \in B \hfill \atop \scriptstyle \eta  \in G \hfill} {\frac{{u_{\gamma \eta \alpha } u_{\eta \gamma  \alpha} }}{{u_{\eta \eta } }}} + O(\phi + |\nabla_x \phi|).
\end{align}
By \eqref{a3.5}, \eqref{a3.12} and the equation \eqref{a3.1}, we obtain
\begin{align}\label{a3.13}
\Delta _x \phi  - \phi _t  =& \sigma _l (G)\sum\limits_{\gamma  \in B} {\left[ {\left( {\Delta _x u_{\gamma \gamma }  - u_{\gamma \gamma t} } \right) - 2\sum\limits_{\eta  \in G} {\sum\limits_{i = 1}^n {\frac{{u_{\gamma \eta i} ^2 }}{{u_{\eta \eta } }}} } } \right]}  + O(\phi  + |\nabla _x \phi |) \notag \\
=&  - 2\sigma _l (G)\sum\limits_{\gamma  \in B} {\sum\limits_{\eta  \in G} {\sum\limits_{i = 1}^n {\frac{{u_{\gamma \eta i} ^2 }}{{u_{\eta \eta } }}} } }  + O(\phi  + |\nabla _x \phi |)  \notag\\
\le& C_1(\phi  + |\nabla _x \phi |)-C_2\sum\limits_{i \in B} {\left| {\nabla^2 u_{i} }\right|^2}.
\end{align}
where $C_1$, and $C_2$ are two small positive constants. Together with
\begin{equation}\label{a3.14}
\phi(x,t) \geq 0, \quad  (x,t) \in \mathcal {O} \times (t_0-\delta, t_0], \quad
\phi(x_0, t_0) =0,
\end{equation}
we can apply the strong maximum principle for parabolic equations, and we have
\begin{equation}\label{a3.15}
\phi(x,t) = \sigma_{l+1}(\nabla^2 u) =0,
\end{equation}
and
\begin{equation}\label{a3.16}
\sum\limits_{i \in B} {\left| {\nabla^2 u_{i} }\right|^2}\equiv 0.
\end{equation}

Then we get the following constant rank theorem for the spatial Hessian $\nabla^2 u$.

\begin{theorem}\label{tha3.3}
Under the assumption of Theorem \ref{tha3.2},  $\nabla^2 u $ has a constant rank in $\Omega$ for each
fixed $t \in (0,T] $. Moreover, let $l(t)$ be the minimal rank of
$\nabla^2 u$ in $\Omega$, then $l(s) \leqslant l(t)$ for all
$0< s \leqslant t \leqslant T$.
\end{theorem}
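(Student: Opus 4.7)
The plan is to use the auxiliary function $\phi(x,t) = \sigma_{l+1}(\nabla^2 u)$, where $l$ is the minimal rank attained by the spatial Hessian somewhere in $\Omega \times (0,T]$, and to derive a differential inequality of parabolic type to which the strong maximum principle applies. Since $\nabla^2 u \geq 0$ by Theorem \ref{tha3.2}'s space-time convexity hypothesis (its spatial projection also being nonnegative), we have $\phi \geq 0$ globally with $\phi(x_0,t_0)=0$ at a minimizer; thus $\phi$ attains its minimum value at an interior point of a parabolic cylinder, and establishing an inequality of the form $\Delta_x \phi - \phi_t \leq C(\phi + |\nabla_x \phi|)$ will force $\phi \equiv 0$ on a backward parabolic neighborhood, so the rank is everywhere $\leq l$ and hence equal to $l$ by minimality.

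First I would localize near $(x_0,t_0)$, rotate coordinates to diagonalize $\nabla^2 u$ with eigenvalues arranged in decreasing order, and split the indices into a \emph{good} set $G=\{1,\dots,l\}$, whose diagonal entries are bounded below by a positive constant by continuity, and a \emph{bad} set $B=\{l+1,\dots,n\}$. From the pointwise inequality $\phi \geq \sigma_l(G)\sigma_1(B)$ one obtains $u_{ii}=O(\phi)$ for $i\in B$, and Lemma \ref{lem2.9}, applied to the $B$-indexed submatrix, then yields $|\nabla u_{ij}|^2 = O(\phi)$ for all $i,j\in B$. These order-of-magnitude estimates are the backbone of all subsequent manipulations.

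Next, I would differentiate $\phi$ twice in space and once in time via Proposition \ref{prop2.4}, grouping contributions according to the four index patterns $BB$, $GB$, $BG$, $GG$. The first-order derivatives yield $\sum_{\gamma\in B} u_{\gamma\gamma i} = O(\phi + |\nabla_x \phi|)$. In the second-order terms, the $BB$ pattern is controlled directly by $\phi$, the mixed $GB$/$BG$ patterns by $\phi + |\nabla_x\phi|$, while the $GG$ pattern produces the key nonpositive interaction term
\begin{equation*}
-2\sigma_l(G)\sum_{\gamma\in B,\,\eta\in G}\sum_{i=1}^{n}\frac{u_{\gamma\eta i}^2}{u_{\eta\eta}}\,.
\end{equation*}
Crucially, invoking the heat equation (a3.1) eliminates the leading quantity $\sigma_l(G)\sum_{\gamma\in B}(\Delta_x u_{\gamma\gamma} - u_{\gamma\gamma t})$ that would otherwise obstruct the estimate; this is the reason the heat equation enters in a nontrivial way.

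The main obstacle, as anticipated, is the algebraic bookkeeping: one must verify that every leading contribution outside the dominant nonpositive term absorbs cleanly into $O(\phi + |\nabla_x \phi|)$, making careful use of the ``good vs.\ bad'' decomposition in the combinatorial expansions of $\partial\sigma_{l+1}$ and $\partial^2\sigma_{l+1}$. Once the inequality $\Delta_x\phi - \phi_t \leq C_1(\phi + |\nabla_x\phi|) - C_2\sum_{i\in B}|\nabla^2 u_i|^2$ is in place, the strong maximum principle for parabolic equations forces $\phi\equiv 0$ on $\mathcal{O}\times(t_0-\delta,t_0]$, and connectedness of $\Omega$ propagates the rank identity to the entire spatial slice. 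Finally, the monotonicity $l(s)\leq l(t)$ for $s\leq t$ follows by the same mechanism applied \emph{backward} in time: if the minimal rank at some later time were strictly smaller, the above propagation would transport this smaller rank backward, contradicting the larger rank at the earlier time.
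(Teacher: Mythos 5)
Your proposal follows the paper's own proof essentially verbatim: the same test function $\phi=\sigma_{l+1}(\nabla^2u)$, the same good/bad index decomposition with $u_{ii}=O(\phi)$ for $i\in B$ and $|\nabla u_{ij}|^2=O(\phi)$ for $i,j\in B$ via Lemma \ref{lem2.9}, the same cancellation of $\Delta_x u_{\gamma\gamma}-u_{\gamma\gamma t}$ by the heat equation, the same key nonpositive term $-2\sigma_l(G)\sum_{\gamma\in B,\eta\in G}\sum_i u_{\gamma\eta i}^2/u_{\eta\eta}$, and the same use of the strong maximum principle to propagate $\phi\equiv0$ backward in time, which also yields the monotonicity of $l(t)$. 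The approach and all key steps are correct.
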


Also we get the following useful properties.

\begin{proposition} \label{propa3.4}
Under above assumptions at $(x,t) \in \mathcal {O} \times (t_0-\delta, t_0]$, we have
\begin{eqnarray}
\label{a3.17}&& u_{ij}(x,t) = 0, \quad  i \text{ or } j \in B, \\
\label{a3.18}&& u_{it}(x,t) = 0, \quad  i \in B,
\end{eqnarray}
and
\begin{eqnarray}
\label{a3.19}&&\sum\limits_{i \in B} {\left( | {\nabla^2 u_{i} }| (x,t) +| {\nabla u_{it} }|(x,t)\right)} = 0.
\end{eqnarray}
\end{proposition}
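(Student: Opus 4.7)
My plan is to derive Proposition~\ref{propa3.4} from two ingredients already at our disposal: the positive semidefiniteness of the space-time Hessian $D^2u$ and the $C^{1,1}$ regularity estimate for PSD matrix-valued functions in Remark~\ref{rem2.10}. The preceding computation, through the strong maximum principle applied to \eqref{a3.13}, has already delivered $\phi\equiv 0$ on the backward cylinder $\mathcal{O}\times(t_0-\delta,t_0]$ (so the rank of $\nabla^2u$ is constantly $l$) together with \eqref{a3.16}. After rotating spatial coordinates so that $\nabla^2u(x,t)$ is diagonal with $u_{11}\ge\cdots\ge u_{nn}$, constant rank forces $u_{ii}(x,t)=0$ for every $i\in B$.

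For \eqref{a3.17} and \eqref{a3.18} I would not invoke the heat equation at all. The point is that $D^2u(x,t)$ is PSD with a zero diagonal entry in each row indexed by $i\in B$. The elementary inequality $|W_{i\alpha}|^2\le W_{ii}W_{\alpha\alpha}$, valid for any PSD matrix $W$, then forces
$$
u_{i\alpha}(x,t)=0 \qquad\text{for every } \alpha\in\{1,\ldots,n,t\}.
$$
Taking $\alpha$ to be a spatial index yields \eqref{a3.17}; taking $\alpha=t$ yields \eqref{a3.18}.

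For \eqref{a3.19}, the first summand $|\nabla^2u_i|(x,t)$ vanishes directly from \eqref{a3.16}. For the mixed summand $|\nabla u_{it}|(x,t)$, I would apply Remark~\ref{rem2.10} to the $(n+1)\times(n+1)$ matrix-valued function $W:=D^2u$, which belongs to $C^{1,1}(\Omega\times(0,T])$ since $u\in C^{4,3}$ and is PSD by hypothesis. Remark~\ref{rem2.10} provides a constant $C$ such that $|DW_{i\alpha}|\le C(W_{ii}W_{\alpha\alpha})^{1/4}$ on a slightly smaller parabolic cylinder around $(x_0,t_0)$. Evaluating at $(x,t)$ with $\alpha=t$ and $i\in B$, the factor $u_{ii}(x,t)^{1/4}=0$ forces $|Du_{it}(x,t)|=0$; in particular every spatial partial $u_{itj}(x,t)$ vanishes, which is exactly $|\nabla u_{it}|(x,t)=0$. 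Summing over $i\in B$ completes \eqref{a3.19}.

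The main subtlety I anticipate is the point-dependence of the rotated frame, and hence of the index set $B$, which prevents one from differentiating the pointwise identities (such as $u_{ijk}(x,t)=0$ extracted from \eqref{a3.16}) to recover $u_{itj}$ via the differentiated heat equation $u_{itj}=\sum_k u_{ijkk}$: the right-hand side would require control of fourth-order spatial derivatives that is not readily available. Remark~\ref{rem2.10} sidesteps this obstacle precisely because its conclusion is a pointwise, coordinate-robust estimate on $|DW_{i\alpha}|$ in terms of the diagonal entries of $W$, so the pointwise vanishing $u_{ii}(x,t)=0$ already suffices.
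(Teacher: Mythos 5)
Your proposal is correct and follows essentially the same route as the paper: constant rank gives $u_{ii}=0$ for $i\in B$, positive semidefiniteness of $D^2u$ kills the off-diagonal entries $u_{i\alpha}$ (the paper gets the spatial ones for free from the diagonalizing frame and uses $D^2u\geq 0$ only for $u_{it}$, a cosmetic difference), and the mixed term in \eqref{a3.19} is handled exactly as in the paper via the estimate $|\nabla u_{it}|\leq C(u_{ii}u_{tt})^{1/4}=0$ from Lemma \ref{lem2.9}/Remark \ref{rem2.10} combined with \eqref{a3.16}.
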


\begin{proof}
From the choice of coordinate at $(x,t)$, we know
\begin{align*}
u_{ij}(x,t) = 0,  \quad i \ne j.
\end{align*}
By the constant rank theorem of $\nabla^2 u$, i.e. \eqref{a3.15}, we obtain
\begin{align*}
u_{ii}(x,t) = 0,  \quad i \in B.
\end{align*}
Hence, $D^2u\geq 0$ yields
\begin{align*}
u_{it}(x,t) = 0,  \quad i \in B.
\end{align*}
So \eqref{a3.17} and \eqref{a3.18} holds.

By Lemma \ref{lem2.9}, we can get
\begin{align}\label{a3.20}
 |\nabla u_{it}|  \leq C (u_{ii} u_{tt})^{\frac{1}{4}} =0 , \quad i \in B\,,
 \end{align}
which, together with \eqref{a3.16}, gives \eqref{a3.19}.
\end{proof}

\subsection{A constant rank theorem for the space-time Hessian: CASE 1}

In this subsection, we will prove Theorem \ref{tha3.2} in CASE 1 (see Lemma \ref{lem2.8}).
Suppose the space-time Hessian $D^2 u$ attains the minimal rank $l$ at some point $(x_0, t_0) \in \Omega \times (0, T]$. We may assume
$l\leq n$, otherwise there is nothing to prove. Then from lemma
\ref{lem2.8}, there is a neighborhood $\mathcal {O}$ of $x_0$ and $\delta>0$, such that  $ u_{11} \geq
\cdots \geq u_{l-1l-1} \geq C > 0 $ and $u_{tt} -\sum\limits_{i
= 1}^{l-1} {\frac{{u_{it} ^2 }} {{u_{ii} }}} \geq C$ for all $(x,t)
\in \mathcal {O}\times (t_0-\delta, t_0]$. For any fixed
point  $(x,t) \in \mathcal {O}\times (t_0-\delta, t_0]$, we can
rotate the $x$ coordinate so that the matrix $\nabla^2
u$ is diagonal, and without loss of generality we assume $ u_{11}
\geq u_{22}\geq \cdots \geq u_{nn} $. We set $ G = \{ 1, \cdots , l-1 \} $ and $ B = \{ l,
\cdots, n\}$ .

In order to prove the theorem, we just need to prove
\begin{equation} \label{a3.21}
\sigma_{l+1}(D^2 u)\equiv 0, \quad \text{for every}\quad
(x,t) \in \mathcal {O} \times (t_0-\delta, t_0].
\end{equation}
In fact, when $\nabla^2 u$ is diagonal at $(x,t)$, we have
\begin{align} \label{a3.22}
\sigma_{l+1}(D^2 u)=&\sigma_{l+1}(\nabla^2 u)+ u_{tt} \sigma_{l}(\nabla^2 u) - \sum_{i=1}^n u_{it}^2 \sigma_{l-1}(\nabla^2 u|i) \notag \\
\leq& \sigma_{l+1}(\nabla^2 u)+ u_{tt} \sigma_{l}(\nabla^2 u).
\end{align}

In CASE 1, the spatial Hessian $\nabla^2u$ attains the minimal rank $l-1$ at $(x_0, t_0)$.
From Theorem \ref{tha3.3}, the constant rank theorem holds for the spatial
Hessian $\nabla^2u$ of the solution $u$ for the heat equation \eqref{a3.1}, so we can get,
\begin{equation}\label{a3.23}
\sigma_{l+1}(\nabla^2u)=\sigma_{l}(\nabla^2u)\equiv 0, \quad \text{for
every}\quad (x,t) \in \mathcal {O} \times (t_0-\delta, t_0].
\end{equation}
Then
\begin{align} \label{a3.24}
0 \leq \sigma_{l+1}(D^2 u)\leq \sigma_{l+1}(\nabla^2 u)+ u_{tt} \sigma_{l}(\nabla^2 u) =0.
\end{align}
Hence \eqref{a3.21} holds.

By the continuity method, Theorem~\ref{tha3.2} holds in CASE 1.

\subsection{A constant rank theorem for the space-time Hessian: CASE 2}

In this subsection, we will prove Theorem \ref{tha3.2} under CASE 2 (see again Lemma \ref{lem2.8}).
Suppose the space-time Hessian $D^2 u$ attains the minimal
rank $l$ at some point $(x_0, t_0) \in \Omega \times (0, T]$. We may assume
$l\leq n$, otherwise there is nothing to prove. Under CASE 2, $l$ is also the minimal rank of $\nabla^2u$ in $\Omega \times (t_0-\delta, t_0]$.
For each fixed $(x,t)\in\mathcal{O}\times(t_0-\delta, t_0]$,
we choose a local orthonormal frame $e_1,\dots,e_n$ so that $\nabla^2 u$
is diagonal and  let $u_{ii}=\lambda_i, \quad i=1, \cdots, n$. We arrange
$\lambda_1\geq\lambda_2 \geq \cdots \geq\lambda_n\ge 0$, where
$\lambda=(\lambda_1, \lambda_2, \cdots, \lambda_n)$ are the eigenvalues
of $\nabla^2u$ at $(x,t)$. As before, we let $G=\{1, \cdots, l\}$ and
 $B=\{l+1, \cdots, n\}$ be the ``good'' set and ``bad''
 set of indices respectively.  Without confusion we will also again denote $ G = \{ u_{11} , \cdots ,u_{ll} \} $ and $
B = \{ u_{l+1l+1} , \cdots ,u_{nn} \} $.

At $(x,t)$, by the constant rank properties Proposition \ref{propa3.4} we have
\begin{eqnarray}
\label{a3.25}&& u_{ij}(x,t) = 0, \quad  i \text{ or } j \in B, \\
\label{a3.26}&& u_{it}(x,t) = 0, \quad  i \in B,
\end{eqnarray}
and
\begin{eqnarray}\label{a3.27}
| {\nabla^2 u_{i} }| (x,t)=0, \quad | {\nabla u_{it} }|(x,t) = 0, \quad i \in B.
\end{eqnarray}

Let's set
\begin{align}\label{a3.28}
\phi=\sigma_{l+1}(D^2u),
\end{align}
then we have at $(x,t)$
\begin{align*}
\phi=&\sigma_{l+1}(D^2u)=\sigma_{l+1}(\nabla^2u)+u_{tt}\sigma_{l}(\nabla^2u)-\sum_{i}u_{ti}^2\sigma_{l-1}(\nabla^2u|i)\notag\\
= &\sigma_{l}(G)(u_{tt}-\sum_{i\in G}\frac{u_{ti}^2}{\lambda_i}),
\end{align*}
so
\begin{align}\label{a3.29}
u_{tt}-\sum_{i\in G}\frac{u_{ti}^2}{\lambda_i}= O(\phi).
\end{align}

Taking the first derivative of $\phi$ with respect to $t$, then using \eqref{a3.25}-\eqref{a3.27}, we have
\begin{align}\label{a3.30}
\phi_t=&\sum_i\sigma_l(D^2u|i)u_{iit}+u_{ttt}\sigma_l(D^2u)+u_{tt}\sum_i\sigma_{l-1}(D^2u|i)u_{iit}\notag\\
&-2\sum_i\sigma_{l-1}(D^2u|i)u_{ti}u_{tit}-\sum_{i\neq j}\sigma_{l-2}(D^2u|ij)u_{tj}^2u_{iit}\notag\\
&+\sum_{i\neq j}\sigma_{l-2}(D^2u|ij)u_{ti}u_{tj}u_{ijt}\notag\\
=&\sigma_{l}(G)u_{ttt}+u_{tt}\sum_{i\in G}\sigma_{l-1}(G|i)u_{iit}
-2\sum_{i\in G}\sigma_{l-1}(G|i)u_{ti}u_{tit} \notag\\
&-\sum_{\stackrel{i,j\in G}{i\neq j}}\sigma_{l-2}(G|ij)u_{tj}^2u_{iit}+\sum_{\stackrel{i,j\in G}{i\neq j}}\sigma_{l-2}(G|ij)u_{ti}u_{tj}u_{ijt} \notag \\
=&\sigma_l(G)(u_{ttt}-2\sum_{i\in G}\frac{u_{ti}}{\lambda
_i}u_{tit}+\sum_{i,j\in G}\frac{u_{ti}}{\lambda
_i}\frac{u_{tj}}{\lambda _j}u_{ijt}) + O(\phi).
\end{align}

Similarly, taking the first derivative of $\phi$ in the direction
$e_\alpha$, it follows that
\begin{align}
\phi_{\alpha}=\sigma_l(G)(u_{tt\alpha}-2\sum_{i\in G}\frac{u_{ti}}{\lambda
_i}u_{ti\alpha} +\sum_{i,j\in G}\frac{u_{ti}}{\lambda
_i}\frac{u_{tj}}{\lambda_j}u_{ij\alpha}) +  O(\phi), \notag
\end{align}
whence
\begin{align}\label{a3.31}
u_{tt\alpha}-2\sum_{i\in G}\frac{u_{ti}}{\lambda_i}u_{ti\alpha} +\sum_{i,j\in G}\frac{u_{ti}}{\lambda
_i}\frac{u_{tj}}{\lambda_j}u_{ij\alpha}=  O(\phi + |\nabla \phi|).
\end{align}

Computing the second derivatives, we have
\begin{align*}
\Delta \phi=& \sum_{i,j}\frac{{\partial \sigma _{l + 1} (D^2 u)}}{{\partial u_{ij} }}\Delta u_{ij }
+ \sum_{i,j,k,l}\frac{{\partial ^2 \sigma_{l + 1} (D^2 u)}} {{\partial u_{ij}\partial u_{kl} }}u_{ij\alpha }u_{kl\alpha}
+ \Delta u_{tt} \sigma _l (D^2 u)\\
&+ 2u_{tt\alpha }\sum_{i,j}\frac{{\partial \sigma _l (D^2u)}} {{\partial u_{ij}}}u_{ij\alpha}
+u_{tt}\sum_{i,j}\frac{{\partial \sigma _l (D^2u)}} {{\partial u_{ij}}} \Delta u_{ij}+u_{tt}\sum_{i,j,k,l}\frac{{\partial ^2 \sigma _l (D^2u)}}{{\partial u_{ij}\partial u_{kl}}}u_{ij\alpha }u_{kl\alpha}\\
&-2\sum\limits_i{\sigma _{l -1} (D^2u|i)u_{ti} \Delta u_{ti }} -2\sum\limits_i{\sigma _{l - 1} (D^2u|i)u_{ti\alpha }u_{ti\alpha}}- 4\sum\limits_{i,j,k}{\frac{{\partial\sigma_{l - 1}(D^2u|i)}}{{\partial u_{jk}}}u_{ti}u_{ti\alpha}u_{jk\alpha }}\notag \\
&-\sum_{i,j,k}\frac{{\partial\sigma_{l-1}(D^2u|i)}}{{\partial u_{jk}}}u_{ti}^2 \Delta u_{jk}
-\sum_{i,j,k,p,q}\frac{{\partial^2\sigma_{l-1}(D^2u|i)}}{{\partial u_{jk}\partial u_{pq}}}u_{ti}^2u_{jk\alpha }u_{pq\alpha}\\
&+\sum_{\stackrel{i,j}{i\ne j}}{\sigma_{l-2}(D^2u|i,j)}u_{ti}u_{tj}\Delta u_{ij}
+2\sum_{\stackrel{i,j}{i\ne j}}{\sigma_{l-2}(D^2u|i,j)u_{tj}u_{ti\alpha}u_{ij\alpha}}\notag \\
&+2\sum_{\stackrel{i,j}{i\ne j}}{\sigma_{l-2}(D^2u|i,j)u_{ti} u_{tj\alpha }u_{ij\alpha}}
+2\sum_{\stackrel{i,j,k,l}{i\ne j}}{\frac{{\partial\sigma_{l-2}(D^2u|i,j)}}{{\partial u_{kl}}}u_{ti}u_{tj}u_{ij\alpha}u_{kl\alpha}}\\
&-2\sum_{\stackrel{i,j,k }{i\ne j,i\ne k,j\ne k}}{\sigma_{l-3}(D^2u|i,j,k)u_{ti}u_{tj}u_{ki\alpha}u_{kj\alpha}},
\end{align*}

For any $i \in B$, we have from \eqref{a3.27}
\begin{align*}
\Delta u_{ii} =  u_{iit} =0.
\end{align*}
Hence
\begin{align*}
\Delta \phi=& \sigma _{l} (G)\sum_{i\in B}\Delta u_{ii}+ \Delta u_{tt} \sigma _l (G)+ 2u_{tt\alpha }\sum_{i \in G}\sigma _{l-1} (G|i)u_{ii\alpha}\\
&+u_{tt}[\sum_{i\in G}\sigma _{l-1} (G|i) \Delta u_{ii} + \sum_{i\in B}\sigma _{l-1} (G) \Delta u_{ii}]+ u_{tt} \sum_{i \ne j \in G} \sigma _{l-2} (G|ij) [u_{ii\alpha }u_{jj\alpha} - u_{ij\alpha }u_{ji\alpha}]\\
&-2\sum\limits_{i \in G} {\sigma _{l -1} (G|i)u_{ti} \Delta u_{ti }} -2\sum\limits_{i \in G}{\sigma _{l - 1} (G|i)u_{ti\alpha }u_{ti\alpha}}- 4\sum\limits_{i \ne j \in G}{\sigma_{l - 2}(G|ij)u_{ti}u_{ti\alpha}u_{jj\alpha }}\notag \\
&-\sum_{i\ne j \in G}\sigma_{l - 2}(G|ij)u_{ti}^2 \Delta u_{jj}-\sum_{i\in G, j \in B}\sigma_{l - 2}(G|i)u_{ti}^2 \Delta u_{jj}
\notag\\
&-\sum_{i\ne j \ne k \in G}\sigma_{l -3}(G|ijk)u_{ti}^2 [u_{jj\alpha }u_{kk\alpha} -u_{jk\alpha }u_{kj\alpha} ]\\
&+\sum_{{i\ne j \in G}}{\sigma_{l-2}(G|i,j)}u_{ti}u_{tj}\Delta u_{ij}
+2\sum_{{i\ne j \in G}}{\sigma_{l-2}(G|i,j)u_{tj}u_{ti\alpha}u_{ij\alpha}}\notag \\
&+2\sum_{{i\ne j \in G}}{\sigma_{l-2}(G|i,j)u_{ti} u_{tj\alpha }u_{ij\alpha}}
+2\sum_{{i\ne j \ne k \in G}}{\sigma_{l-3}(G|i,j,k)u_{ti}u_{tj}u_{ij\alpha}u_{kk\alpha}}\\
&-2\sum_{i\ne j \ne k \in G}{\sigma_{l-3}(G|i,j,k)u_{ti}u_{tj}u_{ki\alpha}u_{kj\alpha}} + O(\phi)\\
\sim &\sigma_l(G)[\Delta u_{tt}-2\sum_{i\in G}\frac{u_{ti}}{\lambda_i}\Delta u_{it}+\sum_{i,j\in G}
\frac{u_{ti}}{\lambda_i}\frac{u_{tj}}{\lambda_j}\Delta u_{ij}]-2\sigma_l(G)\sum_{i\in
G}\frac{1}{\lambda_i}(u_{ti\alpha}-\sum_{j\in G}\frac{u_{tj}}{\lambda_j}u_{ij\alpha})^2 \\
& + O(\phi + |\nabla \phi|).
\end{align*}
So we can write
\begin{align}
\Delta \phi - \phi_t =& -2\sigma_l(G)\sum_{i\in
G}\frac{1}{\lambda_i}(u_{ti\alpha}-\sum_{j\in G}\frac{u_{tj}}{\lambda_j}u_{ij\alpha})^2 + O(\phi + |\nabla \phi|) \notag \\
\leq&  C(\phi + |\nabla \phi|).
\end{align}
Together with
\begin{equation}
\phi(x,t) \geq 0, \quad  (x,t) \in \mathcal {O} \times (t_0-\delta, t_0], \quad
\phi(x_0, t_0) =0,
\end{equation}
we can apply the strong maximum principle of parabolic equations, and we obtain
\begin{equation}\label{a3.34}
\phi(x,t) =\sigma_{l+1}(D^2u)\equiv 0, \quad  (x,t) \in \mathcal {O} \times (t_0-\delta, t_0].
\end{equation}

By the continuity method, Theorem \ref{tha3.2} holds under CASE 2. The proof of Theorem \ref{tha3.2} is complete.

\section{The strict convexity of the level sets of harmonic functions in convex rings}

In this section, we consider
the following initial boundary value problem
\begin{equation}\label{a4.1}
\left\{\begin{array}{lcl}
              \Delta u= 0  &\text{in}& \Omega= \Omega _0 \backslash \overline {\Omega _1 },   \\
              u=  0   &\text{on}&  \partial \Omega_0,\\
              u=  1   &\text{in}&  \partial \Omega_1,
\end{array} \right.
\end{equation}
where $\Omega=\Omega_0\setminus\overline\Omega_1$ is a $C^{2}$ convex ring in $\mathbb{R}^n$ ($n\geq 2$), i.e. $\Omega _0$ and
$\Omega _1$ are bounded convex $C^{2}$ domains in $\mathbb{R}^n$ with $
\overline \Omega _1  \subset\Omega _0$.

Notice that in these assumptions, it holds $|\nabla u| \ne 0$ in $\Omega$ (see Kawohl \cite{Ka85}) and the level sets $\partial\Sigma_x^c =\{x \in \Omega: u =c \} $ are $n-1$ dimensional hypersurfaces for $c \in (0, 1)$.
We ask whether
$\partial\Sigma_x^c$ is convex or strictly convex for every $c \in (0,1)$.

In 1957, Gabriel \cite{Ga57} proved that the level sets of the Green function of a 3-dimension bounded convex domain are strictly
convex. Later, in 1977, Lewis \cite{Le77} extended Gabriel's result to $p$-harmonic functions in
higher dimensions and obtained the following theorem.
\begin{theorem}(Gabriel \cite{Ga57} and Lewis \cite{Le77})\label{tha4.1}
Suppose $\Omega=\Omega_0\setminus\overline\Omega_1$ is a $C^{2}$ convex ring, and $u \in C^2(\overline{\Omega})$ satisfies \eqref{a4.1}.
Then the level set $\partial\Sigma_x^c$ of $u$ is strictly convex  for every $c \in (0,1)$.
\end{theorem}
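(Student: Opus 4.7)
The plan is to follow the strategy of Korevaar \cite{Ko90} (see also \cite{BGMX}), combining three ingredients: (i) the weak quasiconcavity of $u$ (convexity of the level sets $\partial\Sigma_x^c$); (ii) a constant rank theorem for the symmetric curvature matrix $a=(a_{ij})$ of the level sets of $u$, in the spirit of the constant rank theorems reviewed in Section 2.2; and (iii) a homotopic deformation of the convex ring $\Omega$ to a concentric spherical annulus, on which the level sets are obviously strictly convex.

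The first ingredient, convexity of the level sets, is classical (Gabriel \cite{Ga57}, Lewis \cite{Le77}, Caffarelli--Spruck \cite{CS82}) and can be obtained via the concavity maximum principle, or by applying the deformation argument already at the level of quasiconcavity. Thanks to Hopf's lemma one has $|\nabla u|\neq 0$ in $\Omega$, so the matrix $a$ defined in \eqref{2.5a}--\eqref{2.6a} is a well-defined positive semidefinite symmetric tensor on each level set.

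The main technical step, and the principal obstacle, is the constant rank theorem for $a$: if $\ell_0=\min_{x\in\Omega}\mathrm{rank}\,a(x)$, then $\mathrm{rank}\,a\equiv\ell_0$ throughout $\Omega$. Fix a point $x_0$ where the minimum is attained and, in a neighborhood of $x_0$, rotate the spatial frame so that $u_n=|\nabla u|$ and $a$ is diagonal, splitting the indices $\{1,\dots,n-1\}$ into \emph{good} indices $G=\{1,\dots,\ell_0\}$ (with diagonal entries bounded below by a positive constant) and \emph{bad} indices $B=\{\ell_0+1,\dots,n-1\}$. Introduce the test function
$$\phi(x)=\sigma_{\ell_0+1}\bigl(a(x)\bigr),$$
which is non-negative on a neighborhood of $x_0$ and vanishes at $x_0$. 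Using the algebraic identities of Propositions \ref{prop2.3}--\ref{prop2.4}, the Codazzi relation $a_{ij,k}=a_{ik,j}$, the gradient estimate $|\nabla a_{ij}|\le C(a_{ii}a_{jj})^{1/4}$ from Lemma \ref{lem2.9}, and the harmonicity of $u$, one expands $\Delta\phi$ and absorbs the mixed good/bad contributions to obtain a differential inequality of the form
$$\Delta\phi\le C\,(\phi+|\nabla\phi|)\qquad\text{near }x_0.$$
The strong maximum principle then forces $\phi\equiv 0$ in a neighborhood of $x_0$; a standard connectedness argument on the domain $\Omega$ propagates the identity $\phi\equiv 0$ to all of $\Omega$, yielding $\mathrm{rank}\,a\equiv \ell_0$. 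The bookkeeping needed to carry out this expansion, and in particular the matching of third-derivative terms of $u$ that allows the bad-eigenvalue contributions to be cleanly eliminated after invoking $\Delta u=0$, is by far the most delicate part of the argument.

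The final step is the deformation. Choose a $C^{2,\alpha}$ family of convex rings $\{\Omega^s\}_{s\in[0,1]}$ with $\Omega^0=\Omega$ and $\Omega^1$ a concentric spherical annulus, depending continuously on $s$; let $u^s$ solve the analogue of \eqref{a4.1} on $\Omega^s$ and let $a^s$ denote its level-set curvature matrix. By Schauder theory $u^s$ and $a^s$ depend continuously on $s$, and on $\partial\Omega^s$ the matrix $a^s$ has rank $n-1$ with a uniform positive lower bound on $\sigma_{n-1}(a^s)$, inherited from the positive principal curvatures of $\partial\Omega^s$ along the homotopy. At $s=1$ the level sets are concentric spheres, hence $\mathrm{rank}\,a^1\equiv n-1$. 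The set $S=\{s\in[0,1]:\mathrm{rank}\,a^s\equiv n-1\}$ is open by continuity, and closed because if the rank of $a^{s_0}$ dropped at some interior point, the constant rank theorem would force $\sigma_{n-1}(a^{s_0})\equiv 0$ on all of $\Omega^{s_0}$, contradicting the uniform positive boundary lower bound. Hence $S=[0,1]$, so $0\in S$ and $\partial\Sigma_x^c$ is strictly convex for every $c\in(0,1)$, completing the proof.
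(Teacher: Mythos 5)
Your overall architecture coincides with the paper's: the same test function $\phi=\sigma_{\ell_0+1}(a)$, the same good/bad index splitting and use of Lemma \ref{lem2.9} to reach $\Delta\phi\le C(\phi+|\nabla\phi|)$, the strong maximum principle, and then a Minkowski deformation of $\Omega$ to a spherical annulus. The constant rank part of your sketch is a faithful outline of the proof of Theorem \ref{tha4.2} as given in Section 2.3.1.

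The one step that does not go through as written is the closedness (equivalently, the contradiction) in your deformation argument. You derive full rank at the parameter $s_0$ from ``a uniform positive lower bound on $\sigma_{n-1}(a^s)$ on $\partial\Omega^s$, inherited from the positive principal curvatures of $\partial\Omega^s$ along the homotopy.'' But the theorem is stated for a general $C^2$ convex ring: at $s=0$ the domain is the original $\Omega$, whose boundary is merely convex and may well have points of vanishing Gauss curvature, so no such boundary lower bound is available there (the Minkowski combination with a ball is uniformly convex only for the intermediate parameters, as the paper notes via Schneider). Approximating $\Omega$ by uniformly convex rings does not rescue the endpoint either, since strict convexity is not stable under such limits. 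The paper closes this gap with a purely interior observation that needs no boundary curvature at all: if the rank of $a^{s_0}$ dropped below $n-1$ at one interior point, the constant rank theorem would force every level set $\partial\Sigma_x^c$, $c\in(0,1)$, to have a degenerate second fundamental form at \emph{every} point; but each such level set is a closed convex hypersurface and therefore possesses at least one strictly convex point (e.g.\ where it touches a circumscribed sphere), a contradiction. Replacing your boundary estimate by this remark makes your argument complete and valid for arbitrary $C^2$ convex rings.
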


Motivated by a result of Caffarelli-Friedman \cite{CF85}, Korevaar \cite{Ko90} gave a new proof of Theorem \ref{tha4.1}.
Also Ma-Ou-Zhang \cite{MOZ10} gave a different proof of Theorem \ref{tha4.1}, based on a quantitative Gauss curvature estimate for the curvature of the level sets.

In the following, we give a brief proof of Theorem \ref{tha4.1} in two subsections. The result belongs to \cite{Ko90} and its proof comes from \cite{BGMX, Ko90}.

\subsection{A constant rank theorem for the second fundamental form  of the level sets of harmonic functions}

In this subsection, we use the notation of Subsection 2.1.1.

\begin{theorem}(Korevaar~\cite{Ko90})\label{tha4.2}
Suppose $\Omega=\Omega_0\setminus\overline\Omega_1$ is a $C^{2}$ convex ring, and $u \in C^2(\overline{\Omega})$
is a quasiconcave function satisfying the equation \eqref{a4.1}. Then the second fundamental form
 of level sets $\partial\Sigma_x ^{c}$ has constant rank in $\Omega$.
\end{theorem}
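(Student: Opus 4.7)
My plan is to prove Theorem~\ref{tha4.2} via the same general constant rank strategy explained in Section 2.2: introduce the test function $\phi = \sigma_{l+1}(a)$ (equivalently $\sigma_{l+1}(A)$, since the two matrices differ by a positive scalar via \eqref{2.5a}), show that it is nonnegative, vanishes at the minimum-rank point, and satisfies an elliptic differential inequality of the form $\Delta \phi \leq C(\phi + |\nabla \phi|)$ modulo manifestly nonpositive terms, and then conclude $\phi \equiv 0$ by the strong maximum principle.

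First I would localize. Since $u$ is quasiconcave, the second fundamental form $a$ is positive semidefinite on every level set. Suppose $a$ attains its minimum rank $l$ at some $x_0 \in \Omega$; if $l = n-1$ there is nothing to prove, so assume $l \leq n-2$. After a rigid motion we may assume $u_n(x_0) = |\nabla u(x_0)| > 0$ and $u_i(x_0) = 0$ for $i < n$, and after a further rotation in the hyperplane $\{x_n = 0\}$ we may assume the $(n-1)\times(n-1)$ matrix $A(x_0)$ is diagonal with $A_{11}(x_0) \geq \cdots \geq A_{n-1,n-1}(x_0) \geq 0$. By continuity, in a neighborhood $\mathcal O$ of $x_0$ we keep $A_{11}, \ldots, A_{ll} \geq c_0 > 0$. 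Set $G = \{1,\dots,l\}$ and $B = \{l+1,\dots,n-1\}$. From $\phi = \sigma_{l+1}(A) \geq \sigma_l(G)\,\sigma_1(B) \geq 0$ we get $A_{ii} = O(\phi)$ for $i \in B$, and Lemma~\ref{lem2.9} applied to the nonnegative matrix $A$ gives $|\nabla A_{ij}|^2 = O(\phi)$ whenever $i,j \in B$.

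The core step is to compute $\Delta \phi$ at a point of $\mathcal O$ where (after rotating coordinates as above) $A$ is diagonal. Using Proposition~\ref{prop2.4} one finds
\begin{equation*}
\phi_k = \sigma_l(G)\sum_{\gamma \in B} A_{\gamma\gamma,k} + O(\phi), \qquad \sum_{\gamma \in B} A_{\gamma\gamma,k} = O(\phi + |\nabla\phi|),
\end{equation*}
and, after expanding the second derivatives,
\begin{equation*}
\Delta \phi = \sigma_l(G) \sum_{\gamma \in B} \Delta A_{\gamma\gamma} - 2\,\sigma_l(G) \sum_{\gamma \in B,\,\eta \in G}\sum_{k=1}^{n-1} \frac{(A_{\gamma\eta,k})^2}{A_{\eta\eta}} + O(\phi + |\nabla \phi|).
\end{equation*}
Here the Codazzi property $a_{ij,k} = a_{ik,j}$ at the chosen point, noted just after \eqref{2.6a}, is used to symmetrize the cross terms and to discard the remaining terms into $O(\phi + |\nabla\phi|)$ via the bound on $|\nabla A_{ij}|$ for $i,j \in B$. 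The second, negative term on the right is exactly the unfavorable-looking contribution that we will need to keep as a nonpositive reserve.

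The main obstacle is then to show that $\sum_{\gamma \in B}\Delta A_{\gamma\gamma}$ can be rewritten, using $\Delta u = 0$ together with the explicit form of $h_{ij}$ in \eqref{1.1a}, as a sum of squares of the type $\bigl(A_{\gamma\eta,k} - (\text{correction})\bigr)^2 / A_{\eta\eta}$ plus an $O(\phi + |\nabla\phi|)$ remainder. Concretely, one differentiates $\Delta u = 0$ twice, substitutes back into $\Delta h_{\gamma\gamma} = \Delta(u_n^2 u_{\gamma\gamma} + u_{nn} u_\gamma u_\gamma - 2 u_n u_\gamma u_{\gamma n})$, and uses $u_i(x) = O(|x-x_0|)$ for $i \in B$-neighboring rotated frames in order to push all cross terms either into the favorable negative quadratic already present or into $O(\phi + |\nabla\phi|)$. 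This bookkeeping is the technical heart of the argument (it is precisely the computation carried out in \cite{Ko90} and revisited in \cite{BGMX}); once completed, it gives
\begin{equation*}
\Delta \phi \leq C_1 (\phi + |\nabla \phi|) - C_2 \sum_{\gamma \in B,\,\eta \in G, k}\frac{(A_{\gamma\eta,k})^2}{A_{\eta\eta}} \leq C_1(\phi + |\nabla \phi|)
\end{equation*}
in $\mathcal O$. Since $\phi \geq 0$ on $\mathcal O$ and $\phi(x_0) = 0$, the strong maximum principle for elliptic operators with first-order term forces $\phi \equiv 0$ on $\mathcal O$. Hence the rank of $A$ is at most $l$, and by minimality equal to $l$, throughout $\mathcal O$. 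A standard connectedness argument propagates this from $\mathcal O$ to all of $\Omega$, proving that the second fundamental form of $\partial\Sigma_x^c$ has constant rank in $\Omega$.
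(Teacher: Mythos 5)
Your strategy is the paper's own: the test function $\phi=\sigma_{l+1}$ of the curvature matrix, the good/bad index splitting, the reduction to a differential inequality $\Delta\phi\le C(\phi+|\nabla\phi|)$, and the strong maximum principle plus connectedness. Two remarks are in order. First, a sign slip: by \eqref{2.5a} the matrices $a$ and $A$ differ by the \emph{negative} factor $-|u_n|/(|\nabla u|\,u_n^3)$ (recall $u_n>0$ at the chosen point), so where the level sets are convex $A$ is negative semidefinite, not nonnegative. You therefore cannot order $A_{11}\ge\cdots\ge A_{n-1\,n-1}\ge 0$, nor apply Lemma \ref{lem2.9} to $A$ as a nonnegative matrix. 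The paper runs the entire argument on $a$ (equivalently on $-A$), with $a_{ii}=-u_{ii}/u_n\ge 0$ and hence $u_{ii}\le 0$ for good indices; this last sign is precisely what makes the decisive term nonpositive at the end.

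Second, and more substantively: the step you defer to \cite{Ko90} and \cite{BGMX} is not bookkeeping around an already favorable structure --- it \emph{is} the proof. Expanding $\sum_{j\in B}\sum_\alpha a_{jj,\alpha\alpha}$ via harmonicity produces genuinely positive leftovers (for instance a term $\sim -6u_{nj}^2\sum_{i\in G}u_{ii}\ge 0$ for $j\in B$), so the inequality $\Delta\phi\le C_1(\phi+|\nabla\phi|)$ does not follow from the sign of the reserve term $-2\sigma_l(G)\sum a_{ij,\alpha}^2/a_{ii}$ alone. What the paper actually establishes is the exact cancellation \eqref{a4.10}: the two contributions merge into the single expression $2u_n^{-3}\sum_{j\in B,\,i\in G}\sum_{\alpha=1}^{n}\bigl[u_nu_{ij\alpha}-2u_{i\alpha}u_{jn}\bigr]^2/u_{ii}$, which is $\le 0$ only because $u_{ii}<0$ on the good set. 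Without exhibiting this identity (or an equivalent completion of the square), your final displayed inequality is asserted rather than derived, and the argument as written does not close.
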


\begin{proof}
By the regularity theory of harmonic functions, $u \in C^\infty (\Omega) \cap C^2(\overline{\Omega})$. And the second
fundamental form $a(x) = \{ a_{ij}\}$ of a level set of $u$ is as in \eqref{2.5a}.

Suppose $a(x)$ attains minimal rank $l$ at some point $x_0 \in
\Omega$. We can assume $l\leqslant n-2$, otherwise there is
nothing to prove. We also assume $u_n>0$, and pick a small neighborhood $\mathcal {O}$ of
$x_0$. For any
fixed point  $x \in \mathcal {O}$, we can choose $e_1,\cdots, e_{n-1},e_n$ such
that
\begin{equation}\label{a4.2}
 |\nabla u(x)|=u_n(x)>0\ \quad \text{and}\quad
\{ u_{ij} \}_{1 \leq i,j \leq n-1} \text{is diagonal at}\ x.
\end{equation}
Without loss of generality we also assume $ u_{11} \leq u_{22}\leq \cdots
\leq u_{n-1n-1} $. So, at $x \in \mathcal {O}$, by \eqref{2.5a}, we have the matrix
$\{ a_{ij} \}$ is also diagonal and $a_{11} \geq a_{22} \geq \cdots \geq a_{n-1
n-1}$. There is a positive constant $\delta>0$ depending only on
$\|u\|_{C^{4}}$ and $\mathcal {O}$,
such that $a_{11} \ge a_{22} \ge \cdots \ge a_{ll} > \delta$. As before, we denote the ``good" and ``bad" sets of indices by $ G = \{ 1, \cdots ,l \} $ and $ B = \{ l+1, \cdots, n-1 \} $ respectively. If there is no confusion, we also denote
\begin{eqnarray}\label{a4.3}
\quad \quad \mbox{\it $G=\{a_{11}, \cdots, a_{ll}\}$ and
$B=\{a_{l+1l+1}, \cdots, a_{n-1n-1}\}$.}
\end{eqnarray}

Set
\begin{eqnarray}\label{3.3}
\phi(x)=\sigma_{l+1}(a_{ij}).
\end{eqnarray}
Following the notations in \cite{CF85} and \cite{KL87}, if $h$ and
$g$ are two functions defined in $\mathcal {O}$, we say $h\lesssim g$ if there exist positive constants
$C_1$ and $C_2$ depending only on $||u||_{C^{4}},n$ (independent
of $x$), such that $(h-g)(x)\leq(C_1\phi+C_2|\nabla\phi|)(x)$, $\forall x\in
\mathcal {O}$. We also write
$$
h\sim g\quad \text{ if }\quad h\lesssim g, \text{ and }\quad g\lesssim
h.
$$

For any fixed point $x \in \mathcal {O}$, we choose a coordinate system as
in \eqref{a4.2} so that $|\nabla u|=  u_n >0$ and the matrix
$\{ a_{ij}(x,t) \}$ is diagonal and nonnegative.
From the definition of $\phi$, we can get
\begin{equation}\label{a4.5}
a_{ii} \sim 0,\quad ~~\forall i \in B,
\end{equation}
and
\begin{equation}\label{a4.6}
h_{ii} \sim 0, \quad u_{ii} \sim 0, ~~\forall i\in B.
\end{equation}

Taking the first derivatives of $\phi$, we get
\begin{eqnarray}
\phi_{\a} &=& \sum_{ij=1}^{n-1}\frac{\partial \sigma_{l+1}(a)}{\partial a_{ij}} a_{ij,\a}  = \sum_{i \in G}  \sigma_l(a | i )
a_{ii,\a}+  \sum_{i \in B}  \sigma_l(a| i ) a_{ii, \a} \notag \\
&\sim&  \sigma_l(G) \sum_{i \in B}  a_{ii, \a} \sim  -u_n^{-3}\sigma_l(G) \sum_{i \in B}  h_{ii,\a}  \notag  \\
&\sim& -u_n^{-3} \sigma_l(G) \sum_{i \in B}  [u_n^2u_{ii\a}-2u_nu_{in}u_{i\a} ], \notag
\end{eqnarray}
whence
\begin{eqnarray}\label{a4.7}
\sum_{i \in B}  a_{ii,\a} \sim  0, \quad  \sum_{i \in B}  h_{ii, \a} \sim 0, \quad  \sum_{i \in B}  [u_n^2u_{ii\a}-2u_nu_{in}u_{i \a} ] \sim 0.
\end{eqnarray}
Then
\begin{eqnarray}\label{a4.8}
 \sum_{i \in B} u_{iij} \sim 0, \quad \forall j \in G.
\end{eqnarray}

Taking the second derivatives of $\phi$, we have from \eqref{a4.5} and \eqref{a4.7}
\begin{eqnarray}\label{a4.9}
\phi_{\alpha \alpha}&=&\sum_{i,j=1}^{n-1}\frac{\partial \sigma_{l+1}(a)}{\partial a_{ij}} a_{ij,\alpha \alpha}
+ \sum_{i,j,k,l=1}^{n-1}\frac{\partial^2 \sigma_{l+1}(a)}{\partial a_{ij}\partial a_{kl}} a_{ij,\alpha}a_{kl,\alpha} \notag \\
&=& \sum_{j=1}^{n-1}\sigma_{l}(a|j) a_{jj,\alpha \alpha} + \sum_{i,j=1, i\ne j}^{n-1}\sigma_{l-1}(a|ij) a_{ii,\alpha}a_{jj,\alpha}
-\sum_{i,j=1, i\ne j}^{n-1}\sigma_{l-1}(a|ij) a_{ij,\alpha}a_{ji,\alpha} \notag \\
&\sim& \sum_{j\in B}\sigma_{l}(G) a_{jj,\alpha \alpha} + [\sum_{i,j\in G, i\ne j}+\sum_{i \in G, j\in B}
+\sum_{i \in B, j\in G}+\sum_{i,j\in B, i\ne j} ]\sigma_{l-1}(a|ij) a_{ii,\alpha}a_{jj,\alpha} \notag\\
&&\qquad-[\sum_{i,j\in G, i\ne j}+\sum_{i \in G, j\in B}+\sum_{i \in B, j\in G}+\sum_{i,j\in B, i\ne j} ]\sigma_{l-1}(a|ij) a_{ij,\alpha}a_{ji,\alpha} \notag \\
&\sim& \sum_{j\in B}\sigma_{l}(G) a_{jj,\alpha \alpha} -2\sum_{i \in G, j\in B} \sigma_{l-1}(G|i) a_{ij,\alpha}a_{ji,\alpha} \notag \\
&=& \sigma_l(G) \sum_{j\in B} \Big[a_{jj,\alpha \alpha}-2\sum_{i\in
G}\frac{a_{ij,\alpha}a_{ij,\alpha}}{a_{ii}}\Big],
\end{eqnarray}
where we have used the following inequalities from Lemma \ref{lem2.9}:
\begin{align*}
&|a_{ii,\a}a_{jj,\b} | \leq C a_{ii}^{\frac{1}{2}}\cdot C a_{jj}^{\frac{1}{2}} \leq C_1 \phi, \quad  i, j\in B, i\ne j;  \\
&|a_{ij,\a}a_{ji,\b}| \leq C [a_{ii}a_{jj} ] ^{\frac{1}{4}} \cdot C [a_{ii}a_{jj} ] ^{\frac{1}{4}}  \leq C_2 \phi, \quad i,j\in B, i\ne j.
\end{align*}

Since $u_k=0$ for $k=1,\cdots, n-1$, from \eqref{2.5a},
\begin{eqnarray}
u_n u_{ij\a}=-u_n^2 a_{ij,\a}+u_{nj}u_{i\a} +u_{ni} u_{j\a}+ u_{n\a} u_{ij}, \quad \forall\; i,j \le n-1, \notag
\end{eqnarray}
and
\begin{align}
\sum_{j\in B}a_{jj,\alpha \alpha} \sim&-\frac{1}{u_n^3} \sum_{j\in B} h_{jj,\alpha \a}- 2 (\frac{|u_n|}{|\nabla u|{u_n}^3})_{\a} \sum_{j\in B} h_{jj,\alpha }  \notag  \\
\sim&-\frac{1}{u_n^3}\sum_{j\in B}[u_n^2u_{jj\alpha\a}-2u_nu_{nj}u_{\a\a
j}+2u_{nn}u_{j\a}^2+4u_{n\a}u_{nj}u_{j\a} -4u_nu_{j\a}u_{nj\a}]. \notag
\end{align}
Hence it yields
\begin{align*}
\sum_{j \in B} \sum_{\alpha=1}^n a_{jj,\alpha \a} \sim
-\frac{1}{u_n^3} \sum_{j \in B}  \sum_{\alpha=1}^n [2u_{nn}u_{j\a}^2+4u_{n\a}u_{nj}u_{j\a} -4u_nu_{j\a}u_{nj\a}],
\end{align*}
where
\begin{align}
\sum_{\alpha=1}^n[2u_{nn}u_{j\a}^2+&4u_{n\a}u_{nj}u_{j\a} -4u_nu_{j\a}u_{nj\a}]\notag \\
\sim& 2u_{nn}u_{jn}^2 + 4u_{nn}u_{nj}u_{jn} -4u_nu_{jn}u_{njn}\notag \\
=& 6u_{nn}u_{nj}u_{jn} -4u_nu_{jn}u_{nnj}\notag \\
=& 6[\Delta u - \sum_{i=1}^{n-1} u_{ii}]u_{nj}u_{jn} -4u_nu_{jn}[\Delta u_j - \sum_{i=1}^{n-1} u_{iij}]
\notag \\
=&  - 6 u_{nj}^2 \sum_{i=1}^{n-1} u_{ii}+ 4u_nu_{jn} \sum_{i=1}^{n-1} u_{iij}\notag \\
\sim&   - 6 u_{nj}^2 \sum_{i\in G} u_{ii}+ 4u_nu_{jn} \sum_{i\in G} u_{iij}. \notag
\end{align}
For $i \in G, j\in B$, we have
\begin{align}
\sum\limits_{\alpha  = 1}^n {\frac{{a_{ij,\alpha } ^2 }}{{a_{ii} }}}  =& - \frac{1}{{u_n ^3 }}\sum\limits_{\alpha  = 1}^n {\frac{{[u_n ^2 a_{ij,\alpha } ]^2 }}{{u_{ii} }}}  \sim  - \frac{1}{{u_n ^3 }}\sum\limits_{\alpha  = 1}^n {\frac{{[u_n u_{ij\alpha }  - u_{i\alpha } u_{nj}  - u_{j\alpha } u_{ni} ]^2 }}{{u_{ii} }}}\notag \\
=&  - \frac{1}{{u_n ^3 }}\sum\limits_{\alpha  = 1}^n {\frac{{[u_n u_{ij\alpha }  - 2u_{i\alpha } u_{nj}  + u_{i\alpha } u_{nj}  - u_{j\alpha } u_{ni} ]^2 }}{{u_{ii} }}}\notag \\
=& - \frac{1}{{u_n ^3 }}\sum\limits_{\alpha  = 1}^n {\frac{{[u_n u_{ij\alpha }  - 2u_{i\alpha } u_{nj} ]^2  + 2[u_n u_{ij\alpha }  - 2u_{i\alpha } u_{nj} ][u_{i\alpha } u_{nj}  - u_{j\alpha } u_{ni} ] + [u_{i\alpha } u_{nj}  - u_{j\alpha } u_{ni} ]^2 }}{{u_{ii} }}}
\notag \\
\sim& - \frac{1}{{u_n ^3 }}\left\{ {\sum\limits_{\alpha  = 1}^n {\frac{{[u_n u_{ij\alpha }  - 2u_{i\alpha } u_{nj} ]^2 }}{{u_{ii} }} + 2u_n u_{iij} u_{nj}  - 3u_{ii} u_{nj} ^2 } } \right\}. \notag
\end{align}
Then
\begin{align}\label{a4.10}
\sum_{j \in B} \sum_{\a=1}^n [a_{jj,\alpha \alpha}-2\sum_{i\in
G}\frac{a_{ij,\alpha}^2}{a_{ii}}] \sim2u_n^{-3}\sum_{j \in B,i \in G}\sum_{\alpha=1}^{n}\frac{[u_nu_{ij\alpha}-2u_{i\a}u_{jn}]^2}{u_{ii}}.
\end{align}

Since $a_{ii} =-\frac{u_{ii}}{u_n}>0$ for $i \in G$, we have $u_{ii} < 0$ for $i \in G$. Hence
\begin{align*}
\Delta \phi(x)=& 2 u_n^{-3}\sigma_l(G) \sum_{j\in B,i\in G} \sum_{\alpha=1}^{n}\frac{[u_nu_{ij\alpha}-2u_{i\a}u_{jn}]^2}{u_{ii}} +O(\phi + |\nabla \phi|) \\
\leq& C(\phi + |\nabla \phi|).
\end{align*}
Together with
\begin{equation}
\phi(x) \geq 0, \quad  x \in \mathcal {O}, \quad \phi(x_0) =0,
\end{equation}
we can apply the strong maximum principle of elliptic equations, and we obtain
\begin{equation}\label{a3.34}
\phi(x) =\sigma_{l+1}(a_{ij})\equiv 0, \quad x \in \mathcal {O}.
\end{equation}

By the continuity method, Theorem \ref{tha4.2} holds.
\end{proof}

\subsection{The strict convexity of the level sets of $u(x)$}

Let $0 \in \Omega_1$. At the initial time we let the domain be the standard ball ring $U =
B_R(0)\setminus \overline{B_r(0)}$ ($0 < r < R$), and for $t\in[0,1]$, we set
\begin{align}
&\Omega_{0,t} = (1 - t)B_R(0) + t \Omega_0,\\
&\Omega_{1,t} = (1 - t)B_r(0) + t \Omega_1,\\
&\Omega_t=\Omega_{0,t}\setminus\overline\Omega_{1,t},
\end{align}
where the sum is the Minkowski vector sum. So the domain $\Omega_t$ is a family
of $C^2$ strictly convex rings ( see Schneider \cite{Sc93}) for $0 \leq t <1$. We denote as $u_t$ the solution of the following Dirichlet problem
\begin{equation}\label{a4.14}
\left\{\begin{array}{lcl}
              \Delta u_t(x) = 0  &\text{in}& \Omega_t\,,   \\
              u_t(x) =  0   &\text{on}&  \partial \Omega_{0,t},\\
              u_t(x) =  1   &\text{in}&  \partial \Omega_{1,t}.
\end{array} \right.
\end{equation}

By the maximum principle $|\nabla u_t| \ne 0$ in $\Omega_t$ (see Kawohl \cite{Ka85}), and by the
standard elliptic theory we have uniform estimates on $|u_t|_{C^3(\Omega_t)}$ only
depending on the geometry of $\Omega$. When $t=0$, $\Omega_0 =U$, and each level set $\partial\Sigma_x^{c,0} = \{ x \in U: u_0 =c \}$
is a ball. Hence $\partial\Sigma_x^{c,0}$ is strictly convex for each $c \in (0,1)$. If $0 < t_0 \leq 1$ is the first time that the level sets of $u_{t_0}$ becomes convex but not strictly convex at some point $x_{t_0} \in \Omega_{t_0}$, we can use the constant rank theorem (that is Theorem \ref{tha4.1}) for $u_{t_0}$. Hence each level set $\partial\Sigma_x^{c, t_0} = \{ x \in \Omega_{t_0}: u_{t_0} =c \}$ is convex but not strictly convex. But $\partial\Sigma_x^{c, t_0}$ is a closed convex hypersurface, and there is at least a strictly convex point on  each $\partial\Sigma_x^{c, t_0}$. This is a contradiction.

\newpage

\chapter{A microscopic space-time Convexity Principle for space-time level sets}

In this chapter, we prove the Theorem \ref{th1.3}. On the proof of constant rank theorem on the space-time convex solutions of the heat equation in Theorem~\ref{tha3.2}, we first get the constant rank properties of spatial Hessian $\nabla^2 u$ in Theorem~\ref{tha3.3}, then we can obtain some useful properties in Proposition~\ref{propa3.4}, at last we complete the proof Theorem~\ref{tha3.2} according {\bf CASE 1} and {\bf CASE 2} in Section 2.2.
Using the similar idea in Section 2.2, in order to prove Theorem \ref{th1.3} on the constant rank theorem for the second
fundamental form of the space-time level sets of a space-time quasiconcave solution of the heat equation, we divide three sections to complete the proof on this theorem. Firstly in Section 3.1 we prove Theorem \ref{th3.1}, a constant rank theorem for the second fundamental form of the spatial level sets of a space-time quasiconcave solution to the heat equation \eqref{1.3}, and obtain some constant rank properties. Then the proof of Theorem \ref{th1.3} is split into two cases  (see Lemma \ref{lem2.8}). CASE 1 is treated in Section 3.2,  using the constant rank theorem established in Section 3.1. Finally we deal with CASE 2 in Section 3.3, completing the proof of Theorem \ref{th1.3}.

\section{A constant rank theorem for the spatial second fundamental form}
\setcounter{equation}{0} \setcounter{theorem}{0}

In this Section we shall consider the spatial level sets of $u$ and obtain the following constant rank theorem for
the spatial second fundamental form.

\begin{theorem}\label{th3.1}
Suppose $u \in C^{4,3}(\Omega \times (0,T])$ is a space-time quasiconcave solution of the heat equation
 \eqref{1.3} with $u_t >0$ and $|\nabla u|>0$ in $\Omega \times (0,T]$. Then the second fundamental form
 of spatial level sets $\partial\Sigma_x ^{c,t}=\{x \in \Omega | u(x,t) =
c\}$ has the constant rank property in $\Omega$ for all $c\in (0,1)$,  i.e.
if the rank of $II_{\partial{\Sigma}_x^{c,t}}$ attains its minimum rank $l_0$ $(0\leq
l\leq n-1)$ at some point $(x_0,t_0) \in \Omega \times (0,T)$, then the rank of
$II_{\partial{\Sigma}^{c,t}_{x}}$ is constant on $\Omega\times (0,t_0]$. Moreover, let
$l(t)$ be the minimal rank of the second fundamental form
$II_{\partial{\Sigma}^{c,t}_{x}}$ in $\Omega$, then $l(s) \leqslant l(t)$ for all $0 <
s \leqslant t \leqslant  T$.
\end{theorem}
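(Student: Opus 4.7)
The plan is to adapt the elliptic constant rank argument of Theorem~\ref{tha4.2} (Korevaar's result for harmonic functions) to the parabolic setting. Space-time quasiconcavity of $u$ forces each spatial superlevel set $\{x:u(x,t)\geq c\}$ (which is the slice of the convex space-time superlevel set by the horizontal hyperplane $\{t=\text{const}\}$) to be convex, so the spatial curvature matrix $a=(a_{ij})$ defined in \eqref{2.5}--\eqref{2.6} is positive semidefinite throughout $\Omega\times(0,T]$. Assume the rank of $a$ attains its minimum value $l_0\leq n-2$ at $(x_0,t_0)$ (otherwise there is nothing to prove) and set
\[
\phi(x,t)=\sigma_{l_0+1}(a(x,t)),
\]
so that $\phi\geq 0$ in a parabolic neighborhood $\mathcal{O}\times(t_0-\delta,t_0]$ and $\phi(x_0,t_0)=0$. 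At each point of this neighborhood we rotate the spatial frame so that $|\nabla u|=u_n>0$, $u_i=0$ for $i\leq n-1$, and $(u_{ij})_{1\leq i,j\leq n-1}$ is diagonal with $u_{11}\leq\cdots\leq u_{n-1\,n-1}$, and split indices into the good set $G=\{1,\dots,l_0\}$ and the bad set $B=\{l_0+1,\dots,n-1\}$. By Remark~\ref{rem2.10} together with $a\geq 0$, we get $a_{ii}=O(\phi)$ and $|Da_{ij}|^2=O(\phi)$ whenever $i$ or $j$ lies in $B$.

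The core step will be establishing a parabolic differential inequality
\[
\Delta\phi-\phi_t\leq C(\phi+|\nabla\phi|)
\]
on $\mathcal{O}\times(t_0-\delta,t_0]$. The computation of $\Delta\phi$ follows the harmonic case verbatim and reduces, modulo $O(\phi+|\nabla\phi|)$, to
\[
\Delta\phi\sim\sigma_{l_0}(G)\sum_{j\in B}\sum_{\alpha=1}^{n}\Big[a_{jj,\alpha\alpha}-2\sum_{i\in G}\frac{a_{ij,\alpha}^2}{a_{ii}}\Big].
\]
Expanding $a_{jj,\alpha\alpha}$ via \eqref{2.5}--\eqref{2.6} and contracting over $\alpha$ now produces traces $\sum_{i=1}^{n-1}u_{ii}$ and $\sum_{i=1}^{n-1}u_{iij}$ which vanished in the harmonic case but here, by the heat equation, equal $u_t-u_{nn}$ and $u_{tj}-u_{nnj}$ respectively. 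These spurious time-derivative contributions should match exactly the expansion of $\phi_t$, obtained by differentiating \eqref{2.5}--\eqref{2.6} in $t$ and replacing each occurrence of $u_t$, $u_{it}$ by $\Delta u$, $\Delta u_i$. After the cancellation one is left with the same manifestly nonpositive quadratic expression as in the elliptic case,
\[
\Delta\phi-\phi_t\leq -2u_n^{-3}\sigma_{l_0}(G)\sum_{\substack{j\in B\\ i\in G}}\sum_{\alpha=1}^{n}\frac{[u_nu_{ij\alpha}-2u_{i\alpha}u_{jn}]^2}{|u_{ii}|}+O(\phi+|\nabla\phi|),
\]
where the sign is correct since $u_{ii}<0$ for $i\in G$ (the spatial level sets being convex with respect to the upward normal).

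With this inequality, the strong maximum principle for linear parabolic equations applied to the nonnegative function $\phi$ vanishing at the interior point $(x_0,t_0)$ yields $\phi\equiv 0$ on $\mathcal{O}\times(t_0-\delta,t_0]$. A standard propagation-and-connectedness argument (the locus $\{\phi(\cdot,t)=0\}$ is both open and closed in $\Omega$ for each fixed $t$, and we iterate the local conclusion backward in $t$) then upgrades this to $\phi\equiv 0$ on the whole $\Omega\times(0,t_0]$, which is the desired constant rank statement. The monotonicity $l(s)\leq l(t)$ for $s\leq t$ is then immediate: if $l(s_0)>l(t_0)$ for some $s_0\leq t_0$, the value $l(t_0)$ would be attained at some $(x_\ast,t_\ast)$ with $t_\ast\leq t_0$ and, by the constant rank statement just proved, also on $\Omega\times(0,\min\{s_0,t_\ast\}]$, contradicting the definition of $l(s_0)$.

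The main obstacle will be the bookkeeping in the cancellation between $\phi_t$ and the new terms appearing in $\Delta\phi$ via the heat equation: one must expand $a_{ij,t}$ carefully from \eqref{2.5}--\eqref{2.6}, track how second time derivatives of $u$ (arising from differentiating $u_{it}$) are produced and eliminated using the parabolic relation, and organize the remaining error terms so that every contribution not belonging to the good negative sum can be absorbed into $O(\phi+|\nabla\phi|)$ via the Codazzi-type identity for $a$ and the gradient estimates of Remark~\ref{rem2.10}.
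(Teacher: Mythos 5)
Your overall architecture (test function $\phi=\sigma_{l_0+1}(a)$, good/bad index splitting, the parabolic differential inequality, strong maximum principle plus propagation) matches the paper's, and your computation of $\Delta\phi$ modulo $O(\phi+|\nabla\phi|)$ is the one carried out in Lemma \ref{lem3.2}. But there is a genuine gap at the decisive step: the cancellation between $\phi_t$ and the trace terms produced in $\Delta\phi$ by the heat equation is \emph{not} exact. Substituting $\Delta u_{jj}=u_{jjt}$, $\Delta u_j=u_{jt}$, $\Delta u=u_t$ into \eqref{3.5} and subtracting \eqref{3.4} leaves, for each bad index $j$, the residual
\[
-u_n^{-3}\sigma_{l_0}(G)\bigl[-4u_nu_{nj}u_{tj}+6u_{nj}^2u_t\bigr],
\]
as in \eqref{3.15}. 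This quadratic expression in $(u_{nj},u_{tj})$ has indeterminate sign and is not controlled by $O(\phi+|\nabla\phi|)$ using only the spatial information you invoke (positive semidefiniteness of $a$ and Remark \ref{rem2.10} applied to $a$). So you do \emph{not} land on ``the same manifestly nonpositive quadratic expression as in the elliptic case''; in the harmonic computation these terms are absent precisely because $\Delta u=\Delta u_j=0$ there.

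The missing idea is where the \emph{full} space-time quasiconcavity enters, beyond its spatial consequence. The paper completes the square, \eqref{3.16}, writing
\[
-4u_nu_{nj}u_{tj}+6u_{nj}^2u_t=\frac{2}{u_t}\Bigl[u_tu_{nj}+\frac{\hat h_{jn}}{u_t}\Bigr]^2-2\frac{\hat h_{jn}^2}{u_t^3},
\]
with $\hat h_{jn}=u_t^2u_{jn}-u_nu_tu_{jt}$ from \eqref{2.9}, and then uses positive semidefiniteness of the space-time Weingarten tensor $\hat a$: since $u_{jj}=O(\phi)$ for $j\in B$ one has $\hat a_{jj}=O(\phi)$, hence $\hat a_{jn}^2\le\hat a_{jj}\hat a_{nn}=O(\phi)$ and $\hat h_{jn}^2=O(\phi)$ (see \eqref{3.16a}--\eqref{3.16aa}). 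The square term then carries the correct sign and the remainder is absorbed into $O(\phi)$. Without this input the argument cannot close: the Remark at the end of Section 3.1 stresses that for the heat equation, spatial convexity of the level sets alone does not yield the constant rank property, and one would need a different structural condition on the equation (e.g. $u_t=\Delta u/|\nabla u|^2$). You should therefore add the off-diagonal space-time estimate $\hat h_{jn}^2=O(\phi)$, $j\in B$, as an explicit step before invoking the maximum principle.
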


The proof is split into two subsections.

\subsection{Some preliminary calculations for a test function}

Since Theorem \ref{th3.1} is of local nature, we can assume that the level surface
$\partial\Sigma_x^{c,t} = \{x \in \Omega| u(x,t)=c\}$ be connected for each $c\in (0,
1)$. Suppose $a(x,t)$ attains minimal rank $l$ at some point $(x_0,t_0) \in
\Omega \times (0, T]$. Let us assume $l\leqslant n-2$, otherwise there is
nothing to prove. Furthermore we assume $u \in C^{4,3}(\Omega\times (0, T])$ and
$u_n>0$. We can pick a parabolic neighborhood $\mathcal {O}\times (t_0-\delta, t_0]$ of $(x_0, t_0)$ for $\delta>0$. For any
fixed point  $(x,t) \in \mathcal {O}\times (t_0-\delta, t_0]$, we can express
$(a_{ij})$ as in \eqref{2.5}, by choosing $e_1,\cdots, e_{n-1},e_n$ such
that
\begin{equation}\label{3.1}
 u_n(x,t)= |\n u(x,t)|>0\ \quad \text{and}\quad
(u_{ij})_{1 \leq i, j \leq n-1} \text{is diagonal at}\ (x,t).
\end{equation}
Without loss of generality we can assume $ u_{11} \leq u_{22}\leq \cdots
\leq u_{n-1n-1} $. So, at $(x,t) \in \mathcal {O}\times (t_0-\delta,
t_0]$, from \eqref{2.5}, we have the matrix
$(a_{ij})_{1 \leq i, j \leq n-1}$ is also diagonal, and without loss of
generality we may assume $a_{11} \geq a_{22} \geq \cdots \geq a_{n-1
n-1}$. There is a positive constant $C>0$ depending only on
$\|u\|_{C^{4}}$ and $\mathcal {O}\times (t_0-\delta, t_0]$,
such that $a_{11} \ge a_{22} \ge \cdots \ge a_{ll} > C$ for all $(x,t)
\in \mathcal {O}\times (t_0-\delta, t_0]$. Let
$ G = \{ 1, \cdots ,l \} $ and $ B = \{
l+1, \cdots, n-1 \} $ be the ``good" and ``bad" sets of indices
respectively, and, if there is no confusion, we also set
\begin{eqnarray}\label{3.2}
\quad \quad \mbox{\it $G=\{a_{11}, \cdots, a_{ll}\}$ and
$B=\{a_{l+1l+1}, \cdots, a_{n-1n-1}\}$.}
\end{eqnarray}
Note that for any $\epsilon>0$, we may choose $\mathcal {O}\times (t_0-\delta,
t_0]$ small enough such that $a_{jj} <\epsilon$ for all $j \in
B$ and $(x,t) \in \mathcal {O}\times (t_0-\delta, t_0]$.

For each $c$, let $a=(a_{ij})$ be the symmetric Weingarten tensor of
$\partial\Sigma_x^{c,t}$. Set
\begin{eqnarray}\label{3.3}
\phi(x,t)=\sigma_{l+1}(a_{ij}),
\end{eqnarray}
Theorem \ref{th3.1} is equivalent to say $ \phi(x)\equiv 0$ in $\mathcal {O}\times (t_0-\delta, t_0]$.

Following the notations in \cite{CF85} and \cite{KL87}, if $h$ and
$g$ are two functions defined in $\mathcal {O}\times (t_0-\delta,
t_0]$, we write $h\lesssim g$ if there exist positive constants
$C_1$ and $C_2$ depending only on $||u||_{C^{3,1}},n$ (independent
of $(x,t)$), such that $(h-g)(x,t)\leq(C_1\phi+C_2|\nabla\phi|)(x,t)$, $\forall(x,t)\in
\mathcal {O}\times (t_0-\delta, t_0]$. We also write
$$
h\sim g\quad \text{if}\quad h\lesssim g, \quad g\lesssim
h.
$$

In the following, we will use $i, j, \cdots$ as indices running from $1$
to $n-1$ and use the Greek indices $\alpha, \beta, \cdots$ as
indices running from $1$ to $n$.

\begin{lemma}\label{lem3.2}
For any fixed $(x,t) \in \mathcal {O}\times (t_0-\delta,
t_0]$, with the coordinate system chosen as in \eqref{3.1}, we have
\begin{equation}\label{3.4}
\phi_t \sim -u_n^{-3}\sigma_l(G)\sum_{j \in B}  [u_n^2 u_{jjt}-2u_n u_{jn}u_{jt} ],
\end{equation}
and
\begin{eqnarray}\label{3.5}
\Delta \phi &\sim& -u_n^{-3}\sigma_l(G)\sum_{j\in B}
[u_n^2\Delta u_{jj}-6u_nu_{nj}\Delta u_{j}
+6u_{nj}^2\Delta u]  \notag  \\
&&+2u_n^{-3}\sigma_l(G) \sum_{j\in B,i\in G} \sum_{\alpha=1}^{n}\frac{[u_n u_{ij\alpha}-2u_{i\a}u_{jn}]^2}{u_{ii}}.
\end{eqnarray}
\end{lemma}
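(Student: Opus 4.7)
The plan is to compute $\phi_t$ and $\Delta\phi$ at the fixed point $(x,t)$ by applying the chain rule to $\phi=\sigma_{l+1}(a)$ and expanding $a_{ij}=F\cdot A_{ij}$ with $F=-|u_n|/(|\nabla u|u_n^3)$ and $A_{ij}$ as in \eqref{2.6}. Under the coordinate choice \eqref{3.1} several simplifications at the center should be recorded first: since $u_i=0$ for $i<n$ and $u_n>0$, a direct inspection of \eqref{2.4} gives $h_{in}=0$ and $A_{ij}|_{\text{center}}=h_{ij}$; moreover every ``correction'' summand of $A_{ij}$ carries at least one explicit factor $u_i$ with $i<n$, so after one or two differentiations the surviving pieces either retain a $u_i$ factor (zero at the center) or a factor of the shape $u_n h_{\cdot n}$ (also zero). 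Consequently $A_{ij,\alpha}\sim h_{ij,\alpha}$ and $A_{jj,\alpha\alpha}\sim h_{jj,\alpha\alpha}$ at the center, after absorbing $u_{jj}=O(\phi)$ terms for $j\in B$.

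For \eqref{3.4}, since $(a_{ij})$ is diagonal at $(x,t)$ the chain rule gives $\phi_t=\sum_i\sigma_l(a|i)\,a_{ii,t}$. For $i\in G$ the factor $\sigma_l(a|i)$ contains $\sigma_1(B)=O(\phi)$ and is negligible, while $\sigma_l(a|i)\sim\sigma_l(G)$ for $i\in B$, so $\phi_t\sim\sigma_l(G)\sum_{j\in B}a_{jj,t}$. Writing $a_{jj,t}=F_tA_{jj}+FA_{jj,t}$, noting $A_{jj}|_{\text{center}}=u_n^2u_{jj}=O(\phi)$ and $A_{jj,t}\sim h_{jj,t}$, one arrives at $a_{jj,t}\sim -u_n^{-3}[u_n^2u_{jjt}-2u_nu_{jn}u_{jt}]$, which is \eqref{3.4}. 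An entirely parallel computation yields the first-order identity
\[
\phi_\alpha\sim -u_n^{-3}\sigma_l(G)\sum_{j\in B}[u_n^2u_{jj\alpha}-2u_nu_{j\alpha}u_{jn}],
\]
used repeatedly below.

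For \eqref{3.5}, the second-order chain rule reads
\[
\phi_{\alpha\alpha}=\sum_i\sigma_l(a|i)\,a_{ii,\alpha\alpha}+\sum_{i\neq j}\sigma_{l-1}(a|ij)\bigl[a_{ii,\alpha}a_{jj,\alpha}-a_{ij,\alpha}^2\bigr].
\]
Lemma \ref{lem2.9} applied to the positive semidefinite matrix $(a_{ij})$ yields $|a_{ij,\alpha}|\le C(a_{ii}a_{jj})^{1/4}$, so any product of two first-derivatives of $a$ involving two bad indices is $O(\phi)$, while the mixed good-bad products $a_{ii,\alpha}a_{jj,\alpha}$ are absorbed into $O(|\nabla\phi|)$ after summing over $\alpha$ via the first-order identity above. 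Using $\sigma_{l-1}(a|ij)\sim\sigma_l(G)/a_{ii}$ for $i\in G$, $j\in B$, what survives is
\[
\Delta\phi\sim\sigma_l(G)\sum_{j\in B}\sum_\alpha a_{jj,\alpha\alpha}\,-\,2\sigma_l(G)\sum_{i\in G,\,j\in B}\sum_\alpha\frac{a_{ij,\alpha}^2}{a_{ii}}.
\]

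It remains to expand both pieces. For the first, writing $a_{jj,\alpha\alpha}\sim 2F_\alpha h_{jj,\alpha}+Fh_{jj,\alpha\alpha}$ and expanding $h_{jj,\alpha}$ and $h_{jj,\alpha\alpha}$ termwise (again using $u_i|_{\text{center}}=0$, $h_{in}|_{\text{center}}=0$ and $u_{jj}=O(\phi)$) produces terms such as $u_n^2u_{jj\alpha\alpha}$, $u_nu_{jn}u_{j\alpha\alpha}$, $u_{nn}u_{j\alpha}^2$, $u_nu_{j\alpha}u_{jn\alpha}$. Summing over $\alpha$ converts the first two into $u_n^2\Delta u_{jj}$ and $u_nu_{jn}\Delta u_j$, while the remaining quadratic terms collapse to pure $\alpha=n$ contributions because $u_{j\alpha}\sim 0$ for $\alpha<n$, $j\in B$. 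Substituting $u_{nn}=\Delta u-\sum_{i<n}u_{ii}$ and $u_{nnj}=\Delta u_j-\sum_{i<n}u_{iij}$, together with $\sum_{i\in B}u_{iij}\sim 0$ (which follows from the first-order identity evaluated at $\alpha=j$), produces the coefficients $u_n^2$, $-6u_nu_{nj}$, $6u_{nj}^2$ on $\Delta u_{jj}$, $\Delta u_j$, $\Delta u$ respectively, plus residual pieces $-6u_{jn}^2\sum_{i\in G}u_{ii}+4u_nu_{jn}\sum_{i\in G}u_{iij}$. For the second piece, $a_{ij,\alpha}\sim -h_{ij,\alpha}/u_n^3$ with $h_{ij,\alpha}=u_n[u_nu_{ij\alpha}-u_{i\alpha}u_{jn}-u_{j\alpha}u_{in}]$ at the center; rewriting
\[
[u_nu_{ij\alpha}-u_{i\alpha}u_{jn}-u_{j\alpha}u_{in}]^2=[u_nu_{ij\alpha}-2u_{i\alpha}u_{jn}]^2+2[u_nu_{ij\alpha}-2u_{i\alpha}u_{jn}][u_{i\alpha}u_{jn}-u_{j\alpha}u_{in}]+[u_{i\alpha}u_{jn}-u_{j\alpha}u_{in}]^2
\]
and summing over $\alpha$ (using the same $u_{j\alpha}\sim 0$ collapse) shows that the cross- and remainder-terms exactly cancel the residual contributions from the first piece, yielding the perfect-square expression $[u_nu_{ij\alpha}-2u_{i\alpha}u_{jn}]^2/u_{ii}$ of \eqref{3.5}. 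The main obstacle is organizational: one must track the many terms generated by differentiating $FA_{jj}$ twice, classify each according to which center identity ($u_i=0$, $h_{in}=0$, $u_{jj}=O(\phi)$ or Lemma \ref{lem2.9}) makes it negligible, and recognize the precise algebraic identity that completes the square -- the pattern mirrors the harmonic argument in the proof of Theorem \ref{tha4.2}, the only genuine difference being that $\Delta u$, $\Delta u_j$, $\Delta u_{jj}$ no longer vanish and hence appear explicitly in \eqref{3.5}.
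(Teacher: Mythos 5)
Your proposal is correct and follows essentially the same route as the paper's proof: the chain-rule expansion of $\sigma_{l+1}(a)$ at a point where $a$ is diagonal, the elimination of bad--bad first-derivative products via Lemma \ref{lem2.9}, the use of the first-order identity $\sum_{j\in B}a_{jj,\alpha}\sim0$ to absorb the good--bad products and the prefactor terms, the collapse of the quadratic remainders to $\alpha=n$, the substitution $u_{nn}=\Delta u-\sum_{i<n}u_{ii}$ and $u_{nnj}=\Delta u_j-\sum_{i<n}u_{iij}$, and the completion of the square in $[u_nu_{ij\alpha}-2u_{i\alpha}u_{jn}]$ whose cross terms cancel the residual $-6u_{nj}^2\sum_{i\in G}u_{ii}+4u_nu_{jn}\sum_{i\in G}u_{iij}$. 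The cancellation you assert does check out against the paper's \eqref{3.12}--\eqref{3.13}, so nothing is missing.
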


\begin{proof}
This proof is similar as in \cite{CS}; for completeness, we give it with some modifications.

For any fixed point $(x,t) \in \mathcal
{O}\times (t_0-\delta, t_0]$, we choose a coordinate system as
in (\ref{3.1}) so that $|\n u|=  u_n >0$ and the matrix
$(a_{ij}(x,t))$ is diagonal for $1 \le i,j\le n-1$ and nonnegative.
From the definition of $\phi$, we can get
\begin{equation}\label{3.6}
a_{ii} \sim 0, ~~\forall i\in B,
\end{equation}
and
\begin{equation}\label{3.7}
h_{ii} \sim 0, \quad u_{ii} \sim 0, ~~\forall i\in B.
\end{equation}

Taking the first derivatives of $\phi$, we get
\begin{eqnarray}
\phi_{\a} &=& \sum_{ij=1}^{n-1}\frac{\partial \sigma_{l+1}(a)}{\partial a_{ij}} a_{ij,\a}  = \sum_{i \in G}  \sigma_l(a | i )
a_{ii,\a}+  \sum_{i \in B}  \sigma_l(a| i ) a_{ii, \a} \notag \\
&\sim&  \sigma_l(G) \sum_{i \in B}  a_{ii, \a} \sim  -u_n^{-3}\sigma_l(G) \sum_{i \in B}  h_{ii,\a}  \notag  \\
&\sim& -u_n^{-3} \sigma_l(G) \sum_{i \in B}  [u_n^2u_{ii\a}-2u_nu_{in}u_{i\a} ], \notag
\end{eqnarray}
so we get
\begin{eqnarray}\label{3.8}
\sum_{i \in B}  a_{ii,\a} \sim  0, \quad  \sum_{i \in B}  h_{ii, \a} \sim 0, \quad  \sum_{i \in B}  [u_n^2u_{ii\a}-2u_nu_{in}u_{i \a} ] \sim 0.
\end{eqnarray}
Hence
\begin{eqnarray}\label{3.9}
 \sum_{i \in B} u_{iij} \sim 0, \quad \forall j \in G.
\end{eqnarray}
Similarly, we get
\begin{eqnarray}\label{3.10}
\phi_t\sim \sigma_l(G)\sum_{j \in B}
a_{jj,t} \sim -u_n^{-3} \sigma_l(G) \sum_{j \in B}  [u_n^2u_{jjt}-2u_nu_{jn}u_{jt} ].
\end{eqnarray}

Using relationship \eqref{3.6} - \eqref{3.8}, we have
\begin{eqnarray}\label{3.11}
\phi_{\alpha \beta}&=&\sum_{i,j=1}^{n-1}\frac{\partial \sigma_{l+1}(a)}{\partial a_{ij}} a_{ij,\a \beta} + \sum_{i,j,k,l=1}^{n-1}\frac{\partial^2 \sigma_{l+1}(a)}{\partial a_{ij}\partial a_{kl}} a_{ij,\a}a_{kl,\b} \notag \\
&=& \sum_{j=1}^{n-1}\sigma_{l}(a|j) a_{jj,\a \beta} + \sum_{i,j=1, i\ne j}^{n-1}\sigma_{l-1}(a|ij) a_{ii,\a}a_{jj,\b} -\sum_{i,j=1, i\ne j}^{n-1}\sigma_{l-1}(a|ij) a_{ij,\a}a_{ji,\b} \notag \\
&\sim& \sum_{j\in B}\sigma_{l}(G) a_{jj,\a \beta} + [\sum_{i,j\in G, i\ne j}+\sum_{i \in G, j\in B}+\sum_{i \in B, j\in G}+\sum_{i,j\in B, i\ne j} ]\sigma_{l-1}(a|ij) a_{ii,\a}a_{jj,\b} \notag\\
&&\qquad-[\sum_{i,j\in G, i\ne j}+\sum_{i \in G, j\in B}+\sum_{i \in B, j\in G}+\sum_{i,j\in B, i\ne j} ]\sigma_{l-1}(a|ij) a_{ij,\a}a_{ji,\b} \notag \\
&\sim& \sum_{j\in B}\sigma_{l}(G) a_{jj,\a \beta}  + \sum_{i,j\in B, i\ne j} \sigma_{l-1}(a|ij) a_{ii,\a}a_{jj,\b}\notag\\
&& -[\sum_{i \in G, j\in B}+\sum_{i \in B, j\in G}+\sum_{i,j\in B, i\ne j} ]\sigma_{l-1}(a|ij) a_{ij,\a}a_{ji,\b} \notag \\
&\sim& \sum_{j\in B}\sigma_{l}(G) a_{jj,\a \beta} -2\sum_{i \in G, j\in B} \sigma_{l-1}(G|i) a_{ij,\a}a_{ji,\b} \notag \\
&=& \sigma_l(G) \sum_{j\in B} \Big[a_{jj,\alpha \beta}-2\sum_{i\in
G}\frac{a_{ij,\alpha}a_{ij,\beta}}{a_{ii}}\Big].
\end{eqnarray}
where we have used the following fact from Lemma \ref{lem2.9},
\begin{align*}
&|a_{ii,\a}a_{jj,\b} | \leq C a_{ii}^{\frac{1}{2}}\cdot C a_{jj}^{\frac{1}{2}} \leq C_1 \phi, \quad  i, j\in B, i\ne j;  \\
&|a_{ij,\a}a_{ji,\b}| \leq C [a_{ii}a_{jj} ] ^{\frac{1}{4}} \cdot C [a_{ii}a_{jj} ] ^{\frac{1}{4}}  \leq C_2 \phi, \quad i,j\in B, i\ne j.
\end{align*}

Since $u_k=0$ for $k=1,\cdots, n-1$, from \eqref{2.5},
\begin{eqnarray}
u_n u_{ij\a}=-u_n^2 a_{ij,\a}+u_{nj}u_{i\a} +u_{ni} u_{j\a}+ u_{n\a} u_{ij}, \quad \forall\; i,j \le n-1, \notag
\end{eqnarray}
and for each $j\in B$,
\begin{eqnarray}
a_{jj,\alpha \alpha} &\sim&-\frac{1}{u_n^3}h_{jj,\alpha \a}- 2 (\frac{|u_n|}{|\nabla u|{u_n}^3})_{\a} h_{jj,\alpha }  \notag  \\
&=&-\frac{1}{u_n^3}[u_n^2u_{jj\alpha\a}-2u_nu_{nj}u_{\a\a
j}+2u_{nn}u_{j\a}^2+4u_{n\a}u_{nj}u_{j\a} -4u_nu_{j\a}u_{nj\a}] \notag \\
&&\quad - 2 (\frac{|u_n|}{|\nabla u|{u_n}^3})_{\a} h_{jj,\alpha }. \notag
\end{eqnarray}
Hence \eqref{3.8} yields
\begin{align}\label{3.12}
\sum_{j \in B} \sum_{\alpha=1}^n a_{jj,\alpha \a} \sim&
-\frac{1}{u_n^3} \sum_{j \in B}  \Big\{u_n^2 \Delta u_{jj}-2u_nu_{nj} \Delta u_{j}+\sum_{\alpha=1}^n
[2u_{nn}u_{j\a}^2+4u_{n\a}u_{nj}u_{j\a} -4u_nu_{j\a}u_{nj\a}]\Big\} \notag \\
& \quad - 2 \sum_{\alpha=1}^n (\frac{|u_n|}{|\nabla u|{u_n}^3})_{\a} \sum_{j \in B}  h_{jj,\alpha } \notag \\
\sim&
-\frac{1}{u_n^3} \sum_{j \in B}  \Big\{u_n^2 \Delta u_{jj}-2u_nu_{nj} \Delta u_{j}+\sum_{\alpha=1}^n
[2u_{nn}u_{j\a}^2+4u_{n\a}u_{nj}u_{j\a} -4u_nu_{j\a}u_{nj\a}]\Big\},
\end{align}
where
\begin{align}
\sum_{\alpha=1}^n[2u_{nn}u_{j\a}^2+4u_{n\a}u_{nj}u_{j\a} -4u_nu_{j\a}u_{nj\a}]
\sim& 2u_{nn}u_{jn}^2 + 4u_{nn}u_{nj}u_{jn} -4u_nu_{jn}u_{njn}\notag \\
=& 6u_{nn}u_{nj}u_{jn} -4u_nu_{jn}u_{nnj}\notag \\
=& 6[\Delta u - \sum_{i=1}^{n-1} u_{ii}]u_{nj}u_{jn} -4u_nu_{jn}[\Delta u_j - \sum_{i=1}^{n-1} u_{iij}]
\notag \\
=& 6u_{nj}^2 \Delta u  -4u_nu_{jn}\Delta u_j  - 6 u_{nj}^2 \sum_{i=1}^{n-1} u_{ii}+ 4u_nu_{jn} \sum_{i=1}^{n-1} u_{iij}\notag \\
\sim& 6u_{nj}^2 \Delta u  -4u_nu_{jn}\Delta u_j  - 6 u_{nj}^2 \sum_{i\in G} u_{ii}+ 4u_nu_{jn} \sum_{i\in G} u_{iij}. \notag
\end{align}
For $i \in G, j\in B$, we have
\begin{align}
\sum\limits_{\alpha  = 1}^n {\frac{{a_{ij,\alpha } ^2 }}{{a_{ii} }}}  =& - \frac{1}{{u_n ^3 }}\sum\limits_{\alpha  = 1}^n {\frac{{[u_n ^2 a_{ij,\alpha } ]^2 }}{{u_{ii} }}}  \sim  - \frac{1}{{u_n ^3 }}\sum\limits_{\alpha  = 1}^n {\frac{{[u_n u_{ij\alpha }  - u_{i\alpha } u_{nj}  - u_{j\alpha } u_{ni} ]^2 }}{{u_{ii} }}}\notag \\
=&  - \frac{1}{{u_n ^3 }}\sum\limits_{\alpha  = 1}^n {\frac{{[u_n u_{ij\alpha }  - 2u_{i\alpha } u_{nj}  + u_{i\alpha } u_{nj}  - u_{j\alpha } u_{ni} ]^2 }}{{u_{ii} }}}\notag \\
=& - \frac{1}{{u_n ^3 }}\sum\limits_{\alpha  = 1}^n {\frac{{[u_n u_{ij\alpha }  - 2u_{i\alpha } u_{nj} ]^2  + 2[u_n u_{ij\alpha }  - 2u_{i\alpha } u_{nj} ][u_{i\alpha } u_{nj}  - u_{j\alpha } u_{ni} ] + [u_{i\alpha } u_{nj}  - u_{j\alpha } u_{ni} ]^2 }}{{u_{ii} }}}
\notag \\
\sim& - \frac{1}{{u_n ^3 }}\left\{ {\sum\limits_{\alpha  = 1}^n {\frac{{[u_n u_{ij\alpha }  - 2u_{i\alpha } u_{nj} ]^2 }}{{u_{ii} }} + 2u_n u_{iij} u_{nj}  - 3u_{ii} u_{nj} ^2 } } \right\}. \notag
\end{align}
So
\begin{align}\label{3.13}
\sum_{j \in B} \sum_{\a=1}^n [a_{jj,\alpha \alpha}-2\sum_{i\in
G}\frac{a_{ij,\alpha}^2}{a_{ii}}] \sim& -u_n^{-3}
\sum_{j \in B}[u_n^2\Delta u_{jj}-6u_nu_{nj}\Delta u_{j}
+6u_{nj}^2\Delta u]  \notag  \\
&+2u_n^{-3}\sum_{j \in B,i \in G}\sum_{\alpha=1}^{n}\frac{[u_nu_{ij\alpha}-2u_{i\a}u_{jn}]^2}{u_{ii}}.
\end{align}

From \eqref{3.11} and \eqref{3.13}, Lemma \ref{lem3.2} holds.
\end{proof}

\subsection{ Proof of the constant rank theorem for the spatial second fundamental form}

Theorem \ref{th3.1} is a direct consequence of the following
proposition and the strong maximum principle.

\begin{proposition}\label{prop3.3}
Suppose that the function $u$ satisfies the assumptions of Theorem
\ref{th3.1}. If the second fundamental form $II_{\partial\Sigma_x^{c,t}}$ of the spatial level sets
$\partial\Sigma_x^{c,t}=\{x \in \Omega | u(x,t) =
c\}$ attains minimum rank $l$ at a point
$(x_0,t_0)\in \Omega \times (0, T]$, then there exist a neighborhood $\mathcal {O}
\times(t_0-\delta,t_0]$ of $(x_0,t_0)$ and a positive
constant $C$, independent of $\phi$, such
that
\begin{equation}\label{3.14}
 \Delta \phi(x,t)-\phi_t\leq
C(\phi+|\nabla \phi|),~~\forall ~(x,t)\in \mathcal {O}
\times(t_0-\delta,t_0].
\end{equation}
\end{proposition}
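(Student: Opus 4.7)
My plan is to start from the two asymptotic expressions already proved in Lemma \ref{lem3.2} and exploit the heat equation to match the $t$-derivatives in $\phi_t$ with the Laplacians in $\Delta\phi$. Specifically, $\Delta u = u_t$, $\Delta u_j = u_{jt}$, and $\Delta u_{jj} = u_{jjt}$ make the leading pieces $u_n^2 \Delta u_{jj}$ in $\Delta\phi$ and $u_n^2 u_{jjt}$ in $\phi_t$ cancel exactly when forming $\Delta\phi - \phi_t$. Collecting the remaining terms and using the formulas of Lemma \ref{lem3.2} yields
\begin{equation*}
\Delta\phi - \phi_t \sim \sigma_l(G)u_n^{-3}\sum_{j\in B}\bigl[4u_n u_{nj} u_{jt} - 6u_{nj}^2 u_t\bigr] + 2\sigma_l(G)u_n^{-3}\sum_{j\in B,\,i\in G}\sum_{\alpha=1}^n \frac{[u_n u_{ij\alpha} - 2u_{i\alpha} u_{jn}]^2}{u_{ii}}.
\end{equation*}

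The second sum is the easy one. For $i\in G$ the identity $a_{ii} = -u_{ii}/u_n > 0$ together with $u_n > 0$ forces $u_{ii} < 0$, so each summand $[\,\cdot\,]^2/u_{ii}$ is non-positive and the whole sum contributes $\leq 0$ to $\Delta\phi - \phi_t$; it may simply be discarded.

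For the first sum, introduce the quantity $E_j := u_t u_{jn} - u_n u_{jt}$ for $j\in B$. A direct algebraic rewriting gives $4u_n u_{nj} u_{jt} - 6u_{nj}^2 u_t = -2u_t u_{nj}^2 - 4 u_{nj} E_j$, and Cauchy--Schwarz in the form $-4 u_{nj} E_j \leq 2 u_t u_{nj}^2 + 2 E_j^2/u_t$ shows that this bracket is bounded above by $2 E_j^2/u_t$. Hence the whole proposition reduces to proving $E_j^2 \lesssim \phi$ for every $j\in B$. This is exactly where \emph{space-time} (as opposed to merely spatial) quasiconcavity is indispensable. Since $\hat a \geq 0$, the $2\times 2$ principal minor inequality yields $\hat a_{jn}^2 \leq \hat a_{jj}\hat a_{nn}$. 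In the adapted frame where $u_i=0$ for $i<n$, formulas \eqref{2.11}--\eqref{2.12} simplify to show that $\hat a_{jj}$ is (up to a bounded positive factor) equal to $a_{jj} = -u_{jj}/u_n$, while $\hat a_{jn}$ is, again up to a bounded factor, equal to $E_j/u_n^2$. Since $\phi \geq \sigma_l(G)\,a_{jj}$ and $\sigma_l(G)\geq c>0$, we get $a_{jj}\lesssim \phi$ and hence $E_j^2 \lesssim \phi$, as required.

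The main obstacle I anticipate is executing step three cleanly: one must verify that the constants hidden in the proportionality between $\hat a$ and $a$, and the upper bound on $\hat a_{nn}$, depend only on $\|u\|_{C^{3,1}}$ and the positive lower bounds on $u_t$, $u_n$, and $a_{ii}$ for $i\in G$, so that the bound $E_j^2 \leq C\phi$ really is of the form permitted by the notation $\lesssim$. One must also check that the cancellation in the first step is complete, so that no uncontrolled term of order $u_n^2$ survives (this is where the exact coefficients $-6$ and $4$ in Lemma \ref{lem3.2} are crucial, as they produce the perfect square $-2 u_t u_{nj}^2$). Once these dependencies are tracked, putting the three pieces together gives the desired bound $\Delta\phi - \phi_t \leq C(\phi + |\nabla \phi|)$ on $\mathcal{O}\times(t_0-\delta,t_0]$, which is the assertion of Proposition \ref{prop3.3}.
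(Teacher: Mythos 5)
Your proposal is correct and follows essentially the same route as the paper: starting from Lemma \ref{lem3.2} and the heat equation, discarding the manifestly nonpositive sum over $i\in G$ (since $u_{ii}<0$ there), and controlling the remaining bracket by the quantity $E_j=u_tu_{jn}-u_nu_{jt}=\hat h_{jn}/u_t$, whose square is $O(\phi)$ precisely because space-time quasiconcavity gives $\hat a_{jn}^2\le \hat a_{jj}\hat a_{nn}$ with $\hat a_{jj}=O(\phi)$. Your Cauchy--Schwarz step is algebraically identical to the paper's exact completion of the square $-2u_tu_{nj}^2-4u_{nj}E_j=-\tfrac{2}{u_t}(u_tu_{nj}+E_j)^2+\tfrac{2}{u_t}E_j^2$, so nothing essential differs.
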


\begin{proof}  Let $u\in C^{4,3}(\Omega \times [0,T])$ be a space-time quasiconcave solution of equation
(\ref{1.1}) and $(u_{ij}) \in \mathcal {S}^n.$ Let $l$ be the minimum
rank of the second fundamental forms $II_{\partial\Sigma_x^{c,t}}$ of $\partial\Sigma_x^{c,t}$
($l \in \{0,1,\cdots,n-1\}$) for every $c$, and suppose the minimum rank $l$ is attained at $(x_0, t_0)
\in \Omega \times (0, T]$. We work in $\mathcal
{O} \times(t_0-\delta,t_0]$ of $(x_0,t_0)$, as usual.  Obviously $ \phi(x,t)\geq 0$
and $\phi(x_0,t_0)=0 $. For each fixed $(x,t)$, choose as usual a
local coordinate $e_1,\cdots, e_{n-1}, e_n$ such that (\ref{3.1}) is satisfied. We want to establish differential
inequality \eqref{3.14} for $\phi$.

By Lemma~\ref{lem3.2} and $u_t= \Delta u$,
\begin{eqnarray}\label{3.15}
\Delta \phi(x,t)-\phi_t &=& -u_n^{-3}\sigma_l(G)\sum_{j\in B}
[-4u_nu_{nj} u_{tj}+6u_{nj}^2 u_t]  \notag  \\
&&+2 u_n^{-3}\sigma_l(G) \sum_{j\in B,i\in G} \sum_{\alpha=1}^{n}\frac{[u_nu_{ij\alpha}-2u_{i\a}u_{jn}]^2}{u_{ii}} \\
&& +O(\phi + |\nabla \phi|).\notag
\end{eqnarray}
Since $u_{ii}=-u_n a_{ii} <0$ for $i \in G$, and for $j \in B$
\begin{eqnarray}\label{3.16}
-4u_nu_{nj} u_{tj}+6u_{nj}^2 u_t &=& \frac{1}{u_t}[6 (u_tu_{nj})^2-4 (u_n u_{tj})(u_t u_{nj})] \notag \\
&=& \frac{1}{u_t}[6 (u_tu_{nj})^2-4 (u_t u_{nj}- \frac{\hat h_{jn}}{u_t})(u_t u_{nj})] \notag \\
&=& \frac{1}{u_t}[2 (u_tu_{nj})^2+4\frac{\hat h_{jn}}{u_t}(u_t u_{nj})] \notag \\
&=& \frac{2 }{u_t}[u_t u_{nj}+\frac{\hat h_{jn}}{u_t}]^2 -2\frac{{\hat h_{jn}}^2}{u_t^3}.\notag \\
\end{eqnarray}
By  \eqref{2.11},  \eqref{2.14} and  \eqref{3.7}, for $j \in B$, we know
\begin{eqnarray}\label{3.16a}
\hat a_{jj}=-\frac{\hat h_{jj}}{|Du|u_t^2}=-\frac{u_{jj}}{|Du|}=O(\phi).
\end{eqnarray}
Now we use the key assumption in our  Theorem
\ref{th3.1} that the solution is {\bf space-time quasiconcave},  then for $j \in B$ we get
\begin{eqnarray}\label{3.16aa}
\hat a_{jn}^2\le \hat a_{jj}\hat a_{nn}=O(\phi),\quad \text{and}\quad  {\hat h_{jn}}^2=O(\phi).
\end{eqnarray}
From  \eqref{3.16} and  \eqref{3.16aa}, we obtain
\begin{eqnarray}\label{3.16aaa}
-4u_nu_{nj} u_{tj}+6u_{nj}^2 u_t= \frac{2 }{u_t}[u_t u_{nj}+\frac{\hat h_{jn}}{u_t}]^2 +O(\phi).
\end{eqnarray}

Now we can get
\begin{eqnarray}\label{3.17}
\Delta \phi(x,t)-\phi_t &=& -2u_n^{-3}\sum_{j\in B} \sigma_l(G)
\frac{1}{u_t}[u_t u_{nj}+\frac{\hat h_{jn}}{u_t}]^2  \notag  \\
&&+2u_n^{-3}\sum_{j\in B,i\in G}\sigma_l(G)\sum_{\alpha=1}^{n}\frac{[u_nu_{ij\alpha}-2u_{i\a}u_{jn}]^2}{u_{ii}} \notag \\
&& +O(\phi + |\nabla \phi|)\,.
\end{eqnarray}
This yields \eqref{3.14} and the proof is complete.
\end{proof}

\subsection{Some consequences of  Theorem~\ref{th3.1}}

In the above proof,
for \eqref{3.17} and the strong maximum principle, we get
\begin{align}
\phi =0  \quad \text{ for } (x,t) \in \mathcal {O} \times (t_0-\delta,t_0]. \notag
\end{align}
Then for any $(x,t)\in \mathcal {O}
\times(t_0-\delta_0,t_0]$ it must hold
\begin{align}\label{3.19}
a_{ii}=0, \text{ for } i \in B.
\end{align}

In fact, we can obtain more precise information as follows.
\begin{corollary} \label{cor3.4}
For any $(x,t)\in \mathcal {O} \times(t_0-\delta_0,t_0]$ with the suitable coordinate system \eqref{3.1}, we have
\begin{align}
\label{3.20}&u_{ii}=0, \text{ for } i \in B,  \\
\label{3.21}&u_{ni}=0,   \text{ for } i \in B, \\
\label{3.22}&u_{ij\alpha}=0,  \text{ for } i\in B, j\in G, \alpha =1, \cdots, n.
\end{align}
\end{corollary}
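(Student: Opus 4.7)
The strategy is to combine three ingredients: (i) the vanishing $\phi\equiv 0$ delivered by applying the strong maximum principle to the differential inequality of Proposition~\ref{prop3.3}; (ii) the saturation of the nonpositive terms in the identity (3.17), which must be equalities once $\phi$ and $|\nabla\phi|$ vanish; and (iii) the space-time quasiconcavity assumption, which controls the off-diagonal entry $\hat a_{jn}$ once $\hat a_{jj}$ vanishes.

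First, Proposition~\ref{prop3.3} together with the strong maximum principle for parabolic inequalities gives $\phi\equiv 0$ on $\mathcal{O}\times(t_0-\delta_0,t_0]$, and hence also $\nabla\phi\equiv 0$ there. Since $\phi=\sigma_{l+1}(a)$ and the good eigenvalues $a_{11},\dots,a_{ll}$ are bounded below by a positive constant, the identity $\sigma_{l+1}(a)=0$ forces $a_{jj}=0$ for every $j\in B$. Recalling that $a_{jj}=-u_{jj}/u_n$ with $u_n>0$ yields \eqref{3.20} at once.

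Next, with $\phi$ and $|\nabla\phi|$ both identically zero in the parabolic neighborhood, the identity (3.17) reduces to
\begin{align*}
0 \;=\; -2u_n^{-3}\sigma_l(G)\sum_{j\in B}\frac{1}{u_t}\Bigl[u_t u_{nj}+\tfrac{\hat h_{jn}}{u_t}\Bigr]^2 \;+\; 2u_n^{-3}\sigma_l(G)\sum_{\substack{j\in B\\ i\in G}}\sum_{\alpha=1}^n\frac{[u_n u_{ij\alpha}-2u_{i\alpha}u_{jn}]^2}{u_{ii}}.
\end{align*}
Because $u_{ii}<0$ for $i\in G$, both sums consist of nonpositive contributions, so each individual squared bracket must vanish pointwise. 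This yields the two algebraic identities
\begin{align*}
u_t u_{nj}+\hat h_{jn}/u_t &= 0 \quad(j\in B), \\
u_n u_{ij\alpha}-2u_{i\alpha}u_{jn} &= 0 \quad(i\in G,\; j\in B,\; \alpha=1,\dots,n).
\end{align*}

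Finally we use space-time quasiconcavity to break the coupling. From \eqref{3.16a} we have $\hat a_{jj}=O(\phi)=0$ for $j\in B$; since $\hat a\ge 0$, the PSD Cauchy--Schwarz inequality $\hat a_{jn}^2\le \hat a_{jj}\hat a_{nn}$ forces $\hat a_{jn}=0$. At the chosen coordinates where $u_1=\cdots=u_{n-1}=0$, formula \eqref{2.15} collapses to $\hat A_{jn}=\hat h_{jn}/\hat W$, so $\hat h_{jn}=0$ for $j\in B$. Substituting this into the first saturation identity gives $u_t u_{nj}=0$, hence \eqref{3.21}; substituting $u_{jn}=0$ into the second saturation identity then gives $u_n u_{ij\alpha}=0$, hence \eqref{3.22}. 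The main subtlety is that the first saturation identity alone only yields the proportionality $u_{jt}=(2u_t/u_n)u_{nj}$; the independent input $\hat h_{jn}=0$ coming from quasiconcavity is precisely what decouples $u_{jn}$ from $u_{jt}$ and forces the separate vanishing of $u_{jn}$, after which \eqref{3.22} follows for free.
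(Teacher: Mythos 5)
Your proposal is correct and follows essentially the same route as the paper: $\phi\equiv 0$ (hence $a_{jj}=0$, $u_{jj}=0$, $\hat h_{jj}=0$ for $j\in B$), then positive semidefiniteness of $\hat a$ to get $\hat a_{jn}=\hat h_{jn}=0$, and finally saturation of the two families of squares in \eqref{3.17} to extract $u_{nj}=0$ and $u_{ij\alpha}=0$. Your closing remark correctly identifies the role of the quasiconcavity input ($\hat h_{jn}=0$) in decoupling $u_{jn}$ from $u_{jt}$, which is exactly how the paper's argument closes.
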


\begin{proof}
By \eqref{2.5} and
\eqref{3.19} we have
\begin{align*}
u_{ii}=0, \text{ for } i \in B,
\end{align*}
Then from \eqref{2.9}, \eqref{2.11} and \eqref{2.12},
\begin{align*}
&\hat h_{ii}=0, \quad \hat a_{ii} =0, \text{ for } i \in B.
\end{align*}
From $\hat a\geq0$,
\begin{align*}
&\hat a_{in} =0, \quad \hat h_{in}=0, \text{ for } i \in B.
\end{align*}
Equation \eqref{3.17} yields
\begin{align*}
&u_t u_{ni}+\frac{\hat h_{in}}{u_t}=0,   \text{ for } i \in B, \\
&u_nu_{ij\alpha}-2u_{j\a}u_{in}=0,  \text{ for } i\in B, j\in G, \alpha =1, \cdots, n,
\end{align*}
so it holds
\begin{align*}
&u_{ni}=0,   \text{ for } i \in B, \\
&u_{ij\alpha}=0,  \text{ for } i\in B, j\in G, \alpha =1, \cdots, n.
\end{align*}
So the proof of Corollary \ref{cor3.4} is complete.
\end{proof}

\begin{remark}
In this section we got a constant rank theorem for the second fundamental form of the spatial level sets of a space-time
quasiconcave solution to heat equation \eqref{1.3}. But if we delete the condition {\bf space-time
quasiconcave solution} in above  Theorem~ \ref{th3.1},  then we couldn't obtain the constant rank theorem for the second fundamental form of the spatial convex level sets of the solution to heat equation \eqref{1.3}. In fact we need another structure condition on the parabolic partial differential equation to get this property, for example it is true for the equation
$$\frac{{\partial u}} {{\partial t}} =\frac{\Delta u}{|\nabla u|^2}. $$
Please see the detail in Chen-Shi \cite{CS} or  Ishige-Salani \cite{IS10}.
\end{remark}

\section{A constant rank theorem for the space-time second fundamental form: CASE 1}

In this section, we begin the proof of Theorem~\ref{th1.3} .

 We start to consider the space-time level sets $\partial\Sigma_{x,t} ^{c} = \{(x,t) \in
\Omega \times [0, T]|u(x,t) = c\}$, and as in Section 2.1.3, the Weingarten tensor of $\partial\Sigma_{x,t} ^{c} $ is
\begin{equation}\label{4.1}
\hat a_{\a \b} =-\frac{|u_t|}{|D u|{u_t}^3} \hat A_{\a \b}, \quad  1 \leq \a, \b \leq n,
\end{equation}
where $A_{\a \b}$ defined as in \eqref{2.12}, and for explicit formulae see from \eqref{2.14} to \eqref{2.17}.

Suppose $\hat{a}(x,t)=(\hat a_{\a \b})_{n \times n}$ attains the minimal
rank $l$ at some point $(x_0, t_0) \in \Omega \times (0, T]$. We may assume
$l\leqslant n-1$, otherwise there is nothing to prove. At $(x_0,t_0)$, we may choose $e_1,\cdots, e_{n-1},e_n$ such that
\begin{equation}\label{4.6}
u_n(x_0,t_0) = |\n u(x_0,t_0)|>0,\ \mbox{ and }
(u_{ij})_{1 \leq i,j \leq n-1} \mbox{is diagonal at}\ (x_0,t_0).
\end{equation}
Without loss of generality we assume $ u_{11} \leq u_{22}\leq \cdots
\leq u_{n-1n-1} $. So, at $(x_0,t_0)$, from \eqref{4.1}, we have the matrix
$(\hat a_{ij})_{1 \leq i,j \leq n-1}$ is also diagonal, and $\hat a_{11} \geq \hat a_{22} \geq \cdots \geq \hat a_{n-1
n-1}$. From Lemma~\ref{lem2.8}, we get at $(x_0,t_0)$, there is a positive constant $
C_0$ such that

{\bf CASE 1:}
\begin{eqnarray*}
&&\hat a_{11}  \geq \cdots \geq \hat a_{l-1l-1} \geq
C_0 , \quad \hat a_{ll} = \cdots = \hat a_{n-1n-1} =0 , \\
&&\hat a_{nn} -\sum\limits_{i = 1}^{l-1} {\frac{{\hat a_{in} ^2 }} {{\hat a_{ii}
}}} \geq C_0 ,  \quad \hat a_{in} = 0, \quad l \leqslant i \leqslant n-1.
\end{eqnarray*}

{\bf CASE 2:}
\begin{eqnarray*}
&&\hat a_{11}  \geq \cdots \geq \hat a_{ll} \geq C_0 , \quad \hat a_{l+1l+1} =
\cdots = \hat a_{n-1n-1} =0, \\
&& \hat a_{nn}  = \sum\limits_{i = 1}^{l}{\frac{{\hat a_{in} ^2 }} {{\hat a_{ii} }}}
,  \quad \hat a_{in} = 0, \quad  l+1 \leqslant i \leqslant n-1.
\end{eqnarray*}

In this section, we consider CASE 1.

There is a neighborhood $\mathcal {O}\times
(t_0-\delta, t_0]$ of $(x_0, t_0)$, such that for any fixed point  $(x,t) \in \mathcal {O}\times (t_0-\delta,
t_0]$, we may choose $e_1,\cdots, e_{n-1},e_n$ such that
\begin{equation}\label{4.7}
u_n(x,t) = |\n u(x,t)|>0\ \mbox{ and }
(u_{ij})_{1 \leq i,j \leq n-1} \mbox{is diagonal at}\ (x,t).
\end{equation}
Similarly we assume $ u_{11} \leq u_{22}\leq \cdots
\leq u_{n-1n-1} $. So, at $(x,t) \in \mathcal {O}\times (t_0-\delta,
t_0]$, from \eqref{4.1}, we have the matrix
$(\hat a_{ij})_{1 \leq i,j \leq n-1}$ is also diagonal, and $\hat a_{11} \geq \hat a_{22} \geq \cdots \geq \hat a_{n-1
n-1}$. There is a positive constant $C_0>0$ depending only on
$\|u\|_{C^{4}}$ and $\mathcal {O}\times (t_0-\delta, t_0]$,
such that
\begin{align*}
\hat a_{11}  \geq \cdots \geq \hat a_{l-1l-1} \geq
C_0 ,  \quad \hat a_{nn} -\sum\limits_{i = 1}^{l-1} {\frac{{\hat a_{in} ^2 }} {{\hat a_{ii}
}}} \geq C_0.
\end{align*}
for $(x,t) \in \mathcal {O}\times (t_0-\delta, t_0]$. For convenience we
denote $ G = \{ 1, \cdots ,l-1 \} $ and $ B = \{
l, \cdots, n-1 \} $ be the ``good" and ``bad" sets of indices
respectively.

Since
\begin{equation}\label{4.8}
\hat a_{ij}= \frac{|\n u|}{|D u|} a_{ij}, \quad 1 \leq i, j \leq n-1,
\end{equation}
there is a positive constant $C>0$ depending only on
$\|u\|_{C^{4}}$ and $\mathcal {O}\times (t_0-\delta, t_0]$,
such that
\begin{equation}\label{4.9}
a_{11}  \geq \cdots \geq  a_{l-1l-1} \geq C,\quad (x,t) \in \mathcal {O}\times (t_0-\delta, t_0],
\end{equation}
and
\begin{equation}\label{4.10}
a_{ll}(x_0,t_0) = \cdots = a_{n-1n-1}(x_0,t_0) = 0.
\end{equation}
So the spatial
second fundamental form $a=(a_{ij})_{n-1 \times n-1}$ attains the minimal rank $l-1$ at $(x_0,t_0)$.
From Theorem $\ref{th3.1}$, the constant rank theorem holds for the spatial
second fundamental form $a$. So we can get $a_{ii}=0, \forall i \in B$ for any $(x,t) \in \mathcal {O} \times (t_0-\delta, t_0]$.
Furthermore,
\begin{equation}\label{4.11}
\hat a_{ii}=0, ~\forall i \in B.
\end{equation}
We denote $M=(\hat a_{ij})_{n-1 \times n-1}$,
so
\begin{equation} \label{4.12}
\sigma_{l+1}(M)=\sigma_{l}(M) \equiv 0, \quad \text{for
every}\quad (x,t) \in \mathcal {O} \times (t_0-\delta, t_0].
\end{equation}
Then we have from \eqref{2.18}
\begin{equation} \label{4.13}
0 \leq \sigma_{l+1}(\hat a)\leq \sigma_{l+1}(M)+\hat a_{nn}\sigma_l(M) = 0.
\end{equation}
So \begin{equation} \label{4.14}
\sigma_{l+1}(\hat a)\equiv 0, \quad \text{for every}\quad
(x,t) \in \mathcal {O} \times (t_0-\delta, t_0].
\end{equation}

By the continuity method, Theorem \ref{th1.3} holds under the CASE 1.

\section{A constant rank theorem for the space-time second fundamental form: CASE 2 }
\setcounter{equation}{0} \setcounter{theorem}{0}

Now we consider {\bf CASE 2}. For completeness we write out the Weingarten
tensor of the space-time level sets $\partial\Sigma_{x,t} ^{c} = \{(x,t) \in
\Omega \times [0, T]| u(x,t) = c\}$ as
\begin{equation}\label{6.1}
\hat a_{\a \b} =-\frac{|u_t|}{|D u|{u_t}^3} \hat A_{\a \b}, \quad  1 \leq \a, \b \leq n,
\end{equation}
where
\begin{align}
\label{6.2} \hat A_{ij} =& \hat h_{ij}-\frac{u_iu_n \hat h_{jn}}{\hat W(1+\hat W)u_t^2} -\frac{u_ju_n \hat h_{in}}{\hat W(1+ \hat W)u_t^2}  \\
&-\frac{u_i \sum_{l=1}^{n-1}u_l \hat h_{jl}}{\hat W(1+\hat W)u_t^2} -\frac{u_j \sum_{l=1}^{n-1} u_l \hat h_{il}}{\hat W(1+ \hat W)u_t^2}
+ \frac{u_iu_ju_n^2 \hat h_{nn}}{\hat W^2(1+\hat W)^2u_t^4} +T_{ij},  \quad 1 \leq i,j \leq n-1, \notag \\
\label{6.3} \hat A_{in} =&  \frac{1}{\hat W}\hat h_{in}-\frac{u_iu_n \hat h_{nn}}{\hat W^2(1+\hat W)u_t^2} -\frac{u_n \sum_{l=1}^{n-1}u_l
\hat h_{il}}{\hat W(1+\hat W)u_t^2}  \\
&+\frac{\hat h_{in} \sum_{l=1}^{n-1} u_l^2}{\hat W(1+\hat W)u_t^2}
+ \frac{u_i \sum_{l=1}^{n-1}u_l\hat h_{nl}}{\hat W(1+\hat W)u_t^2}[1- \frac{2}{\hat W}] +T_{in},  \quad 1 \leq i \leq n-1, \notag \\
\label{6.4} \hat A_{nn} =&  \frac{1}{\hat W^2} \hat h_{nn}-2\frac{u_n\sum_{l=1}^{n-1}u_l\hat h_{nl}}{\hat W^2(1+\hat W)u_t^2}  \\
&+2\frac{\sum_{l=1}^{n-1} u_l^2\hat h_{nn}}{\hat W^2(1+\hat W)u_t^2}
+ \frac{ \sum_{k,l=1}^{n-1} u_ku_l \hat h_{kl}}{\hat W(1+\hat W)u_t^2}[1- \frac{1}{\hat W}] +T_{nn},\notag
\end{align}
and
\begin{equation}\label{6.5}
\hat h_{\a \b} =u_t^2 u_{\a \b} + u_{tt}u_{\a}u_\b - u_t u_\b u_{\a t} - u_t u_\a u_{\b t}, \quad  1 \leq \a,\b \leq n.
\end{equation}
When we choose the coordinates such that  $u_i=0$ ($1 \leq i \leq n-1$) at some point $(x_0, t_0)$,
$T_{\a \b}$ ($1 \leq \a, \b \leq n$) satisfies
\begin{align}\label{6.6}
 T_{\a \b} =0, \quad D T_{\a \b} =0, \quad D^2 T_{\a \b} =0, \quad 1 \leq \a, \b \leq n.
\end{align}

From Lemma \ref{lem2.8}, $\hat{a}(x,t)=(\hat a_{\a \b})_{n \times n}$ attains the minimal
rank $l$ at some point $(x_0, t_0) \in \Omega \times (0, T]$ and at $(x_0,t_0)$, we may choose $e_1,\cdots, e_{n-1},e_n$ such that
\begin{equation}
u_n = |\n u|>0,\ \mbox{ and }
(u_{ij})_{1 \leq i,j \leq n-1} \mbox{is diagonal at}\ (x_0,t_0). \notag
\end{equation}
Then we have
\begin{eqnarray*}
&&\hat a_{11}  \geq \cdots \geq \hat a_{ll} \geq C_0 , \quad \hat a_{l+1l+1} =
\cdots = \hat a_{n-1n-1} =0, \\
&& \hat a_{nn}  = \sum\limits_{i = 1}^{l}{\frac{{\hat a_{in} ^2 }} {{\hat a_{ii} }}}
,  \quad \hat a_{in} = 0, \quad  l+1 \leqslant i \leqslant n-1.
\end{eqnarray*}
Then there is a neighborhood $\mathcal {O}\times
(t_0-\delta, t_0]$ of $(x_0, t_0)$, such that for any fixed point  $(x,t) \in \mathcal {O}\times (t_0-\delta,
t_0]$, we may choose $e_1,\cdots, e_{n-1},e_n$ such that
\begin{equation}\label{6.7}
u_n(x,t)= |\n u(x,t)|>0,\ \mbox{ and }
(u_{ij})_{1 \leq i,j \leq n-1} \mbox{is diagonal at}\ (x,t).
\end{equation}
Similarly we assume $ u_{11} \leq u_{22}\leq \cdots
\leq u_{n-1n-1} $. So, at $(x,t) \in \mathcal {O}\times (t_0-\delta,
t_0]$, from \eqref{6.1} and \eqref{6.2}, we have the matrix
$(\hat a_{ij})_{1 \leq i,j \leq n-1}$ is also diagonal, and $\hat a_{11} \geq \hat a_{22} \geq \cdots \geq \hat a_{n-1
n-1}$. There is a positive constant $C>0$ depending only on
$\|u\|_{C^{4}}$ and $\mathcal {O}\times (t_0-\delta, t_0]$,
such that
\begin{align*}
\hat a_{11}  \geq \cdots \geq \hat a_{ll} \geq C,
\end{align*}
for all $(x,t) \in \mathcal {O}\times (t_0-\delta, t_0]$. For convenience we
denote $ G = \{ 1, \cdots ,l \} $ and $ B = \{
l+1, \cdots, n-1 \} $ be the ``good" and ``bad" sets of indices
respectively. Without confusion we will also simply denote $
G = \{ \hat a_{11} , \cdots , \hat a_{ll} \} $ and $ B= \{\hat a_{l+1 l+1} , \cdots ,\hat a_{n-1n-1} \} $.

We shall divide this part into three steps. In step 1 we use Theorem~\ref{th3.1} to perform a reduction of the proof. Step 2 starts from Lemma \ref{lem6.5}, which is the reduction for the second derivative of $\phi$ via step 1. In step 3, we shall complete the proof of Theorem~\ref{th1.3}
with the help of Theorem \ref{th6.7}.

\subsection{Step 1: reduction using Theorem \ref{th3.1}}
From Theorem $\ref{th3.1}$, the constant rank theorem holds for the spatial
second fundamental form $a$ with the minimal rank $l$. So for any $(x,t) \in \mathcal {O} \times (t_0-\delta, t_0]$ with the coordinate \eqref{6.7}, we can get $a_{ii}=0, \forall i \in B$. Furthermore, $\hat a_{ii}=0, \forall i \in B$.

Under above assumptions, we can get
\begin{proposition}\label{prop6.1}
For any $(x,t) \in \mathcal {O} \times (t_0-\delta, t_0]$ with the coordinate \eqref{6.7}, we have
\begin{align}\label{6.8}
\hat a_{ii}(x,t) \equiv 0, \quad i \in B.
\end{align}
Furthermore, we have at $(x,t)$
\begin{eqnarray}
\label{6.9} &&\hat a_{ij}(x,t) \equiv 0, \quad  \text{ i or j } \in B, \\
\label{6.10} && \hat a_{in}(x,t)=\hat a_{ni}(x,t) \equiv 0, \quad i \in B, \\
\label{6.11} && {D \hat a_{ij} }(x,t)= {(\nabla \hat a_{ij}, \hat a_{ij,t}) }(x,t) \equiv 0, \quad  \text{ i or j } \in B,\\
\label{6.12} && {D \hat a_{in} }(x,t)= {D \hat a_{ni} }(x,t) \equiv 0, \quad  i \in B.
\end{eqnarray}
\end{proposition}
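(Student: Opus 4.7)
The plan is to reduce Proposition \ref{prop6.1} to the already-established constant rank theorem for the spatial second fundamental form (Theorem \ref{th3.1}) by passing from the space-time Weingarten tensor $\hat{a}$ to its spatial sub-block $M=(\hat a_{ij})_{1\le i,j\le n-1}$, and then to harvest \eqref{6.9}--\eqref{6.12} as purely algebraic/regularity consequences of $\hat a\ge 0$.

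First, I would observe that in CASE 2, with the coordinate choice \eqref{6.7} at $(x_0,t_0)$, the block $M$ is diagonal with $\hat a_{11}\ge\cdots\ge\hat a_{ll}\ge C_0$ and $\hat a_{l+1,l+1}=\cdots=\hat a_{n-1,n-1}=0$, so $\mathrm{rank}(M)=l$ at $(x_0,t_0)$. Via the identity $\hat a_{ij}=\frac{|\nabla u|}{|Du|}\,a_{ij}$ (valid for $1\le i,j\le n-1$, see \eqref{4.8}), the spatial Weingarten tensor $a$ has rank exactly $l$ at $(x_0,t_0)$ as well. Since the $l$ good eigenvalues remain uniformly bounded below by $C$ on $\mathcal{O}\times(t_0-\delta,t_0]$, the rank of $a$ is $\ge l$ throughout this neighborhood, so $l$ is the local minimum rank of the spatial second fundamental form and it is attained at $(x_0,t_0)$.

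Second, I would apply Theorem \ref{th3.1} and Corollary \ref{cor3.4} to $a$: the former yields that $a$ has constant rank $l$ on $\mathcal{O}\times(t_0-\delta,t_0]$, and the latter gives $u_{ii}\equiv 0$, hence $a_{ii}\equiv 0$, for $i\in B$ on this neighborhood. Multiplying by the positive factor $|\nabla u|/|Du|$ produces $\hat a_{ii}\equiv 0$ for $i\in B$, which is \eqref{6.8}. At this point the space-time quasiconcavity hypothesis enters its second (algebraic) role: $\hat a\ge 0$ forces a vanishing diagonal entry to zero out the entire corresponding row and column, so $\hat a_{ii}\equiv 0$ for $i\in B$ immediately delivers $\hat a_{ij}\equiv 0$ for every $j\in\{1,\dots,n\}$ whenever $i\in B$, giving both \eqref{6.9} and \eqref{6.10}.

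Finally, I would deduce the derivative identities \eqref{6.11}--\eqref{6.12} from the $C^{1,1}$ estimate of Lemma \ref{lem2.9} and its parabolic version Remark \ref{rem2.10}. Since $u\in C^{4,3}$, the tensor $\hat a$ is $C^{2,1}$ (hence $C^{1,1}$) and positive semidefinite, so on compact subsets of $\mathcal{O}\times(t_0-\delta,t_0)$ one has $|D\hat a_{\alpha\beta}|\le C\,(\hat a_{\alpha\alpha}\hat a_{\beta\beta})^{1/4}$; combined with $\hat a_{ii}\equiv 0$ for $i\in B$, this forces $D\hat a_{ij}\equiv 0$ whenever $i$ or $j\in B$, and in particular $D\hat a_{in}\equiv 0$ for $i\in B$. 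The only step where any care is needed is the first one — translating the CASE 2 information about $\hat a$ at the single point $(x_0,t_0)$ into the hypothesis that $a$ attains its local minimum rank at $(x_0,t_0)$ — but this is immediate from the uniform lower bound on the good eigenvalues in the chosen neighborhood.
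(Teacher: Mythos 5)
Your proposal is correct and follows essentially the same route as the paper: reduce to the spatial constant rank theorem (Theorem \ref{th3.1}) via the identity $\hat a_{ij}=\frac{|\nabla u|}{|Du|}a_{ij}$, then obtain \eqref{6.9}--\eqref{6.10} from positive semidefiniteness of $\hat a$ and \eqref{6.11}--\eqref{6.12} from the $C^{1,1}$ estimate of Lemma \ref{lem2.9} (Remark \ref{rem2.10}). Your explicit remark that the good eigenvalues stay bounded below on the neighborhood, so that $l$ is indeed the locally attained minimum rank of $a$, is a point the paper states without elaboration, and you handle it correctly.
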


\begin{proof}
The proof is directly from the constant rank theorem of $a$ and Lemma \ref{lem2.9} (i.e. Remark \ref{rem2.10}).

For any $(x,t) \in \mathcal {O} \times (t_0-\delta, t_0]$ with the coordinate \eqref{6.7}, we can get
\begin{equation*}
\hat a_{ij}= \frac{|\n u|}{|D u|} a_{ij}, \quad 1 \leq i, j \leq n-1,
\end{equation*}
then there is a positive constant $C>0$ depending only on
$\|u\|_{C^{4}}$ and $\mathcal {O}\times (t_0-\delta, t_0]$,
such that
\begin{equation*}
a_{11}  \geq \cdots \geq  a_{ll} \geq C,\quad (x,t) \in \mathcal {O}\times (t_0-\delta, t_0],
\end{equation*}
and
\begin{equation*}
a_{l+1l+1}(x_0,t_0) = \cdots = a_{n-1n-1}(x_0,t_0) = 0.
\end{equation*}
So the spatial second fundamental form $a=(a_{ij})_{n-1 \times n-1}$ attains the minimal rank $l$ at $(x_0,t_0)$.
From Theorem $\ref{th3.1}$, the constant rank theorem holds for the spatial
second fundamental form $a$. Then we can get for any $(x,t) \in \mathcal {O} \times (t_0-\delta, t_0]$ with the coordinate  \eqref{6.7} (that is \eqref{3.1}),
\begin{equation*}
a_{ii}(x,t)=0, \quad \forall i \in B .
\end{equation*}
Furthermore, we have
\begin{equation*}
\hat a_{ii}(x,t)=\frac{|\n u|}{|D u|} a_{ii}=0, \quad \forall i \in B.
\end{equation*}
From the positive definite of $\hat a$ at $(x,t)$, we get
\begin{equation*}
\hat a_{in}(x,t)=0, \quad \forall i \in B.
\end{equation*}
And from Remark \ref{rem2.10}, we get
\begin{equation*}
 |D \hat a_{ij}|(x,t)=|D \hat a_{in}|(x,t)=0, \quad \forall i \in B.
\end{equation*}
So the lemma holds.
\end{proof}

We denote $M=(\hat a_{ij})_{n-1 \times n-1}$, and set
\begin{eqnarray}\label{6.13}
\phi = \sigma_{l+1}(\hat a).
\end{eqnarray}
Following the notation in \cite{CF85} and \cite{KL87}, let $h$ and
$g$ be two functions defined in $\mathcal {O}\times (t_0-\delta,
t_0]$, we say $h\lesssim g$ if there exist positive constants
$C_1$ and $C_2$ depending only on $||u||_{C^{3,1}},n$ (independent
of $(x,t)$), such that $(h-g)(x,t)\leq(C_1\phi+C_2|\nabla\phi|)(x,t)$, $\forall(x,t)\in
\mathcal {O}\times (t_0-\delta, t_0]$. We also write
$$
h\sim g\quad \text{if}\quad h\lesssim g, \quad g\lesssim
h.
$$

\begin{lemma} \label{lem6.2}
Under the above assumptions and notations, for any $(x,t) \in \mathcal {O} \times (t_0-\delta, t_0]$ with the coordinate \eqref{6.7}, we have
\begin{align}
\label{6.14} &u_{ii}  = 0,\quad i \in B; \qquad  u_{ii}  = \frac{{\hat h_{ii} }}{{u_t ^2 }}, \quad i \in G;  \\
\label{6.15} &u_{ij}  = 0,\quad i \in B \cup G, j \in B \cup G, i \ne j;  \\
\label{6.16} &u_{in}  =0, \quad i \in B; \qquad u_{it}=0, \quad i \in B; \\
\label{6.17}  &u_t ^2 u_{in}  = \hat h_{in}  + u_n u_t u_{it},\quad i \in G;  \\
\label{6.18} &u_{nn} = u_t - \frac{{ 1}}{{u_t ^2 }}\sum\limits_{i \in G} {\hat h_{ii}};  \\
\label{6.19} &u_n ^2 u_{tt} \sim \sum\limits_{i \in G} {\frac{{\hat h_{in} ^2 }}{{\hat h_{ii} }}}
 + \sum\limits_{i \in G} {\hat h_{ii} }  - u_t ^3  + 2u_n u_t u_{nt}.
\end{align}

\end{lemma}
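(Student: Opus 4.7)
The plan is to combine three ingredients: the coordinate simplifications at any $(x,t)\in\mathcal{O}\times(t_0-\delta,t_0)$ where $u_i=0$ for $i\le n-1$ and $u_n=|\nabla u|>0$; the vanishing structure on $\hat{a}$ already delivered by Proposition~\ref{prop6.1} and Corollary~\ref{cor3.4} (applied to the spatial second fundamental form, whose minimum rank coincides with $l$ in CASE 2 since $\hat{a}_{ij}=(|\nabla u|/|Du|)a_{ij}$ for $i,j\le n-1$); and, only for \eqref{6.19}, the CASE 2 identity exploited through a direct computation of $\sigma_{l+1}(\hat{a})=\phi$.

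First I would record the simplifications forced by the coordinate choice. Since $u_i=0$ for $i\le n-1$, the definition \eqref{6.5} collapses to $\hat{h}_{ij}=u_t^2 u_{ij}$ for $1\le i,j\le n-1$, $\hat{h}_{in}=u_t^2 u_{in}-u_t u_n u_{it}$, and $\hat{h}_{nn}=u_t^2 u_{nn}+u_{tt}u_n^2-2u_t u_n u_{nt}$; every correction term in \eqref{6.2}--\eqref{6.4} carries a factor $u_l$ with $l\le n-1$ and hence vanishes, leaving $\hat{A}_{ij}=\hat{h}_{ij}$, $\hat{A}_{in}=\hat{W}^{-1}\hat{h}_{in}$, $\hat{A}_{nn}=\hat{W}^{-2}\hat{h}_{nn}$. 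Using $u_t>0$ and $\hat{W}=|Du|/u_t$, this gives $\hat{a}_{ij}=-u_{ij}/|Du|$ for $i,j\le n-1$, $\hat{a}_{in}=-\hat{h}_{in}/(u_t|Du|^2)$, and $\hat{a}_{nn}=-\hat{h}_{nn}/|Du|^3$. Identity \eqref{6.15} is immediate from the diagonalization of the spatial Hessian minor; the second half of \eqref{6.14} is the definitional equality $\hat{h}_{ii}=u_t^2 u_{ii}$; and the first half of \eqref{6.14} follows because Proposition~\ref{prop6.1} gives $\hat{a}_{ii}=0$ for $i\in B$, which via $\hat{a}_{ii}=-u_{ii}/|Du|$ forces $u_{ii}=0$.

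For \eqref{6.16}, Corollary~\ref{cor3.4} applied to the spatial second fundamental form directly yields $u_{ni}=0$ for $i\in B$; substituting this into $\hat{h}_{in}=0$ (which follows from $\hat{a}_{in}=0$, again Proposition~\ref{prop6.1}) leaves $u_t u_n u_{it}=0$, hence $u_{it}=0$. Equation \eqref{6.17} is then just the defining formula for $\hat{h}_{in}$ rearranged. For \eqref{6.18} I would invoke the heat equation $u_t=\sum_{i\le n-1}u_{ii}+u_{nn}$, substitute $u_{ii}=0$ for $i\in B$ and $u_{ii}=\hat{h}_{ii}/u_t^2$ for $i\in G$, and solve for $u_{nn}$.

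The remaining identity \eqref{6.19} is where the CASE 2 hypothesis enters substantively. Because Proposition~\ref{prop6.1} forces every row and column of $\hat{a}$ indexed by $B$ to vanish, every principal $(l+1)$-minor of $\hat{a}$ whose index set meets $B$ is zero; the only surviving minor is the one on $G\cup\{n\}$, whose $G$-block is diagonal, so expanding along the $n$-th row gives
\[
\phi=\sigma_{l+1}(\hat{a})=\sigma_l(G)\Bigl(\hat{a}_{nn}-\sum_{i\in G}\frac{\hat{a}_{in}^2}{\hat{a}_{ii}}\Bigr).
\]
Since $\sigma_l(G)\ge c>0$ on the neighborhood, this yields $\hat{a}_{nn}-\sum_{i\in G}\hat{a}_{in}^2/\hat{a}_{ii}\sim 0$; translating through the formulas above gives $\hat{h}_{nn}\sim\sum_{i\in G}\hat{h}_{in}^2/\hat{h}_{ii}$, and substituting $\hat{h}_{nn}=u_t^2 u_{nn}+u_{tt}u_n^2-2u_t u_n u_{nt}$ together with \eqref{6.18} (to eliminate $u_t^2 u_{nn}$) rearranges into \eqref{6.19}. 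The only real obstacle is careful bookkeeping of signs and of the vanishing correction terms in \eqref{6.2}--\eqref{6.4}; once that is recorded, each statement reduces to a short algebraic manipulation.
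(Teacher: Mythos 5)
Your proposal is correct and follows essentially the same route as the paper: the coordinate simplifications collapsing $\hat A_{ij},\hat A_{in},\hat A_{nn}$ to $\hat h_{ij},\hat W^{-1}\hat h_{in},\hat W^{-2}\hat h_{nn}$, Proposition \ref{prop6.1} and Corollary \ref{cor3.4} for the vanishing of the $B$-indexed entries, the heat equation for \eqref{6.18}, and the expansion $\phi=\sigma_l(G)\bigl(\hat a_{nn}-\sum_{i\in G}\hat a_{in}^2/\hat a_{ii}\bigr)\sim 0$ for \eqref{6.19}. The only cosmetic difference is that you expand $\sigma_{l+1}(\hat a)$ by direct minor bookkeeping where the paper cites Lemma \ref{lem2.5}; the two are equivalent here.
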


\begin{proof} Under the above assumptions, we need to do some routine
calculations for the derivatives of $\phi$. In the following, \eqref{6.7} can be used all the time.

Since $M$ is diagonal and $\hat a_{ii}=0$ for $i \in B$, we can get from Lemma \ref{lem2.5},
\begin{align}
\phi=&\sigma_{l+1}(M)+ \hat a_{nn}\sigma_l(M)
-\sum_{i}\hat a_{ni} \hat a_{in}\sigma_{l-1}(M|i) \notag \\
=&\sigma_l(G)[\hat a_{nn} - \sum\limits_{i \in G}{\frac{{\hat a_{in} ^2 }} {{\hat a_{ii} }}}], \notag
\end{align}
so
\begin{equation}\label{6.20}
\hat a_{nn} - \sum\limits_{i \in G}{\frac{{\hat a_{in} ^2 }} {{\hat a_{ii}}}}\sim 0.
\end{equation}

By \eqref{6.1}, we have
\begin{align}\label{6.21}
\hat A_{nn} - \sum\limits_{i \in G}{\frac{{\hat A_{in} ^2 }} {{\hat A_{ii} }}}
\sim 0.
\end{align}
Since $u_n= |\nabla u| > 0$ by \eqref{6.7}, $u_i=0, i=1, \cdots, n-1$, then we get
\begin{align}\label{6.22}
\hat A_{ij} =\hat h_{ij}, \quad \hat A_{in} = \frac{1}{\hat W}\hat h_{in}, \quad
\hat A_{nn} = \frac{1}{\hat W^2} \hat h_{nn},
\end{align}
so from \eqref{6.21}, we can get
\begin{align} \label{6.23}
\hat h_{nn} - \sum\limits_{i \in G}{\frac{{\hat h_{in} ^2 }} {{\hat h_{ii} }}}
\sim 0.
\end{align}

From \eqref{6.1}, \eqref{6.21} and Proposition \ref{prop6.1}, we have
\begin{align*}
&0=a_{ij} = -\frac{|u_t|}{|D u|{u_t}^3}\hat A_{ij}= -\frac{|u_t|}{|D u|{u_t}^3}\hat h_{ij}, \quad  i \in B, j\in B \cup G, \\
&0=a_{in} = -\frac{|u_t|}{|D u|{u_t}^3}\hat A_{in}= -\frac{|u_t|}{|D u|{u_t}^3}\frac{1}{\hat W}\hat h_{in}, \quad i \in B,
\end{align*}
so we get
\begin{align} \label{6.24}
\hat h_{ij}= u_t ^2 u_{ij}=0,\quad i \in B, j\in B \cup G; \qquad \hat h_{in}=u_t ^2 u_{in}-u_n u_t u_{it}=0, \quad i \in B.
\end{align}
From \eqref{6.5}, \eqref{6.7} and  \eqref{6.24}, we get
\begin{align*}
&u_{ii}  = 0,\quad i \in B; \qquad \qquad\quad\quad u_{ii}  = \frac{{\hat h_{ii} }}{{u_t ^2 }}, \quad i \in G;  \\
&u_{ij}  = 0,\quad i \in B \cup G, j \in B \cup G, i \ne j; \\
&u_{in}  =\frac{ u_n} { u_t} u_{it}, \quad i \in B; \qquad \qquad \quad u_t ^2 u_{in}  = \hat h_{in}  + u_n u_t u_{it},\quad i \in G;  \\
&u_{nn} =u_t  - \sum\limits_{i = 1}^{n - 1} {u_{ii} }  = u_t  - \sum\limits_{i \in G} {\frac{{\hat h_{ii} }}{{u_t ^2 }}};  \\
&u_n ^2 u_{tt} = \hat h_{nn}  - u_t ^2 u_{nn}  + 2u_n u_t u_{nt} \sim \sum\limits_{i \in G} {\frac{{\hat h_{in} ^2 }}{{\hat h_{ii} }}}
 + \sum\limits_{i \in G} {\hat h_{ii} }  - u_t ^3 + 2u_n u_t u_{nt}.
\end{align*}
By Corollary \ref{cor3.4}, $u_{in}=u_{it}=0$ for $i \in B$, so we can get \eqref{6.14} - \eqref{6.19}.
The lemma holds.
\end{proof}

\begin{lemma}\label{lem6.3}
Under the above assumptions and notations, for any $(x,t) \in \mathcal {O} \times (t_0-\delta, t_0]$ with the coordinate \eqref{6.7}, we have
\begin{eqnarray}
 \label{6.25} &\hat a_{nn,\alpha }  - 2\sum\limits_{i \in G}
{\frac{{\hat a_{in} }} {{\hat a_{ii} }}\hat a_{in,\alpha } + \sum\limits_{i,j \in G}
{\frac{{\hat a_{in} }} {{\hat a_{ii} }}\frac{{\hat a_{jn} }} {{\hat a_{jj}
}}\hat a_{ij,\alpha } } }   \sim 0, \quad  \a =1, \cdots, n, \\
 \label{6.26} &\hat A_{nn,\alpha }  - 2\sum\limits_{i \in G}
{\frac{{\hat A_{in} }} {{\hat A_{ii} }}\hat A_{in,\alpha } + \sum\limits_{i,j \in G}
{\frac{{\hat A_{in} }} {{\hat A_{ii} }}\frac{{\hat A_{jn} }} {{\hat A_{jj}
}}\hat A_{ij,\alpha } } }   \sim 0, \quad  \a =1, \cdots, n, \\
 \label{6.27} &\hat h_{nn,\alpha }  - 2\sum\limits_{i \in G}
{\frac{{\hat h_{in} }} {{\hat h_{ii} }}\hat h_{in,\alpha } + \sum\limits_{i,j \in G}
{\frac{{\hat h_{in} }} {{\hat h_{ii} }}\frac{{\hat h_{jn} }} {{\hat h_{jj}
}}\hat h_{ij,\alpha } } }   \sim 0, \quad  \a =1, \cdots, n,
\end{eqnarray}
and
\begin{align}  \label{6.28}
\phi_t \sim& \sigma_l(G)[\hat a_{nn,t }  - 2\sum\limits_{i \in G}
{\frac{{\hat a_{in} }} {{\hat a_{ii} }}\hat a_{in,t } + \sum\limits_{i,j \in G}
{\frac{{\hat a_{in} }} {{\hat a_{ii} }}\frac{{\hat a_{jn} }} {{\hat a_{jj}
}}\hat a_{ij,t } } } ]  \notag \\
\sim& \sigma_l(G)(-\frac{|u_t|}{|D u|{u_t}^3}) [\hat A_{nn,t}  - 2\sum\limits_{i \in G}
{\frac{{\hat A_{in} }} {{\hat A_{ii} }}\hat A_{in,t } + \sum\limits_{i,j \in G}
{\frac{{\hat A_{in} }} {{\hat A_{ii} }}\frac{{\hat A_{jn} }} {{\hat A_{jj}
}}\hat A_{ij,t} } } ]\notag \\
\sim& \sigma_l(G) (-\frac{|u_t|}{|D u|{u_t}^3})\frac{1}{\hat W^2} [\hat h_{nn,t }  - 2\sum\limits_{i \in G}
{\frac{{\hat h_{in} }} {{\hat h_{ii} }}\hat h_{in,t} + \sum\limits_{i,j \in G}
{\frac{{\hat h_{in} }} {{\hat h_{ii} }}\frac{{\hat h_{jn} }} {{\hat h_{jj}
}}\hat h_{ij,t} } }].
\end{align}

\end{lemma}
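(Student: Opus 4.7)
The plan is to exploit the block structure of $\hat a$ at $(x,t)$ — guaranteed by Proposition~\ref{prop6.1} — together with a Schur-complement identity, to compute $\phi_\alpha$ and $\phi_t$ at $(x,t)$ explicitly. Proposition~\ref{prop6.1} says that at $(x,t)$ both $\hat a_{ij}$ and its full space-time gradient $D\hat a_{ij}$ vanish whenever $i$ or $j$ is a bad index, so $\hat a$ is block-diagonal at the point, with a vanishing $B\times B$ block and a nontrivial $(G\cup\{n\})\times(G\cup\{n\})$ block $N$ in which $\hat a_{GG}=\mathrm{diag}(\hat a_{11},\dots,\hat a_{ll})$ is invertible and $\hat a_{Gn}=(\hat a_{in})_{i\in G}$ is the surviving column.

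First I will reduce to a single principal minor. Expanding $\sigma_{l+1}(\hat a)=\sum_{|S|=l+1}\det(\hat a_S)$, any principal minor with $S\cap B\neq\emptyset$ has a zero row (indexed by some $i_0\in B$) at $(x,t)$; when one differentiates, the resulting cofactors are contracted either against $\hat a_{i_0\eta,\alpha}$ (which vanish by Proposition~\ref{prop6.1}) or else live in a remaining zero row, so these contributions will be exactly zero at the point. Only $S=G\cup\{n\}$ survives, yielding $\phi_\alpha\sim \partial_\alpha\det(N)$ and $\phi_t\sim\partial_t\det(N)$. Next I will apply Schur's complement formula: since $\hat a_{GG}$ is diagonal and invertible,
\[
\det(N)=\det(\hat a_{GG})\,\Psi_{\rm ext},\qquad \Psi_{\rm ext}:=\hat a_{nn}-\sum_{i,j\in G}(\hat a_{GG}^{-1})_{ij}\hat a_{in}\hat a_{jn},
\]
and at the point $\Psi_{\rm ext}$ reduces to $\hat a_{nn}-\sum_{i\in G}\hat a_{in}^2/\hat a_{ii}\sim 0$ by \eqref{6.20}. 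Hence $[\partial_\alpha\det(\hat a_{GG})]\Psi_{\rm ext}\sim 0$, and the standard identity $\partial_\alpha \hat a_{GG}^{-1}=-\hat a_{GG}^{-1}(\partial_\alpha\hat a_{GG})\hat a_{GG}^{-1}$ together with the diagonality of $\hat a_{GG}$ at the point gives
\[
\partial_\alpha\Psi_{\rm ext}=\hat a_{nn,\alpha}-2\sum_{i\in G}\frac{\hat a_{in}}{\hat a_{ii}}\hat a_{in,\alpha}+\sum_{i,j\in G}\frac{\hat a_{in}\hat a_{jn}}{\hat a_{ii}\hat a_{jj}}\hat a_{ij,\alpha}.
\]
Since $\sigma_l(G)\geq C>0$, this establishes \eqref{6.25} and the first expression in \eqref{6.28}; the case $\alpha=t$ is identical.

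Finally I will translate into the $\hat A$ and $\hat h$ forms. At $(x,t)$, the relations \eqref{6.22} read $\hat a_{\alpha\beta}=-\frac{|u_t|}{|Du|u_t^3}\hat A_{\alpha\beta}$ together with $\hat A_{ij}=\hat h_{ij}$, $\hat A_{in}=\hat h_{in}/\hat W$, $\hat A_{nn}=\hat h_{nn}/\hat W^2$. The ratios $\hat a_{in}/\hat a_{ii}$ coincide with $\hat A_{in}/\hat A_{ii}$, so the same combinations appear. Upon differentiation, the extra terms generated by the derivatives of the overall scalar $-|u_t|/(|Du|u_t^3)$ and of $1/\hat W^2$ multiply, respectively, $\hat A_{nn}-\sum_{i\in G}\hat A_{in}^2/\hat A_{ii}$ and $\hat h_{nn}-\sum_{i\in G}\hat h_{in}^2/\hat h_{ii}$, which are $\sim 0$ by \eqref{6.21} and \eqref{6.23}; hence the translation introduces only harmless corrections, delivering \eqref{6.26}, \eqref{6.27}, and the remaining lines of \eqref{6.28}. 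The main obstacle will be the justification in Step~1 — that derivatives of bad-index principal minors contribute nothing to $\phi_\alpha$ — and this rests crucially on the full strength of Proposition~\ref{prop6.1}, namely the vanishing of the entire first-order gradient $D\hat a_{ij}$ (not merely of $\hat a_{ij}$) whenever $i$ or $j\in B$.
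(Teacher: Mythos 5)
Your derivation of \eqref{6.25} and of the first line of \eqref{6.28} is correct and takes a genuinely different route from the paper. The paper plugs directly into the explicit expansion of $\sigma_{l+1}(\hat a)$ from Lemma \ref{lem2.5} and simplifies term by term using \eqref{6.9}--\eqref{6.12} and \eqref{6.20}; you instead expand $\sigma_{l+1}(\hat a)$ as a sum of principal minors, kill every minor meeting $B$ by the cofactor/zero-row argument (which is valid precisely because Proposition \ref{prop6.1} gives $D\hat a_{ij}=0$, not just $\hat a_{ij}=0$, for bad indices), and then differentiate the Schur factorization $\det(N)=\det(\hat a_{GG})\,\Psi_{\rm ext}$ with $\Psi_{\rm ext}=\phi/\sigma_l(G)=O(\phi)$. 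This is cleaner and makes the structure of the bracket in \eqref{6.25} transparent; the two arguments are of course equivalent, but yours packages the bookkeeping into one linear-algebra identity. The passage from $\hat a$ to $\hat A$ is also fine as you describe it, since $\hat a_{\alpha\beta}$ is an exact scalar multiple of $\hat A_{\alpha\beta}$ and the derivative of the scalar multiplies $\hat A_{nn}-\sum_{i\in G}\hat A_{in}^2/\hat A_{ii}\sim 0$.

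The one place where your plan is too optimistic is the passage from $\hat A$ to $\hat h$, i.e.\ \eqref{6.27} and the last line of \eqref{6.28}. The formulas \eqref{6.2}--\eqref{6.4} are not simply ``scalar times $\hat h$'': they contain correction terms carrying explicit factors $u_i$ ($i\le n-1$). These factors vanish at the point, but their first derivatives do not ($u_{lt}$ and $u_{l\alpha}$ for $l\in G$ survive), so differentiating once produces extra contributions such as $-2u_n\sum_{l\in G}u_{lt}\hat h_{nl}/(\hat W^2(1+\hat W)u_t^2)$ from $\hat A_{nn,t}$ and analogous terms from $\hat A_{in,t}$ and $\hat A_{ij,t}$. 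These are not individually of the form (bounded)$\times(\hat h_{nn}-\sum_{i\in G}\hat h_{in}^2/\hat h_{ii})$, so they are not disposed of by \eqref{6.23} alone; they cancel only after being summed across the three groups of terms in the bracket (using \eqref{6.23} once along the way), which is exactly the computation the paper carries out to reach \eqref{6.31}. So you should not attribute all the corrections to the derivatives of the prefactors $-|u_t|/(|Du|u_t^3)$ and $1/\hat W^2$; you must also track and cancel the derivatives of the $u_i$-weighted terms in $\hat A_{\alpha\beta}$. This is a finite, checkable computation and it does close, so the gap is one of under-specification rather than a wrong idea, but as written your Step 3 would not compile into a proof of \eqref{6.27}.
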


\begin{proof} Taking the first
derivatives of $\phi$ with respect to $t$,  we have from Lemma \ref{lem2.5}
\begin{align}
\phi_t=&\sum_i\sigma_l(M|i)\hat a_{ii,t}+\hat a_{nn,t}\sigma_l(M)+ \hat a_{nn}\sum_i\sigma_{l-1}(M|i)\hat a_{ii,t}\notag\\
&-2\sum_i\sigma_{l-1}(M|i)\hat a_{ni}\hat a_{ni,t}-\sum_{i\neq j}\sigma_{l-2}(M|ij)\hat a_{ni}^2\hat a_{jj,t}+\sum_{i\neq j}\sigma_{l-2}(M|ij)\hat a_{ni}\hat a_{nj}\hat a_{ij,t}\notag\\
=&\sigma_{l}(G)\hat a_{nn,t}+\hat a_{nn}\sum_{i\in
G}\sigma_{l-1}(G|i)\hat a_{ii,t}\notag\\
&-2\sum_{i\in G}\sigma_{l-1}(G|i)\hat a_{ni}\hat a_{ni,t}-\sum_{\stackrel{i,j\in G}{i\neq j}}\sigma_{l-2}(G|ij)\hat a_{ni}^2\hat a_{jj,t}+\sum_{\stackrel{i,j\in G}{i\neq j}}\sigma_{l-2}(G|ij)\hat a_{ni}\hat a_{nj}\hat a_{ij,t},\notag
\end{align}
by \eqref{6.9}-\eqref{6.12} and \eqref{6.20}, we get
\begin{align}
&\hat a_{nn}\sum_{i\in G}\sigma_{l-1}(G|i)\hat a_{ii,t}-\sum_{\stackrel{i,j\in G}{i\neq j}}\sigma_{l-2}(G|ij)\hat a_{ni}^2\hat a_{jj,t}+\sum_{\stackrel{i,j\in G}{i\neq j}}\sigma_{l-2}(G|ij)\hat a_{ni}\hat a_{nj}\hat a_{ij,t}\notag\\
=&\hat a_{nn}\sigma_{l}(G)\sum_{i\in G}\frac{1}{\hat a_{ii}}\hat a_{ii,t}-\sigma_{l}(G)\sum_{\stackrel{i,j\in G}{i\neq j}}\frac{\hat a_{ni}^2}{\hat a_{ii}}\frac{1}{\hat a_{jj}}\hat a_{jj,t}+\sigma_{l}(G)\sum_{\stackrel{i,j\in G}{i\neq j}}\frac{\hat a_{ni}}{\hat a_{ii}}\frac{\hat a_{nj}}{\hat a_{jj}}\hat a_{ij,t}\notag\\
\thicksim&\sigma_{l}(G)\sum\limits_{i,j \in G}
{\frac{{\hat a_{in} }} {{\hat a_{ii} }}\frac{{\hat a_{jn} }} {{\hat a_{jj}
}}\hat a_{ij,t } }, \notag
\end{align}
so we can get that
\begin{align} \label{6.29}
\phi_t\sim& \sigma_l(G)[\hat a_{nn,t }  - 2\sum\limits_{i \in G}
{\frac{{\hat a_{in} }} {{\hat a_{ii} }}\hat a_{in,t } + \sum\limits_{i,j \in G}
{\frac{{\hat a_{in} }} {{\hat a_{ii} }}\frac{{\hat a_{jn} }} {{\hat a_{jj}
}}\hat a_{ij,t } } } ].
\end{align}
By \eqref{6.1} and \eqref{6.20}, we have
\begin{align} \label{6.30}
\phi_t \sim \sigma_l(G)(-\frac{|u_t|}{|D u|{u_t}^3}) [\hat A_{nn,t}  - 2\sum\limits_{i \in G}
{\frac{{\hat A_{in} }} {{\hat A_{ii} }}\hat A_{in,t } + \sum\limits_{i,j \in G}
{\frac{{\hat A_{in} }} {{\hat A_{ii} }}\frac{{\hat A_{jn} }} {{\hat A_{jj}
}}\hat A_{ij,t} } } ],
\end{align}
and from \eqref{6.2}-\eqref{6.4}, we get
\begin{align*}
\hat A_{nn,t} =& \Big(\frac{1}{\hat W^2}\Big)_t \hat h_{nn}+\frac{1}{\hat W^2} \hat h_{nn,t}
-2\frac{u_n\sum_{l=1}^{n-1}u_{lt}\hat h_{nl}}{\hat W^2(1+\hat W)u_t^2},\\
- 2\sum\limits_{i \in G}{\frac{{\hat A_{in} }} {{\hat A_{ii} }}\hat A_{in,t}}
=&- 2\sum\limits_{i \in G}{\Big(\frac{1}{\hat W}\Big)\frac{{\hat h_{in} }} {{\hat h_{ii} }}\Big[ \Big(\frac{1}{\hat W}\Big)_t\hat h_{in}+\frac{1}{\hat W}\hat h_{in,t}
-\frac{u_{it}u_n \hat h_{nn}}{\hat W^2(1+\hat W)u_t^2} -\frac{u_n \sum_{l=1}^{n-1}u_{lt}\hat h_{il}}{\hat W(1+\hat W)u_t^2}\Big]} \notag \\
=&- 2\sum\limits_{i \in G}{\Big(\frac{1}{\hat W}\Big)\frac{{\hat h_{in} }} {{\hat h_{ii} }}\Big[ \Big(\frac{1}{\hat W}\Big)_t\hat h_{in}+\frac{1}{\hat W}\hat h_{in,t}
-\frac{u_{it}u_n \hat h_{nn}}{\hat W^2(1+\hat W)u_t^2} -\frac{u_n u_{it}\hat h_{ii}}{\hat W(1+\hat W)u_t^2}\Big]} \notag \\
\sim&- 2\Big(\frac{1}{\hat W}\Big)\Big(\frac{1}{\hat W}\Big)_t\hat h_{nn}
-2\frac{1}{\hat W^2}\sum\limits_{i \in G}{\frac{{\hat h_{in} }} {{\hat h_{ii} }}\hat h_{in,t}}
+\frac{2u_n \hat h_{nn}}{\hat W^3(1+\hat W)u_t^2}\sum\limits_{i \in G}{\frac{{u_{it}\hat h_{in} }} {{\hat h_{ii} }}} \notag\\
&+\frac{2u_n\sum\limits_{i \in G}{u_{it}\hat h_{in}}}{\hat W^2(1+\hat W)u_t^2}, \\
\sum\limits_{i,j \in G}{\frac{{\hat A_{in} }} {{\hat A_{ii} }}\frac{{\hat A_{jn} }} {{\hat A_{jj}
}} \hat A_{ij,t}} =&\sum\limits_{i,j \in G}{\Big(\frac{1}{\hat W^2}\Big)\frac{{\hat h_{in} }} {{\hat h_{ii} }}\frac{{\hat h_{jn} }} {{\hat h_{jj}
}}\Big[ \hat h_{ij,t}-\frac{u_{it}u_n \hat h_{jn}}{\hat W(1+\hat W)u_t^2} -\frac{u_{jt}u_n \hat h_{in}}{\hat W(1+ \hat W)u_t^2} \Big]} \notag \\
\sim&\Big(\frac{1}{\hat W^2}\Big)\sum\limits_{i,j \in G}{\frac{{\hat h_{in} }} {{\hat h_{ii} }}\frac{{\hat h_{jn} }} {{\hat h_{jj}
}}\hat h_{ij,t}}-\frac{2u_n \hat h_{nn}}{\hat W^3(1+\hat W)u_t^2}\sum\limits_{i \in G}{\frac{{u_{it}\hat h_{in} }} {{\hat h_{ii} }}},
\end{align*}
then
\begin{align} \label{6.31}
&\hat A_{nn,t}  - 2\sum\limits_{i \in G}
{\frac{{\hat A_{in} }} {{\hat A_{ii} }}\hat A_{in,t } + \sum\limits_{i,j \in G}
{\frac{{\hat A_{in} }} {{\hat A_{ii} }}\frac{{\hat A_{jn} }} {{\hat A_{jj}
}}\hat A_{ij,t} } } \notag\\
\sim& \frac{1}{\hat W^2} [\hat h_{nn,t }  - 2\sum\limits_{i \in G}
{\frac{{\hat h_{in} }} {{\hat h_{ii} }}\hat h_{in,t} + \sum\limits_{i,j \in G}
{\frac{{\hat h_{in} }} {{\hat h_{ii} }}\frac{{\hat h_{jn} }} {{\hat h_{jj}
}}\hat h_{ij,t} } }].
\end{align}
So by \eqref{6.30} and \eqref{6.31}, \eqref{6.28} holds.

Similarly, taking the first derivative of $\phi$ in the direction
$e_\alpha$, it follows that
\begin{align}
\phi_\alpha\sim& \sigma_l(G)[\hat a_{nn,\alpha }  - 2\sum\limits_{i \in G}
{\frac{{\hat a_{in} }} {{\hat a_{ii} }}\hat a_{in,\alpha } + \sum\limits_{i,j \in G}
{\frac{{\hat a_{in} }} {{\hat a_{ii} }}\frac{{\hat a_{jn} }} {{\hat a_{jj}
}}\hat a_{ij,\alpha } } } ], \notag
\end{align}
so
\begin{align}\label{6.31}
\hat a_{nn,\alpha }  - 2\sum\limits_{i \in G}
{\frac{{\hat a_{in} }} {{\hat a_{ii} }}\hat a_{in,\alpha } + \sum\limits_{i,j \in G}
{\frac{{\hat a_{in} }} {{\hat a_{ii} }}\frac{{\hat a_{jn} }} {{\hat a_{jj}
}}\hat a_{ij,\alpha } } } \sim 0.\notag
\end{align}
Similarly, we can get
\begin{align}
\hat A_{nn,\alpha }  - 2\sum\limits_{i \in G}
{\frac{{\hat A_{in} }} {{\hat A_{ii} }}\hat A_{in,\alpha } + \sum\limits_{i,j \in G}
{\frac{{\hat A_{in} }} {{\hat A_{ii} }}\frac{{\hat A_{jn} }} {{\hat A_{jj}
}}\hat A_{ij,\alpha } } }   \sim 0, \quad  \a =1, \cdots, n, \notag
\end{align}
and
\begin{equation}
\hat h_{nn,\alpha }  - 2\sum\limits_{i \in G}
{\frac{{\hat h_{in} }} {{\hat h_{ii} }}\hat h_{in,\alpha } + \sum\limits_{i,j \in G}
{\frac{{\hat h_{in} }} {{\hat h_{ii} }}\frac{{\hat h_{jn} }} {{\hat h_{jj}
}}\hat h_{ij,\alpha } } }   \sim 0, \quad  \a =1, \cdots, n.\notag
\end{equation}
The lemma holds.
\end{proof}

\begin{lemma}\label{lem6.4}
Under the above assumptions and notations, for any $(x,t) \in \mathcal {O} \times (t_0-\delta, t_0]$ with the coordinate \eqref{6.7}, we have
\begin{align}
\label{6.32} & u_{ij\a}=0, \quad i \in B, j \in B \cup G, \a \in B \cup G; \\
\label{6.33} & u_{ijn} =0, \quad u_{ijt} = 0, \quad  i \in B, j \in B\cup G; \\
\label{6.34} & u_{inn}  = 0, \quad u_{int} = 0, \quad u_{itt} = 0,\quad i \in B;
\end{align}

\begin{align}
\label{6.35} & u_t^2 u_{iii} = \hat h_{ii,i},  \quad i \in  G; \\
\label{6.36} & u_t^2 u_{iij} = \hat h_{ii,j}-2\frac{u_{j t}}{u_t}  \hat h_{ii}, ~u_t^2 u_{iji} = \hat h_{ij,i}+\frac{u_{j t}}{u_t}  \hat h_{ii}, ~u_t^2 u_{ijj} = \hat h_{ij,j}+\frac{u_{i t}}{u_t}  \hat h_{jj},~~i \in G, j \in G, i \ne j; \\
\label{6.37} & u_t^2 u_{iin} = \hat h_{ii,n}-2\frac{u_{n t}}{u_t}  \hat h_{ii}+2\frac{u_{i t}}{u_t} \hat h_{in}+2u_n u_{i t}^2,  \quad i \in G; \\
\label{6.38} & u_t^2 u_{ijk}  = \hat h_{ij,k}, \quad i \in G, j \in G, k \in G, i \ne j\ne k; \\
\label{6.39} &u_t^2 u_{ijn} = \hat h_{ij,n}+\frac{u_{i t}}{u_t}  \hat h_{jn}+\frac{u_{j t}}{u_t} \hat h_{in}+ 2 u_n u_{i t}u_{j t},  \quad i \in G, j \in G, i \ne j;   \\
\label{6.40} &u_t^2u_{nni} =-\sum\limits_{k \in G} {\hat h_{kk,i}} +u_t^2 u_{it} +2\frac{{u_{it} }}{u_t}\sum\limits_{k \in G, k\ne i} {\hat h_{kk}}, \quad i \in G\\
\label{6.41} &u_t^2u_{nnn} =-\sum\limits_{k \in G} {\hat h_{kk,n}} +u_t^2 u_{nt}
  +2\frac{{u_{nt} }}{u_t}\sum\limits_{k \in G} {\hat h_{kk}}-2\sum\limits_{k \in G} {\frac{{u_{kt} }}{u_t}\hat h_{kn}}-2\sum\limits_{k \in G} {u_n u_{kt} ^2};
\end{align}

\begin{align}
\label{6.42} &u_{n}u_t u_{ii t} =-\hat h_{in,i}+\hat h_{ii,n}+3\frac{{u_{it} }}{u_t}\hat h_{in} -3\frac{{u_{nt} }}{u_t}\hat h_{ii}+2u_n u_{it} ^2+ u_{ii}u_n u_{tt}, \quad  i \in G;  \\
\label{6.43} &u_{n}u_t u_{ij t} =-\hat h_{in,j}+\hat h_{ij,n}+3\frac{{u_{jt} }}{u_t}\hat h_{in} +2u_n u_{it} u_{jt}, \quad  i \in G, j \in G, i \ne j;\\
\label{6.44} &u_{n}u_t u_{nn t} =\sum\limits_{i \in G} {[\hat h_{in,i}-\hat h_{ii,n}]}+u_{n}u_t u_{tt}-\sum\limits_{i \in G} [3\frac{{u_{it} }}{u_t}\hat h_{in} -3\frac{{u_{nt} }}{u_t}\hat h_{ii}+2u_n u_{it} ^2+ u_{ii}u_n u_{tt}];\\
\label{6.45} &u_{n}u_t u_{in t} =-\hat h_{in,n}-\sum\limits_{k \in G} \hat h_{kk,i}+3\frac{{u_{it} }}{u_t}\sum\limits_{k \in G,k \ne i} \hat h_{kk} +\frac{{u_{it} }}{u_t}\hat h_{ii}+\frac{{u_{nt} }}{u_t}\hat h_{in}+ u_{in}u_n u_{tt}, \quad  i \in G;
\end{align}
and
\begin{align}
\label{6.46} u_{n}^2 u_{tti} =&\hat h_{nn,i}-2\hat h_{in,n}-\sum\limits_{k \in G} \hat h_{kk,i} -u_t^2 u_{it} +4\frac{{u_{it} }}{u_t}\sum\limits_{k \in G,k \ne i} \hat h_{kk} \notag \\
&+2\frac{{u_{it} }}{u_t}\hat h_{ii} +2\frac{{u_{nt} }}{u_t}\hat h_{in}- 2 u_t u_{it} u_{nn} + 2u_{t}u_{ni} u_{tn}+ 2u_{n}u_{ti} u_{tn}, ~ i \in G;  \\
\label{6.47} u_{n}^2 u_{ttn} =&\hat h_{nn,n}+2\sum\limits_{i \in G}\hat h_{in,i}-\sum\limits_{k \in G} \hat h_{kk,n}\notag \\
&-u_t^2 u_{nt}-4\sum\limits_{i \in G} \frac{{u_{it} }}{u_t}\hat h_{in}+ 4 \frac{{u_{nt} }}{u_t}\sum\limits_{i \in G}\hat h_{ii} - 2\sum\limits_{i \in G} u_n  u_{it}^2+ 2u_{n}u_{tn}^2,
\end{align}
\end{lemma}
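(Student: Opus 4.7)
The statement of Lemma 6.4 is a compendium of identities for the third-order derivatives of $u$ at a point where the coordinate frame \eqref{6.7} has been chosen. All of them are consequences of three ingredients: (a) the already-established vanishing results of Proposition \ref{prop6.1} and Corollary \ref{cor3.4} (applied in the spatial directions); (b) the defining formula $\hat h_{\alpha\beta}=u_t^2 u_{\alpha\beta}+u_{tt}u_\alpha u_\beta-u_t u_\beta u_{\alpha t}-u_t u_\alpha u_{\beta t}$ and the consequences of Lemma \ref{lem6.2} (in particular $u_t^2u_{ii}=\hat h_{ii}$, $u_{nn}=u_t-\sum_{i\in G}\hat h_{ii}/u_t^2$, and \eqref{6.17}); (c) the heat equation $u_t=\sum_{k}u_{kk}$ together with the commutativity $u_{\alpha\beta\gamma}=u_{\alpha\gamma\beta}$ of third-order partial derivatives.

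My plan is to treat the identities in three blocks corresponding to \eqref{6.32}--\eqref{6.34}, \eqref{6.35}--\eqref{6.41}, and \eqref{6.42}--\eqref{6.47}. For the first block I would first invoke Corollary \ref{cor3.4} (a direct corollary of the spatial constant rank Theorem \ref{th3.1}) to obtain $u_{ij\alpha}=0$ whenever $i\in B$, $j\in G$, $1\le\alpha\le n$. For the remaining cases where $j\in B$ too, I would combine Proposition \ref{prop6.1}, which says $D\hat a_{ij}\equiv 0$ when $i$ or $j\in B$, with the explicit expressions \eqref{6.2}--\eqref{6.5}: at a point with $u_i=0$ for $i<n$ one has $\hat A_{ij}=\hat h_{ij}$, so the vanishing of $D\hat a_{ij}$ translates directly into the vanishing of $D\hat h_{ij}$, from which \eqref{6.33} and \eqref{6.34} drop out after using $u_{in}=u_{it}=0$ for $i\in B$ and Lemma \ref{lem2.9}.

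For the second block, the strategy is brute-force differentiation of the definition of $\hat h_{\alpha\beta}$ in a spatial direction, followed by the coordinate-induced simplifications $u_i=0$ ($i<n$) and $u_{ij}=0$ ($i\ne j$, $i,j<n$); e.g., for \eqref{6.36} one computes $\hat h_{ii,j}=2u_t u_{tj}u_{ii}+u_t^2 u_{iij}$ and inverts, using $u_t^2u_{ii}=\hat h_{ii}$. Identities \eqref{6.40} and \eqref{6.41} require additionally differentiating the relation $u_{nn}=u_t-\sum_{k\in G}\hat h_{kk}/u_t^2$ and invoking once more the vanishing third derivatives in bad indices coming from the first block.

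The hard part is the third block. Here the idea is to exploit the commutativity $u_{\alpha i\beta}=u_{\alpha\beta i}$ between the formulas obtained in the second block for two orderings of the indices, and to feed in the heat equation. Concretely, for \eqref{6.42} I would expand both $\hat h_{in,i}$ and $\hat h_{ii,n}$ as in Step 2, subtract them so that $u_t^2 u_{ini}=u_t^2u_{iin}$ cancels, and rearrange the remainder (which involves $u_{iit}$, $u_{it}$, $u_{in}$, $u_{nt}$, $u_{tt}$) using \eqref{6.17} and $u_t^2u_{ii}=\hat h_{ii}$ to replace $u_{in}$ and $u_{ii}$ by $\hat h_{in}$ and $\hat h_{ii}$; this isolates $u_n u_t u_{iit}$ exactly as claimed. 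Identities \eqref{6.43}--\eqref{6.45} follow by the same commutator trick applied to the pairs $(\hat h_{in,j},\hat h_{ij,n})$, $(\hat h_{nn,t},\sum_{i\in G}\hat h_{ii,t})$, and $(\hat h_{in,n},\hat h_{nn,i})$ respectively. Finally, \eqref{6.46} and \eqref{6.47} come from combining $\hat h_{nn,\a}$ with a sum of $\hat h_{kk,\a}$ ($k\in G$) and using $u_t=\sum_{k}u_{kk}=u_{nn}+\sum_{k<n}u_{kk}$, together with \eqref{6.35}--\eqref{6.41}, to reduce every spatial third derivative to a combination of $\hat h_{\cdot\cdot,\cdot}$ and solve for $u_n^2 u_{tt\alpha}$. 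The main obstacle is not conceptual but bookkeeping: one must carefully track which of the many product-rule terms survive after imposing the coordinate simplifications and the vanishing results for bad indices, and at each stage replace $u_{ii}$, $u_{in}$ by their $\hat h$-expressions so that the final form matches the stated right-hand sides.
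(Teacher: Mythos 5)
Your proposal is correct and follows essentially the same route as the paper's proof: derive the bad-index vanishings from Proposition \ref{prop6.1}/Corollary \ref{cor3.4} together with $\hat A_{\alpha\beta}=\hat h_{\alpha\beta}$ (up to powers of $\hat W$) in the frame \eqref{6.7}, then invert the product-rule expansions of $\hat h_{ij,\alpha}$, $\hat h_{in,\alpha}$, $\hat h_{nn,\alpha}$ for the spatial, mixed and $u_{tt\alpha}$ third derivatives, feeding in the differentiated heat equation for the $u_{nn\alpha}$ and $u_{nnt}$ entries. The only bookkeeping slip is in your pairing for \eqref{6.45}: the partner of $\hat h_{in,n}$ there is the heat-equation identity $u_{inn}=u_{nni}$ (i.e. \eqref{6.40}), while $\hat h_{nn,i}$ enters only afterwards to produce \eqref{6.46}; this corrects itself once the expansion is written out.
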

\begin{proof}
By \eqref{6.1}, \eqref{6.11} and \eqref{6.12}, we get for $i \in B$ or $j \in B$,
\begin{align*}
0={D \hat a_{ij} } &=-\frac{|u_t|}{|D u|{u_t}^3} {D \hat A_{ij} } \\
&=-\frac{|u_t|}{|D u|{u_t}^3}
[{D \hat h_{ij} }-\frac{D u_i u_n \hat h_{jn}}{\hat W(1+\hat W)u_t^2}
 -\frac{Du_ju_n \hat h_{in}}{\hat W(1+ \hat W)u_t^2} ]\\
 &=-\frac{|u_t|}{|D u|{u_t}^3}
{D \hat h_{ij} },  \\
0=D \hat a_{in}  &= -\frac{|u_t|}{|D u|{u_t}^3} D \hat A_{in} \\
 &=-\frac{|u_t|}{|D u|{u_t}^3} [ \frac{1}{\hat W}D \hat h_{in}
-\frac{D u_iu_n \hat h_{nn}}{\hat W^2(1+\hat W)u_t^2} -\frac{u_n \sum_{l=1}^{n-1} Du_l
\hat h_{il}}{\hat W(1+\hat W)u_t^2}]\\
 &=-\frac{|u_t|}{|D u|{u_t}^3} \frac{1}{\hat W}D \hat h_{in},
\end{align*}
so we get
\begin{align*}
& 0= \hat h_{ij, \a}=u_t^2 u_{ij\a}, \quad i \in B, j \in B, \a \in B \cup G; \\
& 0=\hat h_{ij, n}=u_t^2 u_{ijn} - u_t u_{in} u_{j t} - u_t u_{jn} u_{i t}=u_t^2 u_{ijn}, \quad  i \in B, j \in B; \\
& 0=\hat h_{ij,t}=u_t^2 u_{ijt} - u_t u_{it} u_{j t} - u_t u_{jt} u_{i t}=u_t^2 u_{ijt}, \quad  i \in B, j \in B; \\
&0=\hat h_{ij,\a}=u_t^2 u_{ij\a} - u_t u_{j\a} u_{i t}=u_t^2 u_{ij\a} , \quad  i \in B, j \in G, \a \in B \cup G; \\
&0=\hat h_{ij,n}=u_t^2 u_{ijn} - u_t u_{in} u_{j t} - u_t u_{jn} u_{i t} =u_t^2 u_{ijn} ,  \quad i \in B, j \in G; \\
&0=\hat h_{ij,t}=u_t^2 u_{ijt} - 2u_t u_{it} u_{j t}=u_t^2 u_{ijt},  \quad i \in B, j \in G; \\
&0=\hat h_{in, \a}=u_t^2 u_{in\a} - u_t u_{n } u_{i t\a} =u_t^2 u_{in\a}, \quad  i \in B, \a \in B \cup G; \\
&0=\hat h_{in,n}=u_t^2 u_{inn}  - u_t u_{n} u_{i n t}, \quad  i \in B; \\
&0=\hat h_{in,t}=u_t^2 u_{int} - u_t u_{n} u_{i t t}, \quad  i \in B;
\end{align*}
and by the equation \eqref{1.3}, we have
\begin{align*}
u_{nni} =\Delta u_{i}- \sum\limits_{k=1}^{n-1} u_{kki}=u_{it}- \sum\limits_{k \in G} u_{kki}=0, \quad i \in B.
\end{align*}
So
\begin{align*}
& u_{ij\a}=0, \quad i \in B, j \in B \cup G, \a \in B \cup G; \\
& u_{ijn} =0, \quad u_{ijt} = 0, \quad  i \in B, j \in B\cup G; \\
& u_{inn}  = 0, \quad u_{int} = 0, \quad u_{itt} = 0,\quad i \in B;
\end{align*}

For $i,j \in G$, we can get
\begin{align}\label{6.48}
\hat h_{ij,\alpha}= u_t^2 u_{ij\a}+ 2u_t u_{t \a} u_{ij} -u_{i \a} u_t u_{tj}- u_{j \a} u_t u_{ti},
\end{align}
so
\begin{align*}
u_t^2 u_{iii} =&  \hat h_{ii,i}-[2u_t u_{ti} u_{ii} -u_{ii} u_t u_{ti}- u_{ii} u_t u_{ti}]  = \hat h_{ii,i},  \quad i \in  G;
\end{align*}
and
\begin{align*}
 u_t^2 u_{iij} =&\hat h_{ii,j}- [2u_t u_{tj} u_{ii} -u_{ij} u_t u_{ti}- u_{i j} u_t u_{ti}]  \\
=& \hat h_{ii,j}- 2u_t u_{tj} u_{ii}   \\
 =& \hat h_{ii,j}-2\frac{u_{j t}}{u_t}  \hat h_{ii},  \quad i \in G, j \in G, i \ne j.
\end{align*}
Similarly, we have from \eqref{6.48}
\begin{align*}
 u_t^2 u_{iji} =&\hat h_{ij,i}- [2u_t u_{ti} u_{ij} -u_{ii} u_t u_{tj}- u_{ji} u_t u_{ti}]  \\
 =&\hat  h_{ij,i}+u_t u_{tj} u_{ii}   \\
 =& \hat  h_{ij,i}+\frac{u_{j t}}{u_t}  \hat h_{ii},  \quad i \in G, j \in G, i \ne j;  \\
 u_t^2 u_{ijj} =&\hat h_{ij,j}- [2u_t u_{tj} u_{ij} -u_{ij} u_t u_{tj}- u_{jj} u_t u_{ti}]  \\
 =&\hat h_{ij,j}+ u_{jj} u_t u_{ti}   \\
 =& \hat h_{ij,j}+\frac{u_{i t}}{u_t}  \hat h_{jj},  \quad i \in G, j \in G, i \ne j.
\end{align*}
From \eqref{6.48}, we also have
\begin{align*}
u_t^2 u_{iin} =&\hat h_{ii,n}- [2u_t u_{tn} u_{ii} -u_{in} u_t u_{ti}- u_{in} u_t u_{ti}]  \\
 =&\hat h_{ii,n}- 2u_t u_{tn} u_{ii} +2 u_{in} u_t u_{ti}  \\
=&\hat h_{ii,n}- 2\frac{u_{n t}}{u_t} u_{t}^2 u_{ii} +2\frac{u_{it}}{u_t}[ \hat h_{in}+u_{n} u_t u_{ti}]   \\
=& \hat h_{ii,n}-2\frac{u_{n t}}{u_t} \hat h_{ii}+2\frac{u_{i t}}{u_t} \hat h_{in}+2u_n u_{i t}^2,  \quad i \in G;  \\
u_t^2 u_{ijk}  =& \hat h_{ij,k} - [2u_t u_{tk} u_{ij} -u_{ik} u_t u_{tj}- u_{jk} u_t u_{ti}] = \hat h_{ij,k} , \quad i \in G, j \in G, k \in G, i \ne j\ne k;\\
u_t^2 u_{ijn} =& \hat h_{ij,n} - [2u_t u_{tn} u_{ij} -u_{in} u_t u_{tj}- u_{jn} u_t u_{ti}]  \\
=& \hat h_{ij,n} +u_{in} u_t u_{tj} + u_{jn} u_t u_{ti}  \\
=& \hat h_{ij,n} + \frac{u_{j t}}{u_t} [ \hat h_{in} + u_n u_{t}u_{i t}]+\frac{u_{i t}}{u_t} [ \hat h_{jn} + u_n u_{t}u_{j t}]  \\
=& \hat h_{ij,n}+\frac{u_{i t}}{u_t}  \hat h_{jn}+\frac{u_{j t}}{u_t} \hat h_{in}+ 2 u_n u_{i t}u_{j t},  \quad i \in G, j \in G, i \ne j.
\end{align*}
And
\begin{align*}
u_t^2u_{nni} =&u_t^2[\Delta u_{i}- \sum\limits_{k \in G} u_{kki}]=u_t^2 u_{it}-u_t^2u_{iii} -\sum\limits_{k \in G, k \ne i} u_t^2u_{kki}  \\
=&u_t^2 u_{it}- \hat h_{ii,i}-\sum\limits_{k \in G, k \ne i} [\hat h_{kk,i}-2\frac{u_{i t}}{u_t}  \hat h_{kk}]  \\
=& u_t^2 u_{it}-\sum\limits_{k \in G} {\hat h_{kk,i}} +2\frac{{u_{it} }}{u_t}\sum\limits_{k \in G, k\ne i} {\hat h_{kk}}, \quad i \in G,  \\
u_t^2u_{nnn} =& u_t^2[\Delta u_{n}- \sum\limits_{k \in G} u_{kkn}]  \\
=& u_t^2 u_{nt}- \sum\limits_{k \in G}[ \hat h_{kk,n}-2\frac{u_{n t}}{u_t} \hat h_{kk}+2\frac{u_{k t}}{u_t} \hat h_{kn}+2u_n u_{k t}^2]  \\
=&u_t^2  u_{nt}-\sum\limits_{k \in G} {\hat h_{kk,n}}
+2\frac{{u_{nt} }}{u_t}\sum\limits_{k \in G} {\hat h_{kk}}-2\sum\limits_{k \in G} {\frac{{u_{kt} }}{u_t}\hat h_{kn}}-2\sum\limits_{k \in G} {u_n u_{kt} ^2}.
\end{align*}

For $i \in G$, we can get from \eqref{6.5}
\begin{align}\label{6.49}
\hat h_{in,\alpha}= u_t^2 u_{in\a}+ 2u_t u_{t \a} u_{in} +u_{i\a}u_n u_{tt}-u_{i \a} u_t u_{tn}- u_{n \a} u_t u_{ti}- u_nu_{t \a}  u_{ti}- u_{n} u_t u_{ti\a},
\end{align}
then
\begin{align*}
u_{n}u_t u_{ii t} =& -\hat h_{in,i}+ u_t^2 u_{iin}+ 2u_t u_{ti} u_{in} +u_{ii}u_n u_{tt}-u_{i i} u_t u_{tn}- u_{n i} u_t u_{ti}- u_nu_{ti} u_{ti}  \\
=&-\hat h_{in,i}+ [\hat h_{ii,n}-2\frac{u_{n t}}{u_t} \hat h_{ii}+2\frac{u_{i t}}{u_t} \hat h_{in}+2u_n u_{i t}^2]  \\
&+ 2u_t u_{ti} u_{in} +u_{ii}u_n u_{tt}-u_{i i} u_t u_{tn}- u_{n i} u_t u_{ti}- u_nu_{ti} u_{ti}  \\
=&-\hat h_{in,i}+\hat h_{ii,n}+3\frac{{u_{it} }}{u_t}\hat h_{in} -3\frac{{u_{nt} }}{u_t}\hat h_{ii}+2u_n u_{it} ^2+ u_{ii}u_n u_{tt}, \quad  i \in G;  \\
u_{n}u_t u_{ij t} =&-\hat h_{in,j}+ u_t^2 u_{ijn}+ 2u_t u_{tj} u_{in} +u_{ij}u_n u_{tt}-u_{i j} u_t u_{tn}- u_{n j} u_t u_{ti}- u_nu_{tj}  u_{ti}\\
=&-\hat h_{in,j}+[\hat h_{ij,n}+\frac{u_{i t}}{u_t}  \hat h_{jn}+\frac{u_{j t}}{u_t} \hat h_{in}+ 2 u_n u_{i t}u_{j t}]   \\
&+ 2u_t u_{tj} u_{in} - u_{n j} u_t u_{ti}- u_n u_{tj}  u_{ti}\\
=&-\hat h_{in,j}+[\hat h_{ij,n}+\frac{u_{i t}}{u_t}  \hat h_{jn}+\frac{u_{j t}}{u_t} \hat h_{in}+ 2 u_n u_{i t}u_{j t}]   \\
&+ 2\frac{u_{j t}}{u_t} [\hat h_{in}+u_n u_{t} u_{ti} ]- \frac{u_{i t}}{u_t} [ \hat h_{jn}+u_n u_{t} u_{tj} ]- u_n u_{tj}  u_{ti}\\
=&-\hat h_{in,j}+\hat h_{ij,n}+3\frac{{u_{jt} }}{u_t}\hat h_{in} +2u_n u_{it} u_{jt}, \qquad  i \in G, j \in G, i \ne j;
\end{align*}

\begin{align*}
u_{n}u_t u_{in t} =&-\hat h_{in,n}+ u_t^2 u_{inn}+ 2u_t u_{tn} u_{in} +u_{in}u_n u_{tt}-u_{in} u_t u_{tn}- u_{nn} u_t u_{ti}- u_nu_{tn}  u_{ti} \\
=&-\hat h_{in,n}+ [u_t^2 u_{it}-\sum\limits_{k \in G} {\hat h_{kk,i}} +2\frac{{u_{it} }}{u_t}\sum\limits_{k \in G, k\ne i} {\hat h_{kk}}]  \\
&+ u_t u_{tn} u_{in} +u_{in}u_n u_{tt}- u_{nn} u_t u_{ti}- u_nu_{tn}  u_{ti} \\
=&-\hat h_{in,n}-\sum\limits_{k \in G} {\hat h_{kk,i}}+2\frac{{u_{it} }}{u_t}\sum\limits_{k \in G, k\ne i} {\hat h_{kk}}+ [u_t u_{it} \Delta u- u_{nn} u_t u_{ti} ]  \\
&+\frac{{u_{nt} }}{u_t}[u_t^2u_{in} -u_nu_{t}  u_{ti}]+u_{in}u_n u_{tt} \\
=&-\hat h_{in,n}-\sum\limits_{k \in G} \hat h_{kk,i}+3\frac{{u_{it} }}{u_t}\sum\limits_{k \in G,k \ne i} \hat h_{kk} +\frac{{u_{it} }}{u_t}\hat h_{ii}+\frac{{u_{nt} }}{u_t}\hat h_{in}+ u_{in}u_n u_{tt}, \quad  i \in G.
\end{align*}
So by the equation, we have
\begin{align*}
u_{n}u_t u_{nn t} =& u_{n}u_t [\Delta u_{t}- \sum\limits_{k \in G} u_{kkt}]\\
=&u_{n}u_t u_{tt}- \sum\limits_{i \in G}[-\hat h_{in,i}+\hat h_{ii,n}+3\frac{{u_{it} }}{u_t}\hat h_{in} -3\frac{{u_{nt} }}{u_t}\hat h_{ii}+2u_n u_{it} ^2+ u_{ii}u_n u_{tt}] \notag \\
=&u_{n}u_t u_{tt}+\sum\limits_{i \in G} {[\hat h_{in,i}-\hat h_{ii,n}]}-\sum\limits_{i \in G} [3\frac{{u_{it} }}{u_t}\hat h_{in} -3\frac{{u_{nt} }}{u_t}\hat h_{ii}+2u_n u_{it} ^2+ u_{ii}u_n u_{tt}].
\end{align*}

At last, we can get
\begin{align}\label{6.50}
\hat h_{nn,\alpha}=& u_t^2 u_{nn\a}+ 2u_t u_{t \a} u_{nn} + u_n^2 u_{tt\a}+ 2u_n u_{n\a}u_{tt}-2u_{n \a} u_t u_{tn} \notag \\
&-2 u_nu_{t \a}  u_{tn}- 2u_{n} u_t u_{tn\a},
\end{align}
so
\begin{align*}
u_{n}^2 u_{tti} =&\hat h_{nn,i}-[ u_t^2 u_{nni}+ 2u_t u_{ti} u_{nn}+ 2u_n u_{ni}u_{tt}-2u_{ni} u_t u_{tn} -2 u_nu_{ti}  u_{tn}- 2u_{n} u_t u_{tni} ] \\
=&\hat h_{nn,i}-[u_t^2 u_{it}-\sum\limits_{k \in G} {\hat h_{kk,i}} +2\frac{{u_{it} }}{u_t}\sum\limits_{k \in G, k\ne i} {\hat h_{kk}}]  \\
&+2[-\hat h_{in,n}-\sum\limits_{k \in G} \hat h_{kk,i}+3\frac{{u_{it} }}{u_t}\sum\limits_{k \in G,k \ne i} \hat h_{kk} +\frac{{u_{it} }}{u_t}\hat h_{ii}+\frac{{u_{nt} }}{u_t}\hat h_{in}+ u_{in}u_n u_{tt}]  \\
& -[2u_t u_{ti} u_{nn}+ 2u_n u_{ni}u_{tt}-2u_{ni} u_t u_{tn} -2 u_nu_{ti}  u_{tn}] \\
=&\hat h_{nn,i}-2\hat h_{in,n}-\sum\limits_{k \in G} \hat h_{kk,i} -u_t^2 u_{it} +4\frac{{u_{it} }}{u_t}\sum\limits_{k \in G,k \ne i} \hat h_{kk} \\
&+2\frac{{u_{it} }}{u_t}\hat h_{ii} +2\frac{{u_{nt} }}{u_t}\hat h_{in}- 2 u_t u_{it} u_{nn} + 2u_{t}u_{ni} u_{tn}+ 2u_{n}u_{ti} u_{tn}, ~  i \in G;  \\
\end{align*}
and
\begin{align*}
u_{n}^2 u_{ttn} =&  \hat h_{nn,n}-[ u_t^2 u_{nnn}+ 2u_t u_{tn} u_{nn} + 2u_n u_{nn}u_{tt}-2u_{nn} u_t u_{tn} -2 u_nu_{tn}  u_{tn}- 2u_{n} u_t u_{tnn}] \\
=&  \hat h_{nn,n}-[ u_t^2 u_{nt}-\sum\limits_{k \in G} {\hat h_{kk,n}}
+2\frac{{u_{nt} }}{u_t}\sum\limits_{k \in G} {\hat h_{kk}}-2\sum\limits_{k \in G} {\frac{{u_{kt} }}{u_t}\hat h_{kn}}-2\sum\limits_{k \in G} {u_n u_{kt} ^2}] \\
&+2[u_{n}u_t u_{tt}+\sum\limits_{i \in G} {(\hat h_{in,i}-\hat h_{ii,n})}-\sum\limits_{i \in G} (3\frac{{u_{it} }}{u_t}\hat h_{in} -3\frac{{u_{nt} }}{u_t}\hat h_{ii}+2u_n u_{it} ^2+ u_{ii}u_n u_{tt})]   \\
&-[  2u_n u_{nn}u_{tt} -2 u_nu_{tn}  u_{tn}] \\
=&\hat h_{nn,n}+2\sum\limits_{i \in G}\hat h_{in,i}-\sum\limits_{k \in G} \hat h_{kk,n}+[2u_{n}u_t u_{tt}-2\sum\limits_{i \in G}(u_{ii}u_n u_{tt}) -  2u_n u_{nn}u_{tt}] \\
&-u_t^2 u_{nt}-4\sum\limits_{i \in G} \frac{{u_{it} }}{u_t}\hat h_{in}+ 4 \frac{{u_{nt} }}{u_t}\sum\limits_{i \in G}\hat h_{ii} - 2\sum\limits_{i \in G} u_n  u_{it}^2+ 2u_{n}u_{tn}^2 \\
=&\hat h_{nn,n}+2\sum\limits_{i \in G}\hat h_{in,i}-\sum\limits_{k \in G} \hat h_{kk,n} \\
&-u_t^2u_{nt}-4\sum\limits_{i \in G} \frac{{u_{it} }}{u_t}\hat h_{in}+ 4 \frac{{u_{nt} }}{u_t}\sum\limits_{i \in G}\hat h_{ii} - 2\sum\limits_{i \in G} u_n  u_{it}^2+ 2u_{n}u_{tn}^2.
\end{align*}

The lemma holds.
\end{proof}

\subsection{Step 2: reduction for the second derivatives of the test function $\phi$ }

\begin{lemma} \label{lem6.5}
Under the above assumptions and notations, for any $(x,t) \in \mathcal {O} \times (t_0-\delta, t_0]$ with the coordinate \eqref{6.7}, we have
\begin{eqnarray}\label{6.51}
\phi_{\alpha \alpha} &\sim &\left[ {\sigma _l (G) + \hat a_{nn} \sigma
_{l-1} (G) - \sum\limits_{i \in G} {\hat a_{in} ^2 \sigma _{l - 2}
(G\left| i\right.)} }\right]\sum\limits_{m \in B} {\hat a_{mm, \alpha \alpha} }  \notag \\
&&+ \sigma_l(G)\left[ {\hat a_{nn,\alpha \alpha}  - 2\sum\limits_{i \in
G} {\frac{{\hat a_{in} }} {{\hat a_{ii}}}\hat a_{in,\alpha \alpha }+\sum\limits_{i,j
\in G} {\frac{{\hat a_{in} }} {{\hat a_{ii} }}\frac{{\hat a_{jn} }} {{\hat a_{jj}
}}\hat a_{ij,\alpha \alpha} } } } \right]\\
&&- 2\sigma _l (G)\sum\limits_{i \in G} {\frac{1} {{\hat a _{ii}
}}\left[ {\hat a_{in,\alpha }  - \sum\limits_{j \in G} {\frac{{\hat a_{jn} }}
{{\hat a_{jj} }}\hat a_{ij,\alpha } } } \right]^2},   \notag
\end{eqnarray}
where
\begin{eqnarray} \label{6.52}
\sigma _l (G) + \hat a_{nn} \sigma _{l - 1} (G) - \sum\limits_{i \in G}
{\hat a_{in} ^2 \sigma _{l - 2} (G\left| i \right.)}  \sim \sigma _l
(G)\left( {1 + \sum\limits_{i \in G} {\frac{{\hat a_{in} ^2 }} {{\hat a
_{ii}^2 }}} } \right).
\end{eqnarray}
\end{lemma}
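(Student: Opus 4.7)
\textbf{Proof proposal for Lemma \ref{lem6.5}.}

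My plan is to compute $\phi_{\alpha\alpha}$ directly by applying the chain rule twice to $\phi=\sigma_{l+1}(\hat a)$, using Proposition \ref{prop2.4} to express the first and second derivatives of $\sigma_{l+1}$ in terms of entries of $\hat a$, and then organizing the resulting sum according to the partition $\{1,\dots,n-1\}=G\cup B$, with the extra row/column $n$ treated separately. Because $M=(\hat a_{ij})_{(n-1)\times(n-1)}$ is diagonal at $(x,t)$, the only potentially non-diagonal entries to worry about in the Hessian of $\sigma_{l+1}$ are pairs of the form $(ij,ji)$ and the $(in,ni)$ row. Proposition \ref{prop6.1}, together with Remark \ref{rem2.10}, tells us that $\hat a_{ij}$, $\hat a_{in}$, and all their space-time first derivatives vanish whenever $i$ or $j$ belongs to $B$; hence every term in the expansion containing two ``bad'' factors with the index $i\in B$ either directly vanishes or is bounded by $C(W_{ii}W_{jj})^{1/2}\lesssim\phi$ via Lemma \ref{lem2.9}, and so drops out under the $\sim$ relation.

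With these reductions, the surviving contributions to $\phi_{\alpha\alpha}$ split into three blocks. The first block, where both differentiations fall on $\hat a_{mm}$ with $m\in B$, produces the coefficient $[\sigma_l(G)+\hat a_{nn}\sigma_{l-1}(G)-\sum_i \hat a_{in}^2\sigma_{l-2}(G|i)]$ multiplying $\sum_{m\in B}\hat a_{mm,\alpha\alpha}$, which is exactly the pattern displayed in \eqref{6.51}. The second block collects the second derivatives $\hat a_{nn,\alpha\alpha}$, $\hat a_{in,\alpha\alpha}$ and $\hat a_{ij,\alpha\alpha}$ with $i,j\in G$; after combining the appropriate coefficients from Proposition \ref{prop2.4}, these assemble into the bracket in the second line, exactly in the form of the first-derivative identity \eqref{6.25} differentiated once more. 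The third block consists of the squares of single first derivatives, specifically the $\frac{\partial^2\sigma_{l+1}}{\partial \hat a_{in}\partial \hat a_{ni}}$ and the $\frac{\partial^2\sigma_{l+1}}{\partial \hat a_{ij}\partial \hat a_{ji}}$ type contributions with $i,j\in G$. A careful completion of squares, guided by the first-derivative formula \eqref{6.25} (which allows us to eliminate $\hat a_{nn,\alpha}$ modulo $\phi+|\nabla\phi|$), combines these into the single negative square $-2\sigma_l(G)\sum_{i\in G}\frac{1}{\hat a_{ii}}\bigl[\hat a_{in,\alpha}-\sum_{j\in G}\frac{\hat a_{jn}}{\hat a_{jj}}\hat a_{ij,\alpha}\bigr]^2$.

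For the second identity \eqref{6.52}, I would use that $M$ is diagonal on $G$ to write $\sigma_{l-1}(G)=\sigma_l(G)\sum_{i\in G}1/\hat a_{ii}$ and $\sigma_{l-2}(G|i)=\frac{\sigma_l(G)}{\hat a_{ii}}\sum_{j\in G,\,j\ne i}1/\hat a_{jj}$, then collect
\[
\hat a_{nn}\sigma_{l-1}(G)-\sum_{i\in G}\hat a_{in}^2\sigma_{l-2}(G|i)
=\sigma_l(G)\Bigl[\sum_{j\in G}\tfrac{1}{\hat a_{jj}}\Bigl(\hat a_{nn}-\sum_{i\in G}\tfrac{\hat a_{in}^2}{\hat a_{ii}}\Bigr)+\sum_{i\in G}\tfrac{\hat a_{in}^2}{\hat a_{ii}^2}\Bigr].
\]
Since \eqref{6.20} gives $\hat a_{nn}-\sum_{i\in G}\hat a_{in}^2/\hat a_{ii}\sim 0$ and the coefficient $\sum 1/\hat a_{jj}$ is bounded on $\mathcal O\times(t_0-\delta,t_0]$, adding the $\sigma_l(G)$ term yields the asserted equivalence.

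The main obstacle I anticipate is the third-block bookkeeping: ensuring that the cross terms between $\hat a_{nn,\alpha}$, $\hat a_{in,\alpha}$, and $\hat a_{ij,\alpha}$ combine precisely into the clean completed square shown in \eqref{6.51}, rather than leaving residual terms that are not controlled by $\phi+|\nabla\phi|$. This is where the first-derivative identity \eqref{6.25} will have to be used delicately to trade $\hat a_{nn,\alpha}$ against a combination of $\hat a_{in,\alpha}$ and $\hat a_{ij,\alpha}$, and the coefficients from Proposition \ref{prop2.4} must be tracked exactly; any off-by-one in the $\sigma_{l-2}(G|ij)$ terms would spoil the completion of squares.
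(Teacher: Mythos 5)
Your proposal follows essentially the same route as the paper: expand $\phi_{\alpha\alpha}$ via the block structure of $\sigma_{l+1}(\hat a)$, kill every term carrying a bad index using Proposition \ref{prop6.1} and Lemma \ref{lem2.9}, group the survivors into the three blocks of \eqref{6.51}, and complete the square in the first-derivative block by trading $\hat a_{nn,\alpha}$ via \eqref{6.25}; your derivation of \eqref{6.52} from \eqref{6.20} is also exactly the paper's. The only caution is that Proposition \ref{prop2.4} as stated applies to diagonal matrices, whereas $\hat a$ is diagonal only in its $(n-1)\times(n-1)$ block, so the expansion must really go through Lemma \ref{lem2.5} (which is what the paper does, and what your "extra row/column $n$ treated separately" amounts to).
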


\begin{proof} The proof is similar as in \cite{CH12}. For completeness, we give the details of the proof.

Computing the second derivatives directly, we have from Lemma \ref{lem2.5}
\begin{align*}
\phi _{\alpha \alpha}=& \sum_{i,j}\frac{{\partial \sigma _{l + 1} (M)}}{{\partial \hat a_{ij} }}\hat a_{ij,\alpha \a }
+ \sum_{i,j,k,l}\frac{{\partial ^2 \sigma_{l + 1} (M)}} {{\partial \hat a_{ij}\partial \hat a_{kl} }}\hat a_{ij,\alpha }\hat a_{kl,\a}+ \hat a_{nn,\alpha \a } \sigma _l (M)\\
&+ 2\hat a_{nn,\alpha }\sum_{i,j}\frac{{\partial \sigma _l (M)}} {{\partial \hat a_{ij}}}\hat a_{ij,\a}
+\hat a_{nn}\sum_{i,j}\frac{{\partial \sigma _l (M)}} {{\partial \hat a_{ij}}}\hat a_{ij,\alpha\a}
+\hat a_{nn}\sum_{i,j,k,l}\frac{{\partial ^2 \sigma _l (M)}}{{\partial \hat a_{ij}\partial \hat a_{kl}}}\hat a_{ij,\alpha }\hat a_{kl,\a}
\\
&-2\sum\limits_i{\sigma _{l -1} (M|i)\hat a_{ni} \hat a_{ni,\alpha \a }}
-2\sum\limits_i{\sigma _{l - 1} (M|i)\hat a_{ni,\alpha }\hat a_{ni,\a}}
-4\sum\limits_{i,j,k}{\frac{{\partial\sigma_{l - 1}(M|i)}}{{\partial \hat a_{jk}}}\hat a_{ni}\hat a_{ni,\alpha}\hat a_{jk,\a }}\notag \\
&-\sum_{i,j,k}\frac{{\partial\sigma_{l-1}(M|i)}}{{\partial \hat a_{jk}}}\hat a_{ni}^2\hat a_{jk,\alpha \a}
-\sum_{i,j,k,p,q}\frac{{\partial^2\sigma_{l-1}(M|i)}}{{\partial \hat a_{jk}\partial \hat a_{pq}}}\hat a_{ni}^2\hat a_{jk,\alpha }\hat a_{pq,\a}\\
&+\sum_{\stackrel{i,j}{i\ne j}}{\sigma_{l-2}(M|ij)}\hat a_{ni}\hat a_{nj}\hat a_{ij,\alpha \a }
+4\sum_{\stackrel{i,j}{i\ne j}}{\sigma_{l-2}(M|ij)\hat a_{nj}\hat a_{ni,\alpha}\hat a_{ij,\a}}\\
&+2\sum_{\stackrel{i,j,k,l}{i\ne j}}{\frac{{\partial\sigma_{l-2}(M|ij)}}{{\partial \hat a_{kl}}}\hat a_{ni}\hat a_{nj}\hat a_{ij,\a}\hat a_{kl,\alpha}}-2\sum_{\stackrel{i,j,k }{i\ne j,i\ne k,j\ne k}}{\sigma_{l-3}(M|ijk)\hat a_{ni}\hat a_{nj}\hat a_{ki,\alpha}\hat a_{kj,\a}}\\
=&I_{\alpha}+II_{\alpha}+III_{\alpha}+IV_{\alpha},
\end{align*}

where
\begin{align*}
I_{\alpha}=& \sum_{i,j}\frac{{\partial \sigma _{l + 1} (M)}}{{\partial \hat a_{ij} }}\hat a_{ij,\alpha \a }
+ \hat a_{nn,\alpha \a } \sigma _l (M)
+\hat a_{nn}\sum_{i,j}\frac{{\partial \sigma _l (M)}} {{\partial \hat a_{ij}}}\hat a_{ij,\alpha\a}\\
&-2\sum\limits_i{\sigma _{l -1} (M|i)\hat a_{ni} \hat a_{ni,\alpha \a }}
-\sum_{i,j,k}\frac{{\partial\sigma_{l-1}(M|i)}}{{\partial \hat a_{jk}}}\hat a_{ni}^2\hat a_{jk,\alpha \a}
+\sum_{\stackrel{i,j}{i\ne j}}{\sigma_{l-2}(M|ij)}\hat a_{ni}\hat a_{nj}\hat a_{ij,\alpha \a }, \\
II_{\alpha}=& \sum_{i,j,k,l}\frac{{\partial ^2 \sigma_{l + 1} (M)}} {{\partial \hat a_{ij}\partial \hat a_{kl} }}\hat a_{ij,\alpha }\hat a_{kl,\a}
+\hat a_{nn}\sum_{i,j,k,l}\frac{{\partial ^2 \sigma _l (M)}}{{\partial \hat a_{ij}\partial \hat a_{kl}}}\hat a_{ij,\alpha }\hat a_{kl,\a}
-\sum_{i,j,k,p,q}\frac{{\partial^2\sigma_{l-1}(M|i)}}{{\partial \hat a_{jk}\partial \hat a_{pq}}}\hat a_{ni}^2\hat a_{jk,\alpha }\hat a_{pq,\a},\\
III_{\alpha}=& 2\hat a_{nn,\alpha }\sum_{i,j}\frac{{\partial \sigma _l (M)}} {{\partial \hat a_{ij}}}\hat a_{ij,\a}
-4\sum\limits_{i,j,k}{\frac{{\partial\sigma_{l - 1}(M|i)}}{{\partial \hat a_{jk}}}\hat a_{ni}\hat a_{ni,\alpha}\hat a_{jk,\a }}
+2\sum_{\stackrel{i,j,k,l}{i\ne j}}{\frac{{\partial\sigma_{l-2}(M|ij)}}{{\partial \hat a_{kl}}}\hat a_{ni}\hat a_{nj}\hat a_{ij,\a}\hat a_{kl,\alpha}},\\
IV_{\alpha}=& -2\sum\limits_i{\sigma _{l - 1} (M|i)\hat a_{ni,\alpha }\hat a_{ni,\a}}
+4\sum_{\stackrel{i,j}{i\ne j}}{\sigma_{l-2}(M|ij)\hat a_{nj}\hat a_{ni,\alpha}\hat a_{ij,\a}}\notag\\
&-2\sum_{\stackrel{i,j,k }{i\ne j,i\ne k,j\ne k}}{\sigma_{l-3}(M|ijk)\hat a_{ni}\hat a_{nj}\hat a_{ki,\alpha}\hat a_{kj,\a}}.
\end{align*}

Now we use the formulas (\ref{6.9})-(\ref{6.12}), (\ref{6.20}) and (\ref{6.25}) to treat the terms in
$I_{\alpha}, II_{\alpha}, III_{\alpha}$ and $IV_{\alpha}$.

First, we will deal with $I_{\alpha}$.
\begin{eqnarray} \label{6.53}
I_{\alpha}&=& \sigma _l (G)\sum\limits_{m \in B}
{\hat a_{mm,\alpha \a} }  + \hat a_{nn,\alpha \a } \sigma _l (G) + \hat a_{nn}
[ {\sum\limits_{m \in G} {\sigma _{l - 1} (G\left| m
\right.)\hat a_{mm,\alpha \a } }  + \sigma _{l - 1} (G)\sum\limits_{m
\in B} {\hat a_{mm,\alpha \a } } } ]  \notag \\
&& - 2\sum\limits_{i \in G} {\hat a_{in} \hat a_{in,\alpha \a } \sigma _{l -
1} (G\left| i \right.)}  - \sum\limits_{i \in G} {\hat a_{in} ^2 [
{\sum\limits_{\scriptstyle m \in G \hfill \atop \scriptstyle m \ne i
\hfill}  {\sigma _{l - 2} (G\left| {im} \right.)\hat a_{mm,\alpha \a }
}  + \sigma _{l - 2} (G\left| i \right.)\sum\limits_{m \in B}
{\hat a_{mm,\alpha \a } } } ]}  \notag \\
&& + \sum\limits_{\scriptstyle ij \in G \hfill \atop \scriptstyle i
\ne j \hfill}  {\hat a_{in} \hat a_{jn} \hat a_{ij,\alpha \a } \sigma _{l - 2}
(G\left| {ij} \right.)}  \notag\\
&=& [ {\sigma _l (G) + \hat a_{nn} \sigma
_{l-1} (G) - \sum\limits_{i \in G} {\hat a_{in} ^2 \sigma _{l - 2}
(G\left| i\right.)} }]\sum\limits_{m \in B} {\hat a_{mm, \alpha \alpha} } \notag\\
&&+\sigma _l(G)\sum\limits_{m \in G} {\hat a_{mm,\alpha \a } \frac{1}
{{\hat a_{mm} }}[ {\hat a_{nn}  - \sum\limits_{\scriptstyle i \in G
\hfill\atop\scriptstyle i \ne m \hfill}  {\frac{{\hat a_{in} ^2}} {{\hat a_{ii} }}} } ]}  \notag \\
&& + \sigma _l (G)[ {\hat a_{nn,\alpha \alpha}  - 2\sum\limits_{i \in
G} {\frac{{\hat a_{in} }} {{\hat a_{ii}}}\hat a_{in,\alpha \alpha } +
\sum\limits_{\scriptstyle i,j \in G \hfill \atop \scriptstyle i \ne j
\hfill}  {\frac{{\hat a_{in} }} {{\hat a_{ii} }}\frac{{\hat a_{jn} }} {{\hat a_{jj}
}}\hat a_{ij,\alpha \alpha} } } } ] \notag \\
&\sim& [ {\sigma _l (G) + \hat a_{nn} \sigma
_{l-1} (G) - \sum\limits_{i \in G} {\hat a_{in} ^2 \sigma _{l - 2}
(G\left| i\right.)} }]\sum\limits_{m \in B} {\hat a_{mm, \alpha \alpha} } \notag\\
&&+ \sigma_l(G)[ {\hat a_{nn,\alpha \alpha}  - 2\sum\limits_{i \in
G} {\frac{{\hat a_{in} }} {{\hat a_{ii}}}\hat a_{in,\alpha \alpha }+\sum\limits_{i,j
\in G} {\frac{{\hat a_{in} }} {{\hat a_{ii} }}\frac{{\hat a_{jn} }} {{\hat a_{jj}
}}\hat a_{ij,\alpha \alpha} } } } ].
\end{eqnarray}

For $II_{\alpha}$, it follows that
\begin{align}\label{6.54}
II_{\alpha}=&\sum_{i\neq j}\sigma_{l - 1}(G|ij)(\hat a_{ii,\alpha }\hat a_{jj,\a}-\hat a_{ij,\alpha }^2 )
+\hat a_{nn}\sum_{i\neq j}\sigma_{l - 2}(G|ij)(\hat a_{ii,\alpha }\hat a_{jj,\a}-\hat a_{ij,\alpha }^2 )\notag\\
&-\sum_i\sum_{\stackrel{j,k}{j\ne i,k \ne i,j\ne k}}\sigma_{l - 3}(G|ijk)
\hat a_{ni}^2(\hat a_{jj,\alpha }\hat a_{kk,\a}-\hat a_{jk,\alpha }^2)\notag\\
=&\hat a_{nn}\sum_{\stackrel{i,j\in G}{i\ne j}}\sigma_{l - 2}(G|ij)(\hat a_{ii,\alpha }\hat a_{jj,\a}-\hat a_{ij,\alpha }^2 )
-\sum_{\stackrel{i,j,k\in G }{i\ne j,i\ne k,j\ne k}}\sigma_{l - 3}(G|ijk)
\hat a_{ni}^2(\hat a_{jj,\alpha }\hat a_{kk,\a}-\hat a_{jk,\alpha }^2)\notag\\
=&\hat a_{nn}\sigma _l(G)\sum_{\stackrel{i,j\in G}{i\ne j}}\frac{\hat a_{ii,\alpha }\hat a_{jj,\a}-\hat a_{ij,\alpha }^2}{\hat a_{ii}\hat a_{jj}}
-\sigma_l(G)\sum_{\stackrel{i,j,k\in G }{i\ne j,i\ne k,j\ne k}}\frac{\hat a_{ni}^2 }{\hat a_{ii}}
\frac{\hat a_{jj,\alpha }\hat a_{kk,\a}-\hat a_{jk,\alpha }^2}{\hat a_{jj}\hat a_{kk}}\notag\\
=&\sigma _l(G)\sum_{\stackrel{i,j\in G}{i\ne j}}(\hat a_{nn}-\sum_{\stackrel{k \in G}{k \ne i,k \ne j}}\frac{\hat a_{nk}^2 }{\hat a_{kk}})
\frac{\hat a_{ii,\alpha }\hat a_{jj,\a}-\hat a_{ij,\alpha }^2}{\hat a_{ii}\hat a_{jj}}\notag\\
\sim&\sigma_l(G)\sum_{\stackrel{i,j\in G}{i\ne j}}[\frac{\hat a_{ni}^2 }{\hat a_{ii}}+\frac{\hat a_{nj}^2 }{\hat a_{jj}}]
\frac{\hat a_{ii,\alpha }\hat a_{jj,\a}-\hat a_{ij,\alpha }^2}{\hat a_{ii}\hat a_{jj}}\notag\\
=&2\sigma_l(G)\sum_{i,j\in G}\frac{\hat a_{ni}^2 }{\hat a_{ii}^2}
\hat a_{ii,\alpha }\frac{\hat a_{jj,\a}}{\hat a_{jj}}
-2\sigma_l(G)\sum_{i,j\in G}\frac{\hat a_{ni}^2 }{\hat a_{ii}^2}\frac{\hat a_{ij,\alpha }^2}{\hat a_{jj}}.
\end{align}

For $III_{\alpha}$, it follows that
\begin{align} \label{6.55}
III_{\alpha}=&2\hat a_{nn,\alpha}
[\sum_{i\in G}\sigma_{l-1}(G|i)\hat a_{ii,\a}+\sigma_{l-1}(G)\sum_{i\in B}\hat a_{ii,\a}] \notag \\
&-4\sum_{i\in G}\hat a_{ni}\hat a_{ni,\alpha}[\sum_{\stackrel{j\in G}{j\ne i}}\sigma_{l-2}(G|ij)\hat a_{jj,\a}
+\sigma_{l-2}(G|i)\sum_{j\in B}\hat a_{jj,\a}] \notag \\
&+2\sum_{\stackrel{i,j\in G}{i\ne j}}\hat a_{ni}\hat a_{nj}\hat a_{ij,\alpha}
[\sum_{\stackrel{k\in G}{k\ne i,k\ne j}}\sigma_{l-3}(G|ijk)\hat a_{kk,\a}+\sigma_{l-3}(G|ij)\sum_{k\in B}\hat a_{kk,\a}] \notag\\
=&2\hat a_{nn,\alpha} \sum_{i\in G}\sigma_{l-1}(G|i)\hat a_{ii,\a}
-4\sum_{i\in G}\hat a_{ni}\hat a_{ni,\alpha}\sum_{\stackrel{j\in G}{j\ne i}}\sigma_{l-2}(G|ij)\hat a_{jj,\a}
\notag \\
&+2\sum_{\stackrel{i,j\in G}{i\ne j}}\hat a_{ni}\hat a_{nj}\hat a_{ij,\alpha}
\sum_{\stackrel{k\in G}{k\ne i,k\ne j}}\sigma_{l-3}(G|ijk)\hat a_{kk,\a}\notag\\
=&2\sigma_l(G)\hat a_{nn,\alpha}\sum_{i\in G}\frac{\hat a_{ii,\a}}{\hat a_{ii}}
-4\sigma_l(G)\sum_{\stackrel{i,j\in G}{i\ne j}}\frac{\hat a_{ni}}{\hat a_{ii}}\hat a_{ni,\alpha}
\frac{\hat a_{jj,\a}}{\hat a_{jj}}\notag\\
&+2\sigma_{l}(G)\sum_{\stackrel{i,j,k\in G}{i\ne j,i\ne k,j\ne k}}
\frac{\hat a_{ni}}{\hat a_{ii}}\frac{\hat a_{nj}}{\hat a_{jj}}\frac{\hat a_{kk,\a}}{\hat a_{kk}}\hat a_{ij,\alpha}.
\end{align}
Now we need to expand the sum of the third term in \eqref{6.55}
\begin{align}\label{6.56}
\sum_{\stackrel{i,j,k\in G}{i\ne j,i\ne k,j\ne k}}=\sum_{i,j,k\in G}-\sum_{\stackrel{i,j,k\in G}{i=j}}
-\sum_{\stackrel{i,j,k\in G}{i=k}}-\sum_{\stackrel{i,j,k\in G}{j=k}}
+2\sum_{\stackrel{i,j,k\in G}{i=j=k}},
\end{align}
so the third term in \eqref{6.55} becomes
\begin{align}\label{6.57}
\sigma_{l}(G)\sum_{\stackrel{i,j,k\in G}{i\ne j,i\ne k,j\ne k}}
\frac{\hat a_{ni}}{\hat a_{ii}}\frac{\hat a_{nj}}{\hat a_{jj}}\frac{\hat a_{kk,\a}}{\hat a_{kk}}\hat a_{ij,\alpha}
=&\sigma_{l}(G)\sum_{i,j,k\in G}\frac{\hat a_{ni}}{\hat a_{ii}}\frac{\hat a_{nj}}{\hat a_{jj}}\frac{\hat a_{kk,\a}}{\hat a_{kk}}\hat a_{ij,\alpha}
-\sigma_{l}(G)\sum_{i,j\in G}\frac{\hat a_{ni}^2}{\hat a_{ii}^2}\frac{\hat a_{jj,\a}}{\hat a_{jj}}\hat a_{ii,\alpha}\notag\\
&-2\sigma_{l}(G)\sum_{i,j\in G}\frac{\hat a_{ni}}{\hat a_{ii}}\frac{\hat a_{nj}}{\hat a_{jj}}\frac{\hat a_{ii,\a}}{\hat a_{ii}}\hat a_{ij,\alpha}
+2\sigma_{l}(G)\sum_{i\in G}\frac{\hat a_{ni}^2}{\hat a_{ii}^2}\frac{\hat a_{ii,\a}}{\hat a_{ii}}\hat a_{ii,\alpha}.
\end{align}
Combining \eqref{6.55} and \eqref{6.57}, it yields that
\begin{align}\label{6.58}
III_{\alpha}=&2\sigma_l(G)\sum_{i\in G}\frac{\hat a_{ii,\a}}{\hat a_{ii}}(\hat a_{nn,\alpha}
-2\sum_{\stackrel{j\in G}{j\ne i}}\frac{\hat a_{nj}}{\hat a_{jj}}\hat a_{nj,\alpha}
+\sum_{j,k\in G}\frac{\hat a_{nj}}{\hat a_{jj}}\frac{\hat a_{nk}}{\hat a_{kk}}\hat a_{jk,\alpha})\notag\\
&-2\sigma_{l}(G)\sum_{i,j\in G}\frac{\hat a_{ni}^2}{\hat a_{ii}^2}\frac{\hat a_{jj,\a}}{\hat a_{jj}}\hat a_{ii,\alpha}
-4\sigma_{l}(G)\sum_{i,j\in G}\frac{\hat a_{ni}}{\hat a_{ii}}\frac{\hat a_{nj}}{\hat a_{jj}}\frac{\hat a_{ii,\a}}{\hat a_{ii}}\hat a_{ij,\alpha}
+4\sigma_{l}(G)\sum_{i\in G}\frac{\hat a_{ni}^2}{\hat a_{ii}^2}\frac{\hat a_{ii,\a}}{\hat a_{ii}}\hat a_{ii,\alpha}\notag\\
\sim&4\sigma_l(G)\sum_{i\in G}\frac{\hat a_{ni}}{\hat a_{ii}}\frac{\hat a_{ii,\a}}{\hat a_{ii}}[\hat a_{ni,\alpha}
-\sum_{j\in G}\frac{\hat a_{nj}}{\hat a_{jj}}\hat a_{ij,\alpha} ]\notag\\
&-2\sigma_{l}(G)\sum_{i,j\in G}\frac{\hat a_{ni}^2}{\hat a_{ii}^2}\frac{\hat a_{jj,\a}}{\hat a_{jj}}\hat a_{ii,\alpha}
+4\sigma_{l}(G)\sum_{i\in G}\frac{\hat a_{ni}^2}{\hat a_{ii}^2}\frac{\hat a_{ii,\a}}{\hat a_{ii}}\hat a_{ii,\alpha}.
\end{align}

At last, we deal with $IV_{\alpha}$,
\begin{align*}
IV_{\alpha}=&-2[\sum_{i\in G}\sigma _{l - 1} (G|i)\hat a_{ni,\alpha }\hat a_{ni,\a}
+\sigma_{l-1}(G)\sum_{i\in B}\hat a_{ni,\alpha }\hat a_{ni,\a}]\\
&+4\sum_{j\in G}\hat a_{nj}[\sum_{\stackrel{i\in G}{i\ne j}}\sigma_{l-2}(G|ij)\hat a_{ni,\alpha}\hat a_{ij,\a}
+\sigma_{l-2}(G|j)\sum_{i\in B}\hat a_{ni,\alpha}\hat a_{ij,\a}]\\
&-2\sum_{\stackrel{i,j\in G}{i\ne j}}\hat a_{ni}\hat a_{nj}[\sum_{\stackrel{k\in G}{k\ne i,k\ne j}}
\sigma_{l-3}(G|ijk)\hat a_{ki,\alpha}\hat a_{kj,\a}+\sigma_{l-3}(G|ij)\sum_{k\in B}\hat a_{ki,\alpha}\hat a_{kj,\a}]\\
=&-2\sum_{i\in G}\sigma_{l-1}(G|i)\hat a_{ni,\alpha }\hat a_{ni,\a}
+4\sum_{\stackrel{i,j\in G}{i\ne j}}\sigma_{l-2}(G|ij)\hat a_{nj}\hat a_{ni,\alpha}\hat a_{ij,\a}\notag\\
&-2\sum_{\stackrel{i,j,k\in G}{i\ne j,i\ne k,j\ne k}}\sigma_{l-3}(G|ijk)\hat a_{ni}\hat a_{nj}\hat a_{ki,\alpha}\hat a_{kj,\a}.\notag
\end{align*}
By the decomposition \eqref{6.56}, we have
\begin{align}\label{6.59}
IV_{\alpha}=&-2\sigma_{l}(G)\sum_{i\in G}\frac{\hat a_{ni,\alpha }^2}{\hat a_{ii}}
+4\sigma_{l}(G)\sum_{\stackrel{i,j\in G}{i\ne j}}\frac{\hat a_{nj}}{\hat a_{jj}}\frac{\hat a_{ij,\a}}{\hat a_{ii}}a_{ni,\alpha}
-2\sigma_{l}(G)\sum_{\stackrel{i,j,k\in G}{i\ne j,i\ne k,j\ne k}}\frac{\hat a_{ni}}{\hat a_{ii}}\frac{\hat a_{nj}}{\hat a_{jj}}
\frac{\hat a_{ki,\alpha}\hat a_{kj,\a}}{\hat a_{kk}}\notag\\
=&-2\sigma_{l}(G)\sum_{i\in G}\frac{\hat a_{ni,\alpha }^2}{\hat a_{ii}}+4\sigma_{l}(G)\sum_{i,j\in G}\frac{\hat a_{nj}}{\hat a_{jj}}\frac{\hat a_{ij,\a}}{\hat a_{ii}}a_{ni,\alpha}
-4\sigma_{l}(G)\sum_{i\in G}\frac{\hat a_{ni}}{\hat a_{ii}}\frac{\hat a_{ii,\a}}{\hat a_{ii}}a_{ni,\alpha} \notag\\
&-2\sigma_{l}(G)\sum_{i,j,k\in G}\frac{\hat a_{ni}}{\hat a_{ii}}\frac{\hat a_{nj}}{\hat a_{jj}}
\frac{\hat a_{ki,\alpha}\hat a_{kj,\a}}{\hat a_{kk}}
+2\sigma_{l}(G)\sum_{i,j\in G}\frac{\hat a_{ni}^2}{\hat a_{ii}^2}\frac{\hat a_{ij,\alpha}\hat a_{ij,\a}}{\hat a_{jj}}
\notag\\
&+2\sigma_{l}(G)\sum_{i,j\in G}\frac{\hat a_{ni}}{\hat a_{ii}}\frac{\hat a_{nj}}{\hat a_{jj}}
\frac{\hat a_{ii,\alpha}\hat a_{ij,\a}}{\hat a_{ii}}+2\sigma_{l}(G)\sum_{i,j\in G}\frac{\hat a_{ni}}{\hat a_{ii}}\frac{\hat a_{nj}}{\hat a_{jj}}
\frac{\hat a_{ij,\alpha}\hat a_{jj,\a}}{\hat a_{jj}}\notag\\
&-4\sigma_{l}(G)\sum_{i\in G}\frac{\hat a_{ni}^2}{\hat a_{ii}^2}\frac{\hat a_{ii,\alpha}\hat a_{ii,\a}}{\hat a_{ii}}.
\end{align}
Combining the terms, it follows
\begin{align}\label{6.60}
IV_{\alpha}=&-2\sigma_l(G)\sum_{i\in G}\frac{1}{\hat a_{ii}}[\hat a_{ni,\alpha }-\sum_{j\in G}\frac{\hat a_{nj}}{\hat a_{jj}}\hat a_{ij,\a}]^2
-4\sigma_l(G)\sum_{i\in G}\frac{\hat a_{ni}}{\hat a_{ii}}\frac{\hat a_{ii,\a}}{\hat a_{ii}}
[\hat a_{ni,\alpha }-\sum_{j\in G}\frac{\hat a_{nj}}{\hat a_{jj}}\hat a_{ij,\a}] \notag \\
&+2\sigma_{l}(G)\sum_{i,j\in G}\frac{\hat a_{ni}^2}{\hat a_{ii}^2}\frac{\hat a_{ij,\alpha}\hat a_{ij,\a}}{\hat a_{jj}}
-4\sigma_{l}(G)\sum_{i\in G}\frac{\hat a_{ni}^2}{\hat a_{ii}^2}\frac{\hat a_{ii,\alpha}\hat a_{ii,\a}}{\hat a_{ii}}.
\end{align}

So from  \eqref{6.54}, \eqref{6.58} and \eqref{6.60}, we get
\begin{align}\label{6.61}
II_{\alpha}+III_{\alpha}+IV_{\alpha}\sim - 2\sigma _l (G)\sum\limits_{i \in G} {\frac{1} {{\hat a _{ii}
}}\left[ {\hat a_{in,\alpha }  - \sum\limits_{j \in G} {\frac{{\hat a_{jn} }}
{{\hat a_{jj} }}\hat a_{ij,\alpha } } } \right]^2}.
\end{align}
The latter completes the proof of Lemma \ref{lem6.5}, jointly with\eqref{6.53}.
\end{proof}

\begin{lemma} \label{lem6.6}
Under the above assumptions and notations, for any $(x,t) \in \mathcal {O} \times (t_0-\delta, t_0]$ with the coordinate \eqref{6.7}, we have
\begin{eqnarray}\label{6.62}
\phi_{\alpha \alpha} \sim &&\sigma _l
(G)\left( {1 + \sum\limits_{i \in G} {\frac{{\hat a_{in} ^2 }} {{\hat a
_{ii}^2 }}} } \right)\Big(-\frac{|u_t|}{|D u|{u_t}^3}\Big)\sum\limits_{m \in B} {\hat h_{mm, \alpha \alpha} }  \notag \\
&&+ \sigma_l(G) \Big(-\frac{|u_t|}{|D u|{u_t}^3}\Big) \frac{1}{\hat W ^2}\left[ {\hat h_{nn,\alpha \alpha}  - 2\sum\limits_{i \in
G} {\frac{{\hat h_{in} }} {{\hat h_{ii}}}\hat h_{in,\alpha \alpha }+\sum\limits_{i,j
\in G} {\frac{{\hat h_{in} }} {{\hat h_{ii} }}\frac{{\hat h_{jn} }} {{\hat h_{jj}
}}\hat h_{ij,\alpha \alpha} } } } \right] \notag\\
&&- 2\sigma _l (G) \Big(-\frac{|u_t|}{|D u|{u_t}^3}\Big) \frac{1}{\hat W ^2}\sum\limits_{i \in G} {\frac{1} {{\hat h _{ii}
}}\left[ {\hat h_{in,\alpha }  - \sum\limits_{j \in G} {\frac{{\hat h_{jn} }}
{{\hat h_{jj} }}\hat h_{ij,\alpha } } } \right]^2}.
\end{eqnarray}

\end{lemma}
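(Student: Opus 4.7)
The plan is to derive the $\hat h$-form of $\phi_{\alpha\alpha}$ in Lemma \ref{lem6.6} from the $\hat a$-form in Lemma \ref{lem6.5} by performing two successive substitutions: first I would replace each $\hat a_{\alpha\beta}$ by $f\hat A_{\alpha\beta}$ with $f:=-|u_t|/(|Du|u_t^3)$, and then replace each $\hat A_{\alpha\beta}$ by the corresponding $\hat h$-expression coming from \eqref{6.2}--\eqref{6.4}. In each pass the correction terms produced by differentiating the coefficients will be absorbed into the $\sim$-relation by invoking the already-established identities of Lemma \ref{lem6.3} and Proposition \ref{prop6.1}.

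In the first pass, substituting $\hat a_{\alpha\beta}=f\hat A_{\alpha\beta}$ and expanding $\hat a_{\alpha\beta,\alpha\alpha}$ produces the leading piece $f\hat A_{\alpha\beta,\alpha\alpha}$ together with mixed terms $2f_\alpha\hat A_{\alpha\beta,\alpha}+f_{\alpha\alpha}\hat A_{\alpha\beta}$. When these enter the combination
\begin{equation*}
\hat a_{nn,\alpha\alpha}-2\sum_{i\in G}\tfrac{\hat a_{in}}{\hat a_{ii}}\hat a_{in,\alpha\alpha}
+\sum_{ij\in G}\tfrac{\hat a_{in}\hat a_{jn}}{\hat a_{ii}\hat a_{jj}}\hat a_{ij,\alpha\alpha},
\end{equation*}
the $f_{\alpha\alpha}$-contribution is proportional to $\hat A_{nn}-\sum_{i\in G}\hat A_{in}^2/\hat A_{ii}$ (since at the chosen frame $\hat A_{ij}=\hat h_{ij}=u_t^2 u_{ij}$ is diagonal on $G$), which is $\sim 0$ by \eqref{6.21}; the $f_\alpha$-contribution is a multiple of the quantity in \eqref{6.26} and therefore also $\sim 0$. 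For $\sum_{m\in B}\hat a_{mm,\alpha\alpha}$ the analogous reduction works even more cleanly, since both $\hat A_{mm}$ and $D\hat A_{mm}$ vanish identically on $\mathcal O\times(t_0-\delta,t_0)$ by \eqref{6.14}--\eqref{6.16} and Proposition \ref{prop6.1}. For the squared term the substitution gives
\begin{equation*}
\hat a_{in,\alpha}-\sum_{j\in G}\tfrac{\hat a_{jn}}{\hat a_{jj}}\hat a_{ij,\alpha}
=f\Bigl[\hat A_{in,\alpha}-\sum_{j\in G}\tfrac{\hat A_{jn}}{\hat A_{jj}}\hat A_{ij,\alpha}\Bigr]
+f_\alpha\Bigl[\hat A_{in}-\sum_{j\in G}\tfrac{\hat A_{jn}}{\hat A_{jj}}\hat A_{ij}\Bigr],
\end{equation*}
and the second bracket is identically zero at the point because $\hat A_{ij}$ is diagonal on $G$, reducing $\sum_{j\in G}(\hat A_{jn}/\hat A_{jj})\hat A_{ij}$ to $\hat A_{in}$. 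Using $\hat a_{ii}=f\hat A_{ii}$ one also gets $f^2/\hat a_{ii}=f/\hat A_{ii}$, which matches the global $f$-factor in the target expression.

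The second pass is conceptually identical: write $\hat A_{ij}=\hat h_{ij}$, $\hat A_{in}=\hat h_{in}/\hat W$, $\hat A_{nn}=\hat h_{nn}/\hat W^2$ at the chosen point as in \eqref{6.22}, and expand first and second derivatives of $\hat A$ via \eqref{6.2}--\eqref{6.4}. The extra terms in those formulas either carry a factor $u_i$ ($1\le i\le n-1$), in which case their value and first derivative contributions in the $nn$-combination are handled exactly as in pass one (now using \eqref{6.23} in place of \eqref{6.21} and \eqref{6.27} in place of \eqref{6.26}), or they belong to the remainders $T_{\alpha\beta}$, which vanish together with their first and second derivatives by \eqref{6.6}. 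The net power of $\hat W$ produced by the $nn$-block is $1/\hat W^2$, which is exactly the factor appearing in Lemma \ref{lem6.6}, while the $\sum_{m\in B}\hat h_{mm,\alpha\alpha}$ piece acquires no $\hat W$-factor because $\hat A_{mm}=\hat h_{mm}$ directly.

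I expect the main obstacle to lie in the bookkeeping of the second pass: the expression \eqref{6.4} for $\hat A_{nn}$ and its counterparts \eqref{6.2}--\eqref{6.3} contain several $\hat W$-weighted sums involving the $u_i$'s, and one must carefully separate those whose second derivatives actually contribute to the squared-correction
\begin{equation*}
-2\sigma_l(G)f\hat W^{-2}\sum_{i\in G}\hat h_{ii}^{-1}\Bigl[\hat h_{in,\alpha}-\sum_{j\in G}\tfrac{\hat h_{jn}}{\hat h_{jj}}\hat h_{ij,\alpha}\Bigr]^2
\end{equation*}
from those that get absorbed into $O(\phi+|\nabla\phi|)$. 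Once this bookkeeping is carried out, the identities \eqref{6.20}, \eqref{6.23}, \eqref{6.25}--\eqref{6.27}, together with the vanishing property \eqref{6.6}, do the remaining algebraic work and Lemma \ref{lem6.6} follows.
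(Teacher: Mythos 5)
Your two-pass strategy is exactly the paper's: the first pass (replacing $\hat a$ by $f\hat A$ with $f=-|u_t|/(|Du|u_t^3)$ and killing the $f_\alpha$, $f_{\alpha\alpha}$ corrections via \eqref{6.20}, \eqref{6.21} and \eqref{6.26}) reproduces \eqref{6.63}--\eqref{6.64} verbatim, and the second pass is the paper's reduction \eqref{6.65}.

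One caution on the second pass, which you rightly flag as the obstacle but whose mechanism you describe a bit too optimistically. Only the contributions where \emph{both} derivatives fall on a $u_i$-weight in the form of $u_{i\alpha\alpha}$ (the paper's $II_\alpha$, \eqref{6.72}) are killed by the $\sim 0$ identity \eqref{6.23} in the way pass one would suggest. The pieces with one derivative on a $u_i$-weight and the pieces quadratic in $u_{i\alpha}u_{j\alpha}$ (the paper's $III_\alpha$ and $IV_\alpha$, \eqref{6.73}--\eqref{6.74}), together with the extra square and cross terms produced inside $I_\alpha$ by $(\hat W^{-1})_\alpha$ in \eqref{6.71}, are \emph{not} individually $\sim 0$ and are not absorbed into $O(\phi+|\nabla\phi|)$; they cancel exactly against the cross and square terms that appear when the $\hat A$-squared bracket $-2\sum_{i\in G}\hat A_{ii}^{-1}[\hat A_{in,\alpha}-\sum_{j\in G}(\hat A_{jn}/\hat A_{jj})\hat A_{ij,\alpha}]^2$ is re-expanded in terms of $\hat h$ in \eqref{6.66}. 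So the final step is a perfect-square completion rather than a further application of the listed identities; carried out as in \eqref{6.66}--\eqref{6.74}, your outline closes correctly.
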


\begin{proof}
From \eqref{6.7} and Lemma \ref{lem6.2}, we have
\begin{align*}
&u_i \hat h_{in}=(u_i \hat h_{in})_{\a}=(u_i \hat h_{in})_{\a\a}=0, \quad i \in B, \a =1,2,\cdots, n-1;\\
&u_i \hat h_{in}=(u_i \hat h_{in})_{n}=0, (u_i \hat h_{in})_{nn}=2u_{in} \hat h_{in,n}=0, \quad i \in B.
\end{align*}
So from \eqref{6.1} and \eqref{6.2}, we have
\begin{align}\label{6.63}
\hat a_{mm, \alpha \alpha} = \Big(-\frac{|u_t|}{|D u|{u_t}^3}\Big) \hat A_{mm, \alpha \alpha}
= \Big(-\frac{|u_t|}{|D u|{u_t}^3}\Big) \hat h_{mm, \alpha \alpha}, \quad m \in B.
\end{align}

From \eqref{6.1} - \eqref{6.4}, \eqref{6.20} and \eqref{6.25}, we have
\begin{eqnarray}
&&\hat a_{nn,\alpha \alpha}  - 2\sum\limits_{i \in
G} {\frac{{\hat a_{in} }} {{\hat a_{ii}}}\hat a_{in,\alpha \alpha }+\sum\limits_{i,j
\in G} {\frac{{\hat a_{in} }} {{\hat a_{ii} }}\frac{{\hat a_{jn} }} {{\hat a_{jj}
}}\hat a_{ij,\alpha \alpha} } } \notag \\
&=& \Big(-\frac{|u_t|}{|D u|{u_t}^3}\Big) \left[ {\hat A_{nn,\alpha \alpha}  - 2\sum\limits_{i \in
G} {\frac{{\hat A_{in} }} {{\hat A_{ii}}}\hat A_{in,\alpha \alpha }+\sum\limits_{i,j
\in G} {\frac{{\hat A_{in} }} {{\hat A_{ii} }}\frac{{\hat A_{jn} }} {{\hat A_{jj}
}}\hat A_{ij,\alpha \alpha} } } } \right] \notag \\
&&+2 \Big(-\frac{|u_t|}{|D u|{u_t}^3}\Big)_{\alpha} \left[ {\hat A_{nn,\alpha}  - 2\sum\limits_{i \in
G} {\frac{{\hat A_{in} }} {{\hat A_{ii}}}\hat A_{in,\alpha }+\sum\limits_{i,j
\in G} {\frac{{\hat A_{in} }} {{\hat A_{ii} }}\frac{{\hat A_{jn} }} {{\hat A_{jj}
}}\hat A_{ij,\alpha} } } } \right] \notag \\
&&+\Big(-\frac{|u_t|}{|D u|{u_t}^3}\Big)_{\alpha \alpha} \left[ {\hat A_{nn}  - 2\sum\limits_{i \in
G} {\frac{{\hat A_{in} }} {{\hat A_{ii}}}\hat A_{in }+\sum\limits_{i,j
\in G} {\frac{{\hat A_{in} }} {{\hat A_{ii} }}\frac{{\hat A_{jn} }} {{\hat A_{jj}
}}\hat A_{ij} } } } \right] \notag \\
&\sim& \Big(-\frac{|u_t|}{|D u|{u_t}^3}\Big) \left[ {\hat A_{nn,\alpha \alpha}  - 2\sum\limits_{i \in
G} {\frac{{\hat A_{in} }} {{\hat A_{ii}}}\hat A_{in,\alpha \alpha }+\sum\limits_{i,j
\in G} {\frac{{\hat A_{in} }} {{\hat A_{ii} }}\frac{{\hat A_{jn} }} {{\hat A_{jj}
}}\hat A_{ij,\alpha \alpha} } } } \right],\notag
\end{eqnarray}
and
\begin{eqnarray}
&&- 2\sigma _l (G) \sum\limits_{i \in G} {\frac{1} {{\hat a _{ii}
}}\left[ {\hat a_{in,\alpha }  - \sum\limits_{j \in G} {\frac{{\hat a_{jn} }}
{{\hat a_{jj} }}\hat a_{ij,\alpha } } } \right]^2}   \notag \\
&=&- 2\sigma _l (G) \sum\limits_{i \in G} {\frac{1} {{\hat a _{ii}
}}\left[\Big(-\frac{|u_t|}{|D u|{u_t}^3}\Big)\Big( {\hat A_{in,\alpha }  - \sum\limits_{j \in G} {\frac{{\hat A_{jn} }}
{{\hat A_{jj} }}\hat A_{ij,\alpha } } }\Big)+\Big(-\frac{|u_t|}{|D u|{u_t}^3}\Big)_{\alpha }\Big( {\hat A_{in}  - \sum\limits_{j \in G} {\frac{{\hat A_{jn} }}
{{\hat A_{jj} }}\hat A_{ij} } }\Big) \right]^2}   \notag \\
&=&- 2\sigma _l (G) \Big(-\frac{|u_t|}{|D u|{u_t}^3}\Big) \sum\limits_{i \in G} {\frac{1} {{\hat A _{ii}
}}\left[ {\hat A_{in,\alpha }  - \sum\limits_{j \in G} {\frac{{\hat A_{jn} }}
{{\hat A_{jj} }}\hat A_{ij,\alpha } } } \right]^2}. \notag
\end{eqnarray}
So we get
\begin{align} \label{6.64}
&\sigma_l(G)\left[ {\hat a_{nn,\alpha \alpha}  - 2\sum\limits_{i \in
G} {\frac{{\hat a_{in} }} {{\hat a_{ii}}}\hat a_{in,\alpha \alpha }+\sum\limits_{ij
\in G} {\frac{{\hat a_{in} }} {{\hat a_{ii} }}\frac{{\hat a_{jn} }} {{\hat a_{jj}
}}\hat a_{ij,\alpha \alpha} } } } \right]- 2\sigma _l (G) \sum\limits_{i \in G} {\frac{1} {{\hat a _{ii}
}}\left[ {\hat a_{in,\alpha }  - \sum\limits_{j \in G} {\frac{{\hat a_{jn} }}
{{\hat a_{jj} }}\hat a_{ij,\alpha } } } \right]^2}   \notag \\
\sim&\sigma_l(G) \Big(-\frac{|u_t|}{|D u|{u_t}^3}\Big) \left[ {\hat A_{nn,\alpha \alpha}  - 2\sum\limits_{i \in
G} {\frac{{\hat A_{in} }} {{\hat A_{ii}}}\hat A_{in,\alpha \alpha }+\sum\limits_{i,j
\in G} {\frac{{\hat A_{in} }} {{\hat A_{ii} }}\frac{{\hat A_{jn} }} {{\hat A_{jj}
}}\hat A_{ij,\alpha \alpha} } } } \right]\notag \\
&- 2\sigma _l (G) \Big(-\frac{|u_t|}{|D u|{u_t}^3}\Big) \sum\limits_{i \in G} {\frac{1} {{\hat A _{ii}
}}\left[ {\hat A_{in,\alpha }  - \sum\limits_{j \in G} {\frac{{\hat A_{jn} }}
{{\hat A_{jj} }}\hat A_{ij,\alpha } } } \right]^2}.
\end{align}
In the following, we will prove
\begin{eqnarray} \label{6.65}
&& \left[ {\hat A_{nn,\alpha \alpha}  - 2\sum\limits_{i \in
G} {\frac{{\hat A_{in} }} {{\hat A_{ii}}}\hat A_{in,\alpha \alpha }+\sum\limits_{i,j
\in G} {\frac{{\hat A_{in} }} {{\hat A_{ii} }}\frac{{\hat A_{jn} }} {{\hat A_{jj}
}}\hat A_{ij,\alpha \alpha} } } } \right]
- 2\sum\limits_{i \in G} {\frac{1} {{\hat A _{ii}
}}\left[ {\hat A_{in,\alpha }  - \sum\limits_{j \in G} {\frac{{\hat A_{jn} }}
{{\hat A_{jj} }}\hat A_{ij,\alpha } } } \right]^2}  \notag  \\
&\sim& \frac{1}{\hat W ^2}\left[ {\hat h_{nn,\alpha \alpha}  - 2\sum\limits_{i \in
G} {\frac{{\hat h_{in} }} {{\hat h_{ii}}}\hat h_{in,\alpha \alpha }+\sum\limits_{i,j
\in G} {\frac{{\hat h_{in} }} {{\hat h_{ii} }}\frac{{\hat h_{jn} }} {{\hat h_{jj}
}}\hat h_{ij,\alpha \alpha} } } } \right]\notag \\
&&- \frac{2}{\hat W ^2}\sum\limits_{i \in G} {\frac{1} {{\hat h _{ii}
}}\left[ {\hat h_{in,\alpha }  - \sum\limits_{j \in G} {\frac{{\hat h_{jn} }}
{{\hat h_{jj} }}\hat h_{ij,\alpha } } } \right]^2}.
\end{eqnarray}
If \eqref{6.65} holds, \eqref{6.62} holds from \eqref{6.51}, \eqref{6.52}.

From \eqref{6.2} and \eqref{6.3}, taking the first derivatives of $\hat A_{\a \b}$, we get
\begin{align}
\hat A_{in,\a}
=&\Big(\frac{1}{\hat W}\Big)_{\a}\hat h_{in}+\frac{1}{\hat W}\hat h_{in,\a}
-\frac{u_{i\a}u_n \hat h_{nn}}{\hat W^2(1+\hat W)u_t^2} -\frac{u_n \sum_{l=1}^{n-1}u_{l\a}\hat h_{il}}{\hat W(1+\hat W)u_t^2} \notag \\
=&\Big(\frac{1}{\hat W}\Big)_{\a}\hat h_{in}+\frac{1}{\hat W}\hat h_{in,\a}
-\frac{u_{i\a}u_n \hat h_{nn}}{\hat W^2(1+\hat W)u_t^2} -\frac{u_n u_{i\a}\hat h_{ii}}{\hat W(1+\hat W)u_t^2}, \notag
\end{align}
and
\begin{align}
- \sum\limits_{j \in G} {\frac{{\hat A_{jn} }}
{{\hat A_{jj} }}\hat A_{ij,\alpha } } =& - \frac{1}{\hat W}\sum\limits_{j \in G} {\frac{{\hat h_{jn} }}
{{\hat h_{jj} }}[\hat h_{ij,\a}-\frac{u_{i\a}u_n \hat h_{jn}}{\hat W(1+\hat W)u_t^2} -\frac{u_{j\a}u_n \hat h_{in}}{\hat W(1+ \hat W)u_t^2}] } \notag \\
=& - \frac{1}{\hat W}\sum\limits_{j \in G} {\frac{{\hat h_{jn} }}
{{\hat h_{jj} }}\hat h_{ij,\a}}+\frac{u_{i\a}u_n \hat h_{nn}}{\hat W^2(1+\hat W)u_t^2}
+\frac{u_n \hat h_{in}}{\hat W^2(1+ \hat W)u_t^2}\sum\limits_{j \in G} {\frac{{u_{j\a} \hat h_{jn}}}
{{\hat h_{jj} }} }, \notag
\end{align}
so
\begin{align}\label{6.66}
&- 2\sum\limits_{i \in G} {\frac{1} {{\hat A _{ii}
}}\left[ {\hat A_{in,\alpha }  - \sum\limits_{j \in G} {\frac{{\hat A_{jn} }}
{{\hat A_{jj} }}\hat A_{ij,\alpha } } } \right]^2}  \notag  \\
=&- 2\sum\limits_{i \in G} {\frac{1} {{\hat h _{ii}
}}\left[\frac{1}{\hat W}\Big(\hat h_{in,\a}-\sum\limits_{j \in G} {\frac{{\hat h_{jn} }}
{{\hat h_{jj} }}\hat h_{ij,\a}}\Big)+\Big(\frac{1}{\hat W}\Big)_{\a}\hat h_{in}
-\frac{u_n u_{i\a}\hat h_{ii}}{\hat W(1+\hat W)u_t^2}+\frac{u_n \hat h_{in}}{\hat W^2(1+ \hat W)u_t^2}\sum\limits_{j \in G} {\frac{{u_{j\a} \hat h_{jn}}}
{{\hat h_{jj} }} } \right]^2 }\notag  \\
=&- \frac{2}{\hat W^2}\sum\limits_{i \in G} {\frac{1} {{\hat h _{ii}
}}\Big(\hat h_{in,\a}-\sum\limits_{j \in G} {\frac{{\hat h_{jn} }}
{{\hat h_{jj} }}\hat h_{ij,\a}}\Big)^2 }- 4\frac{1}{\hat W}\Big(\frac{1}{\hat W}\Big)_{\a}\sum\limits_{i \in G} {\frac{\hat h_{in}} {{\hat h _{ii}
}}\Big(\hat h_{in,\a}-\sum\limits_{j \in G} {\frac{{\hat h_{jn} }}
{{\hat h_{jj} }}\hat h_{ij,\a}}\Big) }\notag  \\
&+ \frac{4u_n} {\hat W^2(1+\hat W)u_t^2}\sum\limits_{i \in G} {
\Big(\hat h_{in,\a}-\sum\limits_{j \in G} {\frac{{\hat h_{jn} }}
{{\hat h_{jj} }}\hat h_{ij,\a}}\Big)u_{i\a }} \notag \\
&- \frac{4u_n} {\hat W^3(1+\hat W)u_t^2}\sum\limits_{i \in G} {\frac{\hat h_{in}} {{\hat h _{ii}}}
\Big(\hat h_{in,\a}-\sum\limits_{j \in G} {\frac{{\hat h_{jn} }}
{{\hat h_{jj} }}\hat h_{ij,\a}}\Big)\sum\limits_{k \in G} {\frac{{u_{k\a} \hat h_{kn}}}
{{\hat h_{kk} }} }} \notag \\
&- 2\sum\limits_{i \in G} {\frac{1} {{\hat h _{ii}
}}\left[\Big(\frac{1}{\hat W}\Big)_{\a}\hat h_{in}
-\frac{u_n u_{i\a}\hat h_{ii}}{\hat W(1+\hat W)u_t^2}+\frac{u_n \hat h_{in}}{\hat W^2(1+ \hat W)u_t^2}\sum\limits_{j \in G} {\frac{{u_{j\a} \hat h_{jn}}}
{{\hat h_{jj} }} } \right]^2 }.
\end{align}

Taking the second derivatives of $\hat A_{\a \b}$, we get
\begin{align} \label{6.67}
\hat A_{nn,\a \a} =& \Big(\frac{1}{\hat W^2}\Big)_{\a\a} \hat h_{nn}
+2\Big(\frac{1}{\hat W^2}\Big)_{\a} \hat h_{nn,\a} +\frac{1}{\hat W^2} \hat h_{nn,\a\a} \notag \\
&-2[\frac{u_n\sum_{l=1}^{n-1}u_{l\a\a}\hat h_{nl}}{\hat W^2(1+\hat W)u_t^2}
+2\sum_{l=1}^{n-1}u_{l\a}\Big(\frac{u_n\hat h_{nl}}{\hat W^2(1+\hat W)u_t^2}\Big)_{\a}]\notag \\
&+4\sum_{l=1}^{n-1} u_{l\a}^2\frac{\hat h_{nn}}{\hat W^2(1+\hat W)u_t^2}
+ \frac{ 2\sum_{k,l=1}^{n-1} u_{k\a}u_{l\a} \hat h_{kl}}{\hat W(1+\hat W)u_t^2}[1- \frac{1}{\hat W}] \notag \\
=& \Big(\frac{1}{\hat W^2}\Big)_{\a\a} \hat h_{nn}
+2\Big(\frac{1}{\hat W^2}\Big)_{\a} \hat h_{nn,\a} +\frac{1}{\hat W^2} \hat h_{nn,\a\a}
-2\frac{u_n\sum\limits_{l\in G}u_{l\a\a}\hat h_{nl}}{\hat W^2(1+\hat W)u_t^2}\notag \\
&-4\sum\limits_{l \in G}u_{l\a}[\Big(\frac{u_n}{\hat W(1+\hat W)u_t^2}\Big)_{\a}\frac{\hat h_{nl}}{\hat W}
+\frac{u_n}{\hat W(1+\hat W)u_t^2}\Big(\frac{\hat h_{nl}}{\hat W}\Big)_{\a}] \\
&+\frac{4\sum_{l=1}^{n-1} u_{l\a}^2\hat h_{nn}}{\hat W^2(1+\hat W)u_t^2}
+ \frac{ 2\sum_{l=1}^{n-1} u_{l\a}^2 \hat h_{ll}}{\hat W(1+\hat W)u_t^2}[1- \frac{1}{\hat W}].\notag
\end{align}

\begin{align}\label{6.68}
\hat A_{in,\a\a} =& \Big(\frac{1}{\hat W}\Big)_{\a\a}\hat h_{in}+2\Big(\frac{1}{\hat W}\Big)_{\a}\hat h_{in,\a}+\frac{1}{\hat W}\hat h_{in,\a\a} \notag \\
&-[u_{i\a\a}\frac{u_n \hat h_{nn}}{\hat W^2(1+\hat W)u_t^2} +2u_{i\a}\Big(\frac{u_n \hat h_{nn}}{\hat W^2(1+\hat W)u_t^2}\Big)_{\a}]\notag \\
&-\sum_{l=1}^{n-1}[u_{l\a\a}\frac{u_n \hat h_{il}}{\hat W(1+\hat W)u_t^2} +2u_{l\a}\Big(\frac{u_n \hat h_{il}}{\hat W(1+\hat W)u_t^2}\Big)_{\a} ]\notag \\
&+\frac{2 \sum_{l=1}^{n-1} u_{l\a}^2\hat h_{in}}{\hat W(1+\hat W)u_t^2}
+ \frac{2u_{i\a} \sum_{l=1}^{n-1}u_{l\a}\hat h_{nl}}{\hat W(1+\hat W)u_t^2}[1- \frac{2}{\hat W}] \notag\\
=& \Big(\frac{1}{\hat W}\Big)_{\a\a}\hat h_{in}
+2\Big(\frac{1}{\hat W}\Big)_{\a}\hat h_{in,\a}+\frac{1}{\hat W}\hat h_{in,\a\a}
-u_{i\a\a}\frac{u_n \hat h_{nn}}{\hat W^2(1+\hat W)u_t^2}- u_{i\a\a}\frac{u_n \hat h_{ii}}{\hat W(1+\hat W)u_t^2} \notag \\
&-2u_{i\a}[\Big(\frac{u_n }{\hat W^2(1+\hat W)u_t^2}\Big)_{\a} \hat h_{nn}
+\frac{u_n }{\hat W^2(1+\hat W)u_t^2}\hat h_{nn,\a} ]\notag \\
&-2[u_{i\a}\Big(\frac{u_n }{\hat W(1+\hat W)u_t^2}\Big)_{\a}\hat h_{ii}
+\sum_{l\in G}u_{l\a}\Big(\frac{u_n }{\hat W(1+\hat W)u_t^2}\Big)\hat h_{il,\a}] \\
&+\frac{2 \sum_{l=1}^{n-1} u_{l\a}^2\hat h_{in}}{\hat W(1+\hat W)u_t^2}
+ \frac{2u_{i\a} \sum_{l=1}^{n-1}u_{l\a}\hat h_{nl}}{\hat W(1+\hat W)u_t^2}[1- \frac{2}{\hat W}],\notag
\end{align}
and
\begin{align} \label{6.69}
\hat A_{ij,\a\a} =& \hat h_{ij,\a\a}
-[u_{i\a\a}\frac{u_n \hat h_{jn}}{\hat W(1+\hat W)u_t^2}+2u_{i\a}\Big(\frac{u_n \hat h_{jn}}{\hat W(1+\hat W)u_t^2}\Big)_{\a} ]\notag \\
&-[u_{j\a\a}\frac{u_n \hat h_{in}}{\hat W(1+ \hat W)u_t^2}+2u_{j\a}\Big(\frac{u_n \hat h_{in}}{\hat W(1+ \hat W)u_t^2}\Big)_{\a}] \notag \\
&-2\frac{u_{i \a}\sum_{l=1}^{n-1}u_{l\a} \hat h_{jl}}{\hat W(1+\hat W)u_t^2} -2\frac{u_{j\a} \sum_{l=1}^{n-1} u_{l\a} \hat h_{il}}{\hat W(1+ \hat W)u_t^2}
+ 2\frac{u_{i\a}u_{j\a}u_n^2 \hat h_{nn}}{\hat W^2(1+\hat W)^2u_t^4} \notag \\
=& \hat h_{ij,\a\a}-[u_{i\a\a}\frac{u_n \hat h_{jn}}{\hat W(1+\hat W)u_t^2}
+u_{j\a\a}\frac{u_n \hat h_{in}}{\hat W(1+ \hat W)u_t^2} ]\notag \\
&-2u_{i\a}[\Big(\frac{u_n }{\hat W^2(1+\hat W)u_t^2}\Big)_{\a} \hat W \hat h_{jn}
+\frac{u_n }{\hat W^2(1+\hat W)u_t^2}\Big( \hat W \hat h_{jn}\Big)_{\a}] \notag \\
&-2u_{j\a}[\Big(\frac{u_n }{\hat W^2(1+ \hat W)u_t^2}\Big)_{\a}\hat W \hat h_{in}
+\frac{u_n }{\hat W^2(1+ \hat W)u_t^2}\Big(\hat W \hat h_{in}\Big)_{\a}]  \\
&-2\frac{u_{i\a} u_{j\a} \hat h_{jj}}{\hat W(1+\hat W)u_t^2} -2\frac{u_{j\a}  u_{i\a} \hat h_{ii}}{\hat W(1+ \hat W)u_t^2}
+ 2\frac{u_{i\a}u_{j\a} \hat h_{nn}}{\hat W(1+\hat W)u_t^2} [1-\frac{1}{\hat W}].\notag
\end{align}
Then
\begin{eqnarray}\label{6.70}
\hat A_{nn,\alpha \alpha}  - 2\sum\limits_{i \in
G} {\frac{{\hat A_{in} }} {{\hat A_{ii}}}\hat A_{in,\alpha \alpha }+\sum\limits_{i,j
\in G} {\frac{{\hat A_{in} }} {{\hat A_{ii} }}\frac{{\hat A_{jn} }} {{\hat A_{jj}
}}\hat A_{ij,\alpha \alpha} } }  = I_{\a} + II_{\a} + III_{\a} +IV_{\a},
\end{eqnarray}
where
\begin{align}\label{6.71}
I_{\a}=&\Big(\frac{1}{\hat W^2}\Big)_{\a\a} \hat h_{nn}
+2\Big(\frac{1}{\hat W^2}\Big)_{\a} \hat h_{nn,\a} +\frac{1}{\hat W^2} \hat h_{nn,\a\a} \notag \\
& - 2\frac{1}{\hat W}\sum\limits_{i \in G} {\frac{{\hat h_{in} }} {{\hat h_{ii}}}[\Big(\frac{1}{\hat W}\Big)_{\a\a}\hat h_{in}
+2\Big(\frac{1}{\hat W}\Big)_{\a}\hat h_{in,\a}+\frac{1}{\hat W}\hat h_{in,\a\a}]} \notag \\
&+\frac{1}{\hat W^2}\sum\limits_{i,j\in G} {\frac{{\hat h_{in} }} {{\hat h_{ii} }}\frac{{\hat h_{jn} }} {{\hat h_{jj}
}}\hat h_{ij,\a \a} } \notag \\
\sim&\frac{1}{\hat W^2} [\hat h_{nn,\a\a}
- 2\sum\limits_{i \in G} {\frac{{\hat h_{in} }} {{\hat h_{ii}}}\hat h_{in,\a\a}}
+\sum\limits_{i,j\in G} {\frac{{\hat h_{in} }} {{\hat h_{ii} }}\frac{{\hat h_{jn} }} {{\hat h_{jj}
}}\hat h_{ij,\a \a} }]\notag \\
& + 2\Big(\frac{1}{\hat W}\Big)_{\a}^2\hat h_{nn}
+2\Big(\frac{1}{\hat W^2}\Big)_{\a} [\hat h_{nn,\a}-\sum\limits_{i \in G} {\frac{{\hat h_{in} }} {{\hat h_{ii}}}\hat h_{in,\a}} ]\notag \\
\sim&\frac{1}{\hat W^2} [\hat h_{nn,\a\a}
- 2\sum\limits_{i \in G} {\frac{{\hat h_{in} }} {{\hat h_{ii}}}\hat h_{in,\a\a}}
+\sum\limits_{i,j\in G} {\frac{{\hat h_{in} }} {{\hat h_{ii} }}\frac{{\hat h_{jn} }} {{\hat h_{jj}
}}\hat h_{ij,\a \a} }]\notag \\
& + 2\sum\limits_{i \in G} \frac{{1 }} {{\hat h_{ii}}}[\Big(\frac{1}{\hat W}\Big)_{\a}\hat h_{in}]^2
+4 \sum\limits_{i \in G} \frac{{1 }} {{\hat h_{ii}}}[\Big(\frac{1}{\hat W}\Big)_{\a}\hat h_{in}]
[\frac{1}{\hat W}(\hat h_{in,\a}-\sum\limits_{j \in G} {\frac{{\hat h_{jn} }} {{\hat h_{jj}}}\hat h_{ij,\a}}) ],
\end{align}
and
\begin{eqnarray}\label{6.72}
II_{\a}&=&-2\frac{u_n\sum\limits_{l\in G}u_{l\a\a}\hat h_{nl}}{\hat W^2(1+\hat W)u_t^2}  - 2\frac{1}{\hat W}\sum\limits_{i \in
G} {\frac{{\hat h_{in} }} {{\hat h_{ii}}}[-u_{i\a\a}\frac{u_n \hat h_{nn}}{\hat W^2(1+\hat W)u_t^2}- u_{i\a\a}\frac{u_n \hat h_{ii}}{\hat W(1+\hat W)u_t^2}]} \notag \\
&&+\frac{1}{\hat W^2}\sum\limits_{i,j
\in G} {\frac{{\hat h_{in} }} {{\hat h_{ii} }}\frac{{\hat h_{jn} }} {{\hat h_{jj}
}}[-u_{i\a\a}\frac{u_n \hat h_{jn}}{\hat W(1+\hat W)u_t^2}
-u_{j\a\a}\frac{u_n \hat h_{in}}{\hat W(1+ \hat W)u_t^2} ]}  \notag \\
&\sim&  0.
\end{eqnarray}
For the term $III_{\a}$,
\begin{eqnarray*}\label{6.73}
III_{\a}&=&-4\sum\limits_{l \in G}u_{l\a}[\Big(\frac{u_n}{\hat W(1+\hat W)u_t^2}\Big)_{\a}\frac{\hat h_{nl}}{\hat W}
+\frac{u_n}{\hat W(1+\hat W)u_t^2}\Big(\frac{\hat h_{nl}}{\hat W}\Big)_{\a}] \notag \\
&& - 2\frac{1}{\hat W}\sum\limits_{i \in
G} {\frac{{\hat h_{in} }} {{\hat h_{ii}}}\left\{-2u_{i\a}[\Big(\frac{u_n }{\hat W^2(1+\hat W)u_t^2}\Big)_{\a} \hat h_{nn}
+\frac{u_n }{\hat W^2(1+\hat W)u_t^2}\hat h_{nn,\a} ] \right.}\notag \\
&& \left.\qquad \qquad  \quad -2[u_{i\a}\Big(\frac{u_n }{\hat W(1+\hat W)u_t^2}\Big)_{\a}\hat h_{ii}
+\sum_{l\in G}u_{l\a}\Big(\frac{u_n }{\hat W(1+\hat W)u_t^2}\Big)\hat h_{il,\a}] \right\}  \notag \\
&&+\frac{1}{\hat W^2}\sum\limits_{i,j
\in G} {\frac{{\hat h_{in} }} {{\hat h_{ii} }}\frac{{\hat h_{jn} }} {{\hat h_{jj}
}} \left\{-2u_{i\a}[\Big(\frac{u_n }{\hat W^2(1+\hat W)u_t^2}\Big)_{\a} \hat W \hat h_{jn}
+\frac{u_n }{\hat W^2(1+\hat W)u_t^2}\Big( \hat W \hat h_{jn}\Big)_{\a}]  \right.} \notag  \\
&& \left.\qquad \qquad \qquad -2u_{j\a}[\Big(\frac{u_n }{\hat W^2(1+ \hat W)u_t^2}\Big)_{\a}\hat W \hat h_{in}
+\frac{u_n }{\hat W^2(1+ \hat W)u_t^2}\Big(\hat W \hat h_{in}\Big)_{\a}] \right\}  \notag \\
&\sim&-4\sum\limits_{l \in G}u_{l\a}[\frac{u_n}{\hat W(1+\hat W)u_t^2}\Big(\frac{\hat h_{nl}}{\hat W}\Big)_{\a}] \notag \\
&&-2\frac{1}{\hat W}\sum\limits_{i \in
G} {\frac{{\hat h_{in} }} {{\hat h_{ii}}}\left\{-2 u_{i\a}[\frac{u_n }{\hat W^2(1+\hat W)u_t^2}\hat h_{nn,\a} ]
-2[\sum_{l\in G}u_{l\a}\Big(\frac{u_n }{\hat W(1+\hat W)u_t^2}\Big)\hat h_{il,\a}] \right\} } \notag \\
&&+\frac{1}{\hat W^2}\sum\limits_{i,j
\in G} {\frac{{\hat h_{in} }} {{\hat h_{ii} }}\frac{{\hat h_{jn} }} {{\hat h_{jj}
}} \left\{-2u_{i\a}[\frac{u_n }{\hat W^2(1+\hat W)u_t^2}\Big( \hat W \hat h_{jn}\Big)_{\a}]
-2u_{j\a}[\frac{u_n }{\hat W^2(1+ \hat W)u_t^2}\Big(\hat W \hat h_{in}\Big)_{\a}] \right\}},
\end{eqnarray*}
it follows that
\begin{eqnarray}\label{6.73}
III_{\a}&=& -4\sum\limits_{l \in G}\frac{u_nu_{l\a}}{\hat W(1+\hat W)u_t^2} \Big[\Big(\frac{1}{\hat W}\Big)_{\a}\hat h_{nl}
+\frac{1}{\hat W}\hat h_{nl, \a}\Big] \notag \\
&&+4\sum\limits_{i \in
G} {\frac{{\hat h_{in} }} {{\hat h_{ii}}} \Big[\frac{ u_n u_{i\a}}{\hat W^3(1+\hat W)u_t^2}\hat h_{nn,\a}
+\sum_{l\in G}\frac{ u_n u_{l\a}}{\hat W^2 (1+\hat W)u_t^2}\hat h_{il,\a} \Big] } \notag \\
&&-4\sum\limits_{i,j
\in G} {\frac{{\hat h_{in} }} {{\hat h_{ii} }}\frac{{\hat h_{jn} }} {{\hat h_{jj}
}} \left\{\frac{u_n u_{i\a}}{\hat W^4(1+\hat W)u_t^2} \Big[\hat W_\a \hat h_{jn} +\hat W \hat h_{jn, \a}\Big] \right\}}  \notag \\
&=&4\sum\limits_{i \in G} {\frac{1} {{\hat h _{ii}
}}\Big(\frac{1}{\hat W}\Big)_{\a}\hat h_{in}
\left[-\frac{u_n u_{i\a}\hat h_{ii}}{\hat W(1+\hat W)u_t^2}+\frac{u_n \hat h_{in}}{\hat W^2(1+ \hat W)u_t^2}\sum\limits_{j \in G} {\frac{{u_{j\a} \hat h_{jn}}}
{{\hat h_{jj} }} } \right] } \notag \\
&&+ 4\sum\limits_{i \in G} {\frac{1} {{\hat h _{ii}
}}\frac{1}{\hat W}\Big(\hat h_{in,\a}-\sum\limits_{j \in G} {\frac{{\hat h_{jn} }}
{{\hat h_{jj} }}\hat h_{ij,\a}}\Big)\left[ -\frac{u_n u_{i\a}\hat h_{ii}}{\hat W(1+\hat W)u_t^2}\right] }\notag \\
&&+4\frac{1}{\hat W}\Big(\hat h_{nn,\a}-\sum\limits_{j \in G} {\frac{{\hat h_{jn} }}
{{\hat h_{jj} }}\hat h_{jn,\a}}\Big)\left[\frac{u_n }{\hat W^2(1+ \hat W)u_t^2}\sum\limits_{i \in G} {\frac{{u_{i\a} \hat h_{in}}}
{{\hat h_{ii} }} } \right]   \notag \\
&\sim&4\sum\limits_{i \in G} {\frac{1} {{\hat h _{ii}
}}\Big(\frac{1}{\hat W}\Big)_{\a}\hat h_{in}
\left[-\frac{u_n u_{i\a}\hat h_{ii}}{\hat W(1+\hat W)u_t^2}+\frac{u_n \hat h_{in}}{\hat W^2(1+ \hat W)u_t^2}\sum\limits_{j \in G} {\frac{{u_{j\a} \hat h_{jn}}}
{{\hat h_{jj} }} } \right] } \\
&&+ 4\sum\limits_{i \in G} {\frac{1} {{\hat h _{ii}
}}\frac{1}{\hat W}\Big(\hat h_{in,\a}-\sum\limits_{j \in G} {\frac{{\hat h_{jn} }}
{{\hat h_{jj} }}\hat h_{ij,\a}}\Big)\left[ -\frac{u_n u_{i\a}\hat h_{ii}}{\hat W(1+\hat W)u_t^2}
+ \frac{u_n \hat h_{in}}{\hat W^2(1+ \hat W)u_t^2}\sum\limits_{j \in G} {\frac{{u_{j\a} \hat h_{jn}}}
{{\hat h_{jj} }} }\right] }.\notag
\end{eqnarray}
For the term $IV_{\a}$
\begin{eqnarray*}
IV_{\a}&=&\frac{4\sum_{l=1}^{n-1} u_{l\a}^2\hat h_{nn}}{\hat W^2(1+\hat W)u_t^2}
+ \frac{ 2\sum_{l=1}^{n-1} u_{l\a}^2 \hat h_{ll}}{\hat W(1+\hat W)u_t^2}(1- \frac{1}{\hat W}) \notag \\
&&- 2\frac{1}{\hat W}\sum\limits_{i \in
G} {\frac{{\hat h_{in} }} {{\hat h_{ii}}}[\frac{2 \sum_{l=1}^{n-1} u_{l\a}^2\hat h_{in}}{\hat W(1+\hat W)u_t^2}
+ \frac{2u_{i\a} \sum_{l=1}^{n-1}u_{l\a}\hat h_{nl}}{\hat W(1+\hat W)u_t^2}(1- \frac{2}{\hat W})] }\notag \\
&&+\frac{1}{\hat W^2}\sum\limits_{i,j
\in G} {\frac{{\hat h_{in} }} {{\hat h_{ii} }}\frac{{\hat h_{jn} }} {{\hat h_{jj}
}} [-2\frac{u_{i\a} u_{j\a} \hat h_{jj}}{\hat W(1+\hat W)u_t^2} -2\frac{u_{j\a}  u_{i\a} \hat h_{ii}}{\hat W(1+ \hat W)u_t^2}
+ 2\frac{u_{i\a}u_{j\a} \hat h_{nn}}{\hat W(1+\hat W)u_t^2} (1-\frac{1}{\hat W})] }  \notag \\
&\sim& \frac{ 2\sum\limits_{l\in G} u_{l\a}^2 \hat h_{ll}}{\hat W(1+\hat W)u_t^2}(1- \frac{1}{\hat W}) - 2\frac{1}{\hat W}\sum\limits_{i \in
G} {\frac{{\hat h_{in} }} {{\hat h_{ii}}}[ \frac{2u_{i\a} \sum\limits_{l \in G}u_{l\a}\hat h_{nl}}{\hat W(1+\hat W)u_t^2}(1- \frac{2}{\hat W})] }\notag \\
&&+\frac{1}{\hat W^2}\sum\limits_{i,j
\in G} {\frac{{\hat h_{in} }} {{\hat h_{ii} }}\frac{{\hat h_{jn} }} {{\hat h_{jj}
}} [-4\frac{u_{i\a} u_{j\a} \hat h_{jj}}{\hat W(1+\hat W)u_t^2} + 2\frac{u_{i\a}u_{j\a} \hat h_{nn}}{\hat W(1+\hat W)u_t^2} (1-\frac{1}{\hat W})] },
\end{eqnarray*}
then we have
\begin{eqnarray}\label{6.74}
IV_{\a}
&=& \frac{ 2\sum\limits_{l\in G} u_{l\a}^2 \hat h_{ll}}{\hat W(1+\hat W)u_t^2}(1- \frac{1}{\hat W}) - 4\sum\limits_{i \in
G} {\frac{{\hat h_{in} u_{i\a}}} {{\hat h_{ii}}}[ \frac{ \sum\limits_{l \in G}u_{l\a}\hat h_{nl}}{\hat W^2 (1+\hat W)u_t^2}(1- \frac{1}{\hat W})] }\notag \\
&&+2\sum\limits_{i,j
\in G} {\frac{{\hat h_{in} }} {{\hat h_{ii} }}\frac{{\hat h_{jn} }} {{\hat h_{jj}
}} [\frac{u_{i\a}u_{j\a} \hat h_{nn}}{\hat W^3 (1+\hat W)u_t^2} (1-\frac{1}{\hat W})] } \notag \\
&=& \frac{ 2\sum\limits_{l\in G} u_{l\a}^2 \hat h_{ll}}{\hat W(1+\hat W)u_t^2}(\frac{u_n^2}{\hat W(1+\hat W)u_t^2}) - 4\sum\limits_{i \in
G} {\frac{{\hat h_{in} u_{i\a}}} {{\hat h_{ii}}}[ \frac{ \sum\limits_{l \in G}u_{l\a}\hat h_{nl}}{\hat W^2 (1+\hat W)u_t^2}(\frac{u_n^2}{\hat W(1+\hat W)u_t^2})] }\notag \\
&&+2\sum\limits_{i,j
\in G} {\frac{{\hat h_{in} }} {{\hat h_{ii} }}\frac{{\hat h_{jn} }} {{\hat h_{jj}
}} [\frac{u_{i\a}u_{j\a} \hat h_{nn}}{\hat W^3 (1+\hat W)u_t^2} (\frac{u_n^2}{\hat W(1+\hat W)u_t^2})] } \notag \\
&=&2\sum\limits_{i \in G} {\frac{1} {{\hat h _{ii}
}}\left[-\frac{u_n u_{i\a}\hat h_{ii}}{\hat W(1+\hat W)u_t^2}+\frac{u_n \hat h_{in}}{\hat W^2(1+ \hat W)u_t^2}\sum\limits_{j \in G} {\frac{{u_{j\a} \hat h_{jn}}}{{\hat h_{jj} }} } \right]^2 }.
\end{eqnarray}

By \eqref{6.66}, \eqref{6.70} and \eqref{6.71}-\eqref{6.74}, we can get \eqref{6.65}.
So the lemma holds.
\end{proof}

\subsection{Step 3: proof of Theorem \ref{th1.3}}

\begin{theorem} \label{th6.7}
Under the assumptions of Theorem \ref{th1.1} and the above notations, we
have
\begin{equation}\label{6.75}
\Delta \phi-\phi_t  \le C( \phi+ |\nabla \phi| ), \quad (x,t) \in \mathcal {O}\times (t_0-\delta, t_0].
\end{equation}
So by the strong maximum principle and the method of continuity, Theorem \ref{th1.3} holds.
\end{theorem}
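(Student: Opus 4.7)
The plan is to combine the explicit formula for $\phi_{\alpha\alpha}$ in Lemma \ref{lem6.6} (summed over $\alpha = 1,\ldots,n$) with the formula for $\phi_t$ in Lemma \ref{lem6.3}, obtaining
\[
\Delta\phi - \phi_t \sim A\sum_{m\in B} L\hat h_{mm} + B\,\mathcal{Q} + N ,
\]
where $L = \Delta - \partial_t$, the coefficients $A$ and $B$ are positive at $(x,t)$,
\[
\mathcal{Q} = L\hat h_{nn} - 2\sum_{i\in G}\frac{\hat h_{in}}{\hat h_{ii}}L\hat h_{in} + \sum_{ij\in G}\frac{\hat h_{in}\hat h_{jn}}{\hat h_{ii}\hat h_{jj}}L\hat h_{ij},
\]
and
\[
N = -\,2\sigma_l(G)\Bigl(-\tfrac{|u_t|}{|Du|u_t^3}\Bigr)\tfrac{1}{\hat W^2}\sum_{\alpha=1}^{n}\sum_{i\in G}\tfrac{1}{\hat h_{ii}}\Bigl[\hat h_{in,\alpha}-\sum_{j\in G}\tfrac{\hat h_{jn}}{\hat h_{jj}}\hat h_{ij,\alpha}\Bigr]^2.
\]
Using $u_t,u_n>0$ together with $\hat h_{ii}=u_t^2 u_{ii}<0$ for $i\in G$ (which follows from $a_{ii}>0$), one checks that $N\leq 0$, so it suffices to show that the first two summands are $O(\phi+|\nabla\phi|)$.

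Since $u$ satisfies the heat equation, $Lu=0$, and hence $Lu_\gamma = Lu_t = 0$ for every spatial index $\gamma$. Applying the Leibniz rule $L(fg)=fLg+gLf+2\nabla f\cdot\nabla g$ to the defining formula $\hat h_{\alpha\beta}=u_t^2 u_{\alpha\beta}+u_{tt}u_\alpha u_\beta-u_t u_\beta u_{\alpha t}-u_t u_\alpha u_{\beta t}$ collapses each $L\hat h_{\alpha\beta}$ into a finite sum of first-order gradient cross-products such as $2|\nabla u_t|^2 u_{\alpha\beta}$ and $4u_t\,\nabla u_t\cdot\nabla u_{\alpha\beta}$. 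For $m\in B$, the vanishing information in Proposition \ref{prop6.1}, Corollary \ref{cor3.4} and Lemma \ref{lem6.4} (namely $u_{mm},u_{mn},u_{mt}$ and all their first derivatives are $\sim 0$) immediately yields $\sum_{m\in B}L\hat h_{mm}\sim 0$, which disposes of the first summand.

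The main obstacle is the analysis of $\mathcal{Q}$: the individual terms $L\hat h_{nn}$, $L\hat h_{in}$, $L\hat h_{ij}$ for $i,j\in G$ contain genuinely non-small gradient squares that do not vanish term by term. The strategy is to exploit the algebraic constraint $\hat h_{nn}\sim\sum_{i\in G}\hat h_{in}^2/\hat h_{ii}$ (from \eqref{6.23}) together with the first-derivative identity \eqref{6.27} to group the gradient cross-products arising in $\mathcal{Q}$ into two classes: those that recombine into perfect squares of the type $\tfrac{1}{\hat h_{ii}}[\hat h_{in,\alpha}-\sum_{j\in G}\tfrac{\hat h_{jn}}{\hat h_{jj}}\hat h_{ij,\alpha}]^2$ already appearing in $N$ (and hence are absorbed by the nonpositive $N$-term), and those that are $O(\phi+|\nabla\phi|)$ after rewriting third-order derivatives of $u$ via identities \eqref{6.35}--\eqref{6.47} of Lemma \ref{lem6.4}. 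This bookkeeping, which closely parallels the space-time Hessian case treated in Section 3.3 but must additionally carry the normalization factor $|\nabla u|/|Du|$ and the two-tensor structure of $\hat h$, is the main technical difficulty; the $n=2$ instance written out in the Appendix should serve as the guiding model.

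Once \eqref{6.75} is established, the strong maximum principle for the parabolic operator $L$ applied to the nonnegative function $\phi$ (which vanishes at $(x_0,t_0)$) forces $\phi\equiv 0$ in $\mathcal O\times(t_0-\delta,t_0]$. The minimum rank property then propagates backwards in time in a standard way via the continuity method, completing the proof of Theorem \ref{th1.3}.
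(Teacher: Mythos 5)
Your setup is correct and matches the paper's: the decomposition $\Delta\phi-\phi_t\sim A\sum_{m\in B}\Delta\hat h_{mm}+B\,\mathcal Q+N$ is exactly what Lemmas \ref{lem6.3} and \ref{lem6.6} deliver, the sign of $N$ is right (using $\hat h_{ii}=u_t^2u_{ii}<0$ for $i\in G$ and the negative prefactor), and the first summand does vanish because $u_{mn}=u_{mt}=u_{mmt}=0$ for $m\in B$ gives $\Delta\hat h_{mm}=u_t^2u_{mmt}=0$. The observation that $L=\Delta-\partial_t$ kills $u$, $u_\gamma$ and $u_t$, so that $L\hat h_{\alpha\beta}$ collapses to gradient cross-products, is also the correct engine behind \eqref{6.79}--\eqref{6.81}.

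However, the entire substance of Theorem \ref{th6.7} is the step you defer as ``bookkeeping'': showing that $\mathcal Q$, which after \eqref{6.79}--\eqref{6.81} is a large quadratic expression in third and second derivatives of $u$, combines with $N$ into a quantity of one sign modulo $O(\phi+|\nabla\phi|)$. The paper spends \eqref{6.82}--\eqref{6.102} on this: it writes $\mathcal Q=2[I+II]$, splits $I=A+B$ with $B=2\tfrac{u_{nt}}{u_t}B_1+2(\cdots)B_2+B_3+B_4+B_5$, verifies $B_3+B_5\sim0$, and --- crucially --- uses the CASE~2 degeneracy $u_n^2u_{tt}\sim\sum_{i\in G}\hat h_{in}^2/\hat h_{ii}+\sum_{i\in G}\hat h_{ii}-u_t^3+2u_nu_tu_{nt}$ from Lemma \ref{lem6.2} to obtain the identity \eqref{6.101}, which is what makes the signs work out. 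Your claim that the cross terms ``recombine into perfect squares already appearing in $N$'' is not quite what happens: the squares in $N$ must be \emph{completed} by subtracting $\hat h_{in}\big(\tfrac{u_{nn}}{u_n}-\tfrac{u_{nt}}{u_t}-\sum_i\tfrac{\hat h_{in}}{\hat h_{ii}}\tfrac{u_{in}}{u_n}\big)$ (resp.\ $\hat h_{ii}(\cdots)$) inside the bracket, and a genuinely new nonnegative residue $\big[\tfrac{u_{nn}}{u_n}-\tfrac{u_{nt}}{u_t}-\sum_i\tfrac{\hat h_{in}}{\hat h_{ii}}\tfrac{u_{in}}{u_n}\big]^2u_t^3$ is produced; only after this completion does the whole bracket become a sum of manifestly nonnegative terms. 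Whether this cancellation actually closes is precisely the content of the theorem, so as written your argument has a gap exactly at its critical point. The $n=2$ computation in the Appendix is indeed the right model, but the general case must still be carried out (or at least the key identity \eqref{6.101} and the completed-square form of the final display must be exhibited) for the proof to stand.
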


\begin{proof} In fact, if $t_0 = T$ and $(x, t) \in \mathcal {O}\times \{ t_0\}$, we only have \eqref{2.30} instead of \eqref{2.29} from Lemma \ref{lem2.9} ( see Remark \ref{rem2.10} ). So in order to use \eqref{2.29}, we just prove \eqref{6.75} holds for any $(x, t) \in \mathcal {O}\times (t_0-\delta, t_0]$, with a constant $C$ independent of $ dist(\mathcal {O}\times (t_0-\delta,
t_0], \partial(\Omega \times (0, T]) )$ and then by approximation, \eqref{6.75} holds for $t = t_0$.
Then by the strong maximum principle and the method of continuity, we can prove Theorem \ref{th1.3} under CASE 2.

From Lemma \ref{lem6.3} and Lemma \ref{lem6.6}, we have
\begin{align}\label{6.76}
& \Delta \phi-\phi_t \notag\\
\sim &\sigma _l(G)\left( {1 + \sum\limits_{i \in G} {\frac{{\hat a_{in} ^2 }} {{\hat a
_{ii}^2 }}} } \right)\Big(-\frac{|u_t|}{|D u|{u_t}^3}\Big)\sum\limits_{i \in B} {\Delta \hat h_{ii} } \notag \\
&+ \sigma_l(G) \Big(-\frac{|u_t|}{|D u|{u_t}^3}\Big) \frac{1}{\hat W ^2}\left[ {\Big( \Delta \hat h_{nn}-\hat h_{nn,t} \Big)
- 2\sum\limits_{i \in G} {\frac{{\hat h_{in} }} {{\hat h_{ii}}}\Big( \Delta \hat h_{in}-\hat h_{in,t} \Big)+\sum\limits_{i,j
\in G} {\frac{{\hat h_{in} }} {{\hat h_{ii} }}\frac{{\hat h_{jn} }} {{\hat h_{jj}
}}\Big(\Delta \hat h_{ij} -\hat h_{ij,t} \Big)} } } \right] \notag\\
&- 2\sigma _l (G) \Big(-\frac{|u_t|}{|D u|{u_t}^3}\Big) \frac{1}{\hat W ^2}\sum\limits_{i \in G} {\frac{1} {{\hat h _{ii}
}}\sum_{\a=1}^n\left[ {\hat h_{in,\alpha }  - \sum\limits_{j \in G} {\frac{{\hat h_{jn} }}
{{\hat h_{jj} }}\hat h_{ij,\alpha } } } \right]^2}.
\end{align}

First, we have for $i \in B$, $u_{in}=u_{it}= u_{iit} =0$,
\begin{align}\label{6.77}
\Delta \hat h_{ii}  =& u_t ^2 \Delta u_{ii}  + 4u_t u_{\alpha t} u_{ii\alpha }  + 2u_{i\alpha } ^2 u_{tt} \notag \\
&- 2u_t u_{it} \Delta u_i  - 4[u_{i\alpha } u_{t\alpha } u_{it}  + u_{i\alpha } u_t u_{it\alpha } ] \notag\\
=& u_t ^2 \Delta u_{ii}   \notag \\
=&u_t ^2u_{iit}  =0,
\end{align}
then we get
\begin{align}\label{6.78}
\sum\limits_{i \in B} {\Delta \hat h_{ii} }=0.
\end{align}

By \eqref{6.5}, we get
\begin{align*}
 \hat h_{nn,t}  =& 2u_t u_{tt} u_{nn}  + u_t ^2 u_{nnt}  + 2u_n u_{nt} u_{tt}  + u_n ^2 u_{ttt}  \\
  &- 2u_t u_{nt} ^2  - 2u_n u_{tt} u_{nt}  - 2u_n u_t u_{ntt}  \\
  =& u_t ^2 u_{nnt}  + u_n ^2 u_{ttt}  + 2u_t u_{tt} u_{nn}  - 2u_t u_{nt} ^2  - 2u_n u_t u_{ntt},  \\
  \end{align*}
  and
  \begin{align*}
 \Delta \hat h_{nn}  =& u_t ^2 \Delta u_{nn}  + 4u_t u_{\alpha t} u_{nn\alpha }  + 2[u_t \Delta u_t  + u_{\alpha t} ^2 ]u_{nn}  \\
  &+ u_n ^2 \Delta u_{tt}  + 4u_n u_{\alpha n} u_{tt\alpha }  + 2[u_n \Delta u_n  + u_{\alpha n} ^2 ]u_{tt}  \\
  &- 2\Delta u_n u_t u_{nt}  - 2u_n \Delta u_t u_{nt}  - 2u_n u_t \Delta u_{nt}  \\
  &- 4[u_{n\alpha } u_{t\alpha } u_{nt}  + u_{n\alpha } u_t u_{nt\alpha }  + u_n u_{t\alpha } u_{nt\alpha } ] \\
  =& u_t ^2 \Delta u_{nn}  + u_n ^2 \Delta u_{tt}  + 2u_t u_{nn} \Delta u_t  - 2u_n u_t \Delta u_{nt}  - 2u_t u_{nt} \Delta u_n  \\
  &+ 4u_t u_{\alpha t} u_{nn\alpha }  + 4u_n u_{\alpha n} u_{tt\alpha }  + 2u_n u_{tt} \Delta u_n  - 2u_n u_{nt} \Delta u_t  - 4[u_{n\alpha } u_t u_{nt\alpha }  + u_n u_{t\alpha } u_{nt\alpha } ] \\
  &+ 2u_{\alpha t} ^2 u_{nn}  + 2u_{\alpha n} ^2 u_{tt}  - 4u_{n\alpha } u_{t\alpha } u_{nt},
\end{align*}
so
\begin{align} \label{6.79}
\Delta \hat h_{nn}  - \hat h_{nn,t}
=& u_t ^2 [\Delta u_{nn}  - u_{nnt} ] + u_n ^2 [\Delta u_{tt}  - u_{ttt} ] + 2u_t u_{nn} [\Delta u_t  - u_{tt} ]\notag \\
&- 2u_n u_t [\Delta u_{nt}  - u_{ntt} ] - 2u_t u_{nt} [\Delta u_n  - u_{nt} ] \notag\\
&+ 4u_t u_{\alpha t} u_{nn\alpha }  + 4u_n u_{\alpha n} u_{tt\alpha }  + 2u_n u_{tt} \Delta u_n
- 2u_n u_{nt} \Delta u_t  \notag\\
&- 4[u_{n\alpha } u_t u_{nt\alpha }  + u_n u_{t\alpha } u_{nt\alpha } ] \notag\\
&+ 2u_{\alpha t} ^2 u_{nn}  + 2u_{\alpha n} ^2 u_{tt}  - 4u_{n\alpha } u_{t\alpha } u_{nt}  \notag\\
=& 4u_t u_{\alpha t} u_{nn\alpha }  + 4u_n u_{\alpha n} u_{tt\alpha }  + 2u_n u_{tt} u_{nt}
- 2u_n u_{nt} u_{tt}- 4[u_{n\alpha } u_t  + u_n u_{t\alpha } ]u_{nt\alpha } \notag \\
&+ 2u_{\alpha t} ^2 u_{nn}  + 2u_{\alpha n} ^2 u_{tt}  - 4u_{n\alpha } u_{t\alpha } u_{nt} \notag \\
=& 4 u_t u_{\alpha t} u_{nn\alpha }  + 4 u_n u_{\alpha n} u_{tt\alpha }  - 4[u_{n\alpha } u_t  + u_n u_{t\alpha } ]u_{nt\alpha }  \notag\\
&+2 u_{\alpha t} ^2 u_{nn}  + 2 u_{\alpha n} ^2 u_{tt}  - 4u_{n\alpha } u_{t\alpha } u_{nt} \notag \\
=& 4u_t u_{\alpha t} u_{nn\alpha }  + 4u_n u_{\alpha n} u_{tt\alpha }  - 4[u_{n\alpha } u_t  + u_n u_{t\alpha } ]u_{nt\alpha }  \\
&+ 2u_{\alpha t} ^2 u_{nn}  + 2u_{\alpha n} ^2 u_{tt}  - 4u_{n\alpha } u_{t\alpha } u_{nt}. \notag
\end{align}

By \eqref{6.5}, we get for $i\in G$,
\begin{align*}
\hat h_{in,t}  =& 2u_t u_{tt} u_{in}  + u_t ^2 u_{int}  + u_n u_{it} u_{tt}  \\
  &- u_{nt} u_t u_{it}  - u_n u_{tt} u_{it}  - u_n u_t u_{itt}  - u_{it} u_t u_{nt}  \\
  =& 2u_t u_{tt} u_{in}  + u_t ^2 u_{int}  - u_n u_t u_{itt}  - 2u_t u_{nt} u_{it},  \\
  \end{align*}
  and
  \begin{align*}
   \Delta \hat h_{in}  =& u_t ^2 \Delta u_{in}  + 4u_t u_{\alpha t} u_{in\alpha }  + 2[u_t \Delta u_t  + u_{\alpha t} ^2 ]u_{in}  \\
  &+ 2u_{i\alpha } [u_n u_{tt\alpha }  + u_{n\alpha } u_{tt} ] + u_n u_{tt} \Delta u_i  \\
  &- 2u_{i\alpha } [u_t u_{nt\alpha }  + u_{t\alpha } u_{nt} ] - u_t u_{nt} \Delta u_i  \\
  &- \Delta u_n u_t u_{it}  - u_n \Delta u_t u_{it}  - u_n u_t \Delta u_{it}
  - 2[u_{n\alpha } u_{t\alpha } u_{it}  + u_{n\alpha } u_t u_{it\alpha }  + u_n u_{t\alpha } u_{it\alpha } ] \\
  =& u_t ^2 \Delta u_{in}  + 2u_t u_{in} \Delta u_t  - u_n u_t \Delta u_{it}  - u_t u_{nt} \Delta u_i  - u_t u_{it} \Delta u_n  \\
  &+ 4u_t u_{\alpha t} u_{in\alpha }  + 2u_n u_{i\alpha } u_{tt\alpha }  + u_n u_{tt} \Delta u_i  - 2u_t u_{i\alpha } u_{nt\alpha }
   - u_n u_{it} \Delta u_t \notag\\
& - 2[u_{n\alpha } u_t u_{it\alpha }  + u_n u_{t\alpha } u_{it\alpha } ] \\
  &+ 2u_{\alpha t} ^2 u_{in}  + 2u_{i\alpha } u_{n\alpha } u_{tt}  - 2u_{i\alpha } u_{t\alpha } u_{nt}  - 2u_{n\alpha } u_{t\alpha } u_{it},
\end{align*}
so we have
\begin{align} \label{6.80}
 \Delta \hat h_{in}  - \hat h_{in,t}  =& 2u_t u_{in} [\Delta u_t  - u_{tt} ] - u_n u_t [\Delta u_{it}  - u_{itt} ] \notag\\
  &+ 4u_t u_{\alpha t} u_{in\alpha }  + 2u_n u_{i\alpha } u_{tt\alpha }  + u_n u_{tt} [\Delta u_i ] - 2u_t u_{i\alpha } u_{nt\alpha }  \notag\\
  &- u_n u_{it} [\Delta u_t ] - 2[u_{n\alpha } u_t u_{it\alpha }  + u_n u_{t\alpha } u_{it\alpha } ] \notag\\
  &+ 2u_{\alpha t} ^2 u_{in}  + 2u_{i\alpha } u_{n\alpha } u_{tt}  - 2u_{i\alpha } u_{t\alpha } u_{nt}  - 2u_{n\alpha } u_{t\alpha } u_{it}  \notag\\
  =&4u_t u_{\alpha t} u_{in\alpha }  + 2u_n u_{i\alpha } u_{tt\alpha }  + u_n u_{tt} u_{it}  - 2u_t u_{i\alpha } u_{nt\alpha }  \notag\\
  &- u_n u_{it} u_{tt} - 2[u_{n\alpha } u_t u_{it\alpha }  + u_n u_{t\alpha } u_{it\alpha } ] \notag\\
  &+ 2u_{\alpha t} ^2 u_{in}  + 2u_{i\alpha } u_{n\alpha } u_{tt}  - 2u_{i\alpha } u_{t\alpha } u_{nt}  - 2u_{n\alpha } u_{t\alpha } u_{it}  \notag\\
  =&4u_t u_{\alpha t} u_{in\alpha }  + 2u_n u_{i\alpha } u_{tt\alpha }  - 2u_t u_{i\alpha } u_{nt\alpha }
  - 2[u_{n\alpha } u_t  + u_n u_{t\alpha } ]u_{it\alpha }  \notag\\
  &+ 2u_{\alpha t} ^2 u_{in}  + 2u_{i\alpha } u_{n\alpha } u_{tt}  - 2u_{i\alpha } u_{t\alpha } u_{nt}  - 2u_{n\alpha } u_{t\alpha } u_{it}  \notag\\
  =& 4u_t u_{\alpha t} u_{in\alpha }  + 2u_n u_{i\alpha } u_{tt\alpha }  - 2u_t u_{i\alpha } u_{nt\alpha }
  - 2[u_{n\alpha } u_t  + u_n u_{t\alpha } ]u_{it\alpha }  \\
  &+ 2u_{\alpha t} ^2 u_{in}  + 2u_{i\alpha } u_{n\alpha } u_{tt}  - 2u_{i\alpha } u_{t\alpha } u_{nt}  - 2u_{n\alpha } u_{t\alpha } u_{it}. \notag
\end{align}

By \eqref{6.5}, we get for $i, j \in G$,
\begin{align*}
\hat h_{ij,t}  =& 2u_t u_{tt} u_{ij}  + u_t ^2 u_{ijt}  - 2u_t u_{it} u_{jt},  \\
\Delta \hat h_{ij}  =& u_t ^2 \Delta u_{ij}  + 4u_t u_{\alpha t} u_{ij\alpha }  + 2[u_t \Delta u_t  + u_{\alpha t} ^2 ]u_{ij}  \\
&+ 2u_{i\alpha } u_{j\alpha } u_{tt}  - \Delta u_i u_t u_{jt}  - 2[u_{i\alpha } u_{t\alpha } u_{jt}  + u_{i\alpha } u_t u_{jt\alpha } ] \\
&- \Delta u_j u_t u_{it}  - 2[u_{j\alpha } u_{t\alpha } u_{it}  + u_{j\alpha } u_t u_{it\alpha } ] \\
=& u_t ^2 \Delta u_{ij}  + 2u_t u_{ij} \Delta u_t  - u_t u_{jt} \Delta u_i  - u_t u_{it} \Delta u_j  \\
&+ 4u_t u_{\alpha t} u_{ij\alpha }  - 2u_{i\alpha } u_t u_{jt\alpha }  - 2u_{j\alpha } u_t u_{it\alpha }  \\
&+ 2u_{\alpha t} ^2 u_{ij}  + 2u_{i\alpha } u_{j\alpha } u_{tt}  - 2u_{i\alpha } u_{t\alpha } u_{jt}  - 2u_{j\alpha } u_{t\alpha } u_{it},
\end{align*}
so
\begin{align}  \label{6.81}
\Delta \hat h_{ij}  - \hat h_{ij,t}  =& 2u_t u_{ij} [\Delta u_t  - u_{tt} ] \notag\\
&+ 4u_t u_{\alpha t} u_{ij\alpha }  - 2u_{i\alpha } u_t u_{jt\alpha }  - 2u_{j\alpha } u_t u_{it\alpha }  \notag\\
&+ 2u_{\alpha t} ^2 u_{ij}  + 2u_{i\alpha } u_{j\alpha } u_{tt}  - 2u_{t\alpha } [u_{i\alpha } u_{jt}  + u_{j\alpha } u_{it} ] \notag\\
=& 4u_t u_{\alpha t} u_{ij\alpha }  - 2u_{i\alpha } u_t u_{jt\alpha }  - 2u_{j\alpha } u_t u_{it\alpha }  \notag\\
&+ 2u_{\alpha t} ^2 u_{ij}  + 2u_{i\alpha } u_{j\alpha } u_{tt}  - 2u_{t\alpha } [u_{i\alpha } u_{jt}  + u_{j\alpha } u_{it} ] \notag\\
=& 4u_t u_{\alpha t} u_{ij\alpha }  - 2u_{i\alpha } u_t u_{jt\alpha }  - 2u_{j\alpha } u_t u_{it\alpha }  \\
&+ 2u_{\alpha t} ^2 u_{ij}  + 2u_{i\alpha } u_{j\alpha } u_{tt}  -2u_{t\alpha } [u_{i\alpha } u_{jt}  + u_{j\alpha } u_{it} ].\notag
\end{align}

Then we get from \eqref{6.79} - \eqref{6.81}
\begin{align} \label{6.82}
\Big( \Delta \hat h_{nn}-\hat h_{nn,t} \Big)
- 2\sum\limits_{i \in G} {\frac{{\hat h_{in} }} {{\hat h_{ii}}}\Big( \Delta \hat h_{in}-\hat h_{in,t} \Big) }+\sum\limits_{i,j
\in G} {\frac{{\hat h_{in} }} {{\hat h_{ii} }}\frac{{\hat h_{jn} }} {{\hat h_{jj}
}}\Big( \Delta \hat h_{ij} -\hat h_{ij,t} \Big)} =: 2[I+II],
\end{align}
where
\begin{align}\label{6.83}
I=&\Big( 2u_t u_{\alpha t} u_{nn\alpha }  + 2u_n u_{\alpha n} u_{tt\alpha }  - 2[u_{n\alpha } u_t  + u_n u_{t\alpha } ]u_{nt\alpha }   \Big)  \notag \\
&- 2\sum\limits_{i \in G} {\frac{{\hat h_{in} }} {{\hat h_{ii}}}\Big(  2u_t u_{\alpha t} u_{in\alpha }  +u_n u_{i\alpha } u_{tt\alpha }  - u_t u_{i\alpha } u_{nt\alpha }  - [u_{n\alpha } u_t  + u_n u_{t\alpha } ]u_{it\alpha }    \Big) }   \notag  \\
&+\sum\limits_{i,j\in G} {\frac{{\hat h_{in} }} {{\hat h_{ii} }}\frac{{\hat h_{jn} }} {{\hat h_{jj}
}}\Big( 2u_t u_{\alpha t} u_{ij\alpha }  - u_{i\alpha } u_t u_{jt\alpha }  - u_{j\alpha } u_t u_{it\alpha }    \Big)},
\end{align}
and
\begin{align}\label{6.85}
II=&\Big( u_{\alpha t} ^2 u_{nn}  + u_{\alpha n} ^2 u_{tt}  - 2u_{n\alpha } u_{t\alpha } u_{nt}    \Big)  \notag \\
&- 2\sum\limits_{i \in G} {\frac{{\hat h_{in} }} {{\hat h_{ii}}}\Big(  u_{\alpha t} ^2 u_{in}  + u_{i\alpha } u_{n\alpha } u_{tt}  - u_{i\alpha } u_{t\alpha } u_{nt}  -u_{n\alpha } u_{t\alpha } u_{it} \Big) }   \notag  \\
&+\sum\limits_{i,j\in G} {\frac{{\hat h_{in} }} {{\hat h_{ii} }}\frac{{\hat h_{jn} }} {{\hat h_{jj}
}}\Big( u_{\alpha t} ^2 u_{ij}  + u_{i\alpha } u_{j\alpha } u_{tt}  - u_{t\alpha } [u_{i\alpha } u_{jt}  + u_{j\alpha } u_{it} ]   \Big)}.
\end{align}

In the following, we will deal with $I$ and $II$. For $I$, we have
\begin{align*}
I=&u_t^2 u_{nnn} [2 \frac{u_{nt}}{u_t} ]  + \sum_{i \in G} u_t^2 u_{nni} [2 \frac{u_{it}}{u_t}-4\frac{{\hat h_{in} }}{{\hat h_{ii} }}\frac{{u_{nt} }}{{u_t }} ]  +u_n^2 u_{ttn} [2 \frac{u_{nn}}{u_n}-2 \sum_{i \in G} \frac{{\hat h_{in} }}{{\hat h_{ii} }}\frac{{u_{in} }}{{u_n }} ] \notag \\
&+\sum_{ i \in G} u_n^2 u_{tti} [2 \frac{u_{ni}}{u_n}-2  \frac{{\hat h_{in} }}{{\hat h_{ii} }}\frac{{u_{ii} }}{{u_n }} ] + u_n u_t u_{nnt} [-2(\frac{{u_{nn} }}{{u_n }}+\frac{{u_{nt} }}{{u_t }})+2\sum_{i \in G} \frac{{\hat h_{in} }}{{\hat h_{ii} }}\frac{{u_{in} }}{{u_n }}  ]   \notag \\
&+\sum_{i \in G} u_n u_t u_{int} [-2(\frac{{u_{in} }}{{u_n }}+\frac{{u_{it} }}{{u_t }})+2\frac{{\hat h_{in} }}{{\hat h_{ii} }}(\frac{{u_{nn} }}{{u_n }}+\frac{{u_{nt} }}{{u_t }})+2 \frac{{\hat h_{in} }}{{\hat h_{ii} }}\frac{{u_{ii} }}{{u_n }} -2\frac{{\hat h_{in} }}{{\hat h_{ii} }}\sum_{j \in G} \frac{{\hat h_{jn} }}{{\hat h_{jj} }}\frac{{u_{jn} }}{{u_n }}  ]   \notag \\
&+ \sum_{i,j\in G} u_t^2 u_{ijn} [-4\frac{{\hat h_{in} }}{{\hat h_{ii} }}\frac{{u_{jt} }}{{u_t }} +2\frac{{\hat h_{in} }}{{\hat h_{ii} }} \frac{{\hat h_{jn} }}{{\hat h_{jj} }}\frac{{u_{nt} }}{{u_t }}] + \sum_{i,j\in G} u_n u_t u_{ijt} [2\frac{{\hat h_{in} }}{{\hat h_{ii} }}(\frac{{u_{jn} }}{{u_n }}+\frac{{u_{jt} }}{{u_t }}) -  2 \frac{{\hat h_{in} }}{{\hat h_{ii} }} \frac{{\hat h_{jn} }}{{\hat h_{jj} }}\frac{{u_{ii} }}{{u_n }}]   \notag  \\
&+2\sum\limits_{i,j,k\in G} {u_t^2 u_{ijk } \frac{{\hat h_{in} }} {{\hat h_{ii} }}\frac{{\hat h_{jn} }} {{\hat h_{jj}
}}\frac{u_{kt} }{u_t}},
\end{align*}
it follows that
\begin{align}\label{6.86}
I=&u_t^2 u_{nnn} [2 \frac{u_{nt}}{u_t} ]  + \sum_{i \in G} u_t^2 u_{nni} [2 \frac{u_{it}}{u_t}-4\frac{{\hat h_{in} }}{{\hat h_{ii} }}\frac{{u_{nt} }}{{u_t }} ]  +u_n^2 u_{ttn} [2 \frac{u_{nn}}{u_n}-2 \sum_{i \in G} \frac{{\hat h_{in} }}{{\hat h_{ii} }}\frac{{u_{in} }}{{u_n }} ] \notag \\
&+\sum_{ i \in G} u_n^2 u_{tti} [2 \frac{u_{it}}{u_t}] + u_n u_t u_{nnt} [-2(\frac{{u_{nn} }}{{u_n }}+\frac{{u_{nt} }}{{u_t }})+2\sum_{i \in G} \frac{{\hat h_{in} }}{{\hat h_{ii} }}\frac{{u_{in} }}{{u_n }}  ]   \notag \\
&+\sum_{i \in G} u_n u_t u_{int} [-4\frac{{u_{it} }}{{u_t }}+2\frac{{\hat h_{in} }}{{\hat h_{ii} }}(\frac{{u_{nn} }}{{u_n }}+\frac{{u_{nt} }}{{u_t }}) -2\frac{{\hat h_{in} }}{{\hat h_{ii} }}\sum_{j \in G} \frac{{\hat h_{jn} }}{{\hat h_{jj} }}\frac{{u_{jn} }}{{u_n }}  ]   \notag \\
&+ \sum_{i,j\in G} u_t^2 u_{ijn} [-4\frac{{\hat h_{in} }}{{\hat h_{ii} }}\frac{{u_{jt} }}{{u_t }} +2\frac{{\hat h_{in} }}{{\hat h_{ii} }} \frac{{\hat h_{jn} }}{{\hat h_{jj} }}\frac{{u_{nt} }}{{u_t }}] + \sum_{i,j\in G} u_n u_t u_{ijt} [4\frac{{\hat h_{in} }}{{\hat h_{ii} }}\frac{{u_{jt} }}{{u_t }}+2\frac{{\hat h_{in} }}{{\hat h_{ii} }}\frac{{\hat h_{jn} }}{{u_n u_t^2}} -  2 \frac{{\hat h_{in} }}{{u_n u_t^2 }} \frac{{\hat h_{jn} }}{{\hat h_{jj} }}]   \notag  \\
&+2\sum\limits_{i,j,k\in G} {u_t^2 u_{ijk } \frac{{\hat h_{in} }} {{\hat h_{ii} }}\frac{{\hat h_{jn} }} {{\hat h_{jj}
}}\frac{u_{kt} }{u_t}}  \notag \\
=:& A +B,
\end{align}
where $A$ is the terms of $\hat h_{ij,k}$, and $B$ is the sum of the terms of $h_{ij}$. That is
\begin{align*}
A=&(-\sum_{i \in G} \hat h_{ii,n} )[2 \frac{u_{nt}}{u_t} ]  + \sum_{i \in G} (-\sum_{k \in G} \hat h_{kk,i})  [2 \frac{u_{it}}{u_t}-4\frac{{\hat h_{in} }}{{\hat h_{ii} }}\frac{{u_{nt} }}{{u_t }} ]  \notag \\
&+(\hat h_{nn,n}+2\sum_{k \in G} \hat h_{kn,k} -\sum_{k \in G} \hat h_{kk,n} ) [2 \frac{u_{nn}}{u_n}-2 \sum_{i \in G} \frac{{\hat h_{in} }}{{\hat h_{ii} }}\frac{{u_{in} }}{{u_n }} ] \notag \\
&+\sum_{ i \in G} (\hat h_{nn,i} - 2 \hat h_{in,n} -\sum_{k \in G} \hat h_{kk,i} ) [2 \frac{u_{it}}{u_t}] + (\sum_{k \in G} [\hat h_{kn,k} - \hat h_{kk,n}] ) [-2(\frac{{u_{nn} }}{{u_n }}+\frac{{u_{nt} }}{{u_t }})+2\sum_{i \in G} \frac{{\hat h_{in} }}{{\hat h_{ii} }}\frac{{u_{in} }}{{u_n }}  ]   \notag \\
&+\sum_{i \in G} (-\hat h_{in,n}-\sum_{k \in G} \hat h_{kk,i} ) [-4\frac{{u_{it} }}{{u_t }}+2\frac{{\hat h_{in} }}{{\hat h_{ii} }}(\frac{{u_{nn} }}{{u_n }}+\frac{{u_{nt} }}{{u_t }}) -2\frac{{\hat h_{in} }}{{\hat h_{ii} }}\sum_{j \in G} \frac{{\hat h_{jn} }}{{\hat h_{jj} }}\frac{{u_{jn} }}{{u_n }}  ]   \notag \\
&+ \sum_{i,j\in G} \hat h_{ij,n} [-4\frac{{\hat h_{in} }}{{\hat h_{ii} }}\frac{{u_{jt} }}{{u_t }} +2\frac{{\hat h_{in} }}{{\hat h_{ii} }} \frac{{\hat h_{jn} }}{{\hat h_{jj} }}\frac{{u_{nt} }}{{u_t }}] + \sum_{i,j\in G} (- \hat h_{in,j }+\hat h_{ij,n}) [4\frac{{\hat h_{in} }}{{\hat h_{ii} }}\frac{{u_{jt} }}{{u_t }}+2\frac{{\hat h_{in} }}{{\hat h_{ii} }}\frac{{\hat h_{jn} }}{{u_n u_t^2}} -  2 \frac{{\hat h_{in} }}{{u_n u_t^2 }} \frac{{\hat h_{jn} }}{{\hat h_{jj} }}]   \notag  \\
&+2\sum\limits_{i,j,k\in G} {\hat h_{ij,k } \frac{{\hat h_{in} }} {{\hat h_{ii} }}\frac{{\hat h_{jn} }} {{\hat h_{jj}
}}\frac{u_{kt} }{u_t}},
\end{align*}
so we have
\begin{align*}\label{6.87}
A
=& 2\frac{u_{nt}}{u_t}[ \sum_{i \in G}(\frac{{\hat h_{in} }}{{\hat h_{ii} }}\sum_{k \in G} \hat h_{kk,i}- \hat h_{in,i} )+\sum_{i \in G}\frac{{\hat h_{in} }}{{\hat h_{ii} }}(\sum_{j \in G}(\frac{{\hat h_{jn} }}{{\hat h_{jj} }} h_{ij,n}- \hat h_{in,n}) ]  \notag \\
&+(2 \frac{u_{nn}}{u_n}-2 \sum_{i \in G} \frac{{\hat h_{in} }}{{\hat h_{ii} }}\frac{{u_{in} }}{{u_n }})[ \sum_{i \in G}(\hat h_{in,i}- \frac{{\hat h_{in} }}{{\hat h_{ii} }}\sum_{k \in G} \hat h_{kk,i} )+(\hat h_{nn,n}-\sum_{i \in G}\frac{{\hat h_{in} }}{{\hat h_{ii} }} h_{in,n} ) ]  \notag  \\
&+ \sum_{i,j\in G} (- \hat h_{in,j }+\hat h_{ij,n}) [2\frac{{\hat h_{in} }}{{\hat h_{ii} }}\frac{{\hat h_{jn} }}{{u_n u_t^2}} -  2 \frac{{\hat h_{in} }}{{u_n u_t^2 }} \frac{{\hat h_{jn} }}{{\hat h_{jj} }}].
\end{align*}
With some computations, we obtain
\begin{eqnarray}\label{6.87}
A=&& 2[\sum_{i \in G}\frac{{\hat h_{in} }}{{\hat h_{ii} }}(\hat h_{in,n}-\sum_{j \in G}\frac{{\hat h_{jn} }}{{\hat h_{jj} }} h_{ij,n} )]( \frac{u_{nn}}{u_n}- \sum_{i \in G} \frac{{\hat h_{in} }}{{\hat h_{ii} }}\frac{{u_{in} }}{{u_n }}-\frac{u_{nt}}{u_t})  \notag \\
&&+2[ \sum_{i \in G}(\hat h_{in,i}- \frac{{\hat h_{in} }}{{\hat h_{ii} }}\sum_{k \in G} \hat h_{kk,i} ) ] ( \frac{u_{nn}}{u_n}- \sum_{i \in G} \frac{{\hat h_{in} }}{{\hat h_{ii} }}\frac{{u_{in} }}{{u_n }}-\frac{u_{nt}}{u_t})\notag  \\
&&+ \sum_{i,j\in G} 2\frac{{\hat h_{in} }}{{\hat h_{ii} }}\frac{{\hat h_{jn} }}{{u_n u_t^2}}(- \hat h_{in,j }+\hat h_{jn,i}) \notag  \\
=&& 2[\sum_{i \in G}\frac{{\hat h_{in} }}{{\hat h_{ii} }}(\hat h_{in,n}-\sum_{j \in G}\frac{{\hat h_{jn} }}{{\hat h_{jj} }} h_{ij,n} )]( \frac{u_{nn}}{u_n}- \sum_{i \in G} \frac{{\hat h_{in} }}{{\hat h_{ii} }}\frac{{u_{in} }}{{u_n }}-\frac{u_{nt}}{u_t})  \notag \\
&&+2[ \sum_{i \in G}(\hat h_{in,i}- \sum_{k \in G}\frac{{\hat h_{kn} }}{{\hat h_{kk} }} \hat h_{ik,i} ) ] ( \frac{u_{nn}}{u_n}- \sum_{i \in G} \frac{{\hat h_{in} }}{{\hat h_{ii} }}\frac{{u_{in} }}{{u_n }}-\frac{u_{nt}}{u_t})\notag  \\
&&+2[ \sum_{i \in G}(\sum_{k \in G}\frac{{\hat h_{kn} }}{{\hat h_{kk} }} \hat h_{ik,i}- \frac{{\hat h_{in} }}{{\hat h_{ii} }}\sum_{k \in G} \hat h_{kk,i} ) ] ( \frac{u_{nn}}{u_n}- \sum_{i \in G} \frac{{\hat h_{in} }}{{\hat h_{ii} }}\frac{{u_{in} }}{{u_n }}-\frac{u_{nt}}{u_t})\notag  \\
&&+ 2\sum_{i,j\in G} \frac{{\hat h_{in} }}{{\hat h_{ii} }}\frac{{\hat h_{jn} }}{{u_n u_t^2}}(- \hat h_{in,j }+\hat h_{jn,i}).\notag
\end{eqnarray}
At last we get
\begin{align}
A=& 2[\sum_{i \in G}\frac{{\hat h_{in} }}{{\hat h_{ii} }}(\hat h_{in,n}-\sum_{j \in G}\frac{{\hat h_{jn} }}{{\hat h_{jj} }} h_{ij,n} )]( \frac{u_{nn}}{u_n}- \sum_{i \in G} \frac{{\hat h_{in} }}{{\hat h_{ii} }}\frac{{u_{in} }}{{u_n }}-\frac{u_{nt}}{u_t})  \notag \\
&+2[ \sum_{i \in G}(\hat h_{in,i}- \sum_{k \in G}\frac{{\hat h_{kn} }}{{\hat h_{kk} }} \hat h_{ik,i} ) ] ( \frac{u_{nn}}{u_n}- \sum_{i \in G} \frac{{\hat h_{in} }}{{\hat h_{ii} }}\frac{{u_{in} }}{{u_n }}-\frac{u_{nt}}{u_t})\notag  \\
&+2[ \sum_{i,k \in G}\frac{{\hat h_{in} }}{{\hat h_{ii} }}(\hat h_{ki,k} -\hat h_{kk,i} ) ] ( \frac{u_{nn}}{u_n}- \sum_{i \in G} \frac{{\hat h_{in} }}{{\hat h_{ii} }}\frac{{u_{in} }}{{u_n }}-\frac{u_{nt}}{u_t})\notag  \\
&+2 \sum_{i,j\in G} \frac{{\hat h_{in} }}{{\hat h_{ii} }}\frac{{\hat h_{jn} }}{{u_n u_t^2}}(- \hat h_{in,j }+\hat h_{jn,i}).
\end{align}
Since
\begin{align*}
&\hat h_{ki,k} =  u_t^2 u_{kik}+ 2u_t u_{tk} u_{ki} -u_{kk} u_t u_{ti}- u_{ik} u_t u_{tk},  \\
&\hat h_{kk,i}= u_t^2 u_{kki}+ 2u_t u_{t i} u_{kk} -2u_{ki} u_t u_{tk},
\end{align*}
we can get
\begin{align}\label{6.88}
2\sum_{i,k \in G}\frac{{\hat h_{in} }}{{\hat h_{ii} }}(\hat h_{ki,k} -\hat h_{kk,i} ) = &2\sum_{i,k \in G}\frac{{\hat h_{in} }}{{\hat h_{ii} }}(3 u_t u_{tk} u_{ki} - 3u_{kk} u_t u_{ti})  \notag \\
 =&6\sum_{i \in G}\hat h_{in}\frac{{u_{ti} }}{u_t} -6\sum_{k \in G} \hat h_{kk} \sum_{i \in G}\frac{{\hat h_{in} }}{{\hat h_{ii} }}\frac{{u_{ti} }}{u_t}.
\end{align}
From Lemma \ref{lem6.4}, we have
\begin{align*}
&u_{n}u_t u_{ii t} =-\hat h_{in,i}+\hat h_{ii,n}+3\frac{{u_{it} }}{u_t}\hat h_{in} -3\frac{{u_{nt} }}{u_t}\hat h_{ii}+2u_n u_{it} ^2+ u_{ii}u_n u_{tt}, \quad  i \in G;  \\
 &u_{n}u_t u_{ij t} =-\hat h_{in,j}+\hat h_{ij,n}+3\frac{{u_{jt} }}{u_t}\hat h_{in} +2u_n u_{it} u_{jt}, \quad  i \in G, j \in G, i \ne j;
\end{align*}
then
\begin{align} \label{6.89}
2 \sum_{i,j\in G} \frac{{\hat h_{in} }}{{\hat h_{ii} }}\frac{{\hat h_{jn} }}{{u_n u_t^2}}(- \hat h_{in,j }+\hat h_{jn,i})=&2 \sum_{i,j\in G} \frac{{\hat h_{in} }}{{\hat h_{ii} }}\frac{{\hat h_{jn} }}{{u_n u_t^2}}(- 3\frac{{u_{jt} }}{u_t}\hat h_{in}+3\frac{{u_{it} }}{u_t}\hat h_{jn})  \notag \\
=& -6 \frac{{\hat h_{nn} }}{{u_n u_t^2}}\sum_{i \in G}\hat h_{in}\frac{{u_{ti} }}{u_t}
+6 \sum_{i\in G} \frac{{\hat h_{in} }}{{\hat h_{ii} }} \frac{{u_{ti} }}{u_t} \sum_{j \in G}\frac{{\hat h_{jn}^2 }}{{u_n u_t^2}}.
\end{align}
So by \eqref{6.87} - \eqref{6.89}, we have
\begin{align} \label{6.90}
A =& 2[\sum_{i \in G}\frac{{\hat h_{in} }}{{\hat h_{ii} }}(\hat h_{in,n}-\sum_{j \in G}\frac{{\hat h_{jn} }}{{\hat h_{jj} }} h_{ij,n} )]( \frac{u_{nn}}{u_n}- \sum_{i \in G} \frac{{\hat h_{in} }}{{\hat h_{ii} }}\frac{{u_{in} }}{{u_n }}-\frac{u_{nt}}{u_t})  \notag \\
&+2[ \sum_{i \in G}(\hat h_{in,i}- \sum_{k \in G}\frac{{\hat h_{kn} }}{{\hat h_{kk} }} \hat h_{ik,i} ) ] ( \frac{u_{nn}}{u_n}- \sum_{i \in G} \frac{{\hat h_{in} }}{{\hat h_{ii} }}\frac{{u_{in} }}{{u_n }}-\frac{u_{nt}}{u_t})\notag  \\
&+[ 6\sum_{i \in G}\hat h_{in}\frac{{u_{ti} }}{u_t} -6\sum_{k \in G} \hat h_{kk} \sum_{i \in G}\frac{{\hat h_{in} }}{{\hat h_{ii} }}\frac{{u_{ti} }}{u_t} ] ( \frac{u_{nn}}{u_n}- \sum_{i \in G} \frac{{\hat h_{in} }}{{\hat h_{ii} }}\frac{{u_{in} }}{{u_n }}-\frac{u_{nt}}{u_t})\notag  \\
&-6 \frac{{\hat h_{nn} }}{{u_n u_t^2}}\sum_{i \in G}\hat h_{in}\frac{{u_{ti} }}{u_t}
+6 \sum_{i\in G} \frac{{\hat h_{in} }}{{\hat h_{ii} }} \frac{{u_{ti} }}{u_t} \sum_{j \in G}\frac{{\hat h_{jn}^2 }}{{u_n u_t^2}}.
\end{align}
Now for $B$, we have
\begin{align}
B=&2 \frac{u_{nt}}{u_t} [u_t^2 u_{nt}
+2\frac{{u_{nt} }}{u_t}\sum\limits_{k \in G} {\hat h_{kk}}-2\sum\limits_{k \in G} {\frac{{u_{kt} }}{u_t}\hat h_{kn}}-2\sum\limits_{k \in G} {u_n u_{kt} ^2}]  \notag \\
&+ \sum_{i \in G}[2 \frac{u_{it}}{u_t}-4\frac{{\hat h_{in} }}{{\hat h_{ii} }}\frac{{u_{nt} }}{{u_t }} ] [u_t^2 u_{it} +2\frac{{u_{it} }}{u_t}\sum\limits_{k \in G, k\ne i} {\hat h_{kk}}] \notag \\
&+2 [ \frac{u_{nn}}{u_n}- \sum_{i \in G} \frac{{\hat h_{in} }}{{\hat h_{ii} }}\frac{{u_{in} }}{{u_n }} ] [-u_t^2 u_{nt}-4\sum\limits_{i \in G} \frac{{u_{it} }}{u_t}\hat h_{in}+ 4 \frac{{u_{nt} }}{u_t}\sum\limits_{i \in G}\hat h_{ii} - 2\sum\limits_{i \in G} u_n  u_{it}^2+ 2u_{n}u_{tn}^2]\notag \\
&+2\sum_{ i \in G} \frac{u_{it}}{u_t} [-u_t^2 u_{it} +4\frac{{u_{it} }}{u_t}\sum\limits_{k \in G,k \ne i} \hat h_{kk}+2\frac{{u_{it} }}{u_t}\hat h_{ii} +2\frac{{u_{nt} }}{u_t}\hat h_{in}- 2 u_t u_{it} u_{nn} + 2u_{t}u_{ni} u_{tn}+ 2u_{n}u_{ti} u_{tn}] \notag \\
&+ [-2(\frac{{u_{nn} }}{{u_n }}+\frac{{u_{nt} }}{{u_t }})+2\sum_{i \in G} \frac{{\hat h_{in} }}{{\hat h_{ii} }}\frac{{u_{in} }}{{u_n }}  ] [u_{n}u_t u_{tt}-\sum\limits_{i \in G} (3\frac{{u_{it} }}{u_t}\hat h_{in} -3\frac{{u_{nt} }}{u_t}\hat h_{ii}+2u_n u_{it} ^2+ u_{ii}u_n u_{tt})]  \notag \\
&+\sum_{i \in G} [-4\frac{{u_{it} }}{{u_t }}+2\frac{{\hat h_{in} }}{{\hat h_{ii} }}(\frac{{u_{nn} }}{{u_n }}+\frac{{u_{nt} }}{{u_t }}) -2\frac{{\hat h_{in} }}{{\hat h_{ii} }}\sum_{j \in G} \frac{{\hat h_{jn} }}{{\hat h_{jj} }}\frac{{u_{jn} }}{{u_n }}  ] [3\frac{{u_{it} }}{u_t}\sum\limits_{k \in G,k \ne i} \hat h_{kk} +\frac{{u_{it} }}{u_t}\hat h_{ii}+\frac{{u_{nt} }}{u_t}\hat h_{in}+ u_{in}u_n u_{tt}]  \notag \\
&+ \sum_{i,j\in G}  [-4\frac{{\hat h_{in} }}{{\hat h_{ii} }}\frac{{u_{jt} }}{{u_t }} +2\frac{{\hat h_{in} }}{{\hat h_{ii} }} \frac{{\hat h_{jn} }}{{\hat h_{jj} }}\frac{{u_{nt} }}{{u_t }}] [\frac{u_{i t}}{u_t}  \hat h_{jn}+\frac{u_{j t}}{u_t} \hat h_{in}+ 2 u_n u_{i t}u_{j t}] \notag  \\
&+ \sum_{i \in G}  [-4\frac{{\hat h_{in} }}{{\hat h_{ii} }}\frac{{u_{it} }}{{u_t }} +2\frac{{\hat h_{in}^2 }}{{\hat h_{ii}^2 }} \frac{{u_{nt} }}{{u_t }}] [-2\frac{u_{n t}}{u_t}  \hat h_{ii}]\notag \\
&+ \sum_{i,j\in G} [4\frac{{\hat h_{in} }}{{\hat h_{ii} }}\frac{{u_{jt} }}{{u_t }}+2\frac{{\hat h_{in} }}{{\hat h_{ii} }}\frac{{\hat h_{jn} }}{{u_n u_t^2}} -  2 \frac{{\hat h_{in} }}{{u_n u_t^2 }} \frac{{\hat h_{jn} }}{{\hat h_{jj} }}] [3\frac{{u_{jt} }}{u_t}\hat h_{in} +2u_n u_{it} u_{jt}] \notag  \\
& + 4\sum_{i \in G} \frac{{\hat h_{in} }}{{\hat h_{ii} }}\frac{{u_{it} }}{{u_t }} [-3\frac{{u_{nt} }}{u_t}\hat h_{ii}+ u_{ii}u_n u_{tt}]\notag  \\
&+2\sum\limits_{i,j,k\in G} {u_t^2 u_{ijk } \frac{{\hat h_{in} }} {{\hat h_{ii} }}\frac{{\hat h_{jn} }} {{\hat h_{jj}
}}\frac{u_{kt} }{u_t}}   - 2\sum\limits_{i,j,k\in G} {\hat h_{ij,k } \frac{{\hat h_{in} }} {{\hat h_{ii} }}\frac{{\hat h_{jn} }} {{\hat h_{jj}
}}\frac{u_{kt} }{u_t}},
\end{align}
and we let
\begin{align*}
B=2 \frac{u_{nt}}{u_t}  B_1 + 2[ \frac{u_{nn}}{u_n}- \sum_{i \in G} \frac{{\hat h_{in} }}{{\hat h_{ii} }}\frac{{u_{in} }}{{u_n }} ]B_2+ B_3+B_4+B_5, \notag
\end{align*}
where
\begin{align} \label{6.91}
B_1=&[u_t^2 u_{nt}
+2\frac{{u_{nt} }}{u_t}\sum\limits_{k \in G} {\hat h_{kk}}-2\sum\limits_{k \in G} {\frac{{u_{kt} }}{u_t}\hat h_{kn}}-2\sum\limits_{k \in G} {u_n u_{kt} ^2}]  \notag \\
&-2\sum_{i \in G}\frac{{\hat h_{in} }}{{\hat h_{ii} }} [u_t^2 u_{it} +2\frac{{u_{it} }}{u_t}\sum\limits_{k \in G, k\ne i} {\hat h_{kk}}] \notag \\
&- [u_{n}u_t u_{tt}-\sum\limits_{i \in G} (3\frac{{u_{it} }}{u_t}\hat h_{in} -3\frac{{u_{nt} }}{u_t}\hat h_{ii}+2u_n u_{it} ^2+ u_{ii}u_n u_{tt})]  \notag \\
&+\sum_{i \in G} \frac{{\hat h_{in} }}{{\hat h_{ii} }} [3\frac{{u_{it} }}{u_t}\sum\limits_{k \in G,k \ne i} \hat h_{kk} +\frac{{u_{it} }}{u_t}\hat h_{ii}+\frac{{u_{nt} }}{u_t}\hat h_{in}+ u_{in}u_n u_{tt}]  \notag \\
&+ \sum_{i,j\in G}\frac{{\hat h_{in} }}{{\hat h_{ii} }} \frac{{\hat h_{jn} }}{{\hat h_{jj} }} [\frac{u_{i t}}{u_t}  \hat h_{jn}+\frac{u_{j t}}{u_t} \hat h_{in}+ 2 u_n u_{i t}u_{j t}] + \sum_{i \in G}  \frac{{\hat h_{in}^2 }}{{\hat h_{ii}^2 }}  [-2\frac{u_{n t}}{u_t}  \hat h_{ii}]\notag \\
=&\frac{{u_{nt}}}{{u_t}}u_t^2 [\sum_{i \in G} u_{ii} + u_{nn} ]
+2\frac{{u_{nt} }}{u_t}\sum\limits_{k \in G} {\hat h_{kk}}-2\sum\limits_{k \in G} {\hat h_{kn}\frac{{u_{kt} }}{u_t}}-2\sum\limits_{k \in G} {u_n u_{kt} ^2}  \notag \\
&-2\sum_{i \in G}\frac{{\hat h_{in} }}{{\hat h_{ii} }} \frac{{u_{it}}}{{u_t}}u_t^2 [\sum_{k \in G} u_{kk} + u_{nn} ]-4\sum_{i \in G}\frac{{\hat h_{in} }}{{\hat h_{ii} }} \frac{{u_{it}}}{{u_t}}\sum\limits_{k \in G} {\hat h_{kk}} +4\sum\limits_{i \in G} {\hat h_{in}\frac{{u_{it} }}{u_t}} \notag \\
&- u_{n}u_{tt}[\sum_{i \in G} u_{ii} + u_{nn} ] +3\sum\limits_{i \in G} \hat h_{in}\frac{{u_{it} }}{u_t} -3\frac{{u_{nt} }}{u_t}\sum\limits_{i \in G} \hat h_{ii}+2u_n \sum\limits_{i \in G} u_{it} ^2+ u_n u_{tt}\sum\limits_{i \in G} u_{ii}  \notag \\
&+3\sum_{i \in G}\frac{{\hat h_{in} }}{{\hat h_{ii} }} \frac{{u_{it}}}{{u_t}}\sum\limits_{k \in G} {\hat h_{kk}} -2\sum\limits_{i \in G} {\hat h_{in}\frac{{u_{it} }}{u_t}}+\frac{{u_{nt} }}{u_t}\sum_{i \in G} \frac{{\hat h_{in}^2 }}{{\hat h_{ii} }} + u_n^2 u_{tt}\sum_{i \in G} \frac{{\hat h_{in} }}{{\hat h_{ii} }} \frac{ u_{in}}{u_t}  \notag \\
&+ 2\hat h_{nn}\sum_{i\in G}\frac{{\hat h_{in} }}{{\hat h_{ii} }} \frac{u_{i t}}{u_t} + 2u_n u_t^2[\sum_{i\in G}\frac{{\hat h_{in} }}{{\hat h_{ii} }} \frac{u_{i t}}{u_t}]^2 -2\frac{u_{n t}}{u_t}\hat h_{nn},
\end{align}
so we get
\begin{align} \label{6.91}
B_1
\sim&\frac{{u_{nt}}}{{u_t}}u_t^2u_{nn} +3\sum\limits_{k \in G} {\hat h_{kn}\frac{{u_{kt} }}{u_t}}  -2\sum_{i \in G}\frac{{\hat h_{in} }}{{\hat h_{ii} }} \frac{{u_{it}}}{{u_t}}u_t^2 u_{nn}-3\sum_{i \in G}\frac{{\hat h_{in} }}{{\hat h_{ii} }} \frac{{u_{it}}}{{u_t}}\sum\limits_{k \in G} {\hat h_{kk}}  \notag \\
&- u_{n}^2u_{tt} \frac{u_{nn}}{u_n}  + u_n^2 u_{tt} \sum_{i \in G} \frac{{\hat h_{in} }}{{\hat h_{ii} }}\frac{ u_{in}}{u_t}  \notag \\
&+ 2\hat h_{nn}\sum_{i\in G}\frac{{\hat h_{in} }}{{\hat h_{ii} }} \frac{u_{i t}}{u_t} + 2u_n u_t^2[\sum_{i\in G}\frac{{\hat h_{in} }}{{\hat h_{ii} }} \frac{u_{i t}}{u_t}]^2 -\frac{u_{n t}}{u_t}\hat h_{nn}  \notag \\
=&\frac{{u_{nt}}}{{u_t}}[-u_n^2u_{tt} +2u_n u_t u_{nt}]-2\sum_{i \in G}\frac{{\hat h_{in} }}{{\hat h_{ii} }} \frac{{u_{it}}}{{u_t}}[-u_n^2u_{tt} +2u_n u_t u_{nt}]\notag \\
& +3\sum\limits_{k \in G} {\hat h_{kn}\frac{{u_{kt} }}{u_t}}  -3\sum_{i \in G}\frac{{\hat h_{in} }}{{\hat h_{ii} }} \frac{{u_{it}}}{{u_t}}\sum\limits_{k \in G} {\hat h_{kk}}  \notag \\
&+ u_n^2 u_{tt} [- \frac{u_{nn}}{u_n}+\sum_{i \in G} \frac{{\hat h_{in} }}{{\hat h_{ii} }}\frac{ u_{in}}{u_t} ] + 2u_n u_t^2[\sum_{i\in G}\frac{{\hat h_{in} }}{{\hat h_{ii} }} \frac{u_{i t}}{u_t}]^2  \notag \\
=& [2 \sum_{i \in G}  \frac{{u_{it} }}{{u_t }}\frac{{\hat h_{in} }}{{\hat h_{ii} }}- \frac{{u_{nt} }}{{u_t }} - (\frac{{u_{nn} }}{{u_n }} - \sum_{i \in G} \frac{{\hat h_{in} }}{{\hat h_{ii} }}\frac{{u_{in} }}{{u_n }})]u_n ^2 u_{tt}  \notag\\
&+ 2u_n u_t^2 [\frac{{u_{nt} }}{{u_t }} -\sum_{i \in G} \frac{{\hat h_{in} }}{{\hat h_{ii} }}\frac{{u_{it} }}{{u_t }}]^2 +3\sum_{i \in G}\hat h_{in}\frac{{u_{ti} }}{u_t} -3\sum_{k \in G} \hat h_{kk} \sum_{i \in G}\frac{{\hat h_{in} }}{{\hat h_{ii} }}\frac{{u_{ti} }}{u_t}.
\end{align}

Now for the $B_2$,
\begin{align}\label{6.92a}
B_2=& [-u_t^2 u_{nt} -4\sum\limits_{i \in G} \frac{{u_{it} }}{u_t}\hat h_{in}+ 4 \frac{{u_{nt} }}{u_t}\sum\limits_{i \in G}\hat h_{ii} - 2\sum\limits_{i \in G} u_n  u_{it}^2+ 2u_{n}u_{tn}^2]\notag \\
&- [u_{n}u_t u_{tt}-\sum\limits_{i \in G} (3\frac{{u_{it} }}{u_t}\hat h_{in} -3\frac{{u_{nt} }}{u_t}\hat h_{ii}+2u_n u_{it} ^2+ u_{ii}u_n u_{tt})]  \notag \\
&+\sum_{i \in G}\frac{{\hat h_{in} }}{{\hat h_{ii} }} [3\frac{{u_{it} }}{u_t}\sum\limits_{k \in G,k \ne i} \hat h_{kk} +\frac{{u_{it} }}{u_t}\hat h_{ii}+\frac{{u_{nt} }}{u_t}\hat h_{in}+ u_{in}u_n u_{tt}]  \notag \\
=& -\frac{u_{nt}}{u_t} u_t^2 [\sum_{i \in G} u_{ii} + u_{nn} ] -4\sum\limits_{i \in G}\hat h_{in} \frac{{u_{it} }}{u_t}+ 4 \frac{{u_{nt} }}{u_t}\sum\limits_{i \in G}\hat h_{ii} - 2 u_n \sum\limits_{i \in G} u_{it}^2+ 2u_{n}u_{tn}^2\notag \\
&- u_{n}u_{tt}[\sum_{i \in G} u_{ii} + u_{nn} ] +3\sum\limits_{i \in G} \hat h_{in}\frac{{u_{it} }}{u_t} -3\frac{{u_{nt} }}{u_t}\sum\limits_{i \in G} \hat h_{ii}+2u_n \sum\limits_{i \in G} u_{it} ^2+ u_n u_{tt}\sum\limits_{i \in G} u_{ii}  \notag \\
&+3\sum_{i \in G}\frac{{\hat h_{in} }}{{\hat h_{ii} }} \frac{{u_{it}}}{{u_t}}\sum\limits_{k \in G} {\hat h_{kk}} -2\sum\limits_{i \in G} {\hat h_{in}\frac{{u_{it} }}{u_t}}+\frac{{u_{nt} }}{u_t}\sum_{i \in G} \frac{{\hat h_{in}^2 }}{{\hat h_{ii} }} + u_n^2 u_{tt}\sum_{i \in G} \frac{{\hat h_{in} }}{{\hat h_{ii} }} \frac{ u_{in}}{u_t},
\end{align}
so we get
\begin{align}\label{6.92}
B_2\sim& -\frac{u_{nt}}{u_t} [u_t^2u_{nn} -\hat h_{nn}]-3 \sum\limits_{i \in G}\hat h_{in} \frac{{u_{it} }}{u_t}+ 2u_{n}u_{tn}^2\notag \\
&- u_{n}^2 u_{tt} [\frac{u_{nn}}{u_n} -\sum_{i \in G} \frac{{\hat h_{in} }}{{\hat h_{ii} }} \frac{ u_{in}}{u_t} ]+3\sum_{i \in G}\frac{{\hat h_{in} }}{{\hat h_{ii} }} \frac{{u_{it}}}{{u_t}}\sum\limits_{k \in G} {\hat h_{kk}}  \notag \\
=& - [\frac{{u_{nn} }}{{u_n }} - \frac{{u_{nt} }}{{u_t }}- \sum_{i \in G} \frac{{\hat h_{in} }}{{\hat h_{ii} }}\frac{{u_{in} }}{{u_n }}]u_n ^2 u_{tt}  -3\sum_{i \in G}\hat h_{in}\frac{{u_{ti} }}{u_t} +3\sum_{k \in G} \hat h_{kk} \sum_{i \in G}\frac{{\hat h_{in} }}{{\hat h_{ii} }}\frac{{u_{ti} }}{u_t}.
\end{align}
Now for the $B_3$, we have
\begin{align}\label{6.93*}
B_3=& 2\sum_{i \in G} \frac{u_{it}}{u_t} [u_t^2u_{it} +2\frac{{u_{it} }}{u_t}\sum\limits_{k \in G, k\ne i} {\hat h_{kk}}] \notag \\
&+2\sum_{ i \in G} \frac{u_{it}}{u_t} [-u_t^2 u_{it} +4\frac{{u_{it} }}{u_t}\sum\limits_{k \in G,k \ne i} \hat h_{kk}+2\frac{{u_{it} }}{u_t}\hat h_{ii} +2\frac{{u_{nt} }}{u_t}\hat h_{in}- 2 u_t u_{it} u_{nn} + 2u_{t}u_{ni} u_{tn}+ 2u_{n}u_{ti} u_{tn}] \notag \\
&-4\sum_{i \in G} \frac{{u_{it} }}{{u_t }} [3\frac{{u_{it} }}{u_t}\sum\limits_{k \in G,k \ne i} \hat h_{kk} +\frac{{u_{it} }}{u_t}\hat h_{ii}+\frac{{u_{nt} }}{u_t}\hat h_{in}+ u_{in}u_n u_{tt}]  \notag \\
&-4\sum_{i,j\in G}  \frac{{\hat h_{in} }}{{\hat h_{ii} }}\frac{{u_{jt} }}{{u_t }}  [\frac{u_{i t}}{u_t}  \hat h_{jn}+\frac{u_{j t}}{u_t} \hat h_{in}+ 2 u_n u_{i t}u_{j t}] -4 \sum_{i \in G}  \frac{{\hat h_{in} }}{{\hat h_{ii} }}\frac{{u_{it} }}{{u_t }} [-2\frac{u_{n t}}{u_t}  \hat h_{ii}]\notag \\
&+4 \sum_{i,j\in G} \frac{{\hat h_{in} }}{{\hat h_{ii} }}\frac{{u_{jt} }}{{u_t }} [3\frac{{u_{jt} }}{u_t}\hat h_{in} +2u_n u_{it} u_{jt}]  + 4\sum_{i \in G} \frac{{\hat h_{in} }}{{\hat h_{ii} }}\frac{{u_{it} }}{{u_t }} [-3\frac{{u_{nt} }}{u_t}\hat h_{ii}+ u_{ii}u_n u_{tt}]\notag  \\
\sim&  4\sum_{i \in G} (\frac{u_{it}}{u_t})^2 \sum\limits_{k \in G} {\hat h_{kk}}- 4\sum_{i \in G} (\frac{u_{it}}{u_t})^2 \hat h_{ii} + 8 \sum_{i \in G} (\frac{u_{it}}{u_t})^2 \sum\limits_{k \in G} {\hat h_{kk}} - 4\sum_{i \in G} (\frac{u_{it}}{u_t})^2 \hat h_{ii} \notag \\
&+4\frac{{u_{nt} }}{u_t}\sum_{ i \in G} \frac{u_{it}}{u_t}\hat h_{in}- 4 u_t^2  u_{nn} \sum_{ i \in G} (\frac{u_{it}}{u_t})^2+ 4u_n u_{t}u_{tn}\sum_{ i \in G} \frac{u_{it}}{u_t}[ \frac{\hat h_{in}}{u_n u_t^2}+\frac{u_{it}}{u_t} ]\notag \\
&+ 4u_{n}u_{t} u_{tn}\sum_{ i \in G} (\frac{u_{it}}{u_t})^2-12 \sum_{i \in G} (\frac{u_{it}}{u_t})^2 \sum\limits_{k \in G} {\hat h_{kk}}+8 \sum_{i \in G} (\frac{u_{it}}{u_t})^2 \hat h_{ii}
\notag \\
&-4\frac{{u_{nt} }}{u_t}\sum_{i \in G} \hat h_{in}\frac{{u_{it} }}{{u_t }} -4u_n^2 u_{tt}\sum_{i \in G} \frac{{u_{it} }}{{u_t }}[ \frac{\hat h_{in}}{u_n u_t^2}+\frac{u_{it}}{u_t} ]-4\sum\limits_{i\in G} \hat h_{in}\frac{u_{it} }{u_t} \sum\limits_{j\in G} \frac{{\hat h_{jn} }} {{\hat h_{jj}}}\frac{u_{j t}}{u_t} -4\hat h_{nn}\sum\limits_{i \in G}  (\frac{u_{it}}{u_t})^2 \notag \\
&-8 u_n u_t^2 \sum_{i\in G}  \frac{{\hat h_{in} }}{{\hat h_{ii} }}\frac{{u_{it} }}{{u_t }}  \sum_{j \in G}(\frac{ u_{j t}}{u_{t}})^2 +8\frac{u_{n t}}{u_t}  \sum_{i \in G} \hat h_{in}\frac{{u_{it} }}{{u_t }}+12\hat h_{nn}\sum\limits_{i \in G}  (\frac{u_{it}}{u_t})^2\notag \\
&+8 u_n u_t^2 \sum_{i\in G}  \frac{{\hat h_{in} }}{{\hat h_{ii} }}\frac{{u_{it} }}{{u_t }}  \sum_{j \in G}(\frac{ u_{j t}}{u_{t}})^2 -12\frac{u_{n t}}{u_t}  \sum_{i \in G} \hat h_{in}\frac{{u_{it} }}{{u_t }}+ 4\sum_{i \in G}\hat h_{in} \frac{{u_{it} }}{{u_t }} \frac{u_n^2u_{tt}}{u_nu_t^2},
\end{align}
so we have
\begin{align}\label{6.93}
B_3
=&- 4 u_t^2  u_{nn} \sum_{ i \in G} (\frac{u_{it}}{u_t})^2+ 8u_{n}u_{t} u_{tn}\sum_{ i \in G} (\frac{u_{it}}{u_t})^2 -4u_n^2 u_{tt}\sum_{ i \in G} (\frac{u_{it}}{u_t})^2   \notag \\
&-4\sum\limits_{i\in G} \hat h_{in}\frac{u_{it} }{u_t} \sum\limits_{j\in G} \frac{{\hat h_{jn} }} {{\hat h_{jj}}}\frac{u_{j t}}{u_t} +8\hat h_{nn}\sum\limits_{i \in G}  (\frac{u_{it}}{u_t})^2 \notag \\
=&4\hat h_{nn}\sum\limits_{i \in G}  (\frac{u_{it}}{u_t})^2 -4 \sum\limits_{i\in G} \hat h_{in}\frac{u_{it} }{u_t} \sum\limits_{j\in G} \frac{{\hat h_{jn} }} {{\hat h_{jj}}}\frac{u_{j t}}{u_t}.
\end{align}
For the term $B_4$,
\begin{align} \label{6.94}
B_4=& \sum_{i,j\in G} [2\frac{{\hat h_{in} }}{{\hat h_{ii} }}\frac{{\hat h_{jn} }}{{u_n u_t^2}} -  2 \frac{{\hat h_{in} }}{{u_n u_t^2 }} \frac{{\hat h_{jn} }}{{\hat h_{jj} }}] [3\frac{{u_{jt} }}{u_t}\hat h_{in} +2u_n u_{it} u_{jt}] \notag \\
\sim& 6 \frac{{\hat h_{nn} }}{{u_n u_t^2}}\sum_{i \in G}\hat h_{in}\frac{{u_{ti} }}{u_t}
-6 \sum_{i\in G} \frac{{\hat h_{in} }}{{\hat h_{ii} }} \frac{{u_{ti} }}{u_t} \sum_{j \in G}\frac{{\hat h_{jn}^2 }}{{u_n u_t^2}},
\end{align}
and
\begin{align} \label{6.95}
B_5=& 2\sum\limits_{i,j,k\in G} {u_t^2 u_{ijk } \frac{{\hat h_{in} }} {{\hat h_{ii} }}\frac{{\hat h_{jn} }} {{\hat h_{jj}
}}\frac{u_{kt} }{u_t}}   - 2\sum\limits_{i,j,k\in G} {\hat h_{ij,k } \frac{{\hat h_{in} }} {{\hat h_{ii} }}\frac{{\hat h_{jn} }} {{\hat h_{jj}
}}\frac{u_{kt} }{u_t}} \notag  \\
=& 2\sum\limits_{i,k\in G,i \ne k}  \frac{{\hat h_{in} ^2}} {{\hat h_{ii}^2 }}\frac{u_{kt} }{u_t}[-2\frac{u_{k t}}{u_t}  \hat h_{ii}]
+2 \sum\limits_{i,j\in G,i \ne j}  \frac{{\hat h_{in} }} {{\hat h_{ii} }}\frac{{\hat h_{jn} }} {{\hat h_{jj}}}\frac{u_{it} }{u_t} [\frac{u_{j t}}{u_t}  \hat h_{ii}] \notag  \\
&+2 \sum\limits_{i,j\in G,i \ne j}  \frac{{\hat h_{in} }} {{\hat h_{ii} }}\frac{{\hat h_{jn} }} {{\hat h_{jj}}}\frac{u_{jt} }{u_t} [\frac{u_{i t}}{u_t}  \hat h_{jj}]\notag  \\
=&2\sum\limits_{i,k\in G,i \ne k}  \frac{{\hat h_{in} ^2}} {{\hat h_{ii}^2 }}\frac{u_{kt} }{u_t}[-2\frac{u_{k t}}{u_t}  \hat h_{ii}]
+2 \sum\limits_{i,j\in G,i \ne j}  \frac{{\hat h_{in} }} {{\hat h_{ii} }}\frac{{\hat h_{jn} }} {{\hat h_{jj}}}\frac{u_{it} }{u_t}] [\frac{u_{j t}}{u_t}  \hat h_{ii}] \notag  \\
&+2 \sum\limits_{i,j\in G,i \ne j}  \frac{{\hat h_{in} }} {{\hat h_{ii} }}\frac{{\hat h_{jn} }} {{\hat h_{jj}}}\frac{u_{jt} }{u_t}] [\frac{u_{i t}}{u_t}  \hat h_{jj}]\notag \\
\sim&-4\hat h_{nn}\sum\limits_{j\in G}  (\frac{u_{jt}}{u_t})^2 +4 \sum\limits_{i\in G} \hat h_{in}\frac{u_{it} }{u_t} \sum\limits_{j\in G} \frac{{\hat h_{jn} }} {{\hat h_{jj}}}\frac{u_{j t}}{u_t}.
\end{align}
So by \eqref{6.93} and \eqref{6.95}, it is easy to see
\begin{align}\label{6.96}
B_3+B_5 \sim 0.
\end{align}

For the $II$, we can get
\begin{align}
II =&\frac{{u_{\alpha t} ^2 }}{{u_t ^2 }}[ u_{ t} ^2 u_{nn}  - 2\sum_{i \in G} {\frac{{\hat h_{in} }} {{\hat h_{ii}}}u_{ t} ^2 u_{in}}+\sum_{i,j\in G} {\frac{{\hat h_{in} }} {{\hat h_{ii} }} \frac{{\hat h_{jn} }} {{\hat h_{jj}}} u_{ t} ^2 u_{ij}} ] -2u_{n \a}u_{t \a} u_{nt} \notag \\
&+ u_{n} ^2 u_{tt}[\frac{{u_{\alpha n} ^2 }}{{u_n ^2 }}- 2\sum\limits_{i \in G} {\frac{{\hat h_{in} }} {{\hat h_{ii}}}\frac{{u_{i\alpha} }}{{u_n }}\frac{{u_{\alpha n} }}{{u_n }}} +\sum\limits_{i,j\in G} {\frac{{\hat h_{in} }} {{\hat h_{ii} }} \frac{{\hat h_{jn} }} {{\hat h_{jj}}} \frac{{u_{i\alpha} }}{{u_n }}\frac{{u_{j \alpha} }}{{u_n }}}  ] \notag \\
&+ 2\sum\limits_{i \in G} {\frac{{\hat h_{in} }} {{\hat h_{ii}}}[ u_{i\alpha } u_{t\alpha } u_{nt}  + u_{n\alpha } u_{t\alpha } u_{it} ]}
-2\sum\limits_{i,j\in G} {\frac{{\hat h_{in} }} {{\hat h_{ii} }}\frac{{\hat h_{jn} }} {{\hat h_{jj}
}} u_{t\alpha } u_{i\alpha } u_{jt}  } \notag\\
\sim&\frac{{u_{\alpha t} ^2 }}{{u_t ^2 }}[ u_{ t} ^2 u_{nn}- \hat h_{nn}  - 2\sum\limits_{i \in G} {\frac{{\hat h_{in} }} {{\hat h_{ii}}} u_n u_{ t} u_{it}} ] -2u_{n \a}u_{t \a} u_{nt}\notag \\
&+ u_{n} ^2 u_{tt}[(\frac{{u_{n n} }}{{u_n }}- \sum\limits_{i \in G} {\frac{{\hat h_{in} }} {{\hat h_{ii}}}\frac{{u_{in} }}{{u_n }}})^2 +\sum_{i\in G } (\frac{{u_{i n} }}{{u_n }}- \frac{{\hat h_{in} }} {{\hat h_{ii}}}\frac{{u_{ii} }}{{u_n }})^2]\notag \\
&+ 2\sum\limits_{i \in G} {\frac{{\hat h_{in} }} {{\hat h_{ii}}}[ u_{ii} u_{ti} u_{nt}  +u_{in } u_{tn } u_{nt}  + u_{nn } u_{tn } u_{it}+\sum_{j \in G} u_{nj } u_{tj} u_{it} ]}\notag \\
&-2\sum\limits_{i,j\in G} {\frac{{\hat h_{in} }} {{\hat h_{ii} }}\frac{{\hat h_{jn} }} {{\hat h_{jj}
}} [u_{tn} u_{in} u_{jt}+u_{ti } u_{ii } u_{jt}  ]  },
\end{align}
then we have
\begin{align}
II\sim&\frac{{u_{n t} ^2 }}{{u_t ^2 }}[- u_{ n} ^2 u_{tt} + 2u_n u_t u_{nt} - 2\sum\limits_{i \in G} {\frac{{\hat h_{in} }} {{\hat h_{ii}}} u_n u_{ t} u_{it}} ]+\sum_{j \in G} \frac{{u_{j t} ^2 }}{{u_t ^2 }}[ - u_{ n} ^2 u_{tt} + 2u_n u_t u_{nt}  - 2\sum\limits_{i \in G} {\frac{{\hat h_{in} }} {{\hat h_{ii}}} u_n u_{ t} u_{it}} ] \notag \\
&-2u_{nn}u_{tn} u_{nt}-2 \sum_{j \in G} u_{nj}u_{tj} u_{nt}+ u_{n} ^2 u_{tt}[(\frac{{u_{n n} }}{{u_n }}- \sum\limits_{i \in G} {\frac{{\hat h_{in} }} {{\hat h_{ii}}}\frac{{u_{in} }}{{u_n }}})^2 +\sum_{i\in G } (\frac{{u_{i t} }}{{u_t }})^2]\notag \\
&+ 2 \frac{u_{nt}}{u_t} \sum\limits_{i \in G} {\hat h_{in} \frac{{ u_{ti} }} {{u_t}}}+ 2 \frac{{u_{n t} ^2 }}{{u_t ^2 }} \sum\limits_{i \in G} {\frac{{\hat h_{in} }} {{\hat h_{ii}}} u_t^2 u_{in}} +2 \frac{u_{nt}}{u_t}\sum\limits_{i \in G} {\frac{{\hat h_{in} }} {{\hat h_{ii}}}\frac{u_{it}}{u_t}} u_t^2 u_{nn}
+ 2\sum\limits_{i \in G} {\frac{{\hat h_{in} }} {{\hat h_{ii}}}[ \sum_{j \in G} u_{nj } u_{tj} u_{it} ]}\notag \\
&-2 \frac{u_{nt}}{u_t} \sum\limits_{i,j\in G} {\frac{{\hat h_{in} }} {{\hat h_{ii} }}\frac{{\hat h_{jn} }} {{\hat h_{jj}
}} \frac{ u_{jt}}{u_t} u_t^2 u_{in} }
-2\sum\limits_{i,j\in G} {\hat h_{in}\frac{{\hat h_{jn} }} {{\hat h_{jj}
}}\frac{ u_{it}}{u_t}\frac{ u_{jt}}{u_t} } \notag\\
=&\frac{{u_{n t} ^2 }}{{u_t ^2 }}[-2u_t^2 u_{nn}- u_{ n} ^2 u_{tt} + 2u_n u_t u_{nt} - 2\sum\limits_{i \in G} {\frac{{\hat h_{in} }} {{\hat h_{ii}}} u_n u_{ t} u_{it}} ]+\sum_{j \in G} \frac{{u_{j t} ^2 }}{{u_t ^2 }}[ 2u_n u_t u_{nt}  - 2\sum\limits_{i \in G} {\frac{{\hat h_{in} }} {{\hat h_{ii}}} u_n u_{ t} u_{it}} ] \notag \\
& +u_{n} ^2 u_{tt}[(\frac{{u_{n n} }}{{u_n }}- \sum\limits_{i \in G} {\frac{{\hat h_{in} }} {{\hat h_{ii}}}\frac{{u_{in} }}{{u_n }}})^2 ]+ 2 \frac{u_{nt}}{u_t} \sum\limits_{i \in G} {[\hat h_{in}-u_t^2 u_{in}] \frac{{ u_{ti} }} {{u_t}}}\notag \\
&+ 2 \frac{{u_{n t} ^2 }}{{u_t ^2 }} \sum\limits_{i \in G} {\frac{{\hat h_{in} }} {{\hat h_{ii}}} [\hat h_{in} +u_n u_t u_{it}]} +2 \frac{u_{nt}}{u_t}\sum\limits_{i \in G} {\frac{{\hat h_{in} }} {{\hat h_{ii}}}\frac{u_{it}}{u_t}} u_t^2 u_{nn}
+ 2\sum\limits_{i \in G} {\frac{{\hat h_{in} }} {{\hat h_{ii}}} \frac{u_{it}}{u_t}\sum_{j \in G} [ \hat h_{jn} +u_nu_t u_{jt}]\frac{u_{tj}}{u_t} }\notag \\
&-2 \frac{u_{nt}}{u_t} \sum\limits_{i,j\in G} {\frac{{\hat h_{in} }} {{\hat h_{ii} }}\frac{{\hat h_{jn} }} {{\hat h_{jj}
}} \frac{ u_{jt}}{u_t}[\hat h_{in} +u_n u_t u_{it}] }-2\sum\limits_{i,j\in G} {\hat h_{in}\frac{{\hat h_{jn} }} {{\hat h_{jj}
}}\frac{ u_{it}}{u_t}\frac{ u_{jt}}{u_t} }, \notag
\end{align}
so
\begin{align}\label{6.97}
II \sim&\frac{{u_{n t} ^2 }}{{u_t ^2 }}[ u_{ n} ^2 u_{tt} - 2u_n u_t u_{nt}] +u_{n} ^2 u_{tt}[(\frac{{u_{n n} }}{{u_n }}- \sum\limits_{i \in G} {\frac{{\hat h_{in} }} {{\hat h_{ii}}}\frac{{u_{in} }}{{u_n }}})^2 ]\notag \\
&+ 2 \frac{u_{nt}}{u_t}\sum\limits_{i \in G} {\frac{{\hat h_{in} }} {{\hat h_{ii}}}\frac{u_{it}}{u_t}}[ u_t^2 u_{nn}- \hat h_{nn}]-2 \frac{u_{nt}}{u_t} \sum\limits_{i,j\in G} {\frac{{\hat h_{in} }} {{\hat h_{ii} }}\frac{{\hat h_{jn} }} {{\hat h_{jj}}} \frac{ u_{jt}}{u_t} u_n u_t u_{it} }\notag\\
=& [\frac{{u_{nt} ^2 }}{{u_t ^2 }} - 2\frac{{u_{nt} }}{{u_t }} \sum_{i \in G}  \frac{{u_{it} }}{{u_t }}\frac{{\hat h_{in} }}{{\hat h_{ii} }} + (\frac{{u_{nn} }}{{u_n }} - \sum_{i \in G} \frac{{\hat h_{in} }}{{\hat h_{ii} }}\frac{{u_{in} }}{{u_n }})^2 ]u_n ^2 u_{tt}  \notag\\
&- 2u_n u_t u_{nt} [\frac{{u_{nt} }}{{u_t }} -\sum_{i \in G} \frac{{u_{it} }}{{u_t }}\frac{{\hat h_{in} }}{{\hat h_{ii} }}]^2.
\end{align}

Then we get from \eqref{6.90} - \eqref{6.92}, \eqref{6.94} and \eqref{6.97}
\begin{align}  \label{6.98}
&A+2 \frac{u_{nt}}{u_t}  B_1 + 2[ \frac{u_{nn}}{u_n}- \sum_{i \in G} \frac{{\hat h_{in} }}{{\hat h_{ii} }}\frac{{u_{in} }}{{u_n }} ]B_2+ B_4 +II \notag \\
=& 2[\sum_{i \in G}\frac{{\hat h_{in} }}{{\hat h_{ii} }}(\hat h_{in,n}-\sum_{j \in G}\frac{{\hat h_{jn} }}{{\hat h_{jj} }} h_{ij,n} )]( \frac{u_{nn}}{u_n}- \sum_{i \in G} \frac{{\hat h_{in} }}{{\hat h_{ii} }}\frac{{u_{in} }}{{u_n }}-\frac{u_{nt}}{u_t})  \notag \\
&+2[ \sum_{i \in G}(\hat h_{in,i}- \sum_{k \in G}\frac{{\hat h_{kn} }}{{\hat h_{kk} }} \hat h_{ik,i} ) ] ( \frac{u_{nn}}{u_n}- \sum_{i \in G} \frac{{\hat h_{in} }}{{\hat h_{ii} }}\frac{{u_{in} }}{{u_n }}-\frac{u_{nt}}{u_t})\notag  \\
&+2u_n u_t u_{nt}[\frac{{u_{nt} }}{{u_t }} -\sum_{i \in G} \frac{{\hat h_{in} }}{{\hat h_{ii} }}\frac{{u_{it} }}{{u_t }}]^2  +u_n ^2 u_{tt} C,
\end{align}
where
\begin{align}  \label{6.99}
C=& 2 \frac{u_{nt}}{u_t} [2 \sum_{i \in G}  \frac{{u_{it} }}{{u_t }}\frac{{\hat h_{in} }}{{\hat h_{ii} }}- \frac{{u_{nt} }}{{u_t }} - (\frac{{u_{nn} }}{{u_n }} - \sum_{i \in G} \frac{{\hat h_{in} }}{{\hat h_{ii} }}\frac{{u_{in} }}{{u_n }})]  \notag\\
&-  2[ \frac{u_{nn}}{u_n}- \sum_{i \in G} \frac{{\hat h_{in} }}{{\hat h_{ii} }}\frac{{u_{in} }}{{u_n }} ][\frac{{u_{nn} }}{{u_n }} - \frac{{u_{nt} }}{{u_t }}- \sum_{i \in G} \frac{{\hat h_{in} }}{{\hat h_{ii} }}\frac{{u_{in} }}{{u_n }}]  \notag\\
& + [\frac{{u_{nt} ^2 }}{{u_t ^2 }} - 2\frac{{u_{nt} }}{{u_t }} \sum_{i \in G}  \frac{{u_{it} }}{{u_t }}\frac{{\hat h_{in} }}{{\hat h_{ii} }} + (\frac{{u_{nn} }}{{u_n }} - \sum_{i \in G} \frac{{\hat h_{in} }}{{\hat h_{ii} }}\frac{{u_{in} }}{{u_n }})^2 ]  \notag\\
=& -\frac{{u_{nt} ^2 }}{{u_t ^2 }}-(\frac{{u_{nn} }}{{u_n }} - \sum_{i \in G} \frac{{\hat h_{in} }}{{\hat h_{ii} }}\frac{{u_{in} }}{{u_n }})^2
+2\frac{u_{nt}}{u_t} \sum_{i \in G}  \frac{{u_{it} }}{{u_t }}\frac{{\hat h_{in} }}{{\hat h_{ii} }} \notag\\
=& -(\frac{{u_{nn} }}{{u_n }} -\frac{{u_{nt} }}{{u_t  }}- \sum_{i \in G} \frac{{\hat h_{in} }}{{\hat h_{ii} }}\frac{{u_{in} }}{{u_n }})^2
-2\frac{{u_{nt} }}{{u_t  }}(\frac{{u_{nn} }}{{u_n }} - \sum_{i \in G} \frac{{\hat h_{in} }}{{\hat h_{ii} }}\frac{{u_{in} }}{{u_n }})+2\frac{u_{nt}}{u_t} \sum_{i \in G}  \frac{{u_{it} }}{{u_t }}\frac{{\hat h_{in} }}{{\hat h_{ii} }},
\end{align}
and
\begin{align}  \label{6.100}
2u_n u_t u_{nt}[\frac{{u_{nt} }}{{u_t }} -\sum_{i \in G} \frac{{\hat h_{in} }}{{\hat h_{ii} }}\frac{{u_{it} }}{{u_t }}]^2
=&2u_n u_t u_{nt}[\frac{{u_{nt} }}{{u_t }} -\sum_{i \in G} \frac{{\hat h_{in} }}{{\hat h_{ii} }}(\frac{{u_{in} }}{{u_n }}-\frac{{\hat h_{in} }}{{u_n u_t^2}})]^2  \notag \\
=&2u_n u_t u_{nt}[\frac{{u_{nt} }}{{u_t }} +\frac{{\hat h_{nn} }}{{u_n u_t^2}}-\sum_{i \in G} \frac{{\hat h_{in} }}{{\hat h_{ii} }}\frac{{u_{in} }}{{u_n }}]^2  \notag \\
=&2u_n u_t u_{nt}[\frac{{u_{nn} }}{{u_n}}-\frac{{u_{nt} }}{{u_t }} +\frac{{u_n^2 u_{tt} }}{{u_n u_t^2}}-\sum_{i \in G} \frac{{\hat h_{in} }}{{\hat h_{ii} }}\frac{{u_{in} }}{{u_n }}]^2  \notag \\
=&2u_n u_t u_{nt}[\frac{{u_{nn} }}{{u_n}}-\frac{{u_{nt} }}{{u_t }} -\sum_{i \in G} \frac{{\hat h_{in} }}{{\hat h_{ii} }}\frac{{u_{in} }}{{u_n }}]^2 \\
&+4u_n u_t u_{nt}[\frac{{u_{nn} }}{{u_n}}-\frac{{u_{nt} }}{{u_t }}-\sum_{i \in G} \frac{{\hat h_{in} }}{{\hat h_{ii} }}\frac{{u_{in} }}{{u_n }}]  \frac{{u_n^2 u_{tt} }}{{u_n u_t^2}} +2u_n u_t u_{nt}[\frac{{u_n^2 u_{tt} }}{{u_n u_t^2}}]^2.\notag
\end{align}
So
\begin{align}  \label{6.101}
&2u_n u_t u_{nt}[\frac{{u_{nt} }}{{u_t }} -\sum_{i \in G} \frac{{\hat h_{in} }}{{\hat h_{ii} }}\frac{{u_{it} }}{{u_t }}]^2  +u_n ^2 u_{tt} C  \notag \\
=&2u_n u_t u_{nt}[\frac{{u_{nn} }}{{u_n}}-\frac{{u_{nt} }}{{u_t }} -\sum_{i \in G} \frac{{\hat h_{in} }}{{\hat h_{ii} }}\frac{{u_{in} }}{{u_n }}]^2 +4u_n u_t u_{nt}[\frac{{u_{nn} }}{{u_n}}-\frac{{u_{nt} }}{{u_t }}-\sum_{i \in G} \frac{{\hat h_{in} }}{{\hat h_{ii} }}\frac{{u_{in} }}{{u_n }}]  \frac{{u_n^2 u_{tt} }}{{u_n u_t^2}} \notag \\
&+2u_n u_t u_{nt}[\frac{{u_n^2 u_{tt} }}{{u_n u_t^2}}]^2-(\frac{{u_{nn} }}{{u_n }} -\frac{{u_{nt} }}{{u_t  }}- \sum_{i \in G} \frac{{\hat h_{in} }}{{\hat h_{ii} }}\frac{{u_{in} }}{{u_n }})^2u_n ^2 u_{tt} \notag \\
&-2\frac{{u_{nt} }}{{u_t  }}(\frac{{u_{nn} }}{{u_n }}- \sum_{i \in G} \frac{{\hat h_{in} }}{{\hat h_{ii} }}\frac{{u_{in} }}{{u_n }})u_n ^2 u_{tt}+2\frac{u_{nt}}{u_t} \sum_{i \in G}  \frac{{u_{it} }}{{u_t }}\frac{{\hat h_{in} }}{{\hat h_{ii} }}u_n ^2 u_{tt} \notag \\
=&[\frac{{u_{nn} }}{{u_n}}-\frac{{u_{nt} }}{{u_t }} -\sum_{i \in G} \frac{{\hat h_{in} }}{{\hat h_{ii} }}\frac{{u_{in} }}{{u_n }}]^2(2u_n u_t u_{nt}-u_n ^2 u_{tt}) \notag \\
 &+2u_n u_t u_{nt}[\frac{{u_{nn} }}{{u_n}}-2\frac{{u_{nt} }}{{u_t }}-\sum_{i \in G} \frac{{\hat h_{in} }}{{\hat h_{ii} }}\frac{{u_{in} }}{{u_n }}+\sum_{i \in G}  \frac{{u_{it} }}{{u_t }}\frac{{\hat h_{in} }}{{\hat h_{ii} }}+\frac{{u_n^2 u_{tt} }}{{u_n u_t^2}}]  \frac{{u_n^2 u_{tt} }}{{u_n u_t^2}}  \notag \\
 =&[\frac{{u_{nn} }}{{u_n}}-\frac{{u_{nt} }}{{u_t }} -\sum_{i \in G} \frac{{\hat h_{in} }}{{\hat h_{ii} }}\frac{{u_{in} }}{{u_n }}]^2(2u_n u_t u_{nt}-u_n ^2 u_{tt}) \notag \\
 =&-[\frac{{u_{nn} }}{{u_n}}-\frac{{u_{nt} }}{{u_t }} -\sum_{i \in G} \frac{{\hat h_{in} }}{{\hat h_{ii} }}\frac{{u_{in} }}{{u_n }}]^2(\sum\limits_{i \in G} {\frac{{\hat h_{in} ^2 }}{{\hat h_{ii} }}}
 + \sum\limits_{i \in G} {\hat h_{ii} }  - u_t ^3  ).
\end{align}
Then we have
\begin{align}   \label{6.102}
&A+2 \frac{u_{nt}}{u_t}  B_1 + 2[ \frac{u_{nn}}{u_n}- \sum_{i \in G} \frac{{\hat h_{in} }}{{\hat h_{ii} }}\frac{{u_{in} }}{{u_n }} ]B_2+ B_4 +II \notag \\
\sim& 2[\sum_{i \in G}\frac{{\hat h_{in} }}{{\hat h_{ii} }}(\hat h_{in,n}-\sum_{j \in G}\frac{{\hat h_{jn} }}{{\hat h_{jj} }} h_{ij,n} )]( \frac{u_{nn}}{u_n}- \sum_{i \in G} \frac{{\hat h_{in} }}{{\hat h_{ii} }}\frac{{u_{in} }}{{u_n }}-\frac{u_{nt}}{u_t})  \notag \\
&+2[ \sum_{i \in G}(\hat h_{in,i}- \sum_{k \in G}\frac{{\hat h_{kn} }}{{\hat h_{kk} }} \hat h_{ik,i} ) ] ( \frac{u_{nn}}{u_n}- \sum_{i \in G} \frac{{\hat h_{in} }}{{\hat h_{ii} }}\frac{{u_{in} }}{{u_n }}-\frac{u_{nt}}{u_t})\notag  \\
&+[ \frac{u_{nn}}{u_n}-\frac{u_{nt}}{u_t}- \sum_{i \in G} \frac{{\hat h_{in} }}{{\hat h_{ii} }}\frac{{u_{in} }}{{u_n }} ]^2 u_{t}^3\notag \\
&-[\frac{{u_{nn} }}{{u_n}}-\frac{{u_{nt} }}{{u_t }} -\sum_{i \in G} \frac{{\hat h_{in} }}{{\hat h_{ii} }}\frac{{u_{in} }}{{u_n }}]^2(\sum\limits_{i \in G} {\frac{{\hat h_{in} ^2 }}{{\hat h_{ii} }}} + \sum\limits_{i \in G} {\hat h_{ii} }   ).
\end{align}

So
\begin{align}
&\frac{1}{2}\left[ {\Big( \Delta \hat h_{nn}-\hat h_{nn,t} \Big)
- 2\sum\limits_{i \in G} {\frac{{\hat h_{in} }} {{\hat h_{ii}}}\Big( \Delta \hat h_{in}-\hat h_{in,t} \Big)+\sum\limits_{i,j
\in G} {\frac{{\hat h_{in} }} {{\hat h_{ii} }}\frac{{\hat h_{jn} }} {{\hat h_{jj}
}}\Big( \Delta \hat h_{ij} -\hat h_{ij,t} \Big)} } } \right] \notag\\
&- \sum\limits_{i \in G} {\frac{1} {{\hat h _{ii}
}}\sum_{\a=1}^n\left[ {\hat h_{in,\alpha }  - \sum\limits_{j \in G} {\frac{{\hat h_{jn} }}
{{\hat h_{jj} }}\hat h_{ij,\alpha } } } \right]^2} \notag  \\
 =& [ I + II ] - \sum\limits_{i \in G} {\frac{1} {{\hat h _{ii}
}}\left[ {\hat h_{in,n }  - \sum\limits_{j \in G} {\frac{{\hat h_{jn} }}
{{\hat h_{jj} }}\hat h_{ij,n} } } \right]^2}- \sum\limits_{i \in G} {\frac{1} {{\hat h _{ii}
}}\sum_{\a \in G}\left[ {\hat h_{in,\alpha }  - \sum\limits_{j \in G} {\frac{{\hat h_{jn} }}
{{\hat h_{jj} }}\hat h_{ij,\alpha } } } \right]^2}   \notag  \\
 =& [A+2 \frac{u_{nt}}{u_t}  B_1 + 2( \frac{u_{nn}}{u_n}- \sum_{i \in G} \frac{{\hat h_{in} }}{{\hat h_{ii} }}\frac{{u_{in} }}{{u_n }})B_2+ B_4  +II]+B_3+B_5 \notag \\
&- \sum\limits_{i \in G} {\frac{1} {{\hat h _{ii}
}}\left[ {\hat h_{in,n }  - \sum\limits_{j \in G} {\frac{{\hat h_{jn} }}
{{\hat h_{jj} }}\hat h_{ij,n} } } \right]^2}  - \sum\limits_{i \in G} {\frac{1} {{\hat h _{ii}
}}\left[ {\hat h_{in,i }  - \sum\limits_{j \in G} {\frac{{\hat h_{jn} }}
{{\hat h_{jj} }}\hat h_{ij,i} } } \right]^2}\notag \\
&- \sum\limits_{i \in G} {\frac{1} {{\hat h _{ii}
}}\sum_{\a \in G, \a \ne i}\left[ {\hat h_{in,\a }  - \sum\limits_{j \in G} {\frac{{\hat h_{jn} }}
{{\hat h_{jj} }}\hat h_{ij,\alpha } } } \right]^2}   \notag  \\
\sim&- \sum\limits_{i \in G} {\frac{1} {{\hat h _{ii}
}}\left[ {\hat h_{in,n }  - \sum\limits_{j \in G} {\frac{{\hat h_{jn} }}
{{\hat h_{jj} }}\hat h_{ij,n} } }-\hat h_{in}[ \frac{u_{nn}}{u_n}-\frac{u_{nt}}{u_t}- \sum_{i \in G} \frac{{\hat h_{in} }}{{\hat h_{ii} }}\frac{{u_{in} }}{{u_n }} ] \right]^2}  \notag \\
&- \sum\limits_{i \in G} {\frac{1} {{\hat h _{ii}
}}\left[ {\hat h_{in,i }  - \sum\limits_{j \in G} {\frac{{\hat h_{jn} }}
{{\hat h_{jj} }}\hat h_{ij,i} } }-\hat h_{ii}[ \frac{u_{nn}}{u_n}-\frac{u_{nt}}{u_t}- \sum_{i \in G} \frac{{\hat h_{in} }}{{\hat h_{ii} }}\frac{{u_{in} }}{{u_n }} ]  \right]^2}\notag\\
&- \sum\limits_{i \in G} {\frac{1} {{\hat h _{ii}
}}\sum_{\a \in G, \a \ne i}\left[ {\hat h_{in,\a }  - \sum\limits_{j \in G} {\frac{{\hat h_{jn} }}
{{\hat h_{jj} }}\hat h_{ij,\alpha } } } \right]^2} +[ \frac{u_{nn}}{u_n}-\frac{u_{nt}}{u_t}- \sum_{i \in G} \frac{{\hat h_{in} }}{{\hat h_{ii} }}\frac{{u_{in} }}{{u_n }} ]^2u_{t}^3\notag  \\
\geq& 0.  \notag
\end{align}

So \eqref{6.75} holds, and the proof of the theorem is complete.
\end{proof}

\newpage

\chapter{The Strict Convexity of Space-time Level Sets}

In this chapter, we study the solution of Borell \cite{Bo82} and prove Theorem \ref{th1.4} in Section 4.1, by utilizing the constant rank theorem of spatial level sets and space-time level sets, i.e. Theorem \ref{th3.1} and Theorem \ref{th1.3}. Similarly, we can prove Theorem \ref{th1.2} in Section 4.2.

\section{The strict convexity of space-time level sets of Borell's solution}

In \cite{Bo82}, Borell studied  problem \eqref{1.3}-\eqref{1.4} and proved that the space-time level sets $\partial\Sigma^c_{x,t}$ of the solution $u$ to \eqref{1.3}-\eqref{1.4} are convex for $c\in(0,1)$ (that is Theorem \ref{th1.1}). Here we want to refine the result of Borell by proving Theorem \ref{th1.4}, that is the strict convexity of space-time level sets of Borell's solution.

\text {Step 1}: First we notice that, thanks to \cite{Bo82}, we know the space-time level sets of $u$ are all convex. Then we can use Theorem \ref{th3.1} to get that the second fundamental form of spatial level sets $\partial\Sigma_x^{c,t}=\{x \in \Omega \,:\, u(x,t) = c\}$
has the constant rank property in $\Omega$ for all $c\in (0,1)$,  i.e. if the
rank of $II_{\partial{\Sigma}^{c,t}_x}$ attains its minimum $l_0$ $(0\leq l\leq
n-1)$ at some point $(x_0,t_0) \in \Omega \times (0,T)$, then the rank of
$II_{\partial{\Sigma}^{c}_{x,t}}$ is constant on $\Omega\times (0,t_0]$. On the other hand
Hopf lemma implies \eqref{1.7}, which in turn implies that the spatial level set $\partial\Sigma_x^{c,t}$ is a closed convex  $(n-1)$-dimensional hypersurface whose second fundamental form has positive Gauss curvature (then full rank) at least at one point for any $c \in (0,1)$ and any fixed $t \in (0, +\infty)$. Then we finally get that $\partial\Sigma_x^{c,t}$ has full rank $n-1$ in $\Omega\times(0, +\infty)$. That is, $u$ is spatial strictly quasiconcave.

\text {Step 2}:  Since  the  space-time level sets $\Sigma^c_{x,t} =
\{(x,t) \in \bar{\Omega} \times [0, +\infty)| u(x,t)\ge c\}$ for  $0< c<1 $ are
convex, we can use Theorem \ref{th1.3} to get the second fundamental
form of the space-time level sets of $u$ has the constant rank property,  i.e. if the rank of $II_{\partial{\Sigma}^{c}_{x,t}}$ attains its
minimum rank $l_0$ $(0\leq l\leq n)$ at some point $(x_0,t_0) \in \Omega \times
(0,+\infty)$, then the rank of $II_{\partial{\Sigma}^{c}_{x,t}}$ is constant on $\Omega\times
(0,t_0]$. From Step 1, we know the rank of $II_{\partial{\Sigma}^{c}_{x,t}}$ is at least $n-1$. Hence from Theorem \ref{th1.3}, there exist $T_0 \in [0, +\infty]$, such that
\begin{align*}
&\mathbf{Rank}( II_{\partial\Sigma^{c}_{x,t}} (x,t) ) \equiv n-1, \quad \text{ for any } (x,t) \in \Omega \times (0, T_0];\\
&\mathbf{Rank}( II_{\partial\Sigma^{c}_{x,t}} (x,t) ) \equiv n, \quad \text{ for any } (x,t) \in \Omega \times (T_0, +\infty).
\end{align*}

In the following, we prove $T_0 < +\infty$. Otherwise, $T_0 = + \infty$, and $\mathbf{Rank}( II_{\partial\Sigma^{c}_{x,t}} (x,t) ) \equiv n-1$ for any $(x,t) \in \Omega \times (0, +\infty)$. Then we know its null space is parallel in $(x, t)\in \Omega \times (0, +\infty)$. As in Gabriel \cite{Ga57} and Lewis
\cite{Le77}, suppose further that at a certain point $P_0(x_0, t_0)\in \Omega
\times (0, +\infty)$, there is a tangential direction $v_0$ of the level
surface of $u$ through $P_0$ for which the normal curvature of the level
surface is zero at $P_0$; then the level surfaces of $u$ in $\R^{n}\times \R^{+}$
are all cones with a common vertex lying on the special tangent $v_0$ at $P_0$.

Case I. The tangential direction $v_0$ is not parallel to the time direction
$t$: since the domain $\Omega$ is bounded, the splitting line through $P_0$ with direction $v_0$ should meet
the boundary of the domain, contradicting $0<u(P_0)<1$ and the regularity of $u$ (which is continuous up to the boundary).

Case II. The tangential direction $v_0$ is  parallel to the time direction $t$: we know $u_t >0$ and this case is also impossible.

Then the second fundamental form of every space-time level sets of $u$
has full rank $n$, that is $\partial\Sigma_{x,t}^c$ has everywhere positive Gauss curvature for every $c\in(0,1)$.

\section{Proof of Theorem \ref{th1.2}}

In this section we prove Theorem \ref{th1.2}, by utilizing the constant rank theorem of spatial level sets and space-time level sets, i.e. Theorem \ref{th3.1} and Theorem \ref{th1.3}.

Let $u$ be a space-time quasiconcave solution of problem \eqref{1.5}. Then the space-time level sets of $u$ are all convex. Hence we can use Theorem \ref{th3.1} to get the constant ran theorem of the second fundamental form of spatial level sets $\partial\Sigma_x^{c,t}=\{x \in \Omega \,:\, u(x,t) = c\}$. Similarly to the above section, we know that the spatial level set $\partial\Sigma_x^{c,t}$ is a closed convex  $(n-1)$-dimensional hypersurface whose second fundamental form has positive Gauss curvature (then full rank) at least at one point for any $c \in (0,1)$ and any fixed $t \in (0, +\infty)$. Then we finally get that $\partial\Sigma_x^{c,t}$ has full rank $n-1$ in $\Omega\times(0, +\infty)$. That is, $u$ is spatial strictly quasiconcave.

Then we use Theorem \ref{th1.3} to get the constant rank theorem of the second fundamental
form of the space-time level sets of $u$. So there exist $T_0 \in [0, +\infty]$, such that
\begin{align*}
&\mathbf{Rank}( II_{\partial\Sigma^{c}_{x,t}} (x,t) ) \equiv n-1, \quad \text{ for any } (x,t) \in \Omega \times (0, T_0];\\
&\mathbf{Rank}( II_{\partial\Sigma^{c}_{x,t}} (x,t) ) \equiv n, \quad \text{ for any } (x,t) \in \Omega \times (T_0, +\infty).
\end{align*}
Similarly to the discussions of the above section, we can prove $T_0 < +\infty$, and then the proof of Theorem \ref{th1.2} is complete.

\newpage

\chapter{Appendix: the proof in dimension $n=2$}

In this Appendix, we prove the Constant Rank Theorem \ref{th1.3} in the plane. Then $\hat{a}=
\left( {\begin{array}{*{20}c}
   {\hat a_{11}} & {\hat a_{12}}  \\
   {\hat a_{21}} & {\hat a_{22}}  \\
\end{array}} \right)
$ and let it attain the minimal
rank $l$ at some point $(x_0, t_0) \in \Omega \times (0, T]$. We assume $l \leq 1$, otherwise there is nothing to prove.

In CASE 1, Theorem \ref{th1.3} holds directly from the constant rank property of the spatial
second fundamental form $a=(a_{11})_{1 \times 1}$. It is easy (see Section 3.1).

In the following, we consider CASE 2 in dimension $n=2$. Since $l \leq 1$, we deal with $l=0$ and $l=1$, respectively.

\section{minimal rank $l=0$}

From CASE 2 of Lemma \ref{lem2.8}, if the minimal rank is $l=0$, we have at $(x_0,t_0)$,
\begin{eqnarray*}
\hat a_{11} =0,\quad \hat a_{22}  = 0,  \quad \hat a_{12} = 0.
\end{eqnarray*}
Then there are a neighborhood $\mathcal {O}$ of $x_0$ and $\delta>0$ such that for any fixed point  $(x,t) \in \mathcal {O}\times (t_0-\delta,
t_0]$, we can choose $e_1, e_2$ such that
\begin{equation}\label{5.6}
u_1(x,t)=0, \quad u_2(x,t)= |\n u(x,t)|>0.
\end{equation}
From Theorem $\ref{th3.1}$, the constant rank theorem holds for the spatial
second fundamental form $a=(a_{11})_{1 \times 1}$. So we can get $a_{11}=0$ for any $(x,t) \in \mathcal {O} \times (t_0-\delta, t_0]$.
Furthermore, $\hat a_{11}=0$.

We set
\begin{eqnarray}\label{5.11}
\phi = \hat a_{11}+\hat a_{22},
\end{eqnarray}
Under the above assumptions, we get
\begin{align*}
&u_{11}=0, \quad u_{22}= u_t, \quad u_{12} =0, \quad u_{1t}=0, \\
&u_2^2 u_{tt}\sim 2u_2u_tu_{2t}-u_t^3
\end{align*}
from the constant rank property of $a = (a_{11})$ and CASE 2 of Lemma \ref{lem2.8}.
Furthermore, by the constant rank property of $a = (a_{11})$ and Lemma \ref{lem2.9}, we can obtain
\begin{align*}
&u_{111}=0, \qquad u_{221}=0, \qquad u_{112}=0, \qquad u_{222}=u_{2t}, \\
&u_{11t}=0, \qquad  u_{22t}=u_{tt}, \qquad u_{12t} \sim 0, \qquad  u_{tt1}\sim 0,  \\
&u_2^2u_{tt2} \sim  2 u_2 u_{2t}^2 - u_t^2 u_{2t}.
\end{align*}

So we get
\begin{align} \label{5.45}
&\Delta \phi-\phi_t  \notag \\
&\sim  \Big(-\frac{|u_t|}{|D u|{u_t}^3}\Big) \frac{1}{\hat W ^2}
{\Big( \Delta \hat h_{22}-\hat h_{22,t} \Big)} \notag \\
&=  \Big(-\frac{|u_t|}{|D u|{u_t}^3}\Big) \frac{1}{\hat W ^2}
\Big[ 4\sum_{\a=1}^2  u_t u_{t\a} u_{22 \a}+ 4\sum_{\a=1}^2 u_2 u_{2\a} u_{tt\a}
-4u_t \sum_{\a=1}^2 u_{2\a}u_{2t\a} -4 u_{2}\sum_{\a=1}^2 u_{t\a}u_{2t\a} \notag \\
&\qquad \qquad \qquad + 2\sum_{\a=1}^2 u_{t\a}^2 u_{22}+ 2 \sum_{\a=1}^2u_{tt} u_{2 \a}^2- 2 u_2 \Delta u_tu_{2t}+  2u_{2} u_{tt}\Delta u_2-4 \sum_{\a=1}^2 u_{2\a} u_{t\a}u_{2t}\Big] \notag \\
&=  \Big(-\frac{|u_t|}{|D u|{u_t}^3}\Big) \frac{1}{\hat W ^2}
\Big[ 4 u_t u_{2t} u_{222}+ 4 u_2 u_{22} u_{tt2}
-4u_t  u_{22}u_{22t} -4 u_{2} u_{2t}u_{22t} \notag \\
&\qquad \qquad \qquad \quad + 2 u_{2t}^2 u_{22}+ 2 u_{tt} u_{22}^2- 2 u_2 u_{tt} u_{2t}+  2u_{2} u_{tt} u_{2t}-4  u_{22} u_{2t}u_{2t}\Big] \notag \\
&\sim  \Big(-\frac{|u_t|}{|D u|{u_t}^3}\Big) \frac{1}{\hat W ^2}
\Big[  2u_t u_{2t}^2 -4 \frac{u_t^3}{u_2} u_{2t} +2  \frac{u_t^5}{u_2^2} \Big] \notag \\
&= \Big(-\frac{|u_t|}{|D u|{u_t}^3}\Big) \frac{1}{\hat W ^2} 2 u_t^3[\frac{u_{22}}{u_2}- \frac{u_{2t}}{u_t} ]^2 \leq 0.
\end{align}
that is
\begin{align}\label{5.44}
\Delta \phi -\phi_t \leq C(\phi +|\n \phi|), \quad \forall (x,t) \in \mathcal {O}\times (t_0-\delta,
t_0].
\end{align}
Finally, by the strong maximum principle and the method of continuity, Theorem \ref{th1.3} holds.

\section{minimal rank $l=1$}

From CASE 2 of Lemma \ref{lem2.8}, if the minimal rank is $l=1$, we have at $(x_0,t_0)$,
\begin{eqnarray*}
\hat a_{22}  = \frac{{\hat a_{12} ^2 }} {{\hat a_{11} }}, \hat a_{11} \geq C_0 >0.
\end{eqnarray*}
Then tthere are a neighborhood $\mathcal {O}$ of $x_0$ and $\delta>0$ such that
\begin{equation}\label{5.49}
\hat a_{11} \geq \frac{C_0}{2} >0 \quad \text{ in } \mathcal {O}\times (t_0-\delta, t_0].
\end{equation}
For any point $(x,t) \in \mathcal {O}\times (t_0-\delta, t_0]$, we can choose $e_1, e_2$ such that
\begin{equation}\label{5.50}
u_1(x,t)=0, \quad u_2(x,t)= |\n u(x,t)|>0.
\end{equation}

We set
\begin{eqnarray}\label{5.51}
\phi = \sigma_{2}(\hat a)= \hat a_{11} \hat a_{22}- \hat a_{12} \hat a_{21}.
\end{eqnarray}
Under the above assumptions, we get from CASE 2 of Lemma \ref{lem2.8},
\begin{align*}
&u_{11}  = \frac{{\hat h_{11} }}{{u_t ^2 }}, \quad u_{22}  = u_t  - \frac{{\hat h_{11} }}{{u_t ^2 }},  \quad u_t ^2 u_{12}  = \hat h_{12}  + u_2 u_t u_{1t},  \\
&u_2 ^2 u_{tt}  \sim \frac{{\hat h_{12} ^2 }}{{\hat h_{11} }} + \hat h_{11}  -u_t ^3  + 2u_2 u_t u_{2t}.
\end{align*}
Direct computations yield
\begin{align*}
&u_t ^2 u_{111}  = \hat h_{11,1},  \quad u_t ^2 u_{221}  =  - \hat h_{11,1}  +u_t^2 u_{1t},  \\
&u_t ^2 u_{112}  = \hat h_{11,2}  - 2\frac{{u_{2t} }}{{u_t }}\hat h_{11}  + 2\frac{{u_{1t} }}{{u_t }}\hat h_{12}  + 2u_2 u_{1t} ^2,  \\
&u_t ^2 u_{222}  =  - \hat h_{11,2}  +u_t ^2 u_{2t}  + 2\frac{{u_{2t} }}{{u_t }}\hat h_{11}  - 2\frac{{u_{1t} }}{{u_t }}\hat h_{12}  - 2u_2 u_{1t} ^2, \\
&u_2 u_t u_{11t}  = \hat h_{11,2}  - \hat h_{12,1}  - 3\frac{{u_{2t} }}{{u_t }}\hat h_{11}  + 3\frac{{u_{1t} }}{{u_t }}\hat h_{12}  + 2u_2 u_{1t} ^2  + u_2 u_{11} u_{tt},  \\
&u_2 u_t u_{22t}  = \hat h_{12,1}  - \hat h_{11,2}  + u_2 u_{22} u_{tt}  + 3\frac{{u_{2t} }}{{u_t }}h_{11}  - 3\frac{{u_{1t} }}{{u_t }}\hat h_{12}  - 2u_2 u_{1t} ^2,  \\
&u_2 u_t u_{12t}  =  - \hat h_{11,1}  - \hat h_{12,2}  + \frac{{u_{1t} }}{{u_t }}\hat h_{11}  + \frac{{u_{2t} }}{{u_t }}\hat h_{12}  +u_2 u_{12} u_{tt},
\end{align*}
and
\begin{align*}
u_2 ^2 u_{tt1}  =& \hat h_{22,1}  - \hat h_{11,1}  - 2\hat h_{12,2}  + 2\frac{{u_{1t} }}{{u_t }}\hat h_{11} + 2\frac{{u_{2t} }}{{u_t }}\hat h_{12} \notag \\
&- 2\frac{{u_{1t} }}{{u_t }}u_t ^2 u_{22} -u_t^2 u_{1t}  + 2u_t u_{12} u_{2t} + 2u_2 u_{1t} u_{2t},  \\
u_2 ^2 u_{tt2}  =& \hat h_{22,2}  - \hat h_{11,2}  + 2\hat h_{12,1}  + 4\frac{{u_{2t} }}{{u_t }}\hat h_{11}  - 4\frac{{u_{1t} }}{{u_t }}\hat h_{12}  - u_t ^2 u_{2t}  - 2u_2 u_{1t} ^2  + 2u_2 u_{2t} ^2.
\end{align*}

At last, we get
\begin{align} \label{5.94}
\Delta \phi-\phi_t
\sim & \Big(-\frac{|u_t|}{|D u|{u_t}^3}\Big)^2 \frac{1}{\hat W ^2} \left[\hat h_{11} [\Big( \Delta \hat h_{22}-\hat h_{22,t} \Big)
- 2\frac{{\hat h_{12} }} {{\hat h_{11}}}\Big( \Delta \hat h_{12}-\hat h_{12,t} \Big)+\frac{{\hat h_{12}^2 }} {{\hat h_{11}^2 }}
\Big( \Delta \hat h_{11} -\hat h_{11,t} \Big)] \right. \notag \\
&\qquad \qquad \qquad \qquad \left. - 2[\hat h_{12,\alpha}-\frac{{\hat h_{12} }} {{\hat h_{11} }}\hat h_{11,\alpha }]^2 \right].
\end{align}
and
\begin{align}\label{5.113}
&\frac{1}{2}\hat h_{11} [\Big( \Delta \hat h_{22}-\hat h_{22,t} \Big)
- 2\frac{{\hat h_{12} }} {{\hat h_{11}}}\Big(\Delta \hat h_{12}-\hat h_{12,t} \Big)+\frac{{\hat h_{12}^2 }} {{\hat h_{11}^2 }}
\Big( \Delta \hat h_{11} -\hat h_{11,t} \Big)]- [\hat h_{12,\alpha}-\frac{{\hat h_{12} }} {{\hat h_{11} }}\hat h_{11,\alpha }]^2   \notag \\
=& \frac{1}{2}\hat h_{11} [\Big( \Delta \hat h_{22}-\hat h_{22,t} \Big)
- 2\frac{{\hat h_{12} }} {{\hat h_{11}}}\Big(\Delta \hat h_{12}-\hat h_{12,t} \Big)+\frac{{\hat h_{12}^2 }} {{\hat h_{11}^2 }}
\Big( \Delta \hat h_{11} -\hat h_{11,t} \Big)] \notag   \\
&- [\hat h_{12,1}-\frac{{\hat h_{12} }} {{\hat h_{11} }}\hat h_{11,1}]^2 - [\hat h_{12,2}-\frac{{\hat h_{12} }} {{\hat h_{11} }}\hat h_{11,2 }]^2  \notag\\
\sim& 2(\hat h_{12,1}  - \frac{{\hat h_{12} }}{{\hat h_{11} }}\hat h_{11,1} )\hat h_{11} [\frac{{u_{22} }}{{u_2 }} - \frac{{u_{2t} }}{{u_t }} - \frac{{\hat h_{12} }}{{\hat h_{11} }}\frac{{u_{12} }}{{u_2 }}] \notag\\
&+ 2(\hat h_{12,2}  - \frac{{\hat h_{12} }}{{\hat h_{11} }}\hat h_{11,2} )\hat h_{12}[\frac{{u_{22} }}{{u_2 }} - \frac{{u_{2t} }}{{u_t }} - \frac{{\hat h_{12} }}{{\hat h_{11} }}\frac{{u_{12} }}{{u_2 }}]   \notag \\
&+[\frac{{u_{22} }}{{u_2 }} - \frac{{u_{2t} }}{{u_t }} - \frac{{\hat h_{12} }}{{\hat h_{11} }}\frac{{u_{12} }}{{u_2 }}]^2 u_t^3 \hat h_{11}  - [\frac{{u_{22} }}{{u_2 }} - \frac{{u_{2t} }}{{u_t }} - \frac{{\hat h_{12} }}{{\hat h_{11} }}\frac{{u_{12} }}{{u_2 }}]^2 (\hat h_{11}  + \hat h_{22} )\hat h_{11}  \notag \\
&- [\hat h_{12,1}-\frac{{\hat h_{12} }} {{\hat h_{11} }}\hat h_{11,1}]^2 - [\hat h_{12,2}-\frac{{\hat h_{12} }} {{\hat h_{11} }}\hat h_{11,2 }]^2  \notag\\
\sim& - [\hat h_{12,1}-\frac{{\hat h_{12} }} {{\hat h_{11} }}\hat h_{11,1}- (\frac{{u_{22} }}{{u_2 }} - \frac{{u_{2t} }}{{u_t }} - \frac{{\hat h_{12} }}{{\hat h_{11} }}\frac{{u_{12} }}{{u_2 }}) \hat h_{11}]^2 \notag \\
&- [\hat h_{12,2}-\frac{{\hat h_{12} }} {{\hat h_{11} }}\hat h_{11,2 }-(\frac{{u_{22} }}{{u_2 }} - \frac{{u_{2t} }}{{u_t }} - \frac{{\hat h_{12} }}{{\hat h_{11} }}\frac{{u_{12} }}{{u_2 }}) \hat h_{12}]^2 \notag \\
&+[\frac{{u_{22} }}{{u_2 }} - \frac{{u_{2t} }}{{u_t }} - \frac{{\hat h_{12} }}{{\hat h_{11} }}\frac{{u_{12} }}{{u_2 }}]^2 u_t^3 \hat h_{11}  \notag  \\
\leq& 0.
\end{align}
Hence we arrive to
\begin{align}\label{5.44}
\Delta \phi -\phi_t \leq C(\phi +|\n \phi|), \quad \forall (x,t) \in \mathcal {O}\times (t_0-\delta,
t_0].
\end{align}
Finally, again by the strong maximum principle and the method of continuity, Theorem \ref{th1.3} holds.

\newpage

\end{document}